\documentclass[11pt,letterpaper]{article}
\usepackage[T1]{fontenc}
\usepackage{amsmath,amsthm,amssymb,mathtools}
\usepackage{mathrsfs}
\usepackage{nicefrac}
\usepackage[margin=1.05in]{geometry}
\usepackage{microtype}
\usepackage{xcolor}
\usepackage{tikz}
\definecolor{paperlink}{RGB}{0,0,114}
\usepackage{hyperref}
\hypersetup{colorlinks=true,linktoc=all,linkcolor=paperlink,citecolor=paperlink,urlcolor=paperlink,pdfstartview=Fit}
\usepackage{enumitem}
\setlist[itemize]{leftmargin=2em,itemsep=0.25em,topsep=0.35em}
\setlist[enumerate]{leftmargin=2.25em,itemsep=0.25em,topsep=0.35em}
\usepackage{fancyhdr}
\numberwithin{equation}{section}
\newtheorem{theorem}{Theorem}[section]
\newtheorem{proposition}[theorem]{Proposition}
\newtheorem{lemma}[theorem]{Lemma}

\newtheorem{theoremA}{Theorem}

\newtheorem{corollaryA}[theoremA]{Corollary}
\theoremstyle{definition}
\newtheorem{definition}[theorem]{Definition}

\newcommand{\R}{\mathbb{R}}
\newcommand{\Z}{\mathbb{Z}}
\newcommand{\N}{\mathbb{N}}
\newcommand{\Rd}{\mathbb{R}^d}
\newcommand{\Zd}{\mathbb{Z}^d}
\renewcommand{\a}{\mathbf{a}}
\newcommand{\bfA}{\mathbf{A}}
\newcommand{\eps}{\varepsilon}
\newcommand{\nf}{\nicefrac}
\newcommand{\qand}{\quad\mbox{and}\quad}
\newcommand{\cu}{\square}
\newcommand*{\tr}{\mathrm{trace\,}}
\newcommand*{\supp}{\mathrm{supp\,}}
\DeclareMathOperator{\dist}{dist}

\DeclareMathOperator*{\esssup}{ess\,sup}
\DeclareMathOperator*{\essinf}{ess\,inf}
\DeclarePairedDelimiter{\norm}{\lVert}{\rVert}
\renewcommand{\subset}{\subseteq}
\DeclareMathAlphabet{\mathmybb}{U}{bbold}{m}{n}
\newcommand{\indc}{{\mathmybb{1}}}
\newcommand{\Ener}{\mathcal E}
\newcommand{\Hzero}[1]{H^1_{#1,0}}
\newcommand{\Csol}{\mathcal C_{\mathrm{sol}}}
\newcommand{\Csub}{\mathcal C_{\mathrm{sub}}}
\def\Xint#1{\mathchoice
	{\XXint\displaystyle\textstyle{#1}}%
	{\XXint\textstyle\scriptstyle{#1}}%
	{\XXint\scriptstyle\scriptscriptstyle{#1}}%
	{\XXint\scriptscriptstyle\scriptscriptstyle{#1}}%
	\!\int}
\def\XXint#1#2#3{{\setbox0=\hbox{$#1{#2#3}{\int}$}
		\vcenter{\hbox{$#2#3$}}\kern-.5\wd0}}
\def\fint{\Xint-}
\makeatletter
\newcommand{\avsum}{\mathop{\mathpalette\avsuminner\relax}\displaylimits}
\newcommand\avsuminner[2]{%
	{\sbox0{$\m@th#1\sum$}%
		\vphantom{\usebox0}%
		\ooalign{%
			\hidewidth
			\smash{\,\rule[.23em]{8.8pt}{1.1pt} \relax}%
			\hidewidth\cr
			$\m@th#1\sum$\cr
		}%
	}%
}
\makeatother

\title{Coarse ellipticity and De Giorgi--Nash--Moser theory in the optimal range}

\author{S. Armstrong
\thanks{CNRS \& Laboratoire Jacques-Louis Lions, Sorbonne Universit\'e and Courant Institute of Mathematical Sciences, New York University. {\footnotesize\href{mailto:scottnarmstrong@gmail.com}{scottnarmstrong@gmail.com}.}}
\and
B. Avelin
\thanks{Department of Mathematics, Uppsala University. {\footnotesize\href{mailto:benny.avelin@math.uu.se}{benny.avelin@math.uu.se}.}}
\and
T. Kuusi
\thanks{Department of Mathematics and Statistics, University of Helsinki. {\footnotesize\href{mailto:tuomo.kuusi@helsinki.fi}{tuomo.kuusi@helsinki.fi}.}}
\and
A. Turpeinen
\thanks{Department of Mathematics and Statistics, University of Helsinki. {\footnotesize\href{mailto:aatu.turpeinen@helsinki.fi}{aatu.turpeinen@helsinki.fi}.}}
}
\date{}

\begin{document}
\maketitle

\begin{abstract}
We extend the theory of De Giorgi--Nash--Moser to elliptic equations with symmetric coefficients~$\mathbf{a}(x)$ which are possibly degenerate and unbounded but satisfy a coarse ellipticity condition. 
We prove local upper bounds for weak subsolutions, a weak Harnack inequality for nonnegative supersolutions, and a Harnack inequality for nonnegative solutions. 
The coarse ellipticity hypothesis requires spatial moments of coarse-grained matrices, suitably discounted and summed across scales, to be finite. In particular, it holds if, for some~$\alpha,\beta\geq0$ and~$1<p,q<\infty$,
\begin{equation*}
	\mathbf{a}\in W^{-\alpha,p}\cap L^1\,,\qquad\mathbf{a}^{-1}\in W^{-\beta,q}\cap L^1\qquad\mbox{and}\qquad \frac{\alpha+\beta}{2}+\frac{d-1}{2}\biggl(\frac1p+\frac1q\biggr)< 1\,.
\end{equation*}
We show that the coefficient range is sharp, including its boundary, in every dimension~$d\geq3$, for~$\alpha=\beta=0$ and for the corresponding Besov-type quasi-norms of negative order when~$\alpha,\beta>0$. For~$\alpha=\beta=0$, it corresponds to the results of Bella and Sch\"affner~\cite{Bella.Schaffner.regularity}. 
\end{abstract}

\tableofcontents

\section{Introduction}
\label{s.introduction}

\subsection{Background and overview}

Let~$d\geq3$ and let~$\cu_0\coloneqq(-\nf12,\nf12)^d$. We study weak solutions of the elliptic equation
\begin{equation}
	-\nabla\cdot\a\nabla u=0\qquad\mbox{in }\cu_0\,,
	\label{e.subsolution.equation}
\end{equation}
where the coefficient field~$\a$ is measurable, symmetric and positive definite almost everywhere. We assume neither a positive lower bound nor an upper bound on~$\a$. The only qualitative assumption used is the condition~$\tr\a,\tr\a^{-1}\in L^1(\cu_0)$. This ensures the weighted energy space of Section~\ref{s.weighted.spaces} is well defined, but none of our regularity estimates depend on these integrals.

\smallskip

If~$\a$ is uniformly elliptic, the theory of De Giorgi, Nash and Moser~\cite{DeGiorgi,Nash,Moser1,Moser2} shows that subsolutions are locally bounded above and that nonnegative solutions satisfy the Harnack inequality, with constants depending only on~$d$ and on the ratio of the ellipticity constants. In this paper we consider weaker assumptions on~$\a$ which imply these estimates.

\paragraph{Integrability.} The classical answer is given in terms of integrability. After the work of Murthy and Stampacchia~\cite{Murthy.Stampacchia} on degenerate elliptic operators, Trudinger~\cite{Trudinger,Trudinger.1973} proved local boundedness and the Harnack inequality when~$\a\in L^p$ and~$\a^{-1}\in L^q$ with~$\nf1p+\nf1q<\nf2d$. Bella and Sch\"affner~\cite{Bella.Schaffner.regularity} enlarged the range to
\begin{equation}
	\frac1p+\frac1q<\frac2{d-1}\,,
	\label{e.intro.bs.range}
\end{equation}
and this range cannot be enlarged. Bella and Sch\"affner construct scalar coefficients admitting unbounded weak subsolutions when~$\nf1p+\nf1q>\nf2{(d-1)}$ in every dimension~$d\geq3$, and also at equality for~$d\geq4$~\cite{Bella.Schaffner.plaplace}. For~$d\geq4$, the examples of Franchi, Serapioni and Serra Cassano~\cite{FSS} give unbounded weak solutions when~$\nf1p+\nf1q>\nf2{(d-1)}$ (see the discussion in~\cite{Bella.Schaffner.regularity}). Our Theorem~\ref{t.sharpness} below constructs in every dimension~$d\geq3$ a scalar coefficient field satisfying the integrability condition, even at the endpoint, and a weak solution~$u$ with unbounded essential supremum and finite positive essential infimum on~$\frac12\cu_0$. Thus both local boundedness and the Harnack inequality fail.

\smallskip

The improvement from~$d$ to~$d-1$ comes from the choice of the cutoff function. The Caccioppoli inequality for a nonnegative subsolution~$v$ is obtained by testing the weak formulation with~$\varphi^2v$ for a cutoff function~$\varphi$, applying the Cauchy--Schwarz inequality and controlling the right-hand side term~$\int v^2\nabla\varphi\cdot\a\nabla\varphi$. Bella and Sch\"affner minimize this term over~$\varphi$, so the chosen cutoff depends on~$v$. They then estimate the resulting term through integrals over spheres using the Sobolev inequality in dimension~$d-1$. The underlying idea of dimensional reduction through slicing had already been used by Struwe~\cite{Struwe} and Frehse and R\r{u}\v{z}i\v{c}ka~\cite{Frehse.Ruzicka} in the study of the Navier--Stokes equations, and by Kontovourkis~\cite{Kontovourkis} for elliptic equations with divergence-free drift.

\paragraph{Coarse ellipticity.} Integrability measures the size of~$\a$ pointwise. 
Large or small conductivities confined to small inclusions may change the energy of solutions by only a bounded factor, even when the integrability bounds deteriorate.
In~\cite{AADKM}, three of the authors, together with De Filippis and Mingione, replaced integrability by \emph{coarse ellipticity}, a notion introduced in~\cite{AK.HC} for the homogenization of high-contrast random media. On each triadic subcube of~$\cu_0$, the condition uses one energy bound for the mean flux and another for the mean gradient of admissible functions. At each scale it takes the largest constants over the cubes and combines them across scales with discount factors. The coarse ellipticity condition is implied by negative Sobolev regularity of~$\a$ and~$\a^{-1}$. It allows coefficient fields with~$\a\notin L^{1+\delta}$ for every~$\delta>0$, and it holds with uniform constants for the scalar fields~$(1+\mu_n)I$, where the~$\mu_n$ are regularizations of a fractal measure or of a Gaussian multiplicative chaos, whose~$L^{1+\delta}$ norms are unbounded. Under the coarse ellipticity condition,~\cite{AADKM} proves local boundedness and the Harnack inequality with the constant~$\exp(C\Theta^{\nf12})$, in terms of a coarse ellipticity ratio~$\Theta$. However, when restricted to coefficient fields with~$\a\in L^p$ and~$\a^{-1}\in L^q$, these results reach only Trudinger's range~$\nf1p+\nf1q<\nf2d$, and the negative Sobolev criterion of~\cite{AADKM} has the same dimension~$d$ in its admissible range. When estimated from integrability, taking the maximum over the~$3^{kd}$ cubes of size~$3^{-k}$ introduces a factor~$3^{kd/p}$, and this is where the full dimension enters.

\paragraph{Coarse ellipticity with optimal exponents.} 
We introduce a notion of coarse ellipticity, which uses~$\ell_p$ rather than~$\ell_\infty$ norms for the coarse-grained matrices in triadic simplices on each scale; this is implied by the~$L^p$- and negative-Sobolev conditions in the optimal range. This hypothesis assumes bounds on two coarse ellipticity constants. Each constant is defined using a discounted sum of spatial moments of one of the two coarse-grained matrices on triadic simplices (rather than cubes). In each simplex, the coarse-grained matrices control the mean gradients and fluxes of every solution through its energy. They are exactly the coarse-grained matrices of~\cite{AK.HC} for symmetric coefficient fields, and they differ from those of~\cite{AADKM} in an important way: both are defined through solutions alone, whereas the upper matrix of~\cite{AADKM} is a supremum over a cone of functions such as truncations and powers of solutions. The admissible ranges of the integrability and negative-regularity conditions that imply the hypothesis involve~$d-1$ where those of~\cite{AADKM} involved~$d$. 

\smallskip

We explicitly track the dependence of our estimates on the coarse ellipticity ratio, and our estimates are sharp in this dependence. In particular, we obtain local upper bounds for subsolutions with an optimal polynomial dependence on the ratio~$\Theta$ of the two coarse ellipticity constants, and the weak Harnack and Harnack inequalities with the constant~$\exp(C\sqrt\Theta)$ that has optimal growth in~$\Theta$ (Theorems~\ref{t.cg.local.boundedness} and~\ref{t.harnack} and Proposition~\ref{p.sharpness.polynomial}). The weak Harnack inequality holds at the endpoint exponent~$r^*/2$ of Theorem~\ref{t.harnack}, and Proposition~\ref{p.sharpness.weak.harnack} shows that this exponent cannot be increased for any fixed admissible choice of~$d,p,q,s,t$. A coefficient criterion and its sharpness are as follows.
\begin{itemize}
\item The hypothesis holds, and so do local boundedness and the Harnack inequality (Corollary~\ref{c.sobolev.coefficients}), if for some~$\alpha,\beta\geq0$ and~$1<p,q<\infty$
\begin{equation}
	\a\in W^{-\alpha,p}\cap L^1\,,\qquad\a^{-1}\in W^{-\beta,q}\cap L^1\qquad\mbox{and}\qquad\frac{\alpha+\beta}{2}+\frac{d-1}{2}\biggl(\frac1p+\frac1q\biggr)<1\,.
	\label{e.goodrange}
\end{equation}
For~$\alpha=\beta=0$ this is the whole range~\eqref{e.intro.bs.range} for~$\a\in L^p$ and~$\a^{-1}\in L^q$: the coarse-grained theory recovers the range of Bella and Sch\"affner for symmetric coefficients. The Harnack constant is
\begin{equation*}
	\exp\Bigl(C\bigl(\norm{\a}_{L^p(\cu_0)}\norm{\a^{-1}}_{L^q(\cu_0)}\bigr)^{\nf12}\Bigr)\,.
\end{equation*}
This dependence on the coefficient norms was previously known only in Trudinger's range~\cite{AADKM}. For~$\alpha,\beta>0$, the local boundedness or Harnack inequality under negative regularity of~$\a$ and~$\a^{-1}$ in the full range of~\eqref{e.goodrange} is new. Here~$W^{-\alpha,p}$ is normed by duality with~$W^{\alpha,p'}(\cu_0)$, as in~\eqref{e.negative.sobolev.norm}, and not with~$W^{\alpha,p'}_0(\cu_0)$. The hypothesis also holds, in the same range with~$\alpha,\beta>0$, if the Besov-type quasi-norms of Theorem~\ref{t.coefficient.conditions}\ref{c.negative.regularity} of orders~$-\alpha$ and~$-\beta$ are finite.
\item For finite exponents, both the integrability range~\eqref{e.intro.bs.range} and the corresponding range for the Besov-type quasi-norms of Theorem~\ref{t.coefficient.conditions}\ref{c.negative.regularity} are sharp, including at the endpoint, in every dimension~$d\geq3$, already for scalar coefficient fields (Theorem~\ref{t.sharpness}).
\end{itemize}

\paragraph{Defining coarse-grained matrices with solutions versus subsolutions.} That both coarse-grained matrices are defined through solutions alone is essential for the applications of the hypothesis. The renormalization argument of~\cite{AK.HC} for random coefficient fields with high contrast bounds exactly these matrices across scales, and no other quantities. We have no technique to control analogous quantities defined through subsolutions, such as the suprema over cones in~\cite{AADKM}, in this renormalization scheme or in many other applications. Our conclusions nevertheless concern subsolutions and supersolutions, so the proofs must control them by quantities computed from solutions. For the lower matrix this is automatic, since the supremum defining it is the same over all functions of finite energy (Proposition~\ref{p.lower.matrix}). For the upper matrix it is not, and the test functions of Section~\ref{ss.intro.ideas} are built so that the coefficient field enters them only through harmonic replacements on simplices, whose energies are given by the upper matrices of the simplices.

\subsection{Main results}
\label{ss.intro.results}

\paragraph{Coarse-grained matrices.} The weighted spaces~$H^1_\a$ and~$\Hzero{\a}$ and the classes~$\Csol$ of solutions and~$\Csub$ of subsolutions are defined in Section~\ref{s.weighted.spaces}. Let~$U\subseteq\Rd$ be a bounded connected Lipschitz domain with~$\tr\a,\tr\a^{-1}\in L^1(U)$, as in~\eqref{e.weighted.hypotheses}. The \emph{coarse-grained matrix}~$\a(U)$ is the symmetric matrix whose quadratic form is the normalized energy of the~$\a$-harmonic function with affine boundary values:
\begin{equation}
	e\cdot\a(U)e=\min\biggl\{\fint_U\nabla w\cdot\a\nabla w:w\in H^1_\a(U),\ w-e\cdot x\in\Hzero{\a}(U)\biggr\}\qquad(e\in\Rd)\,.
	\label{e.upper.matrix.definition}
\end{equation}
The second coarse-grained matrix~$\a_*(U)$ is defined through its inverse, the symmetric matrix with quadratic form
\begin{equation}
	e\cdot\a_*^{-1}(U)e=\sup_{w\in\Csol(U)}\fint_U\bigl(-\nabla w\cdot\a\nabla w+2e\cdot\nabla w\bigr)\qquad(e\in\Rd)\,.
	\label{e.lower.matrix.definition}
\end{equation}
Section~\ref{s.harmonic.matrices} shows that the right sides of~\eqref{e.upper.matrix.definition} and~\eqref{e.lower.matrix.definition} are quadratic forms of symmetric positive definite matrices and records their properties. The hypothesis uses these matrices on the simplices~$\triangle$ of the triadic triangulations~$\mathscr T_k$ of~$\cu_0$ in~\eqref{e.simplex.family}. The triangulation~$\mathscr T_k$ splits each triadic subcube of side length~$3^{-k}$ into~$d!$ simplices, so it has~$d!\,3^{kd}$ simplices of diameter comparable to~$3^{-k}$.

\paragraph{Coarse ellipticity constants.} For~$s,t>0$ and~$1<p,q<\infty$, the coarse ellipticity assumption bounds the coarse ellipticity constants of Definition~\ref{d.cg.constants}, which are spatial moments of these matrices. For each~$k$ we take the~$L^p$ mean of the operator norms~$|\a(\triangle)|$ over the simplices of~$\mathscr T_k$, where~$\avsum$ denotes the arithmetic mean, and we sum their square roots over~$k$ with the discount~$3^{-ks}$:
\begin{equation*}
	\Lambda_{s,1,p}(\cu_0)=\biggl((1-3^{-s})\sum_{k\geq0}3^{-ks}\biggl(\avsum_{\triangle\in\mathscr T_k}|\a(\triangle)|^p\biggr)^{\nf1{2p}}\biggr)^2\,.
\end{equation*}
The lower constant is defined in the same way:
\begin{equation*}
	\lambda_{t,1,q}(\cu_0)=\biggl((1-3^{-t})\sum_{k\geq0}3^{-kt}\biggl(\avsum_{\triangle\in\mathscr T_k}|\a_*^{-1}(\triangle)|^q\biggr)^{\nf1{2q}}\biggr)^{-2}\,.
\end{equation*}
The middle index records the summability exponent over scales;
our regularity estimates use the value one for this index. At each scale, the spatial averages weight the simplices by their volume, and the factors~$3^{-ks}$ and~$3^{-kt}$ discount finer scales. A constant matrix~$\bfA$ has~$\Lambda_{s,1,p}(\cu_0)=|\bfA|$ and~$\lambda_{t,1,q}(\cu_0)=|\bfA^{-1}|^{-1}$. When~$\Lambda_{s,1,p}(\cu_0)<\infty$ and~$\lambda_{t,1,q}(\cu_0)>0$, we call the ratio
\begin{equation}
	\Theta\coloneqq\frac{\Lambda_{s,1,p}(\cu_0)}{\lambda_{t,1,q}(\cu_0)}
	\label{e.spatial.moment.ratio}
\end{equation}
the \emph{coarse ellipticity ratio}. The comparison inequality~\eqref{e.moment.comparison} gives~$\Theta\geq1$. Proposition~\ref{p.cubical.simplicial.equivalence} shows that replacing the simplices by triadic cubes gives equivalent coarse ellipticity constants when~$s<\nf12(1-1/p)$ and~$t<\nf12(1-1/q)$.

\paragraph{Local boundedness.} We use the exponents
\begin{equation}
	\sigma\coloneqq s+\frac{d-1}{2p}\,,\qquad \sigma_*\coloneqq t+\frac{d-1}{2q}\,,\qquad \theta\coloneqq1-\sigma-\sigma_*\,,\qquad r\coloneqq\frac{2q}{q+1}\in(1,2)\,.
	\label{e.intro.parameters}
\end{equation}
The exponent~$\sigma$ describes the growth in~$k$ allowed by~$\Lambda_{s,1,p}(\cu_0)$ for~$|\a(\triangle)|^{\nf12}$ on the simplices~$\triangle\in\mathscr T_k$ near the cube boundaries~$\partial(\tau\cu_0)$. The scale discount gives~$3^{ks}$, while selecting the largest contribution among the roughly~$3^{k(d-1)}$ simplices near a boundary gives~$3^{k(d-1)/(2p)}$. The lower constant~$\lambda_{t,1,q}(\cu_0)$ controls fractional boundary traces for almost every~$\tau$. Converting their~$L^r$ bounds into~$L^2$ bounds at scale~$3^{-k}$ leaves the exponent~$1-\sigma_*$, where~$r$ is the integrability of the controlled gradient averages. The condition~$\sigma+\sigma_*<1$, or~$\theta>0$, allows the decay from the boundary trace estimate to outweigh the growth of the upper matrices. The bound of Theorem~\ref{t.cg.local.boundedness} carries the power~$\nf{(d-1)}{(4\theta)}$ of~$\Theta$. Section~\ref{ss.intro.ideas} explains how these exponents enter the proof.

\begin{theoremA}[Local boundedness above]
\label{t.cg.local.boundedness}
Let~$d\geq3$, let~$1<p,q<\infty$ and~$s,t>0$ satisfy~$\theta>0$, and suppose that~$\Lambda_{s,1,p}(\cu_0)<\infty$ and~$\lambda_{t,1,q}(\cu_0)>0$. Every subsolution~$u\in\Csub(\cu_0)$ of~\eqref{e.subsolution.equation} is locally bounded above in~$\cu_0$. More precisely, there are~$C<\infty$ and~$\gamma>0$, depending only on~$d,p,q,s,t$, such that the positive part~$u_+=\max\{u,0\}$ satisfies, for~$\nf12\leq\rho_1<\rho_2\leq1$,
\begin{equation}
	\norm{u_+}_{L^\infty(\rho_1\cu_0)}\leq C(\rho_2-\rho_1)^{-\gamma}\Theta^{\nf{(d-1)}{4\theta}}\norm{u_+}_{L^2(\rho_2\cu_0)}\,,
	\label{e.cg.local.boundedness}
\end{equation}
and the right side is finite when~$\rho_2<1$.
\end{theoremA}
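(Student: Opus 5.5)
We will run a De Giorgi iteration on the level-set truncations $v_j=(u-k_j)_+$ over shrinking cubes $\rho^{(j)}\cu_0$ between $\rho_1$ and $\rho_2$. The new ingredient is a Caccioppoli-type inequality in which the coefficient field enters only through the upper matrices $\a(\triangle)$ of the simplices. The plan has four steps.

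\emph{Step 1 (Caccioppoli estimate on a shell).} We fix radii $\rho<\rho'$ and choose a slice $\tau\in(\rho,\rho')$. Since $v=v_j$ is a nonnegative subsolution, we test it against $\varphi^2 v$, where $\varphi$ is not a fixed cutoff. Instead, $\varphi$ is built from harmonic replacements on the simplices $\triangle\in\mathscr T_k$ that meet $\partial(\tau\cu_0)$, at the scale $3^{-k}\simeq\rho'-\rho$, and then glued across neighbouring scales. By the minimality in \eqref{e.upper.matrix.definition}, the energy of each piece is bounded by $|\a(\triangle)|$ times the square of the affine slope it interpolates. This yields
\begin{equation*}
	\int_{\tau\cu_0}\nabla v\cdot\a\nabla v\lesssim\sum_{k}\sum_{\triangle\in\mathscr T_k,\,\triangle\cap\partial(\tau\cu_0)\neq\emptyset}|\a(\triangle)|\,|\triangle|\,\Bigl(\text{local oscillation/size of }v\text{ at scale }3^{-k}\Bigr)^2 .
\end{equation*}
Only about $3^{k(d-1)}$ simplices occur at scale $k$. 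Averaging over $\tau$ and using Hölder against the $\ell_p$ moment gives the factor $3^{k\sigma}\Lambda_{s,1,p}^{1/2}$, which is where $\sigma=s+\frac{d-1}{2p}$ enters.

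\emph{Step 2 (traces via the lower matrix).} The factors involving $v$ on the simplices along $\partial(\tau\cu_0)$ are controlled by mean gradients. By Proposition~\ref{p.lower.matrix}, the supremum in \eqref{e.lower.matrix.definition} may be taken over all finite-energy functions. Therefore $|\fint_\triangle\nabla v|^2\le|\a_*^{-1}(\triangle)|\fint_\triangle\nabla v\cdot\a\nabla v$. Summing these with Hölder in $q$ gives $L^r$ control, $r=\frac{2q}{q+1}$, of the coarse gradients by $\lambda_{t,1,q}^{-1/2}$ times the energy. A multiscale (Besov-type) trace inequality on $(d-1)$-dimensional slices, valid for a.e.\ $\tau$, then converts these into bounds on $v$ at scale $3^{-k}$ with gain $3^{-k(1-\sigma_*)}$. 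Combining with Step 1, the scale-$k$ contribution is bounded by $3^{-k\theta}\Theta^{1/2}\times$(energy on the larger cube)$^{1/2}\times(\text{lower-order }L^2\text{ term})^{1/2}$.

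\emph{Step 3 (optimize in $k$).} We split at the scale $K$ where the two competing contributions balance. This yields a reverse-Hölder/Caccioppoli inequality of the form
\begin{equation*}
	\text{energy}\le C\,\Theta^{\frac{d-1}{2\theta}\cdot\text{(small)}}(\rho'-\rho)^{-C}\times(\text{lower order}),
\end{equation*}
together with a gain of integrability coming from the $(d-1)$-dimensional Sobolev inequality on slices, in the spirit of Bella--Sch\"affner.

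\emph{Step 4 (De Giorgi iteration).} We feed Step 3 into the standard iteration $Y_{j+1}\le C b^j\Theta^{a}Y_j^{1+\epsilon}$ with $Y_j=\|v_j\|_{L^2(\rho^{(j)}\cu_0)}^2$. Choosing the level $k_\infty\simeq\Theta^{(d-1)/(4\theta)}\|u_+\|_{L^2}$ makes $Y_j\to0$, which gives \eqref{e.cg.local.boundedness}. The power of $\Theta$ comes out of the exponent bookkeeping: $\epsilon$ is tied to $\theta$ through the balance in Step 3.

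\emph{Main obstacle.} We expect the main difficulty to be Step 1. The test function must be constructed so that only $\a$-harmonic replacements with affine data on simplices appear, and not truncations or powers of $u$. At the same time, the gluing errors across scales and across adjacent simplices must be summable with the $3^{-ks}$ discount. We would try first a partition of unity subordinate to the simplices straddling $\partial(\tau\cu_0)$, in which the piecewise-affine interpolant of $\varphi$ is replaced simplex by simplex with its $\a$-harmonic replacement. The inter-scale corrections would then be controlled by the telescoping sum defining $\Lambda_{s,1,p}$.
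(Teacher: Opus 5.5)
\textbf{Step~1 has a genuine gap.} This is the step you yourself flag, and the remedy you sketch does not close it. Testing with $\varphi^2v$ produces the cross term $2\int\varphi v\,\nabla\varphi\cdot\a\nabla v$. After Cauchy--Schwarz you must control $\int v^2\,\nabla\varphi\cdot\a\nabla\varphi$.

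Replacing a piecewise affine cutoff by its $\a$-harmonic replacements on simplices controls only the \emph{unweighted} energy, $\int_\triangle\nabla\varphi\cdot\a\nabla\varphi=|\triangle|\,\nabla(L\varphi)\cdot\a(\triangle)\nabla(L\varphi)$. The energy density of $\varphi$ may concentrate exactly where $v$ is large, and bounding $v^2$ by its supremum is circular. So the inequality you display in Step~1 ($|\a(\triangle)|\,|\triangle|$ times the oscillation of $v$ at scale $3^{-k}$) does not follow from your test function.

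\textbf{The missing idea} is to let the test function depend on $v$ without forming a product.
\begin{itemize}
\item Inside a cube $\tau\cu_0$ with a good radius, take it equal to a smooth approximation of $v$.
\item Outside, take it equal to $H_hf$, the simplexwise $\a$-harmonic replacement of a piecewise affine extension $L_hf$ of the trace $f=v|_{\partial(\tau\cu_0)}$ on Whitney simplices. The free-vertex values of $L_hf$ are boundary averages of $f$ times a cutoff of width $h$.
\end{itemize}
Testing then gives $\Ener_{\tau\cu_0}(v)\le-\int_{\cu_0\setminus\tau\overline{\cu}_0}\nabla H_hf\cdot\a\nabla v$, with no $v^2$-weighted term. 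The energy of $H_hf$ on each Whitney simplex is exactly $|\triangle|\,\nabla L_hf\cdot\a(\triangle)\nabla L_hf$. This is controlled by differences of boundary averages of the trace, hence by $[f]_{W^{1-t,r}}$ and $\|f\|_{L^2}$ on $\partial(\tau\cu_0)$. Cauchy--Schwarz on each layer of Whitney simplices pairs it with the energy of $v$ in a thin boundary layer, which is controlled by the maximal function of the boundary energy at the good radius. This is where your Step~2 (traces in $W^{1-t,r}$, the factor $3^{-j\theta}$) actually enters. The resulting term $\Theta^{1/2}h^\theta\Ener_{\rho_2\cu_0}(v)$ is then absorbed by choosing $h$ and iterating over radii.

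\textbf{Step~4 also does not close as stated.} With $Y_j=\|v_j\|^2_{L^2}$, you need the energy of $v_{j+1}$ to give an interior $L^\chi$ bound with $\chi>2$. But the energy only yields $W^{1-t,r}$ with $r=\frac{2q}{q+1}<2$, and its Sobolev exponent $r^*$ can be below $2$ in the admissible range. For instance, $d=3$, $p=10$, $q=2$, $s=\frac1{20}$, $t=\frac3{10}$ give $\theta=\frac1{20}$ and $r^*=\frac{60}{31}$.

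The paper instead iterates
$Y_a(\rho)=\bigl(\|(u-a)_+\|_{L^r(\rho\cu_0)}^2+\lambda_{t,1,q}(\cu_0)^{-1}\Ener_{\rho\cu_0}((u-a)_+)\bigr)^{1/2}$,
with two separate gains:
\begin{itemize}
\item $r^*_\partial>2$ on the cube boundary, for the $L^2$-trace term of the energy estimate;
\item $r^*>r$ in the interior, for the $L^r$ term.
\end{itemize}
The boundary term carries $\Theta^{(1-\sigma_*)/\theta}$ and the interior term carries no power of $\Theta$. Level gaps proportional to $\Theta^{(d-3+2\sigma_*)/(4\theta)}Y$ cancel the former. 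Adding the power $\frac{1-\sigma_*}{2\theta}$ from the Caccioppoli bound for $Y_0$ gives exactly $\frac{d-1}{4\theta}$. Your single-term recurrence $Y_{j+1}\le Cb^j\Theta^aY_j^{1+\epsilon}$ does not exhibit this structure, so the claimed power of $\Theta$ is not yet justified.
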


The power~$\nf{(d-1)}{(4\theta)}$ of~$\Theta$ is optimal for every fixed admissible choice of~$d,p,q,s,t$: Proposition~\ref{p.sharpness.polynomial} gives uniformly elliptic coefficients and solutions that change sign, for which~$\Theta$ is arbitrarily large and the ratio of the two sides of~\eqref{e.cg.local.boundedness} tends to infinity if the power is replaced by any smaller number, even for fixed~$\rho_1$ and~$\rho_2$. The corresponding bound of~\cite{AADKM} had the power~$\nf d{(4(1-s-t))}$ of its ellipticity ratio in place of~$\nf{(d-1)}{(4\theta)}$; here~$1-s-t$ is the value of~$\theta$ for~$p=q=\infty$, since the constants of~\cite{AADKM} take maxima over positions. The numerator~$d-1$ is the dimension of the cube boundaries on which the proof takes place. A subsolution need not be square integrable up to~$\partial\cu_0$,
so the right side of~\eqref{e.cg.local.boundedness} may be
infinite when~$\rho_2=1$.
Every subsolution belongs to~$L^r(\cu_0)$, and interpolation
gives analogous bounds in terms of~$L^\eta$ norms for~$0<\eta<2$, with the power~$\nf{(d-1)}{(2\eta\theta)}$
of~$\Theta$.

\begin{corollaryA}[Bounds in~$L^\eta$]
\label{c.lr.bound}
Under the hypotheses of Theorem~\ref{t.cg.local.boundedness}, for every~$0<\eta<2$ there is~$C<\infty$, depending only on~$\eta$ and~$d,p,q,s,t$, such that every subsolution~$u\in\Csub(\cu_0)$ satisfies, with~$\gamma$ as in Theorem~\ref{t.cg.local.boundedness} and for~$\nf12\leq\rho_1<\rho_2\leq1$,
\begin{equation}
	\norm{u_+}_{L^\infty(\rho_1\cu_0)}\leq C(\rho_2-\rho_1)^{-\nf{2\gamma}\eta}\Theta^{\nf{(d-1)}{2\eta\theta}}\norm{u_+}_{L^\eta(\rho_2\cu_0)}\,,
	\label{e.lr.bound}
\end{equation}
and the right side is finite when~$\rho_2<1$.
\end{corollaryA}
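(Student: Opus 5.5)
The plan is the standard interpolation-and-iteration argument that turns an $L^\infty$–$L^2$ bound into an $L^\infty$–$L^\eta$ bound. We only need to handle $\eta<2$ and $\rho_2<1$; when $\rho_2=1$ we apply the result with $\rho_2'<1$ and let $\rho_2'\uparrow1$, using monotone convergence on the right side.

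Fix $\nf12\le\rho_1<\rho_2<1$ and set $M(\rho)\coloneqq\norm{u_+}_{L^\infty(\rho\cu_0)}$. For $\rho_1\le\rho<\rho'\le\rho_2$, Theorem~\ref{t.cg.local.boundedness} gives
\begin{equation*}
M(\rho)\le C(\rho'-\rho)^{-\gamma}\Theta^{\nf{(d-1)}{4\theta}}\norm{u_+}_{L^2(\rho'\cu_0)}\,.
\end{equation*}
We then interpolate the $L^2$ norm between $L^\eta$ and $L^\infty$:
\begin{equation*}
\norm{u_+}_{L^2(\rho'\cu_0)}\le M(\rho')^{1-\nf\eta2}\norm{u_+}_{L^\eta(\rho_2\cu_0)}^{\nf\eta2}\,.
\end{equation*}
Young's inequality with exponents $\nf2{(2-\eta)}$ and $\nf2\eta$ then yields
\begin{equation*}
M(\rho)\le\tfrac12M(\rho')+C\bigl((\rho'-\rho)^{-\gamma}\Theta^{\nf{(d-1)}{4\theta}}\bigr)^{\nf2\eta}\norm{u_+}_{L^\eta(\rho_2\cu_0)}\,.
\end{equation*}
The exponent works out as $\bigl(\Theta^{\nf{(d-1)}{4\theta}}\bigr)^{\nf2\eta}=\Theta^{\nf{(d-1)}{2\eta\theta}}$ and $(\rho'-\rho)^{-\nf{2\gamma}\eta}$, which are exactly the quantities in~\eqref{e.lr.bound}.

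To absorb the $\tfrac12M(\rho')$ term we use the standard iteration lemma. If $f$ is bounded on $[\rho_1,\rho_2]$ and satisfies $f(\rho)\le\tfrac12f(\rho')+A(\rho'-\rho)^{-\kappa}$ for all $\rho<\rho'$, then $f(\rho_1)\le C(\kappa)A(\rho_2-\rho_1)^{-\kappa}$. The proof takes the geometric sequence $\rho_{(j)}=\rho_1+(\rho_2-\rho_1)(1-\tau^j)$ with $\tau^{\kappa}>\nf12$ and sums the resulting series. Applying this with $\kappa=\nf{2\gamma}\eta$ gives~\eqref{e.lr.bound}, and $C$ depends only on $\eta,d,p,q,s,t$.

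The main obstacle is the boundedness hypothesis of the iteration lemma: we need $M(\rho_2)<\infty$ a priori. This is where $\rho_2<1$ is used. Theorem~\ref{t.cg.local.boundedness} applied between $\rho_2$ and $1$ bounds $M(\rho_2)$ by $\norm{u_+}_{L^2(\rho\cu_0)}$ for any $\rho_2<\rho<1$, and the theorem asserts that this norm is finite. So $M$ is finite on $[\rho_1,\rho_2]$, the iteration is legitimate, and the bound is finite. Finiteness of the right side of~\eqref{e.lr.bound} follows from $u\in L^r(\cu_0)$ when $\eta\le r$, and from the finite $L^\infty(\rho_2\cu_0)$ bound just obtained in general.
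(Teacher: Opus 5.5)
Your proposal is correct and follows essentially the same route as the paper. You interpolate $\norm{u_+}_{L^2(\rho'\cu_0)}$ between $M(\rho')$ and $\norm{u_+}_{L^\eta(\rho_2\cu_0)}$, absorb the result into $\tfrac12M(\rho')$ by Young's inequality, and iterate along geometrically shrinking gaps. The paper's ratio $2^{-\eta/(4\gamma)}$ is exactly your condition $\tau^\kappa>\nf12$, and the a priori finiteness of $M(\rho_2)$ for $\rho_2<1$ and the limit $\rho_2\uparrow1$ are handled the same way.
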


The same examples show that the power~$\nf{(d-1)}{(2\eta\theta)}$ in~\eqref{e.lr.bound} is optimal for every fixed~$0<\eta<2$.

\paragraph{Harnack inequalities.} A supersolution is a function~$u$ with~$-u\in\Csub(\cu_0)$. We derive the weak Harnack inequality for nonnegative supersolutions
and the Harnack inequality for nonnegative solutions in terms of
the coarse ellipticity constants.

\begin{theoremA}[Weak Harnack and Harnack inequalities]
\label{t.harnack}
Under the hypotheses of Theorem~\ref{t.cg.local.boundedness}, let~$r^*\coloneqq\nf{dr}{(d-(1-t)r)}$, the Sobolev exponent of~$W^{1-t,r}$. For every~$0<\eta\leq\nf{r^*}2$ there is~$C<\infty$, depending only on~$\eta$ and~$d,p,q,s,t$, such that every nonnegative supersolution~$u$ satisfies
\begin{equation}
	\biggl(\fint_{\frac58\cu_0}u^\eta\biggr)^{\nf1\eta}\leq\exp\bigl(C\sqrt\Theta\bigr)\essinf_{\frac12\cu_0}u\,,
	\label{e.weak.harnack}
\end{equation}
and there is~$C<\infty$, depending only on~$d,p,q,s,t$, such that every nonnegative solution~$u\in\Csol(\cu_0)$ satisfies
\begin{equation}
	\esssup_{\frac12\cu_0}u\leq\exp\bigl(C\sqrt\Theta\bigr)\essinf_{\frac12\cu_0}u\,.
	\label{e.harnack}
\end{equation}
\end{theoremA}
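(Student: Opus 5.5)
We follow the Moser scheme as organized in~\cite{AADKM}, with Theorem~\ref{t.cg.local.boundedness} and Corollary~\ref{c.lr.bound} supplying the upper half and a logarithmic estimate plus a crossover lemma supplying the lower half. The Harnack inequality~\eqref{e.harnack} follows at once from the weak Harnack inequality~\eqref{e.weak.harnack} with a fixed small~$\eta$ (say~$\eta=1$): Corollary~\ref{c.lr.bound} gives~$\esssup_{\frac12\cu_0}u\le C\Theta^{(d-1)/(2\eta\theta)}\|u\|_{L^\eta(\frac58\cu_0)}$. The polynomial factor is absorbed into~$\exp(C\sqrt\Theta)$ after enlarging~$C$, using~$\Theta\ge1$. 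A rescaling or covering step may be needed to pass between the cubes~$\frac12\cu_0$ and~$\frac58\cu_0$. Since the coarse ellipticity constants are not scale invariant, it is cleaner to state Corollary~\ref{c.lr.bound} directly with~$\rho_1=\frac12$ and~$\rho_2=\frac58$, which is allowed.

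For~\eqref{e.weak.harnack}, set~$v=u+\delta$. We first prove the weak Harnack inequality for small~$\eta$ in three steps.

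\emph{(i) Lower bound for negative powers.} Apply Corollary~\ref{c.lr.bound} to the subsolution~$v^{-1}$; for a supersolution~$v>0$, the function~$v^{-1}$ is a subsolution by convexity. Here one must check that~$v^{-1}$ lies in~$\Csub$, which should be recorded in Section~\ref{s.weighted.spaces}. This gives~$\essinf_{\frac12\cu_0} v\ge \bigl(C\Theta^{c}\bigr)^{-1}\bigl(\fint v^{-\eta}\bigr)^{-1/\eta}$.

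\emph{(ii) Logarithmic estimate.} Test the supersolution inequality with~$\varphi^2 v^{-1}$. This shows that~$w=\log v$ satisfies~$\int\varphi^2\nabla w\cdot\a\nabla w\le 4\int \nabla\varphi\cdot\a\nabla\varphi$. The crucial point is to choose~$\varphi$ as the cutoff built in Section~\ref{ss.intro.ideas}, whose energy is controlled by the upper matrices~$\a(\triangle)$ and hence by~$\Lambda_{s,1,p}$. The mean-gradient bound through~$\a_*^{-1}(\triangle)$ and~$\lambda_{t,1,q}$ then converts this energy bound into a fractional Sobolev bound: on each cube~$Q\subset\frac58\cu_0$, the oscillation of~$w$ in~$W^{1-t,r}$ is at most~$C\sqrt{\Theta}$. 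By the Sobolev embedding this yields~$\|w-(w)_Q\|_{L^{r^*}}\le C\sqrt\Theta$.

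\emph{(iii) Crossover.} From the~$L^{r^*}$ bound, or better a BMO/John--Nirenberg type bound on subcubes, we get~$\fint e^{\eta_0 |w-(w)|/\sqrt\Theta}\le C$. Choosing~$\eta=\eta_0/\sqrt\Theta$ gives~$\fint v^{\eta}\fint v^{-\eta}\le C$. Combining with (i) and converting exponents~$\eta\sim\Theta^{-1/2}$ via Corollary~\ref{c.lr.bound} produces a factor~$(C\Theta^{c})^{C\sqrt\Theta}$. This is too large, since it equals~$\exp(C\sqrt\Theta\log\Theta)$. To avoid the logarithm, instead apply the Bombieri--Giusti lemma to~$v^{-1}$ and to~$v$. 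Its two inputs are the reverse Hölder-type bounds from (i), which have polynomial constants in~$\Theta$, and the measure estimate~$|\{|w-c|>\lambda\}|\le C\sqrt\Theta/\lambda$ from (ii). The lemma then gives constants~$\exp(C\sqrt\Theta)$, provided the polynomial factors enter only through their logarithm. This is the standard outcome and matches the claim.

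Finally, to raise~$\eta$ to the endpoint~$r^*/2$, we would run a Moser iteration on positive powers~$v^{\beta}$ with~$0<\beta<1$, again using the simplicial cutoffs and the~$W^{1-t,r}$-Sobolev gain from~$\lambda_{t,1,q}$. Each step improves the integrability by the factor~$r^*/r$. Finitely many steps, depending on~$\eta$, reach any~$\eta<r^*/2$. The endpoint~$r^*/2$ comes from testing with~$v^{-1/2}$-type powers: the gradient of~$v^{1/2}$ lies in the weighted~$L^2$ space, and hence the averaged~$W^{1-t,r}$ bound puts~$v^{1/2}$ in~$L^{r^*}$. The main obstacle is step (ii) together with this last step. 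Both require the cutoff-based test functions to interact with nonlinear functions of supersolutions (logs and powers) while the coefficient enters only through harmonic replacements on simplices. We would first try to reuse verbatim the construction from the proof of Theorem~\ref{t.cg.local.boundedness}, since the quantity~$\int\nabla\varphi\cdot\a\nabla\varphi$ there is independent of the function being tested.
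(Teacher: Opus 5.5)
Your reduction of \eqref{e.harnack} to a weak Harnack inequality at a fixed small exponent, via Corollary~\ref{c.lr.bound} between $\nf12$ and $\nf58$, matches the paper, which uses $\eta=\nf r4$. Note that $\eta=1$ can exceed $\nf{r^*}2$, for instance when $d=3$ and $q$ is close to one. Your logarithmic estimate~(ii) is also sound. The test $\varphi^2v^{-1}$ produces no weight, and a cutoff that is piecewise affine on some $\mathscr T_k$ and $\a$-harmonic on each simplex has energy at most $C\Lambda_{s,1,p}(\cu_0)$, as in Step~5 of the proof of Lemma~\ref{l.source.mass}.

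The gap is in~(iii). The only reverse H\"older inequalities you have are those of Corollary~\ref{c.lr.bound} for $v^{-1}$, with constant $C_\eta\Theta^{(d-1)/(2\eta\theta)}$. You have none for positive powers of $v$, since that corollary concerns subsolutions. Lemma~\ref{l.bombieri} does not make such constants enter only through their logarithm. Its conclusion is $\exp(CA_1A_2^6)$, and no version can beat $\exp(cA_1A_2)$. Indeed, a function equal to $e^{A_1A_2}$ on a finely distributed set of density $A_2^{-1}$ and to one elsewhere satisfies both hypotheses up to constants, and its integral is of order $e^{A_1A_2}/A_2$. With $A_1\sim\sqrt\Theta$ and $A_2$ a power of $\Theta$ you get at best $\exp(c\,\Theta^{\gamma})$ with $\gamma>\nf12$. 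Normalizing to $v^{-p}$ with $p\sim\Theta^{-1/2}$, so that $A_1\lesssim1$, only turns $A_2$ into $\Theta^{C\sqrt\Theta}$.

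What is missing is a reverse H\"older inequality for small powers whose $\Theta$-dependence disappears as the power tends to zero. In the paper this is Proposition~\ref{p.power.caccioppoli}. You test \eqref{e.power.testing} with the function equal to $w^m$ inside a cube with a good radius and to the piecewise harmonic extension of its trace outside. It is not a fixed cutoff times $w^m$: the weighted term $\int w^{2m}\nabla\varphi\cdot\a\nabla\varphi$ is not controlled by the coarse ellipticity constants, contrary to your last sentence. The extra term in \eqref{e.power.testing} makes the exterior pairing enter with $c_m=\nf{|m|}{(1-2m)}$, so the constant is $\Lambda_{s,1,p}(\cu_0)c_m^2(1+c_m^2\Theta)^{(\sigma+\sigma_*-t)/\theta}$. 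For $|m|\leq\nf{p_*}r$ with $p_*=\nf c{\sqrt\Theta}$ one has $c_m^2\Theta\leq1$. Hence $w^{\pm p_*}$ satisfy reverse H\"older inequalities with $\Theta$-free constants (Lemma~\ref{l.moment.iterations}). Bombieri's lemma with $A_1\leq1$ then gives $\fint w^{p_*}\fint w^{-p_*}\leq C$. The iterations to the infimum and to $L^{\nf r4}$ cost only $e^{C/p_*}$, because the factor at step $j$ is at most $(2\chi^{2j})^{(1-t)r/(2\theta)}$, whose logarithm is summable against $\chi^{-j}$. The John--Nirenberg alternative you mention is unavailable: the coarse ellipticity constants give no control on small subcubes, which is why the paper uses Bombieri's lemma.

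The endpoint argument also fails. A bound on the energy of $v^{\nf12}$ is the case $m=\nf12$ of \eqref{e.power.testing}, where the coefficient $1-2m$ of the energy vanishes, and no such bound holds. For $\a=I$ and the supersolutions $u_\eps=\min\{|x|^{2-d},\eps^{2-d}\}$ one has $\Ener_{\frac12\cu_0}(u_\eps^{\nf12})\asymp\log(\nf1\eps)$, while $\essinf_{\frac12\cu_0}u_\eps$ does not depend on $\eps$. Once Proposition~\ref{p.power.caccioppoli} is available, positive Moser iteration reaches each $\eta<\nf{r^*}2$ with an $\eta$-dependent constant, but not $\eta=\nf{r^*}2$.

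The paper reaches the endpoint by a different mechanism. It writes $u=V+(u-V)$, where $V$ is the zero-boundary potential of the restriction $\nu$ of $-\nabla\cdot\a\nabla u$ to $\frac34\cu_0$. Three ingredients control $V$:
\begin{itemize}
\item the truncation bound $\Ener_{\cu_0}(V\wedge K)\leq K\nu(\cu_0)$;
\item the scale decomposition of Lemma~\ref{l.dirichlet.reconstruction};
\item the summability over $k$ of the scale terms of $\lambda_{t,1,q}(\cu_0)$.
\end{itemize}
Together they give the strong-type bound of Proposition~\ref{p.endpoint.potential}; a bound on each scale term alone would give only a weak-type estimate. The capacitary argument of Lemma~\ref{l.source.mass} bounds $\nu(\cu_0)$ by $C\Lambda_{s,1,p}(\cu_0)\exp(C\sqrt\Theta)\essinf_{\frac12\cu_0}u$. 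Finally, $u-V$ is a nonnegative solution in $\frac34\cu_0$ and is handled by the Harnack inequality.
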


The range~$0<\eta\leq\nf{r^*}2$ of exponents in~\eqref{e.weak.harnack} does not depend on~$\Theta$. For each~$\eta$, the constant is~$\exp(C\sqrt\Theta)$, with~$C$ depending on~$\eta$ but not on~$\Theta$. The endpoint~$\eta=\nf{r^*}2$ uses the summability of the scale contributions to~$\lambda_{t,1,q}(\cu_0)$ (Section~\ref{ss.harnack.endpoint}). For each fixed~$d,p,q,s,t$, Proposition~\ref{p.sharpness.weak.harnack} gives a radial coefficient field satisfying the hypotheses for which~\eqref{e.weak.harnack} fails for every~$\eta>\nf{r^*}2$.

For uniformly elliptic coefficients the hypotheses hold for every~$1<p,q<\infty$ and~$s,t>0$ satisfying~$\theta>0$. The union of the ranges~$0<\eta\leq\nf{r^*}2$ over these parameters is~$0<\eta<\nf d{(d-2)}$, the range of the weak Harnack inequality of Moser and Trudinger~\cite{Moser2,Trudinger.1967}. The fundamental solution~$|x|^{2-d}$ shows that this range is sharp. Since the condition is open and~$t>0$, the classical endpoint~$\nf d{(d-2)}$ is not reached.

Trudinger~\cite{Trudinger,Trudinger.1973} carried Moser's method over to~$\a\in L^p$ and~$\a^{-1}\in L^q$ and proved the weak Harnack inequality for~$\nf1p+\nf1q<\nf2d$. Bella and Sch\"affner~\cite{Bella.Schaffner.regularity} proved it in the range~\eqref{e.intro.bs.range} for every exponent below the value of~$\nf{r^*}2$ at~$t=0$. Theorem~\ref{t.harnack} reaches the same exponents through Theorem~\ref{t.coefficient.conditions}\ref{c.classical.moments}, which allows every~$t>0$.

The growth of the Harnack constant in~$\Theta$ cannot be improved, even for constant coefficients. For~$\Theta\geq1$, the example of~\cite{AADKM} takes the constant coefficient field and positive solution
\begin{equation*}
	\a\coloneqq\operatorname{diag}(1,\Theta,1,\ldots,1)\qquad\mbox{and}\qquad u(x)\coloneqq\exp(\sqrt\Theta\,x_1)\cos x_2\,,
\end{equation*}
whose coarse ellipticity ratio is~$\Theta$. For this solution,
\begin{equation*}
	\esssup_{\frac12\cu_0}u\geq\exp(\nf{\sqrt\Theta}2)\essinf_{\frac12\cu_0}u\,.
\end{equation*}

For uniformly elliptic coefficients the constant~$\exp(C\sqrt\Theta)$ is that of Bombieri and Giusti~\cite{Bombieri.Giusti}. In dimension~$d\geq3$, Guerra~\cite{Guerra} constructed, for measurable coefficients with eigenvalues between~$K^{-1}$ and~$K$, solutions whose H\"older exponent is only~$\exp(-cK)$; up to the value of~$c$, this is the exponent that follows from the Harnack inequality of Bombieri and Giusti. Such coefficients have~$\Theta\leq K^2$ on every cube, so a Harnack constant~$\exp(C\Theta^\gamma)$ with~$\gamma<\nf12$ would contradict his examples.

\paragraph{Conditions on the coefficients.} Averages of~$\a$ and~$\a^{-1}$ dominate~$\a(\triangle)$ and~$\a_*^{-1}(\triangle)$ on every simplex, so classical moments of the coefficients control the coarse ellipticity constants with constant one. Only the averages of~$\a$ and~$\a^{-1}$ over the simplices enter this comparison, so integrability can be replaced by negative regularity, which we measure by a Besov-type quasi-norm built from averages over triadic cubes, as in~\cite{AK.HC,AADKM}. Recall that~$\mathscr T_k$ is the triangulation of~$\cu_0$ obtained by splitting each triadic subcube of side length~$3^{-k}$, that is, each cube~$z+\cu_{-k}$ with~$z\in3^{-k}\Zd\cap\cu_0$, into its~$d!$ coordinate-order simplices; the precise definition is~\eqref{e.simplex.family}. For~$s>0$,~$1\leq p\leq\infty$ and a matrix field~$\mathbf b$ with~$|\mathbf b|\in L^1(\cu_0)$, let
\begin{equation}
	\norm{\mathbf b}_{\mathring{\underline B}^{-2s}_{p,\nf12}(\cu_0)}\coloneqq\biggl(\sum_{k\geq0}3^{-ks}\biggl(\avsum_{z\in3^{-k}\Zd\cap\cu_0}|(\mathbf b)_{z+\cu_{-k}}|^p\biggr)^{\nf1{2p}}\biggr)^2\,.
	\label{e.negative.regularity.norm}
\end{equation}
For~$p=\infty$ the mean over~$z$ is replaced by the maximum. Here and below, the~$L^p$ norm of a matrix field is that of its operator norm. The expression in~\eqref{e.negative.regularity.norm} is a Besov-type quasi-norm based on cube averages, with order~$-2s$ and fine index~$\nf12$, as in~\cite[Section~2.4]{AK.HC} and~\cite[Section~2.2]{AADKM}. It discounts finer scales and sums square roots, as does~$\Lambda_{s,1,p}(\cu_0)$. For positive semidefinite fields, such as~$\a$ and~$\a^{-1}$, it is comparable, up to factors depending only on~$d$,~$p$ and~$s$, to the Besov quasi-norm~$B^{-2s}_{p,\nf12}(\Rd)$ of the extension of~$\mathbf b$ by zero. Proposition~\ref{p.besov.averages} and the heat characterization of~\cite[Definition~6.1 and Theorem~6.7]{Kerkyacharian.Petrushev} give this comparison in Appendix~\ref{a.besov}.

\begin{theoremA}[Conditions on the coefficients]
\label{t.coefficient.conditions}
Let~$1<p,q\leq\infty$ and~$s,t>0$.
\begin{enumerate}[label=\textup{(\roman*)},wide]
\item\label{c.classical.moments} If~$|\a|\in L^p(\cu_0)$ and~$|\a^{-1}|\in L^q(\cu_0)$, where~$|\cdot|$ is the operator norm, then
\begin{equation}
	\Lambda_{s,1,p}(\cu_0)\leq\norm{\a}_{L^p(\cu_0)}\qand \lambda_{t,1,q}(\cu_0)^{-1}\leq\norm{\a^{-1}}_{L^q(\cu_0)}\,,
	\label{e.classical.series}
\end{equation}
and hence~$\Theta\leq\norm{\a}_{L^p(\cu_0)}\norm{\a^{-1}}_{L^q(\cu_0)}$.
\item\label{c.negative.regularity} If~$\norm{\a}_{\mathring{\underline B}^{-2s}_{p,\nf12}(\cu_0)}$ and~$\norm{\a^{-1}}_{\mathring{\underline B}^{-2t}_{q,\nf12}(\cu_0)}$ are finite, then
\begin{equation}
	\Lambda_{s,1,p}(\cu_0)\leq d!\,(1-3^{-s})^2\norm{\a}_{\mathring{\underline B}^{-2s}_{p,\nf12}(\cu_0)}\qand\lambda_{t,1,q}(\cu_0)^{-1}\leq d!\,(1-3^{-t})^2\norm{\a^{-1}}_{\mathring{\underline B}^{-2t}_{q,\nf12}(\cu_0)}\,,
	\label{e.negative.regularity.series}
\end{equation}
and hence~$\Theta\leq(d!)^2\norm{\a}_{\mathring{\underline B}^{-2s}_{p,\nf12}(\cu_0)}\norm{\a^{-1}}_{\mathring{\underline B}^{-2t}_{q,\nf12}(\cu_0)}$.
\item\label{c.negative.sobolev} If~$0<\eps\leq2\min\{s,t\}$ and~$\norm{\a}_{W^{-2s+\eps,p}(\cu_0)}$ and~$\norm{\a^{-1}}_{W^{-2t+\eps,q}(\cu_0)}$ are finite, where these are the norms~\eqref{e.negative.sobolev.norm}, dual to fractional Sobolev norms and tested on bounded functions, then there is~$C<\infty$, depending only on~$d,p,q,s,t,\eps$, such that
\begin{equation}
	\Lambda_{s,1,p}(\cu_0)\leq C\norm{\a}_{W^{-2s+\eps,p}(\cu_0)}\qand\lambda_{t,1,q}(\cu_0)^{-1}\leq C\norm{\a^{-1}}_{W^{-2t+\eps,q}(\cu_0)}\,,
	\label{e.negative.sobolev.series}
\end{equation}
and hence~$\Theta\leq C^2\norm{\a}_{W^{-2s+\eps,p}(\cu_0)}\norm{\a^{-1}}_{W^{-2t+\eps,q}(\cu_0)}$.
\end{enumerate}
\end{theoremA}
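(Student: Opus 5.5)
The plan is to reduce all three parts to one pointwise comparison on each simplex and then sum. The comparison is
\begin{equation*}
	\a(\triangle)\leq\fint_\triangle\a \qand \a_*^{-1}(\triangle)\leq\fint_\triangle\a^{-1}
\end{equation*}
in the sense of quadratic forms. The first inequality follows by testing~\eqref{e.upper.matrix.definition} with~$w=e\cdot x$. The second follows from~\eqref{e.lower.matrix.definition}: for any~$w$,
\begin{equation*}
	-\nabla w\cdot\a\nabla w+2e\cdot\nabla w\leq e\cdot\a^{-1}e
\end{equation*}
pointwise, by completing the square. Both are recorded among the properties in Section~\ref{s.harmonic.matrices}. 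Taking operator norms gives
\begin{equation*}
	|\a(\triangle)|\leq\Bigl|\fint_\triangle\a\Bigr|\leq\fint_\triangle|\a| \qand |\a_*^{-1}(\triangle)|\leq\Bigl|\fint_\triangle\a^{-1}\Bigr|\leq\fint_\triangle|\a^{-1}|.
\end{equation*}

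\textbf{Part (i).} Use the last bound together with Jensen's inequality. Since the simplices of~$\mathscr T_k$ all have the same volume, the arithmetic mean over~$\mathscr T_k$ of~$(\fint_\triangle|\a|)^p$ is at most~$\fint_{\cu_0}|\a|^p=\norm{\a}_{L^p}^p$. The~$k$-th term is therefore at most~$\norm{\a}_{L^p}^{1/2}$. Summing gives
\begin{equation*}
	(1-3^{-s})\sum_k3^{-ks}\norm{\a}_{L^p}^{1/2}=\norm{\a}_{L^p}^{1/2},
\end{equation*}
and squaring yields~\eqref{e.classical.series}. The lower constant is handled identically, and the bound on~$\Theta$ is the product. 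For~$p=\infty$, replace means by maxima.

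\textbf{Part (ii).} Now keep the matrix average~$\fint_\triangle\a$ rather than the average of the norm. Since~$\a\geq0$ and~$\triangle\subset z+\cu_{-k}$, with~$|z+\cu_{-k}|=d!\,|\triangle|$, we have
\begin{equation*}
	\fint_\triangle\a\leq d!\,(\a)_{z+\cu_{-k}}
\end{equation*}
as forms. Each cube contains exactly~$d!$ simplices, so the simplex mean of~$|\a(\triangle)|^p$ is at most~$(d!)^p$ times the cube mean of~$|(\a)_{z+\cu_{-k}}|^p$. Taking~$1/(2p)$ powers gives a factor~$(d!)^{1/2}$ per scale. Summing with the weight~$(1-3^{-s})$ and squaring gives the factor~$d!\,(1-3^{-s})^2$ in front of~\eqref{e.negative.regularity.norm}. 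The lower constant is treated the same way.

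\textbf{Part (iii).} This is where I expect the main work. The plan is to show, for~$0<\eps\leq2s$, that
\begin{equation*}
	\norm{\a}_{\mathring{\underline B}^{-2s}_{p,\nf12}}\leq C\norm{\a}_{W^{-2s+\eps,p}}
\end{equation*}
and then apply (ii). For each~$k$, set~$f_k\coloneqq\sum_z|(\a)_{z+\cu_{-k}}|^{p-1}\indc_{z+\cu_{-k}}\,M_z$, where~$M_z$ is a unit-norm matrix realizing the operator norm of~$(\a)_{z+\cu_{-k}}$ (for positive semidefinite matrices one may take~$M_z=e_ze_z^{T}$). Then
\begin{equation*}
	\sum_z|z+\cu_{-k}|\,|(\a)_{z+\cu_{-k}}|^p=\int\a:f_k.
\end{equation*}
The difficulty is that~$f_k$ is piecewise constant and so is not in~$W^{\alpha,p'}$ for~$\alpha\geq1/p'$. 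I would replace the indicators by a smooth partition adapted to the cubes. Positivity of~$\a$ lets one dominate the cube average by the average against a bump~$\psi_z\geq\indc_{z+\cu_{-k}}$ supported in the enlarged cube~$z+3\cu_{-k}$, at the cost of a constant; this keeps everything bounded, and the norm~\eqref{e.negative.sobolev.norm} tests on bounded functions. With~$\alpha=2s-\eps$, the resulting test function satisfies
\begin{equation*}
	\norm{f_k}_{W^{\alpha,p'}}\lesssim3^{k\alpha}\Bigl(\avsum_z|(\a)_z|^{p}\Bigr)^{1/p'}
\end{equation*}
by scaling and finite overlap. Dividing out gives
\begin{equation*}
	\Bigl(\avsum_z|(\a)_{z+\cu_{-k}}|^p\Bigr)^{1/p}\lesssim3^{k(2s-\eps)}\norm{\a}_{W^{-2s+\eps,p}}.
\end{equation*}
Taking square roots and multiplying by~$3^{-ks}$ leaves~$3^{-k\eps/2}$, which is summable, with constant~$C(\eps,s)$. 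The delicate points are the near-boundary cubes, where the bumps must be cut to~$\cu_0$. Because the norm is dual to~$W^{\alpha,p'}(\cu_0)$ rather than~$W_0^{\alpha,p'}$, no vanishing at the boundary is needed, which is why the paper stresses that choice. The case~$p=\infty$ is handled with the maximum.
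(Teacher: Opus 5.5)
Your parts (i) and (ii) are the paper's argument. For (i) you use the simplex bounds \eqref{e.upper.norm.bound} and \eqref{e.lower.norm.bound}, Jensen's inequality over equal-volume simplices, and scale weights summing to one. For (ii) you use $(\a)_\triangle\leq d!\,(\a)_{z+\cu_{-k}}$ as quadratic forms.

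Part (iii) is correct but takes a different route from the paper.
\begin{itemize}
\item \emph{The paper's route} (Lemma~\ref{l.negative.sobolev}). It tests with Gaussian averages $G_{3^{-2k}}*g$ for arbitrary $g\in L^{p'}(\Rd)$ and shows their $W^{2s-\eps,p'}(\cu_0)$ norms are at most $C3^{k(2s-\eps)}\norm{g}_{L^{p'}(\Rd)}$. Duality then gives an $L^p(\Rd)$ bound on $G_{3^{-2k}}$ convolved with the zero extension of $\tr\a$. Only then does it return to cube averages, through the positivity comparison of Proposition~\ref{p.besov.averages} and $|M|\leq\tr M$.
\item \emph{Your route.} You write down the near-extremal dual element directly: $\sum_z|(\a)_{z+\cu_{-k}}|^{p-1}\psi_z$, with smooth bumps $\psi_z\geq\indc_{z+\cu_{-k}}$ of bounded overlap. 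Positivity is used once, to bound the cube averages by the bump pairings. A scaling and interpolation estimate bounds its $W^{2s-\eps,p'}$ norm by $C3^{k(2s-\eps)}\bigl(\avsum_z|(\a)_{z+\cu_{-k}}|^p\bigr)^{1/p'}$. This checks out, with a single bump when $p=\infty$. Restriction to $\cu_0$ does not increase the norm, so the boundary cubes need no special treatment.
\end{itemize}
Your argument is more elementary and self-contained. The paper's reuses the Gaussian comparison that also identifies \eqref{e.negative.regularity.norm} with the Besov quasi-norm $B^{-2s}_{p,\nf12}(\Rd)$, so one appendix result serves two purposes. Both arguments avoid the restriction $2s-\eps<1-1/p$ that step-function tests would impose, by pairing smooth test functions with positivity.

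One small repair is needed. The norm \eqref{e.negative.sobolev.norm} tests with scalar $g$ and takes the operator norm of the matrix $\int g\,\a$. A matrix-valued pairing $\int\a:f_k$ with $z$-dependent $e_ze_z^{T}$ is therefore not directly of that form. There are two ways to fix this.
\begin{itemize}
\item Split it into the $d^2$ scalar entries $(f_k)_{ij}$. Use that $\bigl|\int(f_k)_{ij}a_{ij}\bigr|$ is an entry of the matrix $\int(f_k)_{ij}\,\a$, hence at most its operator norm.
\item More simply, replace $e_ze_z^{T}$ by the identity and work with the scalar $\tr\a\geq0$, as the paper does. This uses $|M|\leq\tr M$ for positive semidefinite $M$ and $\bigl|\int g\,\tr\a\bigr|\leq d\,\bigl|\int g\,\a\bigr|$.
\end{itemize}
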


Under the hypotheses of each part of Theorem~\ref{t.coefficient.conditions},~$\Lambda_{s,1,p}(\cu_0)<\infty$ and~$\lambda_{t,1,q}(\cu_0)>0$. Hence Theorems~\ref{t.cg.local.boundedness} and~\ref{t.harnack} and Corollary~\ref{c.lr.bound} hold, with the stated bound on~$\Theta$, whenever~$d\geq3$ and the indices can be chosen with
\begin{equation}
	\theta\coloneqq1-s-t-\frac{d-1}{2}\biggl(\frac1p+\frac1q\biggr)>0\,.
	\label{e.negative.regularity.range}
\end{equation}
At each scale, a finite~$L^p$ mean over~$\mathscr T_k$ is at most the maximum. Thus~$\Lambda_{s,1,p}(\cu_0)$ is bounded above by its~$p=\infty$ version, while~$\lambda_{t,1,q}(\cu_0)$ is bounded below by its~$q=\infty$ version. The strict condition~\eqref{e.negative.regularity.range} then permits nearby finite exponents. For part~\ref{c.classical.moments}, whose bounds hold for all~$s,t>0$, indices with~\eqref{e.negative.regularity.range} exist exactly when
\begin{equation}
	\frac1p+\frac1q<\frac{2}{d-1}\,,
	\label{e.classical.moment.range}
\end{equation}
since~$\theta<1-\frac{d-1}2(\nf1p+\nf1q)$ for all~$s,t>0$. Conversely, under~\eqref{e.classical.moment.range}, choose
\begin{equation*}
	s=t=\frac13\biggl(1-\frac{d-1}2\Bigl(\frac1p+\frac1q\Bigr)\biggr)\,,\qquad \theta=s>0\,.
\end{equation*}
With this choice the constants depend only on~$d$,~$p$ and~$q$ (and on~$\eta$ in Corollary~\ref{c.lr.bound} and in~\eqref{e.weak.harnack}), and the Harnack constant of Theorem~\ref{t.harnack} is~$\exp(C(\norm{\a}_{L^p(\cu_0)}\norm{\a^{-1}}_{L^q(\cu_0)})^{\nf12})$.

\smallskip

For part~\ref{c.negative.regularity} the condition~\eqref{e.negative.regularity.range} applies to the indices of the quasi-norms themselves, since the quasi-norms use the same scale discounts as the coarse ellipticity constants and likewise sum square roots over the scales; this is why the fine index is~$\nf12$.

\smallskip

For finite~$p$ and~$q$, part~\ref{c.negative.sobolev} gives Corollary~\ref{c.sobolev.coefficients}, proved in Section~\ref{s.classical.moments}. Compared with the negative Sobolev criterion of~\cite{AADKM}, the corollary replaces~$d$ by~$d-1$ and removes the restrictions~$\alpha<1-\nf1p$ and~$\beta<1-\nf1q$.

\begin{corollaryA}[Coefficients in negative Sobolev spaces]
\label{c.sobolev.coefficients}
Let~$d\geq3$,~$1<p,q<\infty$ and~$\alpha,\beta\geq0$, and set
\begin{equation*}
	\theta\coloneqq1-\frac{\alpha+\beta}2-\frac{d-1}2\Bigl(\frac1p+\frac1q\Bigr)\,.
\end{equation*}
If~$\theta>0$ and~$\a\in W^{-\alpha,p}(\cu_0)\cap L^1(\cu_0)$ and~$\a^{-1}\in W^{-\beta,q}(\cu_0)\cap L^1(\cu_0)$, with the norms~\eqref{e.negative.sobolev.norm}, then there are~$C<\infty$ and~$\gamma>0$, depending only on~$d,p,q,\alpha,\beta$, such that every subsolution~$u\in\Csub(\cu_0)$ satisfies, for~$\nf12\leq\rho_1<\rho_2\leq1$,
\begin{equation}
	\norm{u_+}_{L^\infty(\rho_1\cu_0)}\leq C(\rho_2-\rho_1)^{-\gamma}\bigl(\norm{\a}_{W^{-\alpha,p}(\cu_0)}\norm{\a^{-1}}_{W^{-\beta,q}(\cu_0)}\bigr)^{\nf{(d-1)}{2\theta}}\norm{u_+}_{L^2(\rho_2\cu_0)}\,,
	\label{e.sobolev.local.boundedness}
\end{equation}
with a finite right side when~$\rho_2<1$, and every nonnegative solution~$u\in\Csol(\cu_0)$ satisfies
\begin{equation}
	\esssup_{\frac12\cu_0}u\leq\exp\Bigl(C\bigl(\norm{\a}_{W^{-\alpha,p}(\cu_0)}\norm{\a^{-1}}_{W^{-\beta,q}(\cu_0)}\bigr)^{\nf12}\Bigr)\essinf_{\frac12\cu_0}u\,.
	\label{e.sobolev.harnack}
\end{equation}

\end{corollaryA}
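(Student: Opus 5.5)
The corollary will follow by combining Theorem~\ref{t.coefficient.conditions}\ref{c.negative.sobolev} with Theorems~\ref{t.cg.local.boundedness} and~\ref{t.harnack}, after choosing~$s,t,\eps$ in terms of~$\alpha,\beta$. Given~$\alpha,\beta\ge0$ with~$\theta>0$, I set~$\eps\coloneqq\theta/2$ (or any small multiple of~$\theta$) and
\begin{equation*}
	s\coloneqq\frac{\alpha+\eps}{2}\,,\qquad t\coloneqq\frac{\beta+\eps}{2}\,,
\end{equation*}
so that~$s,t>0$, $-2s+\eps=-\alpha$, $-2t+\eps=-\beta$ and~$\eps\le2\min\{s,t\}$. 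The exponent of Theorem~\ref{t.cg.local.boundedness} is then
\begin{equation*}
	1-s-t-\frac{d-1}{2}\Bigl(\frac1p+\frac1q\Bigr)=\theta-\eps=\frac\theta2>0\,.
\end{equation*}
The hypothesis~$\a\in L^1$, $\a^{-1}\in L^1$ gives the qualitative condition~$\tr\a,\tr\a^{-1}\in L^1(\cu_0)$, so the weighted spaces are defined.

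Theorem~\ref{t.coefficient.conditions}\ref{c.negative.sobolev} gives~$\Lambda_{s,1,p}(\cu_0)\le C\norm{\a}_{W^{-\alpha,p}(\cu_0)}<\infty$ and~$\lambda_{t,1,q}(\cu_0)^{-1}\le C\norm{\a^{-1}}_{W^{-\beta,q}(\cu_0)}<\infty$, with~$C$ depending on~$d,p,q,\alpha,\beta$ once~$s,t,\eps$ are fixed as above. Hence
\begin{equation*}
	\Theta\le C\norm{\a}_{W^{-\alpha,p}(\cu_0)}\norm{\a^{-1}}_{W^{-\beta,q}(\cu_0)}\,.
\end{equation*}
Feeding this into~\eqref{e.harnack} yields~\eqref{e.sobolev.harnack} directly, with~$C$ adjusted. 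For local boundedness,~\eqref{e.cg.local.boundedness} gives the power~$\nf{(d-1)}{(4(\theta/2))}=\nf{(d-1)}{(2\theta)}$ of~$\Theta$, where~$\theta$ is now the corollary's~$\theta$. This is exactly the exponent in~\eqref{e.sobolev.local.boundedness}, and the constants~$C$ and~$\gamma$ depend only on~$d,p,q,\alpha,\beta$.

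The one point to verify is that the factor~$\Theta^{\nf{(d-1)}{(2\theta)}}$ absorbs the constant~$C$ from the comparison. We have~$\Theta\ge1$, but the product of norms could be smaller than~$C^{-1}$. Since the bound only needs to hold up to a constant, we can write~$(C\,X)^{\kappa}=C^{\kappa}X^{\kappa}$ with~$X$ the product of norms and~$\kappa=\nf{(d-1)}{(2\theta)}$. The factor~$C^\kappa$ depends only on the allowed parameters, so no lower bound on~$X$ is required; monotonicity of~$x\mapsto x^\kappa$ suffices.

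The main obstacle is only bookkeeping: the choice~$\eps=\theta/2$ must keep both the constraint~$\eps\le2\min\{s,t\}$ and positivity of the Theorem~A exponent. Both hold trivially here, even when~$\alpha=0$ or~$\beta=0$. All the substantive work is contained in Theorem~\ref{t.coefficient.conditions}\ref{c.negative.sobolev}.
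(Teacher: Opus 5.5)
Your proposal is correct and follows the paper's proof: you make the same choice $\eps=\theta/2$, $s=\alpha/2+\theta/4$, $t=\beta/2+\theta/4$, apply Theorem~\ref{t.coefficient.conditions}\ref{c.negative.sobolev} to bound $\Theta$ by a constant times the product of the negative Sobolev norms, and then invoke Theorems~\ref{t.cg.local.boundedness} and~\ref{t.harnack} with exponent $\theta/2$. As you note, this turns the power $(d-1)/(4\cdot\theta/2)$ into $(d-1)/(2\theta)$, and the comparison constant is absorbed by monotonicity.
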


\paragraph{Sharpness.} For~$1<p,q<\infty$, the range of Corollary~\ref{c.sobolev.coefficients} cannot be relaxed, even at its boundary, if the negative Sobolev norms of orders~$-\alpha$ and~$-\beta$ are replaced by the quasi-norms~\eqref{e.negative.regularity.norm} of the same orders, with Lebesgue norms at a zero order. For~$\alpha>0$, the quasi-norm of order~$-\alpha$ is~\eqref{e.negative.regularity.norm} with~$s=\nf\alpha2$. In terms of Theorem~\ref{t.coefficient.conditions}, neither~\eqref{e.negative.regularity.range} for part~\ref{c.negative.regularity} nor~\eqref{e.classical.moment.range} for part~\ref{c.classical.moments} can be relaxed, even at the boundary.

\begin{theoremA}[Sharpness of the range]
\label{t.sharpness}
Let~$d\geq3$,~$1<p,q<\infty$ and~$\alpha,\beta\geq0$ satisfy
\begin{equation*}
	\theta_0\coloneqq1-\frac{\alpha+\beta}2-\frac{d-1}{2}\biggl(\frac1p+\frac1q\biggr)\leq0\,.
\end{equation*}
Writing~$x=(x_1,y)$ with~$y\in\R^{d-1}$, there are a measurable function~$a:(-\nf12,\nf12)^{d-1}\to(0,\infty)$ and a solution~$u\in\Csol(\cu_0)$ for~$\a(x)\coloneqq a(y)I$ such that~$\tr\a,\tr\a^{-1}\in L^1(\cu_0)$ and~$u\geq1$ almost everywhere. The coefficient and its inverse satisfy
\begin{equation*}
\begin{aligned}
	\norm{\a}_{\mathring{\underline B}^{-\alpha}_{p,\nf12}(\cu_0)}&<\infty\quad\text{if }\alpha>0\,, & \norm{\a}_{L^p(\cu_0)}&<\infty\quad\text{if }\alpha=0\,,\\
	\norm{\a^{-1}}_{\mathring{\underline B}^{-\beta}_{q,\nf12}(\cu_0)}&<\infty\quad\text{if }\beta>0\,, & \norm{\a^{-1}}_{L^q(\cu_0)}&<\infty\quad\text{if }\beta=0\,.
\end{aligned}
\end{equation*}
Moreover,
\begin{equation}
	\esssup_{\frac12\cu_0}u=\infty\qquad\mbox{and}\qquad 0<\essinf_{\frac12\cu_0}u<\infty\,.
	\label{e.sharpness}
\end{equation}
In fact,~$u$ is essentially unbounded in every neighborhood of every point of~$\{(x_1,0):|x_1|<\nf12\}$.
\end{theoremA}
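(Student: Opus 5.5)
The construction reduces to dimension~$d-1$ by taking the coefficient independent of~$x_1$, and the solution~$u$ to be affine in~$x_1$ times a function of~$y$, or simply independent of~$x_1$. I would look for~$u(x)=1+v(y)$, where~$v\geq0$ solves~$-\nabla_y\cdot a\nabla_y v=0$ in~$(-\nf12,\nf12)^{d-1}$ and is unbounded near~$y=0$. Then~$\a=a(y)I$ and~$-\nabla\cdot\a\nabla u=-\nabla_y\cdot a\nabla_y v=0$, so~$u$ is a solution in~$\cu_0$ with~$u\geq1$. Its essential supremum is infinite near every point~$(x_1,0)$, while its essential infimum on~$\frac12\cu_0$ is finite and positive (at least~$1$, and finite since~$v$ is bounded away from the axis). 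The analogous one-dimensional-lower problem is then a radial problem in~$\R^{d-1}$, with~$n\coloneqq d-1\geq2$.

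\textbf{Radial profile.} Take~$a(y)=A(|y|)$ and~$v(y)=V(|y|)$ radial. The equation becomes~$(\rho^{n-1}A(\rho)V'(\rho))'=0$, so~$V'(\rho)=-\rho^{1-n}/A(\rho)$. Unboundedness of~$V$ at~$0$ requires~$\int_0\rho^{1-n}A^{-1}\,d\rho=\infty$. Finite energy, needed for~$u\in H^1_\a$ and hence~$\Csol$, requires
\[
\int_0\rho^{n-1}A|V'|^2\,d\rho=\int_0\rho^{1-n}A^{-1}\,d\rho<\infty.
\]
These two requirements conflict. I would therefore instead make the coefficient oscillate: use a sequence of dyadic annuli~$\rho\sim 2^{-j}$ on which~$A$ is huge except on thin ``conducting'' shells, and tiny on thin ``insulating'' shells. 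This would be in the spirit of Bella--Sch\"affner, who take~$u$ nearly constant on most of each annulus and jumping across thin insulating shells.

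\textbf{Laminate construction.} A cleaner option is to let~$v$ be only a solution in the weak sense, with the energy~$\int\rho^{1-n}A^{-1}$ still finite: choose~$V$ piecewise, with jumps concentrated on insulating shells of width~$w_j$ at radius~$2^{-j}$ carrying~$A\approx\epsilon_j$. The energy contributed is~$\sim 2^{j(n-1)}\delta_j^2\epsilon_j/w_j$, where~$\delta_j$ is the increment of~$V$ across the shell, and~$\sum_j\delta_j=\infty$ is required. The moments are
\[
\|a^{-1}\|_{L^q}^q\gtrsim\sum_j 2^{-j(n-1)}w_j\epsilon_j^{-q},
\]
and~$\|a\|_{L^p}$ is controlled by the region where~$a$ is large. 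The flux must balance, so~$A$ being large elsewhere forces nothing except through~$\|a\|_p$; I would take~$A\approx M_j$ away from the shells, with~$M_j$ determined by flux continuity. Optimizing~$w_j,\epsilon_j,M_j,\delta_j$ with~$\delta_j\sim 1/j$, so that the series diverges only logarithmically, gives convergent norm series exactly when~$\frac1p+\frac1q\le\frac2{d-1}$, the endpoint being handled by the logarithmic choice of~$\delta_j$. For~$\alpha,\beta>0$, I would replace the~$L^p$ computation by estimating the cube averages in~\eqref{e.negative.regularity.norm}. A cube of side~$3^{-k}$ intersecting thin shells sees only the averaged mass~$w_j\epsilon_j^{-1}$, which gains the factor~$3^{-k\alpha/2}$ per scale and shifts the admissible exponents by~$\nf{(\alpha+\beta)}2$.

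\textbf{Main obstacle.} The hard step is the endpoint~$\theta_0=0$, where all series are borderline. There I would insert logarithmic corrections~$j^{-\kappa}$ into~$w_j$ and~$\epsilon_j$, and choose~$\kappa$ so that the norm series behave like~$\sum j^{-1-\delta}$ while~$\sum\delta_j$ behaves like~$\sum j^{-1}$. The remaining checks are routine: that the weak formulation holds across shells, that~$\tr\a,\tr\a^{-1}\in L^1$, and that~$u\in H^1_\a$ with~$u\geq1$.
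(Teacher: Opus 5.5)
Your reduction to an $x_1$-independent solution cannot reach the stated range, because it loses exactly one dimension. Suppose $u=1+v(y)$ is a solution for $\a=a(y)I$. Testing with $\chi(x_1)\phi(y)$ shows that $v$ is a finite-energy weak solution of $-\nabla_y\cdot a\nabla_y v=0$ in the transverse cube $(-\frac12,\frac12)^{d-1}$. It has the same $L^p$ and $L^q$ norms, and also the same quasi-norms~\eqref{e.negative.regularity.norm}, since cube averages of $a(y)$ are transverse averages. The theory in dimension $d-1$ then makes $v$ locally bounded whenever $\frac1p+\frac1q<\frac2{d-2}$:
\begin{itemize}
\item for $d\geq4$, this is Theorem~\ref{t.cg.local.boundedness} with Theorem~\ref{t.coefficient.conditions} applied in dimension $d-1$, or Bella and Sch\"affner's theorem;
\item for $d=3$, the transverse problem is two-dimensional. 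There, restrictions of $v\in W^{1,1}$ to almost every circle lie in $W^{1,1}\subset L^\infty$, and the maximum principle bounds $v$ inside, for all $p,q>1$.
\end{itemize}
So no transverse profile gives an example when $\frac2{d-1}\leq\frac1p+\frac1q<\frac2{d-2}$. In particular there is none at the endpoint, and none at all when $d=3$. You also state the target range backwards: the example is needed when $\frac1p+\frac1q\geq\frac2{d-1}$, i.e.\ $\theta_0\leq0$.

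The radial laminate fails even inside your reduction. For radial $v=V(|y|)$ solving the equation across $y=0$, testing with radial cutoffs gives $\varrho^{d-2}A(\varrho)V'(\varrho)=0$ almost everywhere. So $V$ is constant, however $A$ oscillates. If you allow a constant nonzero flux, i.e.\ a source on the axis, the energy is a constant multiple of the flux times the total increment of $V$. Then finite energy and $\sum_j\delta_j=\infty$ are incompatible. The conflict you noticed is therefore an identity, not an artifact of monotone $A$. With the correct sphere area $2^{-j(n-1)}$, each shell contributes exactly the flux times $\delta_j$.

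The missing idea is that the solution must grow along the axis, so that the axial second derivative supports a profile that is concave transversally. The paper uses disjoint thin cylinders about axes $\{y=y_j\}$ parallel to $e_1$, with $y_j\to0$. The conductivity is $a_j^+=b_j\varrho_j^{-\kappa}$ in the core $\{|y-y_j|<\varrho_j\}$ and $a_j^-=b_j^{-1}\varrho_j^{2-\kappa}$ in the annulus $\{\varrho_j<|y-y_j|<2\varrho_j\}$. The subsolutions are $v_j=j\exp\bigl(\frac{d-1}{b_j}(x_1+\frac12)\bigr)\psi_j(|y-y_j|)$, with $\psi_j$ concave quadratic in the core. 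The term $a_j^+\partial_1^2v_j$ compensates $\Delta_y\psi_j$, and the ratio $a_j^-/a_j^+=\varrho_j^2/b_j^2$ matches the radial fluxes at the interface.

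The axial rate does not depend on $\varrho_j$, while the energy is $O(\varrho_j^{d-1-\kappa})$. So $\varrho_j$ can be chosen to make $\sum_j\norm{v_j}_{H^1_\a(\cu_0)}<\infty$ while $v_j\geq j$ on the core. The sum $v$ is then a subsolution, and $u=1+H_{\cu_0}v\geq1+v$ (Proposition~\ref{p.harmonic.replacement}) is the solution; it is not of product form.

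The coefficient bounds come from Lemma~\ref{l.sharpness.cylinder.averages} with one $\kappa\in(0,2)$ satisfying $\kappa\leq\frac{d-1}p+\alpha$ and $2-\kappa\leq\frac{d-1}q+\beta$. Such $\kappa$ exists exactly when $\theta_0\leq0$, and each cylinder contributes at most $Cb_j^{\nf12}$ to the norms. So the endpoint needs no logarithmic corrections: the free choice of radii decouples the heights $j$ from the coefficient norms.
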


By~\eqref{e.sharpness}, both local boundedness and the Harnack inequality fail. The coefficient is built from disjoint thin cylinders parallel to the~$x_1$-axis, whose axes accumulate on the line~$\{y=0\}$; each consists of a conducting inner cylinder surrounded by an insulating annulus. An inner cylinder with a large conductivity inside an annulus with a small one supports a large nonnegative subsolution of small energy. A cylinder of radius~$\eps$ occupies a fraction of order~$\eps^{d-1}$ of the simplices of size~$\eps$, so the averages over simplices and cubes see it only through its codimension~$d-1$. An inner conductivity of order~$\eps^{-\kappa}$ and an annular one of order~$\eps^{2-\kappa}$ then make contributions bounded independently of~$\eps$ if~$\kappa\leq\nf{(d-1)}p+\alpha$ and~$2-\kappa\leq\nf{(d-1)}q+\beta$. Such a~$\kappa\in(0,2)$ exists exactly when~$\theta_0\leq0$, and this is why the threshold of Theorem~\ref{t.sharpness} is exactly the boundary of the range of Corollary~\ref{c.sobolev.coefficients}. Since the two conductivities have ratio of order~$\eps^2$, the growth rate of the subsolution along the axis does not depend on the radius, so the radii can be chosen so small that the norm~\eqref{e.weighted.norm} of the sum of these subsolutions is finite. Harmonic replacement then gives a solution above that sum, which is unbounded near the line. For~$\alpha=\beta=0$, Theorem~\ref{t.sharpness} gives scalar coefficients with~$\a\in L^p$,~$\a^{-1}\in L^q$ and~$\nf1p+\nf1q\geq\nf2{(d-1)}$, and unbounded solutions, in every dimension~$d\geq3$. The proof is in Section~\ref{s.sharpness}.

\subsection{Relation to previous work}
\label{ss.intro.discussion}

\paragraph{Comparison with~\cite{AADKM}.} The present hypothesis differs from that of~\cite{AADKM} in three ways. First, the upper matrix is computed from solutions alone. In~\cite{AADKM} the upper matrix of a cube is a supremum over a cone of functions adapted to the argument, such as truncations and powers of solutions, and such a supremum need not be a quadratic form; here the Dirichlet energy with affine boundary values suffices, even for bounding subsolutions. Second, the matrices are attached to simplices rather than cubes, because the construction of the test functions begins with a piecewise affine extension on simplices. For the matrices used here, Proposition~\ref{p.cubical.simplicial.equivalence} shows that these two geometries give equivalent constants in the range~\eqref{e.cubical.simplicial.range}. Third, the matrices enter through spatial moments rather than maxima over positions. The variational classes and the normalizations still differ, so this geometric comparison alone does not establish an implication between the two hypotheses in general. The conclusions have the same form, with~$d-1$ in place of~$d$ in the admissible ranges of the coefficient conditions and in the power of the coarse ellipticity ratio in the supremum bound, and with the same growth~$\exp(C\sqrt\Theta)$ of the Harnack constant. The arguments are different: the Caccioppoli inequalities of~\cite{AADKM} are coarse-grained versions of the classical ones, whereas our arguments construct the test function depending on the subsolution and the coefficient field. On the other hand,~\cite{AADKM} covers dimension two, and its examples are not analyzed here.

\paragraph{Comparison with the result of Bella and Sch\"affner.} The results of~\cite{Bella.Schaffner.regularity} hold for possibly nonsymmetric coefficients, in dimension two and for infinite exponents, in solution spaces normed with~$\int|\a|u^2$ in place of the squared mean; Section~\ref{s.classical.moments} records the precise hypotheses and shows that, in the symmetric case with~$d\geq3$, Theorem~\ref{t.coefficient.conditions}\ref{c.classical.moments} reaches their range. Both proofs use the Sobolev inequality in dimension~$d-1$
and test functions that depend on the subsolution.

\paragraph{Random media and scale.} Moment conditions of the type~\eqref{e.intro.bs.range} are natural for random walks and diffusions among random degenerate conductances, where Harnack inequalities under moment conditions enter invariance principles and local limit theorems~\cite{Biskup,Andres.Deuschel.Slowik.2015,Andres.Deuschel.Slowik.2016,Chiarini.Deuschel.2016,Chiarini.Deuschel.2015,Bella.Schaffner.RCM,Bella.Schaffner.parabolic}. The coarse ellipticity condition was designed for random coefficient fields with high contrast~\cite{AK.HC,AK.survey}, and Lau~\cite{Lau} proved qualitative homogenization under it.

Like those of~\cite{Trudinger,Bella.Schaffner.regularity,AADKM}, our estimates are at unit scale. The coarse ellipticity ratio of smaller cubes may grow as the cubes shrink, so iterating the estimates does not give H\"older continuity; this is different from weighted theories such as that of Fabes, Kenig and Serapioni~\cite{FKS}, in which a scale-invariant weighted Poincar\'e inequality is assumed.

% \paragraph{Limitations.} We treat symmetric coefficients in dimension~$d\geq3$, and not nonsymmetric coefficients or dimension two. The constants deteriorate as~$\theta\downarrow0$. We do not know whether the boundary counterexamples of Theorem~\ref{t.sharpness} extend to~$p=\infty$ or~$q=\infty$. The optimality concerns only the power of~$\Theta$; we do not determine the optimal powers of~$\rho_2-\rho_1$, nor those of the coefficient norms and quasi-norms in Theorem~\ref{t.coefficient.conditions} and Corollary~\ref{c.sobolev.coefficients}, where~$\Theta$ is only bounded from above.

\subsection{Ideas of the proof}
\label{ss.intro.ideas}

The main ingredient in our regularity theory is a Caccioppoli inequality with constants depending only on the coarse ellipticity parameters. We construct a test function that agrees with an approximation of the subsolution inside a cube and extends its boundary values piecewise harmonically outside. Testing bounds the energy by an exterior integral, which we control through the fractional Sobolev seminorm and the~$L^2$ norm of the boundary trace. The cube on which this argument is performed is chosen with a good radius, at which the trace norms, the energy in boundary layers and the coarse-grained matrix norms near the boundary are controlled simultaneously. The Sobolev inequality on the~$(d-1)$-dimensional cube boundary then gives the superlinear gain needed for the De Giorgi iteration proving local boundedness. We outline this argument first, then explain how power and logarithmic estimates give Harnack inequalities and how a separate potential argument reaches the weak Harnack endpoint.

\paragraph{The Caccioppoli inequality.} Fix two radii~$\nf12\leq\rho_1<\rho_2\leq1$ and let~$v$ be a nonnegative subsolution in~$\cu_0$. Using the classical test function~$\varphi^2v$, where~$\varphi\in C^\infty_c(\rho_2\cu_0)$ equals one on~$\rho_1\cu_0$, produces the term~$\int v^2\nabla\varphi\cdot\a\nabla\varphi$, which is not controlled by our coarse ellipticity constants. Although the matrices~$\a(\triangle)$ can be used to construct a cutoff function of controlled energy, this is not enough to control the energy with weight~$v^2$. As in Bella and Sch\"affner's approach, the test function needs to depend on~$v$. Indeed, we construct a test function that agrees with a nonnegative smooth approximation~$v_i$ of~$v$ inside a cube~$\tau\cu_0$, where~$\tau\in(\rho_1,\rho_2)$ is to be chosen later, and extends its boundary values outside.

For Lipschitz boundary data~$f$, we construct a continuous piecewise affine extension~$L_hf$ on the Whitney simplices of the exterior of~$\tau\cu_0$ (see Figure~\ref{f.whitney.vertices}). Its free-vertex values are boundary averages multiplied by a cutoff, and its hanging-vertex values are determined by interpolation. The averaging patches have size comparable to the adjacent simplices, and the cutoff confines the support to~$(\tau+3h)\cu_0$, for a width~$h$ chosen later. We then replace~$L_hf$ on each simplex~$\triangle$ by the~$\a$-harmonic function with the same boundary values. If~$e$ is the affine gradient on~$\triangle$, this replacement has energy exactly~$|\triangle|\,e\cdot\a(\triangle)e$. This is how a matrix defined through solutions enters an estimate for a subsolution. Thus the energy of the resulting piecewise harmonic extension~$H_hf$ is expressed through the upper matrices and differences of boundary averages (Section~\ref{s.whitney.extension}). With~$f=v_i|_{\partial(\tau\cu_0)}$, the function equal to~$v_i$ inside the cube and to~$H_hf$ outside is an admissible nonnegative test function. Testing the subsolution inequality gives
\begin{equation*}
    \int_{\tau\cu_0}\nabla v_i\cdot\a\nabla v
    \leq
    -\int_{\cu_0\setminus\tau\overline{\cu}_0}
        \nabla H_hf\cdot\a\nabla v
    \,.
\end{equation*}
The left-hand side converges to~$\Ener_{\tau\cu_0}(v)$ as~$i\to\infty$. It remains to bound the exterior integral on the right.

\paragraph{Fractional regularity and traces.} Every~$w\in H^1_\a(\cu_0)$ belongs to the fractional Sobolev space~$W^{1-t,r}(\cu_0)$, where~$r=\nf{2q}{(q+1)}$, and satisfies
\begin{equation*}
    [w]_{W^{1-t,r}(\cu_0)}
    \leq C\lambda_{t,1,q}(\cu_0)^{-\nf12}
    \Ener_{\cu_0}(w)^{\nf12}
    \,.
\end{equation*}
This continuous embedding (Proposition~\ref{p.lower.fractional}) is proved by reconstructing~$w$ from its gradient averages (Lemma~\ref{l.fractional.reconstruction}), which are controlled by the energy (Lemma~\ref{l.lower.averages}). A slicing argument (Lemma~\ref{l.fractional.slicing}) then places the trace of~$w$ in~$W^{1-t,r}(\partial(\tau\cu_0))$ for almost every~$\tau$. These restrictions retain the order of regularity~$1-t$ because their norms are estimated on average over the family of cube boundaries.

\paragraph{Choosing a good radius.} The extension~$H_hf$ uses the upper matrices only on simplices near~$\partial(\tau\cu_0)$. Let~$A_k(\tau)$ be the largest~$|\a(\triangle)|$ among the simplices of~$\mathscr T_k$ within distance~$C3^{-k}$ of this boundary. Each simplex lies this close to the boundary only for radii in an interval of length~$C3^{-k}$. Consequently, integrating~$A_k(\tau)^p$ over~$\tau$ gives at most~$C3^{k(d-1)}$ times the spatial mean of~$|\a(\triangle)|^p$. This accounts for the boundary dimension~$d-1$ in the exponent~$\sigma=s+\nf{d-1}{(2p)}$ and bounds the discounted sum
\begin{equation*}
    S(\tau)\coloneqq\sum_{k\geq0}3^{-k\sigma}A_k(\tau)^{\nf12}
    \,.
\end{equation*}
Its~$L^{2p}$ norm in the radius is controlled by~$\Lambda_{s,1,p}(\cu_0)^{\nf12}$.

Chebyshev's inequality, the averaged trace estimates and a maximal function estimate for the energy give a radius~$\tau\in(\rho_1,\rho_2)$ at which~$S(\tau)$, the fractional norm of the trace and the energy in thin boundary layers are controlled simultaneously (Proposition~\ref{p.good.radius}). At the same radius, a subsequence of the approximations~$v_i$ and all its truncations have convergent traces.

\paragraph{Estimating the exterior integral.} In estimating the exterior integral, we apply Cauchy--Schwarz separately on the Whitney simplices of each size~$3^{-j}$. Write~$D_v(\tau)$ for the maximal function of the boundary energy defined in~\eqref{e.boundary.energy.maximal}. The energy of~$v$ on these simplices is at most~$C3^{-j}D_v(\tau)$, and the estimate for the extension also contains the boundary-layer volume factor~$3^{-j}$. After taking square roots, these factors cancel the gradient scale~$3^j$ of the affine extension.

The remaining factors measure the regularity of the boundary values and the growth of the upper matrices. Comparing boundary averages in~$L^2$ using their~$L^r$ control costs~$3^{j(d-1)/(2q)}$. Together with the fractional order~$1-t$, this leaves the exponent~$1-\sigma_*$, where~$\sigma_*=t+\nf{d-1}{(2q)}$. The weight in~$S(\tau)$ allows the square roots of the upper matrices to grow like~$3^{j\sigma}$. Their product is
\begin{equation*}
    3^{j\sigma}3^{-j(1-\sigma_*)}=3^{-j\theta}
    \,,\qquad \theta=1-\sigma-\sigma_*
    \,.
\end{equation*}
Only sizes~$3^{-j}<h$ occur in the support of the extension, so~$\theta>0$ gives a factor~$h^\theta$. Lemma~\ref{l.exterior.integral} makes this balance precise:
\begin{equation*}
    \biggl|\int_{\cu_0\setminus\tau\overline{\cu}_0}
        \nabla H_hf\cdot\a\nabla v\biggr|
    \leq CS(\tau)D_v(\tau)^{\nf12}
    \Bigl(
        h^\theta[f]_{W^{1-t,r}(\partial(\tau\cu_0))}
        +h^{-\sigma}\norm{f}_{L^2(\partial(\tau\cu_0))}
    \Bigr)
    \,.
\end{equation*}
The second term comes from cutting off the extension at width~$h$.

We apply this estimate to the traces of~$v_i$. As~$i\to\infty$, the interior pairing converges to the energy of~$v$ in~$\tau\cu_0$, and we pass to the limit in the estimates for the exterior integrals without extending the trace of~$v$ itself. Combining the estimates at the good radius produces an energy term on~$\rho_2\cu_0$ with coefficient~$C(\rho_2-\rho_1)^{-\gamma_1}\Theta^{\nf12}h^\theta$, while the remaining terms are controlled by the~$L^2$ norm of~$v$. Choosing~$h$ to make this coefficient small and iterating over radii gives the Caccioppoli inequality of Proposition~\ref{p.cg.caccioppoli}. Thus~$\theta>0$ expresses exactly the balance between trace regularity and matrix growth needed to absorb the energy on~$\rho_2\cu_0$. \paragraph{Local boundedness and the power of~$\Theta$.} A refinement of these arguments is needed to obtain local boundedness. Let~$u$ be a subsolution in~$\cu_0$. To compare two levels~$0\leq a<b$, write~$w_a=(u-a)_+$ and~$w_b=(u-b)_+$. We apply Proposition~\ref{p.good.radius.energy} with~$v=w_a$ and~$k=b-a$. This bounds the energy of~$w_b$ through its boundary trace, using the boundary-layer energy of~$w_a$. Since~$w_b=(w_a-(b-a))_+$, its fractional seminorm is at most that of~$w_a$. Its~$L^2$ norm gains an additional factor from the difference between the levels:
\begin{equation*}
    \norm{w_b}_{L^2(\partial(\tau\cu_0))}^2
    \leq
    (b-a)^{2-r_\partial^*}
    \norm{w_a}_{L^{r_\partial^*}(\partial(\tau\cu_0))}^{r_\partial^*}
    \,.
\end{equation*}
The boundary Sobolev exponent satisfies~$r_\partial^*>2$, so increasing~$b-a$ makes this term smaller.

The good radius estimates control the fractional norm of the trace by both the energy and the~$L^r$ norm of~$w_a$. We therefore track the quantity
\begin{equation*}
    Y_a(\rho)\coloneqq
    \Bigl(
        \norm{w_a}_{L^r(\rho\cu_0)}^2
        +\lambda_{t,1,q}(\cu_0)^{-1}
        \Ener_{\rho\cu_0}(w_a)
    \Bigr)^{\nf12}
    \,,
\end{equation*}
as in~\eqref{e.two.level.quantity}. The boundary estimate controls the energy of~$w_b$, while the fractional Sobolev inequality on~$\rho_1\cu_0$, with~$r^*>r$, controls its~$L^r$ norm. Choosing the extension width~$h$ makes the seminorm contribution small. Combining the boundary~$L^2$ estimate and the interior~$L^r$ estimate gives the recurrence of Proposition~\ref{p.two.level.recurrence}.

Apart from factors depending on the gap between the radii, the boundary contribution to this recurrence is
\begin{equation*}
    \Theta^{\nf{1-\sigma_*}\theta}
    \left(\frac{Y_a(\rho_2)}{b-a}\right)^{r_\partial^*-2}
    Y_a(\rho_2)^2
    \,.
\end{equation*}
Thus we choose the next level so that~$b-a$ is a large multiple of~$\Theta^\mu Y_a(\rho_2)$, where
\begin{equation*}
    \mu
    =\frac{1-\sigma_*}{\theta(r_\partial^*-2)}
    =\frac{d-3+2\sigma_*}{4\theta}
    \,.
\end{equation*}
This choice cancels the power of~$\Theta$ in the boundary term and also makes the interior term small. The level increments include additional factors accounting for the shrinking gaps between radii. The resulting values of~$Y$ decrease geometrically, and the level increments have a finite sum. If~$a_\infty$ is the limit of the levels, then~$(u-a_\infty)_+=0$ on~$\rho_1\cu_0$, which bounds the supremum (Proposition~\ref{p.energy.to.sup}).

On fixed nested cubes, this argument and the Caccioppoli inequality give, respectively,
\begin{equation*}
	\norm{u_+}_{L^\infty(\frac12\cu_0)}\leq C\Theta^\mu Y_0(\tfrac34)\qquad\mbox{and}\qquad Y_0(\tfrac34)\leq C\Theta^{\nf{1-\sigma_*}{2\theta}}\norm{u_+}_{L^2(\cu_0)}\,.
\end{equation*}
The total power of~$\Theta$ is therefore
\begin{equation*}
	\mu+\frac{1-\sigma_*}{2\theta}=\frac{d-1}{4\theta}\,,
\end{equation*}
as in Theorem~\ref{t.cg.local.boundedness}.

\paragraph{Harnack inequalities.}
Let~$u$ be a nonnegative supersolution and set~$w=u+\eps$
with~$\eps>0$.
For~$m<\nf12$ with~$m\neq0$, we apply the piecewise harmonic
test function construction to approximations and truncations
of~$w^m$.
In the power-testing inequality~\eqref{e.power.testing},
the interior terms bound the energy with coefficient~$1-2m$,
while the exterior pairing is multiplied by~$m$.
Taking its absolute value and dividing by~$1-2m$ therefore
gives the factor
\begin{equation*}
    c_m\coloneqq\frac{|m|}{1-2m}
    \,.
\end{equation*}
Estimating the exterior integral gives the Caccioppoli inequality
for powers (Proposition~\ref{p.power.caccioppoli}), which bounds
the energy of~$w^m$ on an interior cube by the square of its~$L^r$ norm on a larger cube.
The fractional Sobolev inequality then bounds its~$L^{r^*}$ norm
on a smaller cube.
For~$m>0$, this compares moments of~$w$ of orders~$mr$ and~$mr^*$;
for~$m<0$, it compares moments of~$w^{-1}$ of orders~$|m|r$
and~$|m|r^*$.
Thus each application increases the moment order by the
factor~$\nf{r^*}r>1$.

To connect the positive and negative moments, we first divide the Caccioppoli inequality for powers by~$m^2$ and let~$m\uparrow0$. This bounds the energy of~$\log w$ by~$C\Lambda_{s,1,p}(\cu_0)$. Fractional reconstruction consequently bounds its deviation from its mean on~$\frac78\cu_0$ in~$L^r$ by~$C\sqrt\Theta$. We choose~$p_*=\nf c{\sqrt\Theta}$, with~$c>0$ sufficiently small, so that the corresponding deviation of~$\log(w^{p_*})$ is bounded independently of~$\Theta$. At these small exponents the factor~$1+c_m^2\Theta$ in the moment estimates is also bounded. These estimates and the logarithmic bound allow us to apply Bombieri's lemma, Lemma~\ref{l.bombieri}, to obtain
\begin{equation*}
    \fint_{\frac34\cu_0}w^{p_*}
    \fint_{\frac34\cu_0}w^{-p_*}
    \leq C
    \,.
\end{equation*}

This crossover estimate bounds the small positive moment by the reciprocal of the small negative moment. Negative-power iteration bounds that reciprocal by the infimum of~$w$ on a smaller cube. Positive-power iteration raises the positive moment to the fixed order~$\nf r4$. Combining these three estimates gives the weak Harnack inequality at~$\nf r4$, with constant~$\exp(C/p_*)=\exp(C\sqrt\Theta)$. For solutions, Corollary~\ref{c.lr.bound} bounds the supremum by this~$L^{\nf r4}$ norm. Its polynomial factor in~$\Theta$ is absorbed into the exponential, giving the Harnack inequality. All bounds are uniform in~$\eps$, which is then sent to zero (Section~\ref{s.harnack}).

\paragraph{The endpoint weak Harnack inequality.} The Caccioppoli inequality for powers requires~$m<\nf12$, and~$c_m$ diverges as~$m\uparrow\nf12$. It therefore does not give the endpoint~$\eta=\nf{r^*}2$. For this endpoint we separate a supersolution into a potential and a solution. Let~$\nu$ be the restriction of the nonnegative measure~$-\nabla\cdot\a\nabla u$ to~$\frac34\cu_0$, and let~$v$ be its zero-boundary potential in~$\cu_0$. This means that~$v$ vanishes on~$\partial\cu_0$ and satisfies~$-\nabla\cdot\a\nabla v=\nu$ in~$\cu_0$. Comparison gives~$0\leq v\leq u$, and~$u-v$ is a nonnegative solution in~$\frac34\cu_0$. The Harnack inequality controls this remainder, so it remains to estimate~$v$.

The potential estimate of Proposition~\ref{p.endpoint.potential} gives
\begin{equation*}
    \norm{v}_{L^{\nf{r^*}2}(\cu_0)}
    \leq
    C\lambda_{t,1,q}(\cu_0)^{-1}\nu(\cu_0)
    \,.
\end{equation*}
Lemma~\ref{l.dirichlet.reconstruction} decomposes~$v$ scale by scale, and the resulting distribution estimate gives this bound. The summability of the discounted scale terms in~$\lambda_{t,1,q}(\cu_0)$ allows the distribution bounds to be integrated at the endpoint exponent. The capacitary argument of Lemma~\ref{l.source.mass} gives
\begin{equation*}
    \nu(\cu_0)
    \leq C\Lambda_{s,1,p}(\cu_0)
    \exp(C\sqrt\Theta)
    \essinf_{\frac12\cu_0}u
    \,.
\end{equation*}
Combining this with the potential estimate and absorbing the factor~$\Theta$ into the exponential bounds the~$L^{r^*/2}$ norm of~$v$ by~$C\exp(C\sqrt\Theta)\essinf_{\frac12\cu_0}u$. Since~$u-v$ is a nonnegative solution in~$\frac34\cu_0$, the Harnack inequality also bounds its supremum on~$\frac58\cu_0$ by~$\exp(C\sqrt\Theta)\essinf_{\frac12\cu_0}u$. Writing~$u=v+(u-v)$ and combining these two bounds gives the endpoint weak Harnack inequality. All smaller exponents follow by H\"older's inequality (Section~\ref{ss.harnack.endpoint}).

\subsection{Lean formalization}
\label{ss.lean.formalization}

A formalization of this paper in the Lean~4 proof assistant is available at
\begin{center}
\url{https://github.com/scottnarmstrong/CoarseDeGiorgi}\,.
\end{center}
All results of this paper are formalized, within the scope described here. The coarse ellipticity constants are formalized with scale summability~$\mathsf m=1$, the case used in the regularity estimates. In particular, Proposition~\ref{p.cubical.simplicial.equivalence} and Appendix~\ref{a.cubical.simplicial} are formalized for~$\mathsf m=1$ and finite spatial exponents~$1<p,q<\infty$. The formal versions of Proposition~\ref{p.weighted.calculus} and Lemma~\ref{l.weighted.testing} are stated for bounded convex domains, which include the cubes and simplices on which we use them. The results we cite from the literature are either available in \texttt{mathlib} or proved in the formalization. The formal proofs follow the arguments given here, except that the reconstruction estimate of Lemma~\ref{l.fractional.reconstruction} is proved by a different argument that avoids the Fourier transform. They build on \texttt{mathlib} and the \texttt{CoarseGraining} library and contain no unproved placeholders or custom axioms. The main formal statements are also restated using only \texttt{mathlib} definitions, with separate Lean-checked proofs connecting them to the development.

\smallskip

The formalization was written by AI agents under the close supervision of the authors. The development was orchestrated by Claude Opus~5.5, with Lean code written by subagents, primarily GPT-6-Luna and GPT-6-Sol, and checked by Opus~5.5 subagents.

\subsection{Organization}

Sections~\ref{s.harmonic.matrices} and~\ref{s.triadic.simplices} introduce the weighted space, coarse-grained matrices, triangulations and ellipticity constants. Sections~\ref{s.fractional.reconstruction}--\ref{s.good.radius.energy} develop the fractional estimates, radius selection and extension used in the proof outline. Section~\ref{s.local.boundedness} proves local boundedness and Section~\ref{s.harnack} proves the Harnack inequalities. Section~\ref{s.classical.moments} gives the coefficient criteria, and Section~\ref{s.sharpness} proves sharpness of the coefficient range, the powers of~$\Theta$ and the weak Harnack exponent. Appendix~\ref{a.weighted.proofs} contains the weighted-space proofs. Appendix~\ref{a.besov} compares simplex, cube and Gaussian averages, while Appendix~\ref{a.cubical.simplicial} compares cubical and simplicial constants. Appendix~\ref{a.bombieri} proves Bombieri's lemma.

\subsection{Notation}
\label{ss.notation}

Functions are real-valued. On vectors,~$|\cdot|$ is the Euclidean norm. The operator norm of a matrix~$\bfA$ is~$|\bfA|\coloneqq\sup_{|e|=1}|\bfA e|$, and matrix inequalities are inequalities of quadratic forms;~$I$ is the identity matrix. We write~$\indc_A$ for the indicator of a set~$A$ and~$\wedge$ for the pointwise minimum, and for a function or a number~$u$ we write~$u_+\coloneqq\max\{u,0\}$.

\smallskip

Lebesgue measure is written~$|U|$, and integrals over cube boundaries and their subsets use Hausdorff measure~$\mathcal H^{d-1}$. Integrals and~$L^p$ norms are unnormalized. The average of~$f$ over a set~$U$ of positive finite measure is~$\fint_Uf\coloneqq|U|^{-1}\int_Uf$, also written~$(f)_U$, and averages of vector and matrix fields are taken componentwise; over a subset of a cube boundary, the average~$\fint f\,d\mathcal H^{d-1}$ is normalized by its Hausdorff measure. For a nonempty finite family,~$\avsum$ denotes the arithmetic mean. For a measurable~$f$ on~$U$, we use the following expressions, the second when~$U$ is a cube, with~$0<\eta<1$ and~$b>0$:
\begin{equation*}
	\norm{f}_{L^\eta(U)}\coloneqq\Bigl(\int_U|f|^\eta\Bigr)^{1/\eta}\,,\qquad
	\norm{f}_{\underline L^b(U)}\coloneqq\Bigl(\fint_U|f|^b\Bigr)^{1/b}\,.
\end{equation*}
The first expression is not a norm, and no triangle inequality is used for it. The second, used in Section~\ref{s.harnack}, is not a norm when~$b<1$. Essential suprema and infima are written~$\esssup$ and~$\essinf$, and~$\norm{f}_{L^\infty(U)}$ is the essential supremum of~$|f|$. Norms may be infinite; an inequality whose right side is infinite holds trivially.

\smallskip

For~$n\in\Z$ let~$\cu_n\coloneqq3^n\cu_0$. The triadic cubes of side length~$3^n$ are~$z+\cu_n$ with~$z\in3^n\Zd$. Cubes and simplices are open, and bars denote closures. We write~$\ell(D)$ for the side length of a cube~$D$ and~$5\overline D$ for its fivefold dilation about its center, and~$|x|_\infty\coloneqq\max_i|x_i|$; distances marked~$\infty$ use this norm, the others are Euclidean. For~$\rho>0$, the cube~$\rho\cu_0$ has side length~$\rho$. The simplices and triangulations on which the coarse-grained matrices are evaluated are defined in~\eqref{e.simplex.def} and~\eqref{e.simplex.family}.

\smallskip

The space~$W^{1,1}(U)$ is the ordinary Sobolev space, and~$W^{1,1}_0(U)$ is the closure of~$C^\infty_c(U)$ in it. The weighted spaces~$H^1_\a$ and~$\Hzero{\a}$ and the classes~$\Csol$ and~$\Csub$ are defined in Section~\ref{s.weighted.spaces}. For an open~$U\subseteq\Rd$, an order~$0<\alpha<1$ and an integrability~$1\leq\xi<\infty$, the ordinary fractional seminorm is
\begin{equation}
	[w]_{W^{\alpha,\xi}(U)}\coloneqq\biggl(\int_U\int_U\frac{|w(x)-w(y)|^\xi}{|x-y|^{d+\alpha\xi}}\,dx\,dy\biggr)^{\nf1\xi}\,,
	\label{e.fractional.seminorm}
\end{equation}
The corresponding norm is
\begin{equation*}
	\norm{w}_{W^{\alpha,\xi}(U)}\coloneqq\Bigl(\norm{w}_{L^\xi(U)}^\xi+[w]_{W^{\alpha,\xi}(U)}^\xi\Bigr)^{\nf1\xi}\,.
\end{equation*}
The seminorm makes sense, possibly infinite, for every measurable function. For fixed~$\alpha$, the fractional Sobolev exponents of~$W^{\alpha,\xi}$ on~$d$-dimensional domains and on~$(d-1)$-dimensional boundaries are
\begin{equation*}
	\xi^*\coloneqq\frac{d\xi}{d-\alpha\xi}\quad(\alpha\xi<d)\qquad\mbox{and}\qquad\xi_\partial^*\coloneqq\frac{(d-1)\xi}{d-1-\alpha\xi}\quad(\alpha\xi<d-1)\,,
\end{equation*}
For every order~$\beta\geq0$, written~$\beta=m+\alpha$ with an integer~$m\geq0$ and~$0\leq\alpha<1$, we set
\begin{equation*}
	\norm{w}_{W^{\beta,\xi}(U)}\coloneqq\biggl(\sum_{j=0}^m\norm{\nabla^jw}_{L^\xi(U)}^\xi+[\nabla^mw]_{W^{\alpha,\xi}(U)}^\xi\biggr)^{\nf1\xi}\,,
\end{equation*}
where~$\nabla^jw$ is the array of the weak partial derivatives of order~$j$ and the seminorm is omitted if~$\alpha=0$; for~$0<\beta<1$ this is the norm above, and~$W^{0,\xi}(U)=L^\xi(U)$. For~$1<p\leq\infty$, with~$p'\coloneqq\nf p{(p-1)}$ and~$p'=1$ if~$p=\infty$, the negative Sobolev norm of a matrix field~$\mathbf b$ with~$|\mathbf b|\in L^1(U)$ is its dual norm tested on bounded functions,
\begin{equation}
	\norm{\mathbf b}_{W^{-\beta,p}(U)}\coloneqq\sup\biggl\{\biggl|\int_Ug\,\mathbf b\biggr|:g\in L^\infty(U),\ \norm{g}_{W^{\beta,p'}(U)}\leq1\biggr\}\,.
	\label{e.negative.sobolev.norm}
\end{equation}
On cube boundaries we use the seminorm~\eqref{e.boundary.fractional.seminorm}, whose double integral includes pairs of points on different faces; restrictions to cube boundaries are used for almost every radius, as in Lemma~\ref{l.fractional.slicing}, or at a good radius, as in Proposition~\ref{p.good.radius}, and the trace of a function on a cube boundary is denoted by the same symbol as the function.

\smallskip

Constants~$C$ may change from line to line, and their dependence is stated at the start of each section, in the statement where they occur, or, for Sections~\ref{s.good.radii}--\ref{s.local.boundedness}, at the end of Section~\ref{s.triadic.simplices}.

\section{Weighted spaces and coarse-grained matrices}
\label{s.harmonic.matrices}

\subsection{Weighted spaces}
\label{s.weighted.spaces}

Since~$\a$ has neither a positive lower bound nor an upper bound, subsolutions are taken in an energy space weighted by~$\a$ itself rather than in~$H^1$. This subsection defines that space and the classes of solutions and subsolutions.

\smallskip

Throughout this section~$U\subseteq\Rd$ is a bounded connected Lipschitz domain, and the coefficient field is measurable, symmetric and positive definite almost everywhere, with
\begin{equation}
	\tr\a,\ \tr\a^{-1}\in L^1(U)\,.
	\label{e.weighted.hypotheses}
\end{equation}

\smallskip

The energy of~$u\in C^\infty(U)$ is~$\Ener_U(u)\coloneqq\int_U\nabla u\cdot\a\nabla u\in[0,\infty]$. The space~$H^1_\a(U)$ is the completion of the class~$\{u\in C^\infty(U)\cap L^1(U):\Ener_U(u)<\infty\}$ of \emph{smooth functions of finite energy}, which contains the constants, under the norm
\begin{equation}
	\norm{u}_{H^1_\a(U)}\coloneqq\bigl(|(u)_U|^2+\Ener_U(u)\bigr)^{\nf12}\,.
	\label{e.weighted.norm}
\end{equation}
The mean fixes the additive constant. Proposition~\ref{p.weighted.calculus} shows that elements of~$H^1_\a(U)$ belong to~$W^{1,1}(U)$, and Proposition~\ref{p.lower.fractional} gives~$L^r(\cu_0)$ integrability. The subspace~$\Hzero{\a}(U)$ is the closure of~$C^\infty_c(U)$. For measurable~$A\subseteq U$ we write~$\Ener_A(u)\coloneqq\int_A\nabla u\cdot\a\nabla u$. The solutions~$\Csol(U)$ and subsolutions~$\Csub(U)$ are the~$u\in H^1_\a(U)$ satisfying, respectively,
\begin{equation}
	\int_U\nabla\varphi\cdot\a\nabla u=0\quad\mbox{for all }\varphi\in C^\infty_c(U)\,,\qquad
	\int_U\nabla\varphi\cdot\a\nabla u\leq0\quad\mbox{for all }0\leq\varphi\in C^\infty_c(U)\,.
	\label{e.weighted.subsolution}
\end{equation}
Subsolutions carry no sign restriction, and~$\Csol(U)\subseteq\Csub(U)$. A \emph{supersolution} is a~$u$ with~$-u\in\Csub(U)$.

\smallskip

Appendix~\ref{a.weighted.proofs} establishes the calculus and testing properties used below. Proposition~\ref{p.weighted.calculus} treats zero boundary values, restriction, extension by zero, chain rules, truncations, bounded smooth approximation, and products with Lipschitz functions. Lemma~\ref{l.weighted.testing} permits every nonnegative element of~$\Hzero{\a}(U)$ as a test function in~\eqref{e.weighted.subsolution}. We use these results for harmonic replacement and for the test functions in Sections~\ref{s.whitney.extension}--\ref{s.harnack}.

\subsection{Harmonic replacement and the coarse-grained matrices}

In this subsection we introduce harmonic replacement and the two coarse-grained matrices of~\eqref{e.upper.matrix.definition} and~\eqref{e.lower.matrix.definition}. Harmonic replacement minimizes energy under prescribed boundary values. The matrix~$\a(U)$ measures this energy for affine boundary data and is used in the piecewise harmonic extension of Section~\ref{s.whitney.extension}. Bounds involving~$\a_*^{-1}(U)$ control mean gradients and lead to the fractional regularity of Section~\ref{s.fractional.reconstruction}. Both matrices are bounded by averages of~$\a$ and~$\a^{-1}$, as used in Section~\ref{s.classical.moments}.

% Source for the next paragraph: AK.HC = HIgh-Contrast-Elliptic-Homogenization/highcontrast.tex (SHA-256
% e2ea7fad0bc603c2832219bec19fdadb8e2c2a9d06c3e99304fd2542e05990b1), Section 2.1 (ss.Sob: e.Omega.U.def, e.H1s,
% closure of C^infty_c, e.H1s.Dirichlet, e.H1a.Neumann) and Section 2.3 (ss.bfA.def: e.J.def, e.J.mat, e.J.mat.star,
% e.J.P0.Dirichlet, e.ssk.bounds, e.subadditivity); arXiv:2405.10732v3 (2.1)-(2.2), (2.9), (2.11)-(2.12), (2.14),
% (2.21), (2.24), as reported in writing/vs9/structure-review-20260928. Symmetric specialization (k=0): a(U)=s(U)=b(U),
% a_*(U)=s_*(U); e.a(U)e=2J(U,-e,0) and e.a_*^{-1}(U)e=2J(U,0,e). Bounds by averages and ordering: e.ssk.bounds;
% subadditivity: the diagonal blocks of e.subadditivity. HC e.energymaps.nonsymm (2.36) is for solutions only;
% e.lower.all.functions and the range bound of p.harmonic.replacement are proved here. HC's L^1_loc(closure U)
% is L^1(U) for bounded U. Space comparison: Appendix a.weighted.proofs, after the proof of p.weighted.calculus.
For symmetric coefficient fields, harmonic replacement reduces to the Dirichlet problem of~\cite[Section~2.1]{AK.HC}. The matrices~$\a(U)$ and~$\a_*(U)$ are the symmetric coarse-grained matrices of~\cite[Section~2.3]{AK.HC} (see also~\cite{AKM.Book}); the antisymmetric counterpart vanishes in this case. That section states their definitions by suprema over solutions, the minimum with affine boundary values, bounds by averages, the ordering~\eqref{e.matrix.ordering}, and subadditivity for finite partitions.

The space of~\cite[Section~2.1]{AK.HC} uses the norm~$(\norm{u}_{L^1(U)}^2+\Ener_U(u))^{\nf12}$. Appendix~\ref{a.weighted.proofs} shows that this norm and~\eqref{e.weighted.norm} give the same completion, zero-boundary subspace, and solution class. It proves Propositions~\ref{p.harmonic.replacement},~\ref{p.upper.dirichlet} and~\ref{p.lower.matrix} in this space. For~$g\in H^1_\a(U)\cap L^\infty(U)$, the proof of harmonic replacement establishes the maximum principle~$\essinf_Ug\leq H_Ug\leq\esssup_Ug$. The appendix also proves the equality of the suprema over solutions and all of~$H^1_\a(U)$ in~\eqref{e.lower.all.functions}, and the subadditivity bounds~\eqref{e.upper.subadditivity} and~\eqref{e.lower.cube.aggregation} for countable as well as finite partitions.

\smallskip

For~$g\in H^1_\a(U)$, a harmonic representative of~$g$ is an element of~$\Csol(U)\cap(g+\Hzero{\a}(U))$; when it is unique we denote it by~$H_Ug$.

\begin{proposition}[Harmonic replacement]
\label{p.harmonic.replacement}
For every~$g\in H^1_\a(U)$, the harmonic representative~$H_Ug$ exists, is unique, and satisfies
\begin{equation}
	\int_U\nabla\varphi\cdot\a\nabla H_Ug=0\qquad\mbox{for every }\varphi\in\Hzero{\a}(U)\,.
	\label{e.harmonic.orthogonality}
\end{equation}
For every~$w\in g+\Hzero{\a}(U)$,
\begin{equation}
	\Ener_U(w)=\Ener_U(H_Ug)+\Ener_U(w-H_Ug)\,.
	\label{e.harmonic.minimization}
\end{equation}
The map~$g\mapsto H_Ug$ is linear, and~$H_U(g+c)=H_Ug+c$ for constants~$c$. If~$g\in L^\infty(U)$, then~$\essinf_Ug\leq H_Ug\leq\esssup_Ug$ almost everywhere in~$U$.
\end{proposition}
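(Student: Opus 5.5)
The plan is to obtain $H_Ug$ by projecting $g$ onto $\Hzero{\a}(U)$ in the energy inner product. We need the energy $\Ener_U(\varphi)^{\nf12}$ to be a norm on $\Hzero{\a}(U)$ equivalent to $\norm{\cdot}_{H^1_\a(U)}$. Since $\Hzero{\a}(U)$ is the closure of $C^\infty_c(U)$, the space is complete. For equivalence it suffices to bound $|(\varphi)_U|$ by $C\,\Ener_U(\varphi)^{\nf12}$ for $\varphi\in C^\infty_c(U)$, and then pass to the closure.

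For $\varphi\in C^\infty_c(U)$ we use the zero-boundary Poincaré inequality in $W^{1,1}_0(U)$ and Cauchy--Schwarz with weights:
\[
\int_U|\varphi|\leq C(U)\int_U|\nabla\varphi|\leq C(U)\Bigl(\int_U\tr\a^{-1}\Bigr)^{\nf12}\Ener_U(\varphi)^{\nf12}.
\]
This uses~\eqref{e.weighted.hypotheses} and the pointwise bound $|\xi|^2\leq(\xi\cdot\a\xi)\,|\a^{-1}|$. Proposition~\ref{p.weighted.calculus} identifies elements of $\Hzero{\a}(U)$ with $W^{1,1}_0$ functions, so this passes to the closure. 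Hence $(\varphi,\psi)\mapsto\int_U\nabla\varphi\cdot\a\nabla\psi$ is an inner product making $\Hzero{\a}(U)$ a Hilbert space.

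For existence, the functional $\varphi\mapsto\int_U\nabla\varphi\cdot\a\nabla g$ is bounded on this Hilbert space by $\Ener_U(g)^{\nf12}\Ener_U(\varphi)^{\nf12}$. Riesz representation gives $\phi\in\Hzero{\a}(U)$ with $\int\nabla\varphi\cdot\a\nabla\phi=\int\nabla\varphi\cdot\a\nabla g$ for all $\varphi$. Then $H_Ug\coloneqq g-\phi$ satisfies~\eqref{e.harmonic.orthogonality}, which in particular gives membership in $\Csol(U)$. Conversely, any harmonic representative $h$ satisfies the orthogonality for $C^\infty_c$ test functions, and by density and continuity of the pairing for all $\varphi\in\Hzero{\a}(U)$. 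The difference of two representatives lies in $\Hzero{\a}(U)$ and is orthogonal to itself, so it has zero energy and hence vanishes by the norm equivalence; this proves uniqueness.

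Identity~\eqref{e.harmonic.minimization} is Pythagoras: $w-H_Ug\in\Hzero{\a}(U)$, and the cross term vanishes by~\eqref{e.harmonic.orthogonality}. Linearity follows from uniqueness. For constants, $g+c$ has representative $H_Ug+c$, since constants have zero gradient and lie in $H^1_\a$.

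The maximum principle is the main obstacle. Let $M=\esssup_Ug$ and $w=H_Ug$. The idea is to test~\eqref{e.harmonic.orthogonality} with $\varphi=(w-M)_+$, which gives $\Ener_U((w-M)_+)=\int\nabla(w-M)_+\cdot\a\nabla w=0$, and therefore $(w-M)_+=0$. What has to be justified is that $(w-M)_+\in\Hzero{\a}(U)$. Since $g\leq M$, we have
\[
0\leq(w-M)_+\leq(w-g)_+ ,
\]
and $w-g\in\Hzero{\a}(U)$. I would invoke the truncation and zero-boundary-value parts of Proposition~\ref{p.weighted.calculus}: positive parts of elements of $\Hzero{\a}(U)$ stay in $\Hzero{\a}(U)$, and an $H^1_\a$ function squeezed between $0$ and an element of $\Hzero{\a}(U)$ lies in $\Hzero{\a}(U)$. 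If the latter is not stated there, I would prove it directly. Approximate $w-g$ by $\psi_n\in C^\infty_c(U)$, and consider $\min\{(w-M)_+,(\psi_n)_+\}$. These have compact support in $U$ and converge to $(w-M)_+$ in energy by dominated convergence on gradients. Mollifying then requires only the bounded smooth approximation from that proposition. The lower bound $\essinf_Ug\leq H_Ug$ follows by applying the same argument to $-g$.
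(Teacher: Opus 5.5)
Your proposal is correct and matches the paper's proof in substance. Existence is by Riesz representation; the paper cites Lax--Milgram, which is the same thing for this symmetric form, coercive on $\Hzero{\a}(U)$ by the weighted Poincar\'e bound~\eqref{e.weighted.coercivity}. Uniqueness and~\eqref{e.harmonic.minimization} follow from orthogonality, and the maximum principle from testing~\eqref{e.harmonic.orthogonality} with $(H_Ug-M)_+$.

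The only difference is how you show $(H_Ug-M)_+\in\Hzero{\a}(U)$. You approximate by $\min\{(H_Ug-M)_+,(\psi_n)_+\}$ with $\psi_n\to H_Ug-g$. The paper instead uses $(g+\varphi_j-M)_+$ with $\varphi_j\to H_Ug-g$. Since $g\le M$, this vanishes outside $\supp\varphi_j$, so it lies in $\Hzero{\a}(U)$ by Lemma~\ref{l.weighted.testing}\ref{i.testing.compact}. That route avoids the minimum and any mollification.
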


Proposition~\ref{p.harmonic.replacement} is proved in Appendix~\ref{a.weighted.proofs}.

\paragraph{The coarse-grained matrix~$\a(U)$.} For~$e\in\Rd$ let~$\ell_e(x)\coloneqq e\cdot x$, a bounded smooth function of finite energy since~$\tr\a\in L^1(U)$. The matrix~$\a(U)$ of~\eqref{e.upper.matrix.definition} is also given by a concave maximization, which makes sense over any class of functions. For~$\mathcal C\in\{\Csol,\Csub\}$, let
\begin{equation}
	\a(U,e;\mathcal C)\coloneqq\sup_{w\in\mathcal C(U)}\fint_U\bigl(-\nabla w\cdot\a\nabla w+2e\cdot\a\nabla w\bigr)\,.
	\label{e.upper.directional.definition}
\end{equation}
Proposition~\ref{p.upper.dirichlet} shows that the minimum in~\eqref{e.upper.matrix.definition} is attained and equals~$\a(U,e;\Csol)$. As a function of~$e$, this is the quadratic form~$e\cdot\a(U)e$. For each direction~$e$, the subsolution supremum~$\a(U,e;\Csub)$ is at least~$\a(U,e;\Csol)$ by~\eqref{e.upper.coefficient.bound}. The hypothesis uses the solution-based matrix~$\a(U)$, including when the conclusion concerns subsolutions (compare Section~\ref{ss.intro.discussion}).

\begin{proposition}[The coarse-grained matrix~$\a(U)$]
\label{p.upper.dirichlet}
The minimum in~\eqref{e.upper.matrix.definition} is attained, and its value is a positive definite quadratic form in~$e$, so that~$\a(U)$ is symmetric and positive definite. For every~$e\in\Rd$ and~$c\in\R$,
\begin{equation}
	e\cdot\a(U)e=\a(U,e;\Csol)=\frac{\Ener_U(H_U(\ell_e+c))}{|U|}\,.
	\label{e.upper.dirichlet}
\end{equation}
The two directional quantities are ordered and dominated by the average of the coefficient,
\begin{equation}
	0\leq\a(U,e;\Csol)\leq\a(U,e;\Csub)\leq e\cdot(\a)_Ue\,,
	\label{e.upper.coefficient.bound}
\end{equation}
and consequently
\begin{equation}
	0<|\a(U)|\leq|(\a)_U|<\infty\,.
	\label{e.upper.norm.bound}
\end{equation}
\end{proposition}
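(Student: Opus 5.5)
The plan is to reduce everything to harmonic replacement, Proposition~\ref{p.harmonic.replacement}, applied to affine boundary data. Fix~$e\in\Rd$ and~$c\in\R$. The admissible class in~\eqref{e.upper.matrix.definition} is exactly~$\ell_e+\Hzero{\a}(U)$; adding~$c$ does not change it, since constants have zero energy and~$H_U(g+c)=H_Ug+c$. By~\eqref{e.harmonic.minimization}, every admissible~$w$ satisfies~$\Ener_U(w)=\Ener_U(H_U\ell_e)+\Ener_U(w-H_U\ell_e)$. Hence the minimum is attained at~$H_U\ell_e$, and it equals~$\Ener_U(H_U(\ell_e+c))/|U|$. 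Linearity of~$g\mapsto H_Ug$ gives~$H_U\ell_e=\sum_ie_iH_U\ell_{e_i}$, where~$e_i$ are the coordinate vectors. Therefore~$e\mapsto\Ener_U(H_U\ell_e)$ is a quadratic form, with symmetric matrix entries~$|U|^{-1}\int_U\nabla H_U\ell_{e_i}\cdot\a\nabla H_U\ell_{e_j}$.

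\textbf{Identification with the sup over solutions.} Let~$w\in\Csol(U)$. Then~$\ell_e-H_U\ell_e\in\Hzero{\a}(U)$ is an admissible test function in~\eqref{e.harmonic.orthogonality}, and by symmetry of~$\a$ this gives~$\int_U e\cdot\a\nabla w=\int_U\nabla H_U\ell_e\cdot\a\nabla w$. Pointwise completing the square then yields
\begin{equation*}
	\fint_U\bigl(-\nabla w\cdot\a\nabla w+2e\cdot\a\nabla w\bigr)=\fint_U\nabla H_U\ell_e\cdot\a\nabla H_U\ell_e-\fint_U\nabla(w-H_U\ell_e)\cdot\a\nabla(w-H_U\ell_e)\,.
\end{equation*}
This is maximized at~$w=H_U\ell_e$, so~$\a(U,e;\Csol)=e\cdot\a(U)e$. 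Since~$\Csol\subseteq\Csub$, the first two inequalities of~\eqref{e.upper.coefficient.bound} follow: the first by taking~$w=0$, the second by monotonicity of the sup. For the last, I use the pointwise bound~$-\xi\cdot\a\xi+2e\cdot\a\xi\leq e\cdot\a e$, valid for any~$w$. Averaging it gives~$\a(U,e;\Csub)\leq e\cdot(\a)_Ue$, and this average is finite because~$\tr\a\in L^1(U)$.

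\textbf{Positive definiteness.} I expect this step to be the only one with content. I need~$\Ener_U(H_U\ell_e)>0$ for~$e\neq0$. If the energy vanished, then~$\nabla H_U\ell_e=0$ a.e., since~$\a>0$ a.e. Proposition~\ref{p.weighted.calculus} places~$H_U\ell_e$ in~$W^{1,1}(U)$, and~$U$ is connected, so~$H_U\ell_e$ would be constant. Then~$\ell_e-\text{const}\in\Hzero{\a}(U)\subseteq W^{1,1}_0(U)$, again by Proposition~\ref{p.weighted.calculus}. But a nonconstant affine function minus a constant cannot have zero trace on the boundary of a bounded domain. To see this cleanly, integrate~$\partial_e(\ell_e-\kappa)=|e|^2$ over~$U$: a~$W^{1,1}_0$ function has zero mean derivative, whereas here the integral is~$|e|^2|U|\neq0$. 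This is a contradiction. Finally,~\eqref{e.upper.norm.bound} follows from positivity together with~$\a(U)\leq(\a)_U$ in the sense of quadratic forms, taking the supremum over unit~$e$.
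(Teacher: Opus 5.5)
Your proposal is correct and follows the paper's proof essentially step by step. You use harmonic replacement of $\ell_e$ for the minimum and the quadratic form, test the equation of $w\in\Csol(U)$ with $\ell_e-H_U\ell_e$ and complete the square for the supremum, bound $e\cdot(\a)_Ue$ by the pointwise square, and get positive definiteness from the zero mean gradient of elements of $\Hzero{\a}(U)$.

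Two small remarks. First, the identity $\int_U e\cdot\a\nabla w=\int_U\nabla H_U\ell_e\cdot\a\nabla w$ comes from the equation of $w$ itself, not from symmetry of $\a$: either Lemma~\ref{l.weighted.testing}, or \eqref{e.harmonic.orthogonality} with $H_Uw=w$ by uniqueness. Second, your detour through constancy of $H_U\ell_e$ and connectedness is unnecessary. Since $\int_U\nabla(\ell_e-H_U\ell_e)=0$ by \eqref{e.weighted.coercivity}, you get $(\nabla H_U\ell_e)_U=e$ directly, which is how the paper concludes.
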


Proposition~\ref{p.upper.dirichlet} is proved in Appendix~\ref{a.weighted.proofs}.

\paragraph{The coarse-grained matrix~$\a_*(U)$.} The definition~\eqref{e.lower.matrix.definition} of~$\a_*^{-1}(U)$ pairs~$e$ with the gradient rather than the flux. Proposition~\ref{p.lower.matrix} proves that it is a positive definite quadratic form and that its supremum also equals the supremum over~$H^1_\a(U)$. Consequently,~$\a_*^{-1}(U)$ bounds the mean gradient of every~$w\in H^1_\a(U)$. Section~\ref{s.fractional.reconstruction} uses this bound on the simplices of the triangulations.

\begin{proposition}[The coarse-grained matrix~$\a_*(U)$]
\label{p.lower.matrix}
The right side of~\eqref{e.lower.matrix.definition} is a positive definite quadratic form in~$e$. The supremum is attained by a mean-zero solution and equals
\begin{equation}
	e\cdot\a_*^{-1}(U)e=\sup_{w\in H^1_\a(U)}\fint_U\bigl(-\nabla w\cdot\a\nabla w+2e\cdot\nabla w\bigr)\,.
	\label{e.lower.all.functions}
\end{equation}
For every~$w\in H^1_\a(U)$ and~$e\in\Rd$,
\begin{equation}
	\bigl|e\cdot(\nabla w)_U\bigr|^2\leq\bigl(e\cdot\a_*^{-1}(U)e\bigr)\frac{\Ener_U(w)}{|U|}\,,\qquad\mbox{in particular}\qquad |(\nabla w)_U|^2\leq|\a_*^{-1}(U)|\frac{\Ener_U(w)}{|U|}\,.
	\label{e.lower.mean.gradient}
\end{equation}
The matrix~$\a_*^{-1}(U)$ is dominated by the average of the inverse coefficient,
\begin{equation}
	\a_*^{-1}(U)\leq(\a^{-1})_U\,,
	\label{e.lower.coefficient.bound}
\end{equation}
and consequently
\begin{equation}
	0<|\a_*^{-1}(U)|\leq|(\a^{-1})_U|<\infty\,.
	\label{e.lower.norm.bound}
\end{equation}
\end{proposition}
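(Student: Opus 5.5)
The plan is to work in the Hilbert space $H^1_\a(U)$ and use harmonic replacement (Proposition~\ref{p.harmonic.replacement}) to pass from arbitrary functions to solutions. The analytic input is that mean gradients are bounded linear functionals of the energy. For $w\in H^1_\a(U)$, Cauchy--Schwarz in the form $|e\cdot\nabla w|\leq(e\cdot\a^{-1}e)^{\nf12}(\nabla w\cdot\a\nabla w)^{\nf12}$ gives
\begin{equation*}
	\Bigl|\int_U e\cdot\nabla w\Bigr|\leq\Bigl(\int_U e\cdot\a^{-1}e\Bigr)^{\nf12}\Ener_U(w)^{\nf12}\,.
\end{equation*}
This holds first for smooth functions of finite energy and then, by density and the continuity $H^1_\a(U)\hookrightarrow W^{1,1}(U)$ of Proposition~\ref{p.weighted.calculus}, on all of $H^1_\a(U)$. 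Since $\tr\a^{-1}\in L^1(U)$, the right side is finite.

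\emph{Step 1: the supremum over solutions equals the supremum over $H^1_\a(U)$.} Given $w\in H^1_\a(U)$, write $w=H_Uw+\varphi$ with $\varphi\in\Hzero{\a}(U)$. Every $\varphi\in C^\infty_c(U)$ has $\int_U\nabla\varphi=0$. The $W^{1,1}$ continuity extends this to $\Hzero{\a}(U)$, so $(\nabla w)_U=(\nabla H_Uw)_U$. By~\eqref{e.harmonic.minimization}, $\Ener_U(w)\geq\Ener_U(H_Uw)$. Hence the functional in~\eqref{e.lower.all.functions} at $w$ is at most its value at $H_Uw\in\Csol(U)$. The reverse inequality is trivial, which gives~\eqref{e.lower.all.functions}.

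\emph{Step 2: attainment and quadratic structure.} $\Csol(U)$ is a closed subspace of $H^1_\a(U)$, because its defining condition is continuous in the energy seminorm. The functional is invariant under adding constants. So I restrict to mean-zero solutions, on which $\Ener_U^{1/2}$ is a Hilbert norm. There the map $w\mapsto\fint_Ue\cdot\nabla w$ is bounded, and the Riesz representation theorem gives a unique mean-zero $w_e\in\Csol(U)$ with
\begin{equation*}
	\fint_U\nabla\phi\cdot\a\nabla w_e=\fint_Ue\cdot\nabla\phi\qquad\mbox{for all }\phi\in\Csol(U)\,.
\end{equation*}
Completing the square shows that $w_e$ attains the supremum, with value $e\cdot(\nabla w_e)_U=\fint_U\nabla w_e\cdot\a\nabla w_e$. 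Since $e\mapsto w_e$ is linear, the bilinear form $(e,e')\mapsto\fint_U\nabla w_{e'}\cdot\a\nabla w_e$ is symmetric. It defines $\a_*^{-1}(U)$, and the quadratic form is $e\cdot\a_*^{-1}(U)e$.

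\emph{Step 3: positivity and the two bounds.} For positive definiteness, test~\eqref{e.lower.all.functions} with $s\ell_e$ for $s\in\R$. This gives $e\cdot\a_*^{-1}(U)e\geq\sup_s\bigl(2s|e|^2-s^2e\cdot(\a)_Ue\bigr)=|e|^4/\bigl(e\cdot(\a)_Ue\bigr)>0$ for $e\neq0$, using $\tr\a\in L^1(U)$.

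For~\eqref{e.lower.mean.gradient}, test~\eqref{e.lower.all.functions} with $sw$ and optimize in $s$. This gives $|e\cdot(\nabla w)_U|^2\leq(e\cdot\a_*^{-1}(U)e)\,\Ener_U(w)/|U|$ when $\Ener_U(w)>0$. If $\Ener_U(w)=0$, letting $s\to\infty$ forces $e\cdot(\nabla w)_U=0$. Taking $e$ in the direction of $(\nabla w)_U$ gives the operator-norm version.

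For~\eqref{e.lower.coefficient.bound}, use the pointwise inequality $-\xi\cdot\a\xi+2e\cdot\xi\leq e\cdot\a^{-1}e$, which is Young's inequality in the $\a$-geometry. Averaging it over $U$ and taking the supremum gives $\a_*^{-1}(U)\leq(\a^{-1})_U$. Then~\eqref{e.lower.norm.bound} follows from this together with positive definiteness.

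The main obstacle is the functional-analytic bookkeeping rather than any estimate. One must justify that $\int_U\nabla\varphi=0$ for $\varphi\in\Hzero{\a}(U)$ and that $\Csol(U)$ is closed. I will handle both through the continuity $H^1_\a(U)\hookrightarrow W^{1,1}(U)$ from Proposition~\ref{p.weighted.calculus} and the mean-gradient bound above, applied along approximating sequences.
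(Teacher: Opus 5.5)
Your proof is correct and follows essentially the paper's route: Riesz representation of $w\mapsto e\cdot(\nabla w)_U$ in the energy inner product, completing the square, scaling $w\mapsto cw$ for \eqref{e.lower.mean.gradient}, and the pointwise identity $-\xi\cdot\a\xi+2e\cdot\xi=e\cdot\a^{-1}e-(\xi-\a^{-1}e)\cdot\a(\xi-\a^{-1}e)$ for \eqref{e.lower.coefficient.bound}. There are two small differences. The paper applies the Riesz theorem on the whole mean-zero subspace of $H^1_\a(U)$, so the representer is automatically a solution (test with $C^\infty_c(U)$) and one completion of the square gives both suprema at once; this makes your Step~1 and the closedness of $\Csol(U)$ unnecessary, although the paper records your harmonic-replacement argument separately in the main text. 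It also proves positive definiteness by inserting $w=\ell_e$ into the Riesz identity, rather than by optimizing over $s\ell_e$.
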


For~$w\in H^1_\a(U)$, the difference~$H_Uw-w$ belongs to~$\Hzero{\a}(U)$, so~$(\nabla H_Uw)_U=(\nabla w)_U$ by~\eqref{e.weighted.coercivity}, and~$\Ener_U(H_Uw)\leq\Ener_U(w)$ by~\eqref{e.harmonic.minimization}. Replacing~$w$ by the solution~$H_Uw$ therefore does not decrease the functional in~\eqref{e.lower.all.functions}, so the supremum over~$H^1_\a(U)$ is at most the one over~$\Csol(U)$ in~\eqref{e.lower.matrix.definition}; the reverse inequality holds because~$\Csol(U)\subseteq H^1_\a(U)$. Inserting~$cw$ in~\eqref{e.lower.all.functions} and optimizing over~$c\in\R$ gives~\eqref{e.lower.mean.gradient}. The proof of Proposition~\ref{p.lower.matrix} in Appendix~\ref{a.weighted.proofs} obtains the quadratic form and the mean-zero maximizer from the Riesz representation of~$w\mapsto e\cdot(\nabla w)_U$.

\smallskip

All results of this section apply to the interior of a nondegenerate simplex, which is a bounded convex, hence Lipschitz, domain~\cite[Chapter 3, p.~27]{Guermond.traces}; Section~\ref{s.triadic.simplices} uses them on the simplices of triangulations. When~$\a$ is given on a larger domain, Proposition~\ref{p.weighted.calculus}\ref{i.weighted.domains} supplies restriction and zero extension. The mean-gradient bounds~\eqref{e.lower.mean.gradient}, which Section~\ref{s.fractional.reconstruction} uses quantitatively, have constant one and do not involve~$\int\tr\a^{\pm1}$.

\paragraph{Ordering.} The two matrices are ordered,
\begin{equation}
	\a_*(U)\leq\a(U)\,,
	\label{e.matrix.ordering}
\end{equation}
equivalently~$\a(U)^{-1}\leq\a_*^{-1}(U)$. Indeed, by~\eqref{e.weighted.coercivity} and~\eqref{e.upper.dirichlet}, the function~$H_U\ell_\xi$ has mean gradient~$\xi$ and normalized energy~$\xi\cdot\a(U)\xi$. Hence~\eqref{e.lower.all.functions} gives, for every~$e\in\Rd$,
\begin{equation*}
	e\cdot\a_*^{-1}(U)e
	\geq\sup_{\xi\in\Rd}\bigl(2e\cdot\xi-\xi\cdot\a(U)\xi\bigr)
	=e\cdot\a(U)^{-1}e\,.
\end{equation*}

\paragraph{Subadditivity.} Let~$\{U_i\}_{i\in I}$ be a finite or countable family of pairwise disjoint bounded connected Lipschitz subdomains of~$U$ whose union covers~$U$ up to a null set. Then
\begin{equation}
	\a(U)\leq\sum_{i\in I}\frac{|U_i|}{|U|}\a(U_i)\,,
	\label{e.upper.subadditivity}
\end{equation}
and
\begin{equation}
	\a_*^{-1}(U)\leq\sum_{i\in I}\frac{|U_i|}{|U|}\a_*^{-1}(U_i)\,.
	\label{e.lower.cube.aggregation}
\end{equation}
Both matrix series converge absolutely, and the bounds are proved in Appendix~\ref{a.weighted.proofs}.

\section{Triangulations and coarse ellipticity constants}
\label{s.triadic.simplices}

This section fixes the simplices on which the coarse-grained matrices are evaluated, defines the coarse ellipticity constants and condition, and collects the exponents used below. We use two families of simplices: the nested triadic triangulations~$\mathscr T_k$ of~$\cu_0$, for~$k\geq0$, and the Whitney simplices outside a cube, obtained from a Whitney decomposition. The size of a Whitney simplex is comparable to its distance from the cube, so the extension of Section~\ref{s.whitney.extension} uses matrices at the scale on which it varies. Whitney simplices of size~$3^{-j}$ inside~$\cu_0$ belong to~$\mathscr T_j$; the sums defining~$\Lambda_{s,1,p}(\cu_0)$ include~$\a(\triangle)$ for these~$\triangle$.

\paragraph{Cubes and simplices.} We use the conventions for cubes of Section~\ref{ss.notation}. Geometric constants~$c,C$ depend only on~$d$. For a permutation~$\pi$ of~$\{1,\ldots,d\}$, the standard simplex is
\begin{equation}
	\triangle_n^\pi(z)\coloneqq z+3^n\bigl\{x\in\Rd:-\frac12<x_{\pi(1)}<\cdots<x_{\pi(d)}<\frac12\bigr\}\,.
	\label{e.simplex.def}
\end{equation}
These~$d!$ simplices partition~$z+\cu_n$ up to faces, each has volume~$3^{nd}/d!$, and they differ only by coordinate permutations and dilation, so their diameter-to-inradius ratios are bounded by a dimensional constant. For each~$k\geq0$ we use the full triangulation of~$\cu_0$,
\begin{equation}
	\mathscr T_k\coloneqq\bigl\{\triangle_{-k}^{\pi}(z):z\in 3^{-k}\Zd\cap\cu_0,\ \pi\text{ a permutation}\bigr\}\,,
	\label{e.simplex.family}
\end{equation}
with~$d!\,3^{kd}$ simplices of volume~$3^{-kd}/d!$, whose closures cover~$\overline{\cu}_0$. The arithmetic mean is therefore a volume average:
\begin{equation}
	\avsum_{\triangle\in\mathscr T_k}b_\triangle\coloneqq\frac{1}{d!\,3^{kd}}\sum_{\triangle\in\mathscr T_k}b_\triangle=\sum_{\triangle\in\mathscr T_k}|\triangle|b_\triangle\,.
	\label{e.partition.average}
\end{equation}
For~$U\subseteq\cu_0$ let~$\mathscr T_k(U)\coloneqq\{\triangle\in\mathscr T_k:\triangle\subset U\}$, the simplices of~$\mathscr T_k$ contained in~$U$. If~$U$ is the interior of a union of closed triadic cubes of side length at least~$3^{-k}$, then~$\mathscr T_k(U)$ tiles~$U$ up to faces. By~\eqref{e.partition.average}, for nonnegative numbers~$b_\triangle$,
\begin{equation}
	\sum_{\triangle\in\mathscr T_k(U)}|\triangle|\,b_\triangle\leq\avsum_{\triangle\in\mathscr T_k}b_\triangle\,.
	\label{e.partition.restricted}
\end{equation}
The triangulations are nested and meet face to face. By a volume comparison, we have, for~$N\geq1$ and~$x\in\Rd$,
\begin{equation}
	\#\bigl\{\triangle\in\mathscr T_k:\overline\triangle\cap B(x,N3^{-k})\neq\varnothing\bigr\}\leq C(1+N)^d\,.
	\label{e.simplex.count}
\end{equation}

\smallskip

\paragraph{Whitney cubes.} For~$\nf12\leq\tau<1$, we cover the exterior~$\Rd\setminus\tau\overline{\cu}_0$ of the cube~$\tau\cu_0$ by Whitney cubes: we select the maximal triadic cubes~$D$, of all side lengths~$3^n$,~$n\in\Z$, with~$5\overline D\cap\tau\overline{\cu}_0=\varnothing$.

\begin{lemma}[Whitney cubes]
\label{l.whitney.cubes}
The selected cubes have disjoint interiors, their closures cover~$\Rd\setminus\tau\overline{\cu}_0$, and they are locally finite there. For every selected~$D$,
\begin{equation}
	2\ell(D)<\dist_\infty(\overline D,\tau\overline{\cu}_0)\leq\dist_\infty(x,\tau\overline{\cu}_0)\leq9\ell(D)\qquad(x\in\overline D)\,.
	\label{e.whitney.distance}
\end{equation}
\end{lemma}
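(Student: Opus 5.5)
Our plan is to prove the Whitney cube properties directly from the triadic nesting and the maximality in the selection rule. Write $K\coloneqq\tau\overline{\cu}_0$, and say a triadic cube $D$ is \emph{admissible} if $5\overline D\cap K=\varnothing$. Since $5\overline D$ is the closed cube of side $5\ell(D)$ with the same center, $D$ is admissible exactly when $\dist_\infty(\overline D,K)>2\ell(D)$. Indeed, $5\overline D$ is the closed $\ell^\infty$-neighbourhood of $\overline D$ of radius $2\ell(D)$, and $K$ is compact.

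\emph{Disjointness and covering.} Two triadic cubes are either nested or have disjoint interiors. Distinct maximal admissible cubes therefore cannot be nested, so their interiors are disjoint. For the covering, take $x\notin K$, so $\delta\coloneqq\dist_\infty(x,K)>0$. Any triadic cube $D$ with $x\in\overline D$ and $\ell(D)<\delta/3$ satisfies $\dist_\infty(\overline D,K)\geq\delta-\ell(D)>2\ell(D)$, so it is admissible. Admissible cubes containing $x$ in their closure have $\dist_\infty(\overline D,K)\leq\dist_\infty(x,K)=\delta$, which gives $\ell(D)<\delta/2$. Their side lengths are therefore bounded, and the triadic ancestors of a fixed small admissible cube containing $x$ form a chain in which admissibility is inherited downward. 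Taking the largest admissible ancestor gives a maximal selected cube whose closure contains $x$. (The ancestor chain is well defined because triadic cubes at scale $3^n$ containing a given cube at scale $3^m$, $m<n$, are unique.)

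\emph{Distance bounds.} The first inequality of \eqref{e.whitney.distance} is admissibility, and the middle one is trivial. For the upper bound, which we expect to be the main point, we use maximality. If $D$ is selected with $\ell(D)=3^n$, its triadic parent $\widehat D$, of side $3^{n+1}$, is not admissible. So there is $y\in K$ with $\dist_\infty(y,\overline{\widehat D})\leq2\cdot3^{n+1}$. For $x\in\overline D\subseteq\overline{\widehat D}$, the $\ell^\infty$ diameter of $\overline{\widehat D}$ is $3^{n+1}$, so
\[
\dist_\infty(x,K)\leq|x-y|_\infty\leq3^{n+1}+2\cdot3^{n+1}=9\ell(D).
\]
The parent must exist as a triadic cube: triadic cubes at every scale tile $\Rd$ (centers $z\in3^n\Zd$, with $3^n\cu_0$ triadically nested because $3$ is odd), so this is fine.

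\emph{Local finiteness.} Fix a compact $F\subseteq\Rd\setminus K$ and set $\delta_0\coloneqq\dist_\infty(F,K)>0$. If a selected $D$ meets $F$, then by \eqref{e.whitney.distance} we get $\ell(D)\geq\delta_0/9$. Also $\ell(D)<\dist_\infty(\overline D,K)/2$ is bounded above by $C(\operatorname{diam}F+1)$. Only finitely many triadic cubes with side lengths in such a range meet the bounded set $F$.
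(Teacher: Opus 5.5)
Your proof is correct and follows the paper's argument essentially step for step. The shared steps are: maximal admissible ancestors exist by triadic nesting and a bound on admissible side lengths; the lower bound comes from $5\overline D$ being the closed $\ell^\infty$-neighbourhood of $\overline D$ of radius $2\ell(D)$; the upper bound $9\ell(D)$ comes from non-admissibility of the parent plus its $\ell^\infty$-diameter $3\ell(D)$; and local finiteness comes from two-sided bounds on $\ell(D)$. The only slip is in the local-finiteness step: the upper bound should read $\ell(D)<\tfrac12\max_{x\in F}\dist_\infty(x,\tau\overline{\cu}_0)$, which is finite because $F$ is compact, rather than $C(\operatorname{diam}F+1)$, since the latter fails for a small $F$ far from the cube if $C$ depends only on $d$; any finite bound suffices for the argument.
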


\begin{proof}
An exterior point lies in the closure of arbitrarily small admissible cubes, and ancestors of side length greater than~$|x|_\infty+1$ are not admissible, so a maximal admissible ancestor exists; triadic cubes are nested or have disjoint interiors. Fivefold enlargement reduces each positive coordinate gap to~$\tau\overline{\cu}_0$ by~$2\ell(D)$, which gives the lower bound. The parent, of side length~$3\ell(D)$, is not admissible, so its distance to~$\tau\overline{\cu}_0$ is at most~$6\ell(D)$, and points of its closure are within~$3\ell(D)$ of each other; this gives the upper bound. On a compact exterior set,~\eqref{e.whitney.distance} bounds the sizes of the cubes meeting it from above and below, which gives local finiteness.
\end{proof}

\smallskip

\paragraph{Whitney simplices.}
We subdivide each selected cube once into its~$3^d$ children and triangulate them by~\eqref{e.simplex.def}. The resulting simplices are the \emph{Whitney simplices} of~$\tau\overline{\cu}_0$. A Whitney simplex has \emph{size}~$3^{-j}$ if its selected cube~$D$ has side length~$3^{1-j}$, so that
\begin{equation}
	\ell(D)=3^{1-j}\qquad\mbox{and}\qquad |\triangle|=\frac{3^{-jd}}{d!}\,.
	\label{e.whitney.simplex.size}
\end{equation}
For~$j\geq0$, the Whitney simplices of size~$3^{-j}$ contained in~$\cu_0$ belong to~$\mathscr T_j$. Whitney simplices of different sizes need not meet face to face.

\smallskip

\begin{figure}[t]
\centering
\begin{tikzpicture}[x=4cm,y=4cm,line join=round]
% cubes of side length 1/3 below and 1 above, and the Whitney simplices in their children
\foreach \ya/\yb/\m in {0.4/1/{1/9},1/1.75/{1/3}}{
	\begin{scope}
	\clip (0,\ya) rectangle (3,\yb);
	\draw[black!45,line width=0.25pt,step=\m] (0,0) grid (3,2);
	\foreach \k in {-30,...,30} \draw[black!45,line width=0.25pt] (\k*\m,0) -- ++(2,2);
	\draw[line width=0.7pt,step=3*\m] (0,0) grid (3,2);
	\end{scope}}
% free and hanging vertices where the cube size changes
\foreach \k in {0,...,9} \fill (\k/3,1) circle (1.8pt);
\foreach \k in {0,...,8} \foreach \t in {1,2} \filldraw[fill=white,line width=0.5pt] (\k/3+\t/9,1) circle (1.8pt);
% a hanging vertex z and the edge from which g(z) is interpolated
\draw[red!75!black,line width=2.2pt] (4/3,1) -- (5/3,1);
\fill (4/3,1) circle (2pt) (5/3,1) circle (2pt);
\filldraw[fill=white,line width=0.6pt] (13/9,1) circle (2.1pt);
\node[above=3pt,fill=white,inner sep=1pt] at (13/9,1) {$z$};
\node[below=3pt,fill=white,inner sep=1pt,text=red!75!black] at (1.5,1) {$g(z)$};
\end{tikzpicture}
\caption{Whitney cubes (thick lines) and Whitney simplices (thin lines) in dimension two, where cubes of side length~$\ell$ meet cubes of side length~$3\ell$. Filled circles are free vertices and open circles are hanging vertices. At a hanging vertex~$z$, the function~$g$ of Lemma~\ref{l.whitney.interpolation} is interpolated from its values at the endpoints of the red edge.}
\label{f.whitney.vertices}
\end{figure}

\paragraph{Free and hanging vertices.} For~$n\in\Z$, the simplices~$\triangle_n^\pi(y)$,~$y\in3^n\Zd$, form the triangulation of~$\Rd$ of \emph{mesh}~$3^n$. Its restriction to~$\cu_m$,~$m\geq n$, is a dilation of~$\mathscr T_{m-n}$, so these triangulations are nested and meet face to face. The Whitney simplices in a selected cube~$D$ are the simplices of mesh~$\ell(D)/3$ in~$D$. A vertex of a Whitney simplex is \emph{free} if it is a vertex of every Whitney simplex whose closure contains it, and \emph{hanging} otherwise; see Figure~\ref{f.whitney.vertices}. Section~\ref{s.whitney.extension} prescribes values at the free vertices and extends them by the following lemma.

\begin{lemma}[Interpolation on Whitney simplices]
\label{l.whitney.interpolation}
Given real values at the free vertices, there is a unique continuous function~$g$ on~$\Rd\setminus\tau\overline{\cu}_0$, affine on every Whitney simplex, attaining these values. On the closure~$\overline D$ of each selected cube~$D$,~$g$ lies between the smallest and largest prescribed values at free vertices in~$\overline D$, and
\begin{equation}
	\norm{\nabla g}_{L^\infty(D)}\leq\frac{C(d)}{\ell(D)}\max\bigl\{|g(z)-g(z')|:z,z'\in\overline D\mbox{ free vertices}\bigr\}\,.
	\label{e.interpolation.gradient}
\end{equation}
\end{lemma}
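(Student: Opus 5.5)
The plan is to build~$g$ cube by cube and check that the local pieces agree, using the geometry of the Whitney decomposition.

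\emph{Adjacent cubes.} First I would record the basic adjacency fact. If two selected cubes~$D,D'$ have touching closures, then~$\ell(D')\in\{\ell(D)/3,\ell(D),3\ell(D)\}$. This follows from~\eqref{e.whitney.distance}: a common point~$x$ gives~$2\ell(D)<\dist_\infty(x,\tau\overline{\cu}_0)\leq9\ell(D')$, so~$\ell(D)<\nf92\,\ell(D')$, and the same holds with~$D,D'$ exchanged. Since side lengths are powers of~$3$, the ratio is~$3^{-1},1$ or~$3$.

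Inside a single~$D$, the Whitney simplices are those of mesh~$\ell(D)/3$. Their vertices are the lattice points~$y\in(\ell(D)/3)\Zd\cap\overline D$, and these simplices meet face to face. A vertex~$y$ of~$D$ is hanging exactly when it lies in the closure of a neighbor~$D'$ with~$\ell(D')=\ell(D)/3$, but~$y$ is not a vertex of~$D'$'s coarser mesh. That last case is impossible, because the finer mesh~$\ell(D)/9$ refines~$\ell(D)/3$. So hanging vertices arise in the opposite direction: a vertex of the finer cube~$D'$ lying on a face of a coarser~$D$, which is not a lattice point of mesh~$\ell(D)/3$.

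\emph{Construction.} Consequently, on each~$D$ all lattice vertices of mesh~$\ell(D)/3$ in~$\overline D$ are free. Indeed, any simplex whose closure contains such a vertex belongs either to~$D$, to a same-size neighbor with the same lattice, or to a finer neighbor whose lattice contains it. I would then define~$g$ on~$\overline D$ as the piecewise affine (Kuhn/Freudenthal) interpolant of the prescribed values at these vertices. This makes~$g$ continuous on~$\overline D$ and affine on each simplex.

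\emph{Compatibility.} Next I need agreement on common faces. For equal-size neighbors this is immediate, since the triangulations meet face to face. For a neighbor~$D'$ with~$\ell(D')=\ell(D)/3$, the interface~$\overline D\cap\overline{D'}$ is a face piece. The triangulations of mesh~$\ell(D)/3$ and~$\ell(D)/9$ are nested, so restricted to a common face, the coarse interpolant is affine on each coarse face simplex, and the fine simplices refine it. Hence the fine cube's vertices on the interface that are not coarse lattice points are exactly the hanging vertices. I would assign them the value of the coarse affine interpolant, which is the interpolation along the red edge in Figure~\ref{f.whitney.vertices}.

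Before doing this I must make sure these hanging values are well defined. A hanging vertex might lie on faces of several coarse cubes, but those coarse cubes agree with each other by the same-size case. A hanging vertex is never adjacent to a still finer cube that would need its value as a free value: it is hanging, so it receives a value rather than imposing one. Finally,~$g$ on~$D'$ is the interpolant of the free and hanging values, and on the interface it equals the coarse function, because an affine function on the coarse face simplex is reproduced by linear interpolation on the refining simplices. Uniqueness is then clear: on each coarse simplex~$g$ is determined by its vertex values, which are free, and on each fine simplex its hanging vertex values are forced by continuity.

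\emph{Bounds.} Kuhn interpolation is a convex combination of vertex values, so on~$\overline D$ the function~$g$ lies between the extreme values at lattice vertices of~$\overline D$. Those vertices are free, which gives the range bound. For the gradient, on each simplex of mesh~$h=\ell(D)/3$ with vertices~$y_0,\ldots,y_d$ along a Kuhn path, the partial derivatives are differences~$(g(y_i)-g(y_{i-1}))/h$. This yields~\eqref{e.interpolation.gradient} with~$C=3\sqrt d$.

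The main obstacle I expect is the careful identification of free versus hanging vertices, and the consistency of hanging values where several interfaces meet, such as edges and corners in higher dimension. I would handle it by the nestedness of the meshes together with the adjacency fact above.
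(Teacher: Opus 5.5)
Your plan matches the paper's: interpolate on each selected cube, give each hanging vertex the value of the affine interpolant on the coarser face through it, check agreement on interfaces by nestedness of the meshes, and read off both bounds from convex combinations. The gap is in identifying the free vertices.

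You conclude that on \emph{every} selected cube $D$ all lattice vertices of mesh $\ell(D)/3$ in $\overline D$ are free. Your justification lists $D$, same-size neighbors and finer neighbors, but omits coarser neighbors. Your own preceding paragraph shows the claim is false: the lattice vertices of a cube that lie on its interface with a cube of three times its side, but off the coarser mesh, are hanging (the open circles in Figure~\ref{f.whitney.vertices}). Three later steps rest on this false claim:
\begin{itemize}
\item that the coarse interpolant defining a hanging value involves only prescribed values, not other hanging values (your sentence that a hanging vertex ``receives a value rather than imposing one'' does not rule out such chains);
\item the uniqueness argument;
\item the range and gradient bounds, which on a cube with a coarser neighbor you again justify by saying its lattice vertices are free.
\end{itemize}

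The missing idea is the paper's key step. For a hanging vertex $z$, let $\ell$ be the smallest side length of a selected cube whose closure contains $z$. By your adjacency fact these cubes have sides $\ell$ and $3\ell$, and both occur, since otherwise $z$ would be free. Let $F_z$ be the face of the mesh-$\ell$ triangulation whose relative interior contains $z$. The closure of each of these cubes is a union of closed simplices of mesh $\ell$, so it contains $F_z$. Hence every vertex $v$ of $F_z$ lies in the closures of cubes of both sizes. Every selected cube whose closure contains $v$ touches both, so it has side $\ell$ or $3\ell$, i.e.\ Whitney mesh $\ell/3$ or $\ell$. Since $v$ is a mesh-$\ell$ vertex, it is a vertex of both meshes, hence free.

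This makes every hanging value a one-step interpolation of prescribed values, which gives well-definedness and uniqueness. The inclusion $F_z\subset\overline D$ for every selected $D$ with $z\in\overline D$ makes all values of $g$ on $\overline D$ convex combinations of free values in $\overline D$. That gives the range bound and \eqref{e.interpolation.gradient}.

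For freeness alone there is also a quick repair of your route. The max-norm distance to $\tau\overline{\cu}_0$ varies by at most $\ell(D)$ on $\overline D$. By \eqref{e.whitney.distance}, a finer neighbor forces a point of $\overline D$ at distance at most $3\ell(D)$, and a coarser neighbor forces one at distance greater than $6\ell(D)$. So no cube has both, and a cube with a finer neighbor has all its lattice vertices free. You would still need the inclusion $F_z\subset\overline D$ to get the bounds on the finer cubes.
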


\begin{proof}
If selected cubes~$D$ and~$E$ have intersecting closures, then~\eqref{e.whitney.distance} gives~$2\ell(D)<9\ell(E)$. Since their side lengths are powers of three,
\begin{equation}
	\ell(D)\leq3\ell(E)\qquad\mbox{if }\overline D\cap\overline E\neq\varnothing\,.
	\label{e.whitney.neighbors}
\end{equation}
We use the nesting and face-to-face properties of the standard triangulations. In particular, a vertex of a mesh is a vertex of every closed simplex of that mesh containing it, and remains a vertex on finer meshes.

\smallskip

Let~$z$ be a hanging vertex and~$\ell$ the smallest side length of a selected cube whose closure contains~$z$. The selected cubes whose closures contain~$z$ have side lengths~$\ell$ and~$3\ell$, by~\eqref{e.whitney.neighbors}, and both occur: otherwise~$z$ would be free. Let~$F_z$ be the unique face of mesh~$\ell$ whose relative interior contains~$z$. The closure of each of these cubes is a union of closed simplices of mesh~$\ell$, so it contains~$F_z$. Every vertex~$v$ of~$F_z$ therefore belongs to the closures of cubes of both side lengths. Any selected cube whose closure contains~$v$ touches both of these cubes, so its side length is~$\ell$ or~$3\ell$, by~\eqref{e.whitney.neighbors}. Its Whitney mesh is consequently~$\nf\ell3$ or~$\ell$. Since~$v$ is a vertex of mesh~$\ell$, it is free.

\smallskip

At each hanging vertex~$z$, assign~$g(z)$ by affine interpolation of the prescribed values at the vertices of~$F_z$. On each selected cube~$D$, interpolate all its Whitney vertex values to obtain a continuous piecewise affine function~$g_D$ on~$\overline D$.

\smallskip

Let~$D$ and~$E$ be selected cubes whose closures meet, and let~$I\coloneqq\overline D\cap\overline E$. We check that~$g_D=g_E$ on~$I$. If~$\ell(D)=\ell(E)$, the two cubes carry the same triangulation and the same vertex values on~$I$. Otherwise, by~\eqref{e.whitney.neighbors} we may assume~$\ell(D)=\ell$ and~$\ell(E)=3\ell$. Every selected cube whose closure meets~$I$ has side length~$\ell$ or~$3\ell$, by~\eqref{e.whitney.neighbors}. Thus the vertices of mesh~$\ell$ in~$I$ are free, and every other vertex of mesh~$\nf\ell3$ in~$I$ is hanging, with~$F_z$ a face of mesh~$\ell$ in~$I$. The assigned fine vertex values are therefore precisely those of~$g_E$. The intersection~$I$ is a union of faces of mesh~$\ell$, on each of which~$g_E$ is affine. Interpolating these affine traces on the finer nested triangulation reproduces them, so~$g_D=g_E$ on~$I$. This argument applies to intersections of every dimension.

\smallskip

The cube closures form a locally finite cover of the exterior, by Lemma~\ref{l.whitney.cubes}, so the functions~$g_D$ define a continuous function~$g$ there. Uniqueness follows because any continuous function affine on every Whitney simplex is affine on~$F_z$, which is a face of a Whitney simplex in a largest selected cube whose closure contains~$z$. Its value at each hanging vertex is therefore forced by the prescribed free values.

\smallskip

Finally,~$F_z\subset\overline D$ whenever~$z\in\overline D$. Hence all vertex values used in~$g_D$, and then all its values, are convex combinations of prescribed values at free vertices in~$\overline D$. The Whitney simplices in~$D$ are translates of the standard simplices dilated by~$\frac{\ell(D)}3$. Their shape regularity bounds the gradient of an affine function by~$\frac{C(d)}{\ell(D)}$ times its largest vertex difference. This convex-combination property controls the difference by the maximum in~\eqref{e.interpolation.gradient}, proving that estimate.
\end{proof}

\subsection{Coarse ellipticity constants}

The constants below use square roots of matrix moments because the estimates are based on quadratic energy. The lower matrix bounds mean gradients through~\eqref{e.lower.mean.gradient}, while the upper matrix enters the exterior energy pairing after Cauchy--Schwarz (Section~\ref{s.good.radius.energy}). These bounds are used in the fractional reconstruction and the piecewise harmonic extension, respectively.

\begin{definition}[Coarse ellipticity constants]
\label{d.cg.constants}
Let~$s>0$ and~$1\leq\mathsf m,p\leq\infty$. The upper and lower coarse ellipticity constants of~$\a$ on~$\cu_0$, with discount~$s$, scale summability~$\mathsf m$ and spatial integrability~$p$, are, for finite~$\mathsf m$ and~$p$,
\begin{equation}
	\Lambda_{s,\mathsf m,p}(\cu_0)\coloneqq\biggl((1-3^{-s\mathsf m})\sum_{k\geq0}3^{-s\mathsf mk}\biggl(\avsum_{\triangle\in\mathscr T_k}|\a(\triangle)|^p\biggr)^{\nf{\mathsf m}{2p}}\biggr)^{\nf2{\mathsf m}}\,,
	\label{e.upper.spatial.moment}
\end{equation}
and
\begin{equation}
	\lambda_{s,\mathsf m,p}(\cu_0)\coloneqq\biggl((1-3^{-s\mathsf m})\sum_{k\geq0}3^{-s\mathsf mk}\biggl(\avsum_{\triangle\in\mathscr T_k}|\a_*^{-1}(\triangle)|^p\biggr)^{\nf{\mathsf m}{2p}}\biggr)^{-\nf2{\mathsf m}}\,.
	\label{e.lower.spatial.moment}
\end{equation}
For~$p=\infty$, the spatial mean~$(\avsum_{\triangle\in\mathscr T_k}|\cdot|^p)^{1/p}$ is replaced by the maximum over~$\triangle\in\mathscr T_k$. For~$\mathsf m=\infty$, the normalized sum over scales is replaced by the supremum:
\begin{equation*}
	\Lambda_{s,\infty,p}(\cu_0)^{\nf12}\coloneqq\sup_{k\geq0}3^{-ks}\biggl(\avsum_{\triangle\in\mathscr T_k}|\a(\triangle)|^p\biggr)^{\nf1{2p}}
\end{equation*}
and
\begin{equation*}
	\lambda_{s,\infty,p}(\cu_0)^{-\nf12}\coloneqq\sup_{k\geq0}3^{-ks}\biggl(\avsum_{\triangle\in\mathscr T_k}|\a_*^{-1}(\triangle)|^p\biggr)^{\nf1{2p}}\,.
\end{equation*}
The values lie in~$(0,\infty]$ and~$[0,\infty)$, respectively.
\end{definition}

The lower constant never exceeds the upper one, whatever the indices: for~$s,t>0$ and~$1\leq\mathsf m_1,p_1,\mathsf m_2,p_2\leq\infty$,~$\lambda_{t,\mathsf m_2,p_2}(\cu_0)\leq\Lambda_{s,\mathsf m_1,p_1}(\cu_0)$, in the extended sense if a constant degenerates, with equality for~$\a=cI$,~$c>0$. This is proved below, after~\eqref{e.moment.comparison}.

\smallskip

The regularity estimates use only scale summability one. With the discount~$t$ and the spatial integrability~$q$ for the lower constant, the definitions~\eqref{e.upper.spatial.moment} and~\eqref{e.lower.spatial.moment} then read
\begin{equation}
	\Lambda_{s,1,p}(\cu_0)^{\nf12}=(1-3^{-s})\sum_{k\geq0}3^{-ks}\biggl(\avsum_{\triangle\in\mathscr T_k}|\a(\triangle)|^p\biggr)^{\nf1{2p}}\,,
	\label{e.upper.half.series}
\end{equation}
and
\begin{equation}
	\lambda_{t,1,q}(\cu_0)^{-\nf12}=(1-3^{-t})\sum_{k\geq0}3^{-kt}\biggl(\avsum_{\triangle\in\mathscr T_k}|\a_*^{-1}(\triangle)|^q\biggr)^{\nf1{2q}}\,.
	\label{e.lower.half.series}
\end{equation}
For a constant matrix~$\a=\bfA$, every affine function is harmonic, so~$\a(\triangle)=\bfA$, and the affine solution~$x\mapsto\bfA^{-1}e\cdot x$ attains the bound~\eqref{e.lower.coefficient.bound}, so~$\a_*^{-1}(\triangle)=\bfA^{-1}$. Since~$(1-3^{-s\mathsf m})\sum_k3^{-s\mathsf mk}=1$ for finite~$\mathsf m$, and the supremum over~$k$ is attained at~$k=0$ for~$\mathsf m=\infty$, every coarse ellipticity constant of a constant matrix equals~$|\bfA|$ or~$|\bfA^{-1}|^{-1}$.

\paragraph{Comparison of the two constants.} We prove the ordering stated after Definition~\ref{d.cg.constants}. Let~$s,t>0$ and~$1\leq \mathsf m_1,p_1,\mathsf m_2,p_2\leq\infty$. For every~$k\geq0$, the subadditivity bounds~\eqref{e.upper.subadditivity} and~\eqref{e.lower.cube.aggregation} for the partition~$\mathscr T_k$ of~$\cu_0$, monotonicity and the triangle inequality for the operator norm,~\eqref{e.partition.average} and the power-mean inequality give
\begin{equation*}
	|\a(\cu_0)|\leq\biggl(\avsum_{\triangle\in\mathscr T_k}|\a(\triangle)|^{p_1}\biggr)^{\nf1{p_1}}\qquad\mbox{and}\qquad|\a_*^{-1}(\cu_0)|\leq\biggl(\avsum_{\triangle\in\mathscr T_k}|\a_*^{-1}(\triangle)|^{p_2}\biggr)^{\nf1{p_2}}\,,
\end{equation*}
with maxima in place of means for infinite exponents. The scale weights in~\eqref{e.upper.spatial.moment} and~\eqref{e.lower.spatial.moment} sum to one, and for infinite scale summability the term~$k=0$ suffices, so~$|\a(\cu_0)|\leq\Lambda_{s,\mathsf m_1,p_1}(\cu_0)$ and~$|\a_*^{-1}(\cu_0)|\leq\lambda_{t,\mathsf m_2,p_2}(\cu_0)^{-1}$. With the ordering~\eqref{e.matrix.ordering}, for finite positive constants,
\begin{equation}
	\Lambda_{s,\mathsf m_1,p_1}(\cu_0)^{-1}I\leq\a(\cu_0)^{-1}\leq\a_*^{-1}(\cu_0)\leq\lambda_{t,\mathsf m_2,p_2}(\cu_0)^{-1}I\,.
	\label{e.moment.comparison}
\end{equation}
Hence~$\lambda_{t,\mathsf m_2,p_2}(\cu_0)\leq\Lambda_{s,\mathsf m_1,p_1}(\cu_0)$ when both constants are finite and positive, and trivially in the extended sense otherwise; equality for~$\a=cI$ follows from the constant case above. In particular~$\Theta\geq1$.

\paragraph{Comparison with cubes.} The simplices are adapted to the piecewise affine functions used in the proof. To compare the hypothesis with one computed on cubes, let
\begin{equation}
	\mathscr Q_k\coloneqq\{z+\cu_{-k}:z\in3^{-k}\Zd\cap\cu_0\}\qquad(k\geq0)\,,
	\label{e.cubical.family}
\end{equation}
and define~$\widetilde\Lambda_{s,\mathsf m,p}(\cu_0)$ and~$\widetilde\lambda_{t,\mathsf m,q}(\cu_0)$ by replacing~$\mathscr T_k$ with~$\mathscr Q_k$ in Definition~\ref{d.cg.constants}, with the same conventions at infinite exponents. Each cube has volume~$3^{-kd}$, so the arithmetic mean over~$\mathscr Q_k$ is again a volume average. The following proposition compares the two choices of cells without changing the discount or the spatial exponent.

\begin{proposition}[Comparison with cubes]
\label{p.cubical.simplicial.equivalence}
Assume~\eqref{e.weighted.hypotheses} on~$\cu_0$. Let~$1\leq\mathsf m\leq\infty$,~$1<p,q\leq\infty$, and suppose that
\begin{equation}
	0<s<\frac12\biggl(1-\frac1p\biggr) \qquad\mbox{and}\qquad 0<t<\frac12\biggl(1-\frac1q\biggr)\,.
	\label{e.cubical.simplicial.range}
\end{equation}
There exists~$C=C(d,s,t,p,q)<\infty$, independent of~$\mathsf m$, such that
\begin{equation}
	\widetilde\Lambda_{s,\mathsf m,p}(\cu_0) \leq\Lambda_{s,\mathsf m,p}(\cu_0) \leq C\widetilde\Lambda_{s,\mathsf m,p}(\cu_0)\,,
	\label{e.Lambda.cube.simplex.equivalence}
\end{equation}
and
\begin{equation}
	C^{-1}\widetilde\lambda_{t,\mathsf m,q}(\cu_0) \leq\lambda_{t,\mathsf m,q}(\cu_0) \leq\widetilde\lambda_{t,\mathsf m,q}(\cu_0)\,.
	\label{e.lambda.cube.simplex.equivalence}
\end{equation}
The inequalities with constant one hold for all~$s,t>0$ and~$1\leq p,q\leq\infty$.
\end{proposition}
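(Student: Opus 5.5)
The inequalities with constant one come from subadditivity applied to cubes split into simplices. The reverse inequalities come from subadditivity applied to simplices split into countably many triadic cubes, followed by a counting argument and a discrete convolution estimate across scales; that convolution is where the restriction \eqref{e.cubical.simplicial.range} enters.

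\emph{Constant-one bounds.} Each cube $Q=z+\cu_{-k}\in\mathscr Q_k$ is tiled up to faces by its $d!$ simplices $\triangle^\pi_{-k}(z)\in\mathscr T_k$. The upper bound \eqref{e.upper.subadditivity} and the triangle inequality for the operator norm give
\[
|\a(Q)|\le\avsum_\pi|\a(\triangle^\pi_{-k}(z))|\,.
\]
By Jensen's inequality, $|\a(Q)|^p\le\avsum_\pi|\a(\triangle^\pi_{-k}(z))|^p$. Averaging over $z$ shows that the cubical spatial mean at scale $k$ is at most the simplicial one, and for $p=\infty$ the maximum is likewise at most the maximum. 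The map from the scale-wise means to the constant is monotone for every $\mathsf m$, so $\widetilde\Lambda_{s,\mathsf m,p}\le\Lambda_{s,\mathsf m,p}$. The same argument with \eqref{e.lower.cube.aggregation} gives $\widetilde\lambda^{-1}\le\lambda^{-1}$, that is, $\lambda_{t,\mathsf m,q}\le\widetilde\lambda_{t,\mathsf m,q}$. None of this uses \eqref{e.cubical.simplicial.range}.

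\emph{Decomposing a simplex into cubes.} Fix $\triangle\in\mathscr T_k$ and take the maximal open triadic cubes contained in $\triangle$. They are pairwise disjoint, and their union is the open set $\triangle$ up to its boundary, which is a null set. A cube of side $3^{-k-j}$ contained in $\triangle$ lies in the triadic cube of side $3^{-k}$ containing $\triangle$, so it belongs to $\mathscr Q_{k+j}$, and only $j\ge1$ occurs. Maximality forces such a cube to lie within distance $C3^{-k-j}$ of $\partial\triangle$. Since $\partial\triangle$ is a union of $d+1$ faces of diameter about $3^{-k}$, the number of these cubes at level $j$ is
\[
N_j\le C3^{j(d-1)}\,.
\]
The countable subadditivity \eqref{e.upper.subadditivity} then gives
\[
|\a(\triangle)|\le\sum_{j\ge1}\sum_{Q}d!\,3^{-jd}|\a(Q)|\,,
\]
where the inner sum runs over the level-$j$ cubes $Q$ in $\triangle$. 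The analogous bound for $\a_*^{-1}$ follows from \eqref{e.lower.cube.aggregation}. I expect this geometric step to be the main point to check carefully: the boundary-layer count for simplices whose faces include diagonal hyperplanes, and the requirement that each piece be an admissible connected Lipschitz domain (cubes are).

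\emph{Spatial averaging and summation over scales.} Let
\[
M_n\coloneqq\biggl(\avsum_{Q\in\mathscr Q_n}|\a(Q)|^p\biggr)^{\nf1p}\,.
\]
Apply Minkowski's inequality in $j$. For the inner sum use Hölder's inequality, $\bigl(\sum_{i\le N_j}x_i\bigr)^p\le N_j^{p-1}\sum_i x_i^p$. Each cube of $\mathscr Q_{k+j}$ lies in at most one $\triangle$, so averaging over the $d!\,3^{kd}$ simplices and renormalizing by the $3^{(k+j)d}$ cubes gives
\[
\biggl(\avsum_{\triangle\in\mathscr T_k}|\a(\triangle)|^p\biggr)^{\nf1p}\le C\sum_{j\ge1}\bigl(3^{-jdp}\,3^{j(d-1)(p-1)}\,3^{jd}\bigr)^{\nf1p}M_{k+j}=C\sum_{j\ge1}3^{-j(1-\nf1p)}M_{k+j}\,.
\]
Take square roots, using subadditivity of $x\mapsto x^{1/2}$ on sums, and multiply by $3^{-ks}$. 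Writing $a_n\coloneqq3^{-ns}M_n^{1/2}$, this yields
\[
3^{-ks}\biggl(\avsum_{\triangle\in\mathscr T_k}|\a(\triangle)|^p\biggr)^{\nf1{2p}}\le C\sum_{j\ge1}3^{j\left(s-\frac12(1-\frac1p)\right)}a_{k+j}\,.
\]
This is a discrete convolution of $(a_n)$ with a kernel whose $\ell^1$ norm is finite exactly when $s<\frac12(1-\frac1p)$. Young's inequality in $\ell^{\mathsf m}$, with $\mathsf m=\infty$ allowed, bounds the $\ell^{\mathsf m}$ norm over $k$ by $C$ times the $\ell^{\mathsf m}$ norm of $(a_n)$, with $C$ independent of $\mathsf m$. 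The normalizing factors $(1-3^{-s\mathsf m})$ are identical on both sides, so this gives $\Lambda_{s,\mathsf m,p}\le C\widetilde\Lambda_{s,\mathsf m,p}$. For $p=\infty$ use maxima directly: the sum over the $N_j$ cubes is at most $N_j\,3^{-jd}\max_Q|\a(Q)|\le C3^{-j}\max_Q|\a(Q)|$, which matches the exponent $1-\nf1p=1$.

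The identical computation with $\a_*^{-1}$, $q$ and $t$, using \eqref{e.lower.cube.aggregation} and the condition $t<\frac12(1-\frac1q)$, gives $\lambda_{t,\mathsf m,q}^{-1}\le C\widetilde\lambda_{t,\mathsf m,q}^{-1}$. This is the left inequality of \eqref{e.lambda.cube.simplex.equivalence}.
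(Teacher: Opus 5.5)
Your proposal is correct and follows essentially the same route as the paper. The constant-one bounds come from subadditivity over the $d!$ simplices of each cube. For the reverse bounds, you decompose each simplex into triadic cubes lying in boundary layers of volume fraction $C3^{-j}$, then apply H\"older, Minkowski, square roots and a summable discrete convolution in $\ell^{\mathsf m}$ under $s<\frac12(1-\frac1p)$. The only difference is cosmetic: the paper selects Whitney cubes with $3\overline Q\subseteq\triangle$ rather than all maximal triadic cubes contained in $\triangle$, and this gives the same layer estimate.
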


Proposition~\ref{p.cubical.simplicial.equivalence} is proved in Appendix~\ref{a.cubical.simplicial}. The restriction~\eqref{e.cubical.simplicial.range} reduces to~$s,t<\nf12$ when~$p=q=\infty$. For finite spatial exponents, H\"older's inequality on the Whitney layers gives the additional terms~$1/p$ and~$1/q$.

\paragraph{The coarse ellipticity condition.} The hypothesis of Theorem~\ref{t.cg.local.boundedness}, which we call the \emph{coarse ellipticity condition}, is that~$1<p,q<\infty$ and
\begin{equation}
	s,t>0\,,\qquad s+t+\frac{d-1}{2}\biggl(\frac1p+\frac1q\biggr)<1\,,\qquad \Lambda_{s,1,p}(\cu_0)<\infty\,,\qquad \lambda_{t,1,q}(\cu_0)>0\,.
	\label{e.spatial.moment.range}
\end{equation}
The second condition says that the exponent~$\theta$ of~\eqref{e.intro.parameters} is positive. Under the coarse ellipticity condition, the coarse ellipticity ratio~$\Theta$ of~\eqref{e.spatial.moment.ratio} is finite.

If, in addition,~\eqref{e.cubical.simplicial.range} holds, then~\eqref{e.spatial.moment.range} is equivalent to the condition obtained by replacing~$\Lambda$ and~$\lambda$ with~$\widetilde\Lambda$ and~$\widetilde\lambda$. In this range, Proposition~\ref{p.cubical.simplicial.equivalence} gives
\begin{equation}
	\widetilde\Theta\coloneqq\frac{\widetilde\Lambda_{s,1,p}(\cu_0)}{\widetilde\lambda_{t,1,q}(\cu_0)}\leq\Theta\leq C\widetilde\Theta\,,
	\label{e.cubical.simplicial.ratio}
\end{equation}
where~$C$ depends only on~$d,p,q,s,t$. Thus Theorems~\ref{t.cg.local.boundedness} and~\ref{t.harnack} and Corollary~\ref{c.lr.bound} hold with~$\widetilde\Theta$ in place of~$\Theta$, after changing their constants. The restrictions~\eqref{e.cubical.simplicial.range} are additional to~\eqref{e.spatial.moment.range}; the latter does not imply them.

\subsection{Parameters}

The exponents~$\sigma$,~$\sigma_*$ and~$r$ of~\eqref{e.intro.parameters} satisfy
\begin{equation}
	\frac{d-1}{2p}=\sigma-s\,,\qquad (d-1)\Bigl(\frac1r-\frac12\Bigr)=\frac{d-1}{2q}=\sigma_*-t\,,\qquad (1-t)-\frac{d-1}{2q}=1-\sigma_*\,.
	\label{e.reconstruction.parameters}
\end{equation}
The trace of a subsolution on a cube boundary will be controlled in~$W^{1-t,r}$. With~$\alpha=1-t$ fixed, the fractional Sobolev exponents of Section~\ref{ss.notation} for~$\xi=r$ are
\begin{equation}
	r^*=\frac{dr}{d-(1-t)r}\,,\qquad r_\partial^*=\frac{(d-1)r}{d-1-(1-t)r}=\frac{2(d-1)}{d-3+2\sigma_*}\,.
	\label{e.sobolev.exponents}
\end{equation}
The last identity follows from~$\nf1{r_\partial^*}=\nf1r-\nf{(1-t)}{(d-1)}=\nf12-\nf{(1-\sigma_*)}{(d-1)}$, by~\eqref{e.reconstruction.parameters}. From~\eqref{e.spatial.moment.range},
\begin{equation}
	0<\sigma\,,\qquad 0<t<\sigma_*<1-\sigma\,,\qquad 1<r<2\,,\qquad r^*>r\,,\qquad r_\partial^*>2\,.
	\label{e.parameter.facts}
\end{equation}
We have~$\sigma\geq s>0$,~$\sigma_*-t=\nf{(d-1)}{(2q)}>0$, and~$1-\sigma-\sigma_*=\theta>0$. Also~$1<r<2$ because~$1<q<\infty$. Since~$d\geq3$ and~$\sigma_*>0$,
\begin{equation*}
	d-1-(1-t)r=\frac r2(d-3+2\sigma_*)>0\,.
\end{equation*}
Thus both exponents in~\eqref{e.sobolev.exponents} are finite, and~$r^*>r$. The formula for~$r_\partial^*$ gives~$r_\partial^*>2$ because~$\sigma_*<1$. It is the Sobolev exponent of~$W^{1-\sigma_*,2}$ in dimension~$d-1$.

The exponents~$r^*$ and~$r_\partial^*$ provide the superlinear gains of the De Giorgi iteration in Section~\ref{s.local.boundedness}: on the boundary of a cube with a good radius through Lemma~\ref{l.critical.trace.embedding}, and on the smaller cube in the two-level recurrence through the fractional Sobolev inequality~\eqref{e.ordinary.fractional.sobolev}. The recurrence uses that~$\nf{2r^*}r$ and~$r_\partial^*$ exceed two, and the moment iteration of Section~\ref{s.harnack} uses~$r^*>r$; the exponent~$r^*$ itself may be smaller than two.

\smallskip

In the estimate of Section~\ref{s.good.radius.energy}, the Whitney simplices of size~$3^{-j}$ contribute two factors besides their volume and the gradient scale. The trace in~$W^{1-t,r}$ gains~$3^{-j(1-\sigma_*)}$ over boundary patches of size~$3^{-j}$, after the loss~$3^{j(d-1)(\frac1r-\frac12)}=3^{j(\sigma_*-t)}$ from comparing its averages in~$L^r$ rather than~$L^2$ on patches of dimension~$d-1$ (Section~\ref{s.whitney.extension}). The matrices lose~$3^{j\sigma}$: the square roots of~$|\a(\triangle)|$ may grow like~$3^{js}$, as the discount in~$\Lambda_{s,1,p}(\cu_0)$ allows, and taking their maximum near a cube boundary, of dimension~$d-1$, introduces the factor~$3^{j(d-1)/(2p)}$ (Section~\ref{s.good.radii}); the series~\eqref{e.boundary.maxima.series} of these maxima carries the compensating weight~$3^{-j\sigma}$. Multiplying these factors gives
\begin{equation*}
	3^{j\sigma}3^{-j(1-\sigma_*)}=3^{-j\theta}\,.
\end{equation*}
Since~$\theta>0$, the Whitney simplices of each size~$3^{-j}<h$ contribute at most~$h^\theta$ in Section~\ref{s.good.radius.energy}.

\smallskip

The exponents~$\gamma_1,\gamma_2,\ldots$ in the estimates below are the powers of~$(\rho_2-\rho_1)^{-1}$, the inverse of the gap between two radii; they are positive, depend only on~$d,p,q,s,t$, are defined where they arise, and matter only through their finiteness. They arise because the good radius is chosen in an interval of length comparable to~$\rho_2-\rho_1$ and the localization works at that scale. Unless stated otherwise, constants in Sections~\ref{s.good.radii}--\ref{s.local.boundedness} depend only on~$d,p,q,s,t$.

\section{Embedding into fractional Sobolev spaces}
\label{s.fractional.reconstruction}

The lower constant~$\lambda_{t,1,q}(\cu_0)$ controls fractional regularity through averages of mean gradients. Proposition~\ref{p.lower.fractional} shows that every~$w\in H^1_\a(Q)$ belongs to~$W^{1-t,r}(Q)$ for the cubes~$Q$ specified there, with~$r=\nf{2q}{(q+1)}$ and seminorm bounded by~$C\lambda_{t,1,q}(\cu_0)^{-\nf12}\Ener_Q(w)^{\nf12}$. Section~\ref{s.good.radii} restricts this regularity to cube boundaries to control the trace used in the extension of Section~\ref{s.whitney.extension}. Sections~\ref{s.local.boundedness} and~\ref{s.harnack} also use this fractional estimate on cubes.

The exponent~$r<2$ comes from Lemma~\ref{l.lower.averages}. At each scale the energies of the cubes sum to the energy of their union, whereas~$\lambda_{t,1,q}(\cu_0)$ controls the~$q$-th spatial moment of~$\a_*^{-1}(\triangle)$. H\"older's inequality therefore bounds the mean gradients in~$L^r$, where~$\nf r2+\nf r{(2q)}=1$.

\smallskip

For~$n\in\N_0$,~$z\in3^{-n-1}\Zd$, the cube~$Q=z+\cu_{-n}$,~$k\geq n$ and~$f\in L^1(Q)$, set
\begin{equation*}
	P_k(f;Q)\coloneqq\sum_{y\in3^{-k}\Zd\cap\cu_{-n}}(f)_{z+y+\cu_{-k}}\indc_{z+y+\cu_{-k}}\,.
\end{equation*}
The~$3^{d(k-n)}$ cubes~$z+y+\cu_{-k}$ tile~$Q$ up to faces, and~$P_n(f;Q)=(f)_Q$. Equalities hold almost everywhere. For~$k\geq n+1$ the cubes~$z+y+\cu_{-k}$ are triadic; if~$Q\subseteq\cu_0$, each of them is tiled by the~$d!$ simplices of~$\mathscr T_k(z+y+\cu_{-k})$, and~$\mathscr T_k(Q)$ tiles~$Q$. In a proof in which~$Q$ is fixed, we write~$P_kf$ for~$P_k(f;Q)$ and say so at its beginning.

\smallskip

If~$w\in W^{1,1}(Q)$, each~$P_k(\nabla w;Q)$ is a finite step function. The first lemma bounds it in~$L^r$ by the matrices~$\a_*^{-1}(\triangle)$,~$\triangle\in\mathscr T_k(Q)$.

\begin{lemma}[The matrices~$\a_*$ on cubes]
\label{l.lower.averages}
Let~$n\in\N_0$,~$z\in3^{-n-1}\Zd$ and~$Q=z+\cu_{-n}\subset\cu_0$. Assume~\eqref{e.weighted.hypotheses} on~$\cu_0$, and let~$r$ be as in~\eqref{e.intro.parameters}. For every~$w\in H^1_\a(Q)$ and~$k\geq n+1$,
\begin{equation}
	\norm{P_k(\nabla w;Q)}_{L^r(Q)}\leq\biggl(\sum_{\triangle\in\mathscr T_k(Q)}|\triangle|\,|\a_*^{-1}(\triangle)|^q\biggr)^{\nf1{2q}}\Ener_Q(w)^{\nf12}\,.
	\label{e.lower.spatial.holder}
\end{equation}
\end{lemma}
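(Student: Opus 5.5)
The plan is to prove the bound cube by cube at scale~$3^{-k}$, by passing to the simplices of~$\mathscr T_k(Q)$ and then applying H\"older's inequality over the simplices. Fix~$k\geq n+1$ and write~$P_k\nabla w$ for~$P_k(\nabla w;Q)$. Each subcube~$z+y+\cu_{-k}$ is triadic and is tiled by its~$d!$ simplices in~$\mathscr T_k$. Its average of~$\nabla w$ is the volume-weighted average of the simplex averages~$(\nabla w)_\triangle$. By convexity of~$|\cdot|^r$, this gives
\begin{equation*}
	\norm{P_k\nabla w}_{L^r(Q)}^r\leq\sum_{\triangle\in\mathscr T_k(Q)}|\triangle|\,|(\nabla w)_\triangle|^r\,.
\end{equation*}
It therefore suffices to bound the right side by the right side of~\eqref{e.lower.spatial.holder} raised to the power~$r$.

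On each simplex, restriction to~$\triangle$ is legitimate by Proposition~\ref{p.weighted.calculus}, since~$w|_\triangle\in H^1_\a(\triangle)$ and the interior of a simplex is Lipschitz. We can then apply the mean-gradient bound~\eqref{e.lower.mean.gradient} on~$U=\triangle$:
\begin{equation*}
	|(\nabla w)_\triangle|^2\leq|\a_*^{-1}(\triangle)|\,\frac{\Ener_\triangle(w)}{|\triangle|}\,.
\end{equation*}
Hence~$|\triangle|\,|(\nabla w)_\triangle|^r\leq|\triangle|^{1-\nf r2}|\a_*^{-1}(\triangle)|^{\nf r2}\Ener_\triangle(w)^{\nf r2}$.

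Next apply H\"older's inequality over~$\triangle\in\mathscr T_k(Q)$ with exponents~$\nf2{(2-r)}$ and~$\nf2r$. Since~$r=\nf{2q}{(q+1)}$, we have~$1-\nf r2=\nf r{(2q)}$, so~$\nf2{(2-r)}\cdot\nf r2=q$ and~$(1-\nf r2)\cdot\nf2{(2-r)}=1$. This yields
\begin{equation*}
	\sum_{\triangle}|\triangle|^{1-\nf r2}|\a_*^{-1}(\triangle)|^{\nf r2}\Ener_\triangle(w)^{\nf r2}\leq\biggl(\sum_\triangle|\triangle|\,|\a_*^{-1}(\triangle)|^q\biggr)^{\nf r{2q}}\biggl(\sum_\triangle\Ener_\triangle(w)\biggr)^{\nf r2}\,.
\end{equation*}
The simplices tile~$Q$ up to null faces, so~$\sum_\triangle\Ener_\triangle(w)=\Ener_Q(w)$. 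Taking~$r$-th roots gives~\eqref{e.lower.spatial.holder}.

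The only delicate point is the bookkeeping: checking that~$w\in W^{1,1}(Q)$, which Proposition~\ref{p.weighted.calculus} provides so that the averages are defined, and that restriction to simplices keeps the energy additive. The algebra of the exponents is routine once~$1-\nf r2=\nf r{2q}$ is noted.
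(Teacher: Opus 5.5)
Your proof is correct. It differs from the paper's only in where you pass from the cubes of side $3^{-k}$ to the simplices of $\mathscr T_k(Q)$.

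The paper applies the mean-gradient bound~\eqref{e.lower.mean.gradient} directly on each cube $D=z+y+\cu_{-k}$ and runs H\"older's inequality over the cubes. It then replaces $\sum_D|D|\,|\a_*^{-1}(D)|^q$ by the simplex sum, using the subadditivity bound~\eqref{e.lower.cube.aggregation} for the partition $\mathscr T_k(D)$ and Jensen's inequality on the matrix side.

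You instead pass to simplices on the gradient side, by convexity of $|\cdot|^r$, and apply~\eqref{e.lower.mean.gradient} on the simplices themselves. So you never need subadditivity of $\a_*^{-1}$, which the paper proves in Appendix~\ref{a.weighted.proofs}, and your intermediate inequality controls the finer step function of the simplex averages $(\nabla w)_\triangle$.

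The paper's route gives in exchange a sharper intermediate bound, in terms of the cube matrices $\a_*^{-1}(D)$. That is, the estimate already holds with the cubical quantities that define $\widetilde\lambda_{t,1,q}(\cu_0)$. For the lemma as stated, the two arguments are equally short and reach the same inequality.
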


\begin{proof}
Write~$P_k=P_k(\cdot\,;Q)$, and let~$D$ range over the cubes~$z+y+\cu_{-k}$,~$y\in3^{-k}\Zd\cap\cu_{-n}$. By~\eqref{e.lower.cube.aggregation} for the partition~$\mathscr T_k(D)$ of each~$D$, monotonicity and the triangle inequality for the operator norm, and Jensen's inequality,
\begin{equation*}
	\sum_D|D|\,|\a_*^{-1}(D)|^q\leq\sum_D\sum_{\triangle\in\mathscr T_k(D)}|\triangle|\,|\a_*^{-1}(\triangle)|^q=\sum_{\triangle\in\mathscr T_k(Q)}|\triangle|\,|\a_*^{-1}(\triangle)|^q\,.
\end{equation*}
Since~$\nf{r}{2}+\nf{r}{(2q)}=1$, H\"older's inequality over~$D$ and~\eqref{e.lower.mean.gradient} give
\begin{align*}
	\norm{P_k\nabla w}_{L^r(Q)}^r=\sum_D|D|\,|(\nabla w)_D|^r&\leq\sum_D\bigl(|D|\,|\a_*^{-1}(D)|^q\bigr)^{\nf{r}{2q}}\Ener_D(w)^{\nf r2}\\
	&\leq\biggl(\sum_{\triangle\in\mathscr T_k(Q)}|\triangle|\,|\a_*^{-1}(\triangle)|^q\biggr)^{\nf r{2q}}\Ener_Q(w)^{\nf r2}\,.\qedhere
\end{align*}
\end{proof}

The next estimate does not involve the coefficients. It recovers the fractional differences of~$w$ from the triadic averages of its gradient.

\begin{lemma}[Reconstruction from triadic gradient averages]
\label{l.fractional.reconstruction}
Let~$n\in\N_0$,~$z\in3^{-n-1}\Zd$ and~$Q=z+\cu_{-n}$. For~$0<\alpha<1$ and~$1<\xi<\infty$ there exists~$C(d,\alpha,\xi)<\infty$ such that every~$w\in W^{1,1}(Q)\cap L^\xi(Q)$ satisfies
\begin{equation}
	[w]_{W^{\alpha,\xi}(Q)}\leq C\sum_{k\geq n}3^{-k(1-\alpha)}\norm{P_k(\nabla w;Q)}_{L^\xi(Q)}\,.
	\label{e.fractional.reconstruction}
\end{equation}
\end{lemma}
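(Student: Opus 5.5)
Since both sides of \eqref{e.fractional.reconstruction} are invariant under translation and transform identically under dilation, I will first reduce to~$n=0$ and~$Q=\cu_0$. Under~$x\mapsto 3^{-n}x$, the seminorm~$[w]_{W^{\alpha,\xi}}$ scales by~$3^{n(\alpha-d/\xi)}$. The term~$3^{-k(1-\alpha)}\norm{P_k\nabla w}_{L^\xi}$ scales in the same way once~$k$ is shifted by~$n$, because the gradient contributes~$3^n$ and the volume contributes~$3^{-nd/\xi}$. So it suffices to treat~$Q=\cu_0$, writing~$P_kf=P_k(f;\cu_0)$ and~$g_k\coloneqq P_k\nabla w$. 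I would then avoid the Fourier transform and work with the triadic martingale of~$w$ itself.

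\textbf{Step 1: telescoping.} Write~$w-(w)_Q=\sum_{k\geq0}(P_{k+1}w-P_kw)$, with convergence in~$L^\xi$ by the Lebesgue differentiation theorem, since~$w\in L^\xi$. On a triadic cube~$D$ of side~$3^{-k}$, the increment~$P_{k+1}w-P_kw$ is the vector of differences between the averages on the~$3^d$ children of~$D$ and the average on~$D$. I would bound these differences by~$C3^{-k}|(\nabla w)_{\widehat D}|$ plus a fluctuation term, where~$\widehat D$ is~$D$ or a neighbouring cube. The tool is the identity for~$W^{1,1}$ functions relating averages on adjacent cubes through gradients: integrate~$\nabla w$ over the convex hull of a pair of adjacent child cubes, which is a box.

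Making this work requires replacing~$w$ on~$D$ by the affine function with slope~$(\nabla w)_D$. The difference of child averages of~$w$ equals that of the affine function, namely~$3^{-k-1}(\nabla w)_D\cdot(\text{center shift})$, plus the difference of child averages of~$w-\ell_D$. The latter is controlled by finer gradient averages through the same decomposition iterated. Summing the resulting geometric series gives, pointwise,
\[
|P_{k+1}w-P_kw|\leq C\sum_{j\geq k}3^{-j}\,M_{j}(g_j)\,,
\]
where~$M_j$ is a local averaging over the~$O(1)$ neighbours at scale~$3^{-j}$ intersected with~$D$. Hence~$\norm{P_{k+1}w-P_kw}_{L^\xi}\leq C\sum_{j\geq k}3^{-j}\norm{g_j}_{L^\xi}$.

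I expect this to be the main obstacle. One must express differences of coarse averages of~$w$ exactly through triadic gradient averages at all finer scales, rather than through~$|\nabla w|$, since only the~$g_j$ appear on the right of \eqref{e.fractional.reconstruction}. If the direct bookkeeping gets messy, my fallback is a cleaner exact identity: for adjacent cubes~$D,D'$ of side~$3^{-j}$, the difference~$(w)_{D'}-(w)_D$ equals~$3^{-j}e\cdot$ the average of~$\nabla w$ over~$D\cup D'$ (the box) minus boundary-slab corrections. I would then telescope~$(w)_{D}-(w)_{D'}$ at level~$j$ against level~$j+1$.

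\textbf{Step 2: fractional seminorm of the pieces.} Each~$h_k\coloneqq P_{k+1}w-P_kw$ is piecewise constant on cubes of side~$3^{-k-1}$, so the standard estimate for step functions applies. For~$\alpha\xi<1$,
\[
[h_k]_{W^{\alpha,\xi}}\leq C3^{k\alpha}\norm{h_k}_{L^\xi}\,,
\]
since the jump sets have dimension~$d-1$ and~$\alpha\xi<1$ makes the singular integral across faces converge. For~$\alpha\xi\geq1$ this fails, and I would instead split~$[w]^\xi$ dyadically in~$|x-y|\sim3^{-m}$. For~$|x-y|\sim 3^{-m}$, I bound~$\int|w(x)-w(y)|^\xi$ by~$\norm{w-P_mw}_{L^\xi}^\xi$ plus~$\norm{P_mw(\cdot)-P_mw(\cdot+h)}^\xi$. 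The first is handled by Step~1 as a tail sum~$\sum_{k\geq m}$. For the second, neighbouring differences at level~$m$ are handled by Step~1 again, giving~$C3^{-m}\norm{g_m}$ plus finer terms. This yields
\[
[w]_{W^{\alpha,\xi}}^\xi\leq C\sum_m3^{m\alpha\xi}\Bigl(\sum_{j\geq m}3^{-j}\norm{g_j}_{L^\xi}\Bigr)^\xi .
\]
This route works for all~$0<\alpha<1$, and I would use it throughout.

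\textbf{Step 3: summation.} Setting~$a_j=3^{-j(1-\alpha)}\norm{g_j}_{L^\xi}$, the inner sum equals~$3^{-m\alpha}\sum_{j\geq m}3^{-(j-m)\alpha}a_j$. Hence
\[
[w]_{W^{\alpha,\xi}}\leq C\Bigl(\sum_m\bigl(\sum_{j\geq m}3^{-(j-m)\alpha}a_j\bigr)^\xi\Bigr)^{1/\xi}\leq C\bigl\|(a_j)\bigr\|_{\ell^\xi}\leq C\sum_j a_j ,
\]
by Young's inequality for the convolution with the summable kernel~$3^{-i\alpha}$ and the embedding~$\ell^1\subset\ell^\xi$. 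This is the right side of \eqref{e.fractional.reconstruction} with~$n=0$.
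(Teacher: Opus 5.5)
Your route is correct in outline and genuinely different from the paper's. The architecture (the triadic martingale of $w$, dyadic shells in $|x-y|$, discrete Young) works. However, Step~1, which carries all the content, is only a heuristic as written, for three reasons.

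\begin{itemize}
\item Iterating ``the same decomposition'' does not close. After one step you face differences between averages over cubes lying in \emph{different} children, not parent-minus-children differences.
\item The fallback ``uniform box average minus boundary-slab corrections'' is not the right identity.
\item The pointwise bound with averages localized at scale $3^{-j}$ is false. The increment $P_{k+1}w-P_kw$ is constant on each child of $D$, while all the $g_j$ with $j\geq k$ may vanish outside one tiny cell of another child.
\end{itemize}

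What is true, and all you need after Jensen, is a bound by averages over the coarse box. It follows from a tent weight. Take face-adjacent cubes $D$ and $D'=D+\ell e$ of the level-$m$ grid of $Q$, with $\ell=3^{-m}$. Let $B$ be the box $D\cup D'$, and let $\tau\in(0,2\ell)$ be the coordinate along $e$ measured from the face of $B$ in $D$. Then
\[
(w)_{D'}-(w)_D=\fint_D\int_0^\ell e\cdot\nabla w(x+se)\,ds\,dx=\int_Be\cdot\nabla w\,\phi\,,\qquad\phi\coloneqq|D|^{-1}\min\{\tau,2\ell-\tau\}\,.
\]
For $j\geq m$, let $\phi_j$ be the average of $\phi$ on the cubes of side $3^{-j}$. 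Since $\phi_j$ is constant on these cubes and $g_{j-1}$ is the average of $g_j$ on the coarser ones, we have $\int e\cdot\nabla w\,\phi_j=\int e\cdot g_j\,\phi_j$ and $\int g_j\phi_{j-1}=\int g_{j-1}\phi_{j-1}$. Moreover $\phi_m=\ell/(2|D|)$ on $B$, $\|\phi_j-\phi_{j-1}\|_{L^\infty}\leq C3^{-j}|D|^{-1}$, and $\phi_j\to\phi$ uniformly, while $\nabla w\in L^1$. Telescoping gives
\[
|(w)_{D'}-(w)_D|\leq C\sum_{j\geq m}3^{-j}\fint_B|g_j|\,.
\]
This handles both the sibling differences in $P_{k+1}w-P_kw$ and the neighbour differences of $P_mw$ in Step~2, with $L^\xi$ bounds by Jensen. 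The tail $\sum_{k\geq m}\|P_{k+1}w-P_kw\|_{L^\xi}$ then carries an extra factor $j-m+1$. This is harmless in Step~3, since $(i+1)3^{-i\alpha}$ is still summable.

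The paper avoids this combinatorics by not decomposing $w$ through its own martingale. For each increment $G_k=(P_k-P_{k-1})\nabla w$ it solves a Neumann problem $\Delta u_k=\nabla\cdot G_k$ on $Q$, and then:
\begin{itemize}
\item Calder\'on--Zygmund estimates, by reflection, give $\|\nabla u_k\|_{L^\xi}\leq C\|G_k\|_{L^\xi}$;
\item duality with a second Neumann problem, the zero means of $G_k$ on parent cubes, and Poincar\'e give $\|u_k\|_{L^\xi}\leq C3^{-k}\|G_k\|_{L^\xi}$;
\item the interpolation $[u]_{W^{\alpha,\xi}}\leq C\|u\|_{L^\xi}^{1-\alpha}\|\nabla u\|_{L^\xi}^{\alpha}$ bounds each piece;
\item $\sum_ku_k=w-(w)_Q$ is identified by duality.
\end{itemize}
Its pieces are Sobolev functions, so interpolation handles each one for every $0<\alpha<1$, and the sum over $k$ is a plain triangle inequality. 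Your pieces are step functions. That is why the bound $[h_k]\lesssim3^{k\alpha}\|h_k\|$ breaks at $\alpha\xi\geq1$, and why you need the shell decomposition and the discrete Young inequality. In exchange, your argument uses only the fundamental theorem of calculus on boxes and Jensen's inequality, with no elliptic $L^\xi$ theory. It also gives the stronger bound by the $\ell^\xi$ norm, rather than the $\ell^1$ norm, of $\bigl(3^{-j(1-\alpha)}\|g_j\|_{L^\xi}\bigr)_j$.
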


% Neumann display form adapted from HC, highcontrast-arxiv-v3.tex, e.H1a.Neumann (1427--1439), and ASD, algsuperdiff.tex (14563--14574); only the display form is imported.
\begin{proof}
We may assume that the sum on the right is finite, and we write~$P_k=P_k(\cdot\,;Q)$. We first estimate the Neumann solutions associated with successive differences of the gradient averages, then estimate their fractional seminorms and sum them. The constants below are independent of~$n$,~$k$ and~$Q$.

\smallskip

\emph{Step 1.} Neumann estimates. Define
\begin{equation*}
	G_n\coloneqq P_n\nabla w\,,\qquad G_k\coloneqq(P_k-P_{k-1})\nabla w\quad(k>n)\,.
\end{equation*}
For~$k\geq n$, let~$u_k\in H^1(Q)$ be the weak solution, normalized by~$(u_k)_Q=0$, of
\begin{equation*}
	\left\{
	\begin{aligned}
		&\Delta u_k=\nabla\cdot G_k &&\mbox{in }Q\,,\\
		&\mathbf{n}\cdot(\nabla u_k-G_k)=0 &&\mbox{on }\partial Q\,.
	\end{aligned}
	\right.
\end{equation*}
Here~$\mathbf{n}$ is the outer unit normal to~$Q$, and the boundary condition is understood weakly. Given~$\phi\in L^2(Q)\cap L^{\xi'}(Q)$, where~$\xi'=\xi/(\xi-1)$, let~$v\in H^1(Q)$ be the weak solution, normalized by~$(v)_Q=0$, of
\begin{equation*}
	\left\{
	\begin{aligned}
		&-\Delta v=\phi-(\phi)_Q &&\mbox{in }Q\,,\\
		&\mathbf{n}\cdot\nabla v=0 &&\mbox{on }\partial Q\,.
	\end{aligned}
	\right.
\end{equation*}
Classical estimates for the Neumann problem give
\begin{equation*}
	\norm{\nabla u_k}_{L^\xi(Q)}\leq C(d,\xi)\norm{G_k}_{L^\xi(Q)}\,,\qquad \norm{\nabla^2v}_{L^{\xi'}(Q)}\leq C(d,\xi)\norm{\phi}_{L^{\xi'}(Q)}\,.
\end{equation*}
Indeed, these inequalities follow by reflection from the classical Calder\'on--Zygmund estimates for the Laplacian.

\smallskip

For~$k>n$, the field~$G_k$ has zero average on every parent cube~$D=z+y+\cu_{1-k}$ with~$y\in3^{1-k}\Zd\cap\cu_{-n}$. The equations of~$u_k$ and~$v$, the zero averages of~$u_k$ and~$G_k$, the Poincar\'e and H\"older inequalities on the parent cubes, and the Neumann estimate for~$v$ on~$Q$ give
\begin{align*}
	\biggl|\int_Q u_k\phi\biggr|&=\biggl|\int_Q G_k\cdot\nabla v\biggr|=\biggl|\sum_D\int_DG_k\cdot\bigl(\nabla v-(\nabla v)_D\bigr)\biggr|\notag\\
	&\leq C3^{1-k}\norm{G_k}_{L^\xi(Q)}\norm{\nabla^2v}_{L^{\xi'}(Q)}\leq C3^{-k}\norm{G_k}_{L^\xi(Q)}\norm{\phi}_{L^{\xi'}(Q)}\,.
\end{align*}
Since~$L^2(Q)\cap L^{\xi'}(Q)$ is dense in~$L^{\xi'}(Q)$, duality gives~$\norm{u_k}_{L^\xi(Q)}\leq C(d,\xi)3^{-k}\norm{G_k}_{L^\xi(Q)}$ for~$k>n$. For~$k=n$, the solution is~$u_n(x)=G_n\cdot(x-(x)_Q)$, where~$(x)_Q$ denotes the mean of the coordinate vector on~$Q$. Since~$Q$ has side length~$3^{-n}$, the same bound holds for~$k=n$.

\smallskip

\emph{Step 2.} Fractional interpolation. Convexity of~$Q$ and the fundamental theorem of calculus give, for~$h\in\Rd$,
\begin{equation*}
	\norm{u_k(\cdot+h)-u_k}_{L^\xi(Q\cap(Q-h))}\leq\min\bigl\{2\norm{u_k}_{L^\xi(Q)},\,|h|\norm{\nabla u_k}_{L^\xi(Q)}\bigr\}\,.
\end{equation*}
Changing variables~$y=x+h$ in the seminorm, splitting at~$|h|=\varrho$ for~$\varrho>0$, and using polar coordinates, we obtain
\begin{align*}
	[u_k]_{W^{\alpha,\xi}(Q)}^\xi&=\int_{\Rd}\frac{\norm{u_k(\cdot+h)-u_k}_{L^\xi(Q\cap(Q-h))}^\xi}{|h|^{d+\alpha\xi}}\,dh\notag\\
	&\leq C(d,\xi)\biggl(\norm{\nabla u_k}_{L^\xi(Q)}^\xi\int_0^\varrho t^{\xi(1-\alpha)-1}\,dt+\norm{u_k}_{L^\xi(Q)}^\xi\int_\varrho^\infty t^{-\alpha\xi-1}\,dt\biggr)\,.
\end{align*}
Evaluating the two radial integrals yields
\begin{equation*}
	[u_k]_{W^{\alpha,\xi}(Q)}^\xi\leq C(d,\xi)\biggl(\frac{\varrho^{\xi(1-\alpha)}}{\xi(1-\alpha)}\norm{\nabla u_k}_{L^\xi(Q)}^\xi+\frac{\varrho^{-\alpha\xi}}{\alpha\xi}\norm{u_k}_{L^\xi(Q)}^\xi\biggr)\,.
\end{equation*}
If either norm is zero, the seminorm vanishes. Otherwise choose~$\varrho=\norm{u_k}_{L^\xi(Q)}/\norm{\nabla u_k}_{L^\xi(Q)}$ and take the~$\xi$-th root to obtain
\begin{equation*}
	[u_k]_{W^{\alpha,\xi}(Q)}\leq C(d,\alpha,\xi)\norm{u_k}_{L^\xi(Q)}^{1-\alpha}\norm{\nabla u_k}_{L^\xi(Q)}^\alpha\,.
\end{equation*}
Combining this interpolation inequality with the bounds from Step~1 gives, for~$k\geq n$,
\begin{equation*}
	[u_k]_{W^{\alpha,\xi}(Q)}
	\leq C(d,\alpha,\xi)3^{-k(1-\alpha)}\norm{G_k}_{L^\xi(Q)}\,.
\end{equation*}

\smallskip

\emph{Step 3.} Summation and identification of the limit. For~$k>n$, we have~$\norm{G_k}_{L^\xi(Q)}\leq\norm{P_k\nabla w}_{L^\xi(Q)}+\norm{P_{k-1}\nabla w}_{L^\xi(Q)}$. Reindexing the second term in the weighted sum contributes the factor~$3^{-(1-\alpha)}$, and hence
\begin{equation*}
	\sum_{k=n}^\infty[u_k]_{W^{\alpha,\xi}(Q)}\leq C\sum_{k=n}^\infty3^{-k(1-\alpha)}\norm{G_k}_{L^\xi(Q)}\leq C\sum_{k=n}^\infty3^{-k(1-\alpha)}\norm{P_k\nabla w}_{L^\xi(Q)}\,.
\end{equation*}
The~$L^\xi$ estimate of Step~1 and~$3^{-k}\leq3^{-n\alpha}3^{-k(1-\alpha)}$ for~$k\geq n$ also give~$\sum_{k=n}^\infty\norm{u_k}_{L^\xi(Q)}<\infty$. Thus~$S_N\coloneqq\sum_{k=n}^N u_k$ converges in~$W^{\alpha,\xi}(Q)$.

\smallskip

To identify the limit of~$S_N$, take~$\phi\in C_c^\infty(Q)$ and its dual Neumann solution~$v$ from Step~1. The reflected datum is smooth, so~$\nabla v$ is bounded. Since~$P_N\nabla w\to\nabla w$ in~$L^1(Q)$, the weak equations and integration by parts give
\begin{equation*}
	\int_Q S_N\phi=\int_Q P_N\nabla w\cdot\nabla v\longrightarrow\int_Q\nabla w\cdot\nabla v=\int_Q\bigl(w-(w)_Q\bigr)\phi\,.
\end{equation*}
It follows that~$S_N\to w-(w)_Q$ in~$W^{\alpha,\xi}(Q)$. The preceding sum of seminorms proves the assertion.
\end{proof}

With~$\alpha=1-t$ and~$\xi=r$, the two lemmas combine into the fractional estimate, first on each cube in terms of the matrices~$\a_*^{-1}(\triangle)$ of its simplices, and then in terms of~$\lambda_{t,1,q}(\cu_0)$. Since~$H^1_\a(Q)$ is a completion, we prove it for smooth functions and pass to the limit.

\begin{proposition}[Embedding into~$W^{1-t,r}$]
\label{p.lower.fractional}
Assume~\eqref{e.weighted.hypotheses} on~$\cu_0$ and the coarse ellipticity condition~\eqref{e.spatial.moment.range}, and let~$r$ be as in~\eqref{e.intro.parameters}. There exists~$C(d,q,t)<\infty$ such that, for every~$n\in\N_0$,~$z\in3^{-n-1}\Zd$ and~$Q=z+\cu_{-n}\subset\cu_0$, the embedding~$H^1_\a(Q)\subset W^{1-t,r}(Q)$ is continuous, and every~$w\in H^1_\a(Q)$ satisfies
\begin{equation}
	[w]_{W^{1-t,r}(Q)}\leq C\Ener_Q(w)^{\nf12}\sum_{k>n}3^{-kt}\biggl(\sum_{\triangle\in\mathscr T_k(Q)}|\triangle|\,|\a_*^{-1}(\triangle)|^q\biggr)^{\nf1{2q}}\,.
	\label{e.lower.fractional.local}
\end{equation}
Consequently,
\begin{equation}
	[w]_{W^{1-t,r}(Q)}\leq C\,\lambda_{t,1,q}(\cu_0)^{-\nf12}\,\Ener_Q(w)^{\nf12}\qquad(w\in H^1_\a(Q))\,.
	\label{e.lower.fractional}
\end{equation}
In particular, every~$w\in H^1_\a(\cu_0)$ belongs to~$L^r(\cu_0)$.
\end{proposition}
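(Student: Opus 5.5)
We first prove \eqref{e.lower.fractional.local} for smooth functions of finite energy on~$Q$ and then pass to the completion. Let~$w\in C^\infty(Q)\cap L^1(Q)$ with~$\Ener_Q(w)<\infty$. Since~$\tr\a^{-1}\in L^1(Q)$, the Cauchy--Schwarz inequality gives~$\int_Q|\nabla w|\leq(\int_Q\tr\a^{-1})^{\nf12}\Ener_Q(w)^{\nf12}$, so~$w\in W^{1,1}(Q)$.

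To apply Lemma~\ref{l.fractional.reconstruction} we also need~$w\in L^r(Q)$, which is not immediate. We handle this by first proving the bound for truncations~$w_M\coloneqq(w\wedge M)\vee(-M)$, which are bounded and belong to~$W^{1,1}(Q)$, and satisfy~$\Ener_Q(w_M)\leq\Ener_Q(w)$ by the chain rule of Proposition~\ref{p.weighted.calculus}. Apply Lemma~\ref{l.fractional.reconstruction} to~$w_M$ with~$\alpha=1-t$ and~$\xi=r$. This bounds~$[w_M]_{W^{1-t,r}(Q)}$ by~$C\sum_{k\geq n}3^{-kt}\norm{P_k(\nabla w_M;Q)}_{L^r(Q)}$.

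For~$k\geq n+1$, Lemma~\ref{l.lower.averages} bounds each term by the simplex sum in \eqref{e.lower.fractional.local} times~$\Ener_Q(w_M)^{\nf12}$. The term~$k=n$ needs separate treatment. There~$P_n\nabla w_M=(\nabla w_M)_Q$ is constant, and by subadditivity \eqref{e.lower.cube.aggregation} applied to the tiling~$\mathscr T_{n+1}(Q)$, H\"older's inequality gives the same bound as the~$k=n+1$ term, up to a factor~$3^{t}$. Hence the~$k=n$ term is absorbed into the sum over~$k>n$ with a constant depending on~$t$.

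The next step is to pass~$M\to\infty$. Fatou's lemma applied to the double integral gives~\eqref{e.lower.fractional.local} for~$w$ itself. The fractional Poincar\'e inequality on~$Q$ combined with~$(w)_Q$ finite then places~$w$ in~$L^r(Q)$.

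For general~$w\in H^1_\a(Q)$, take smooth~$w_i\to w$ in~$H^1_\a(Q)$. Applying the seminorm bound to~$w_i-w_j$, together with the means~$(w_i)_Q$ and the fractional Poincar\'e inequality, shows that~$(w_i)$ is Cauchy in~$W^{1-t,r}(Q)$. The limit agrees with the~$W^{1,1}$ limit, namely~$w$, by Proposition~\ref{p.weighted.calculus}. This gives the continuity of the embedding and \eqref{e.lower.fractional.local}.

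To deduce \eqref{e.lower.fractional}, bound the inner sum in \eqref{e.lower.fractional.local} using \eqref{e.partition.restricted}:
\begin{equation*}
	\sum_{\triangle\in\mathscr T_k(Q)}|\triangle|\,|\a_*^{-1}(\triangle)|^q\leq\avsum_{\triangle\in\mathscr T_k}|\a_*^{-1}(\triangle)|^q\,.
\end{equation*}
The resulting series is~$(1-3^{-t})^{-1}\lambda_{t,1,q}(\cu_0)^{-\nf12}$ by \eqref{e.lower.half.series}. Finally, the case~$Q=\cu_0$ (with~$n=0$,~$z=0$) together with fractional Poincar\'e gives~$w\in L^r(\cu_0)$.

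The main obstacle is the~$L^r$ integrability hypothesis of Lemma~\ref{l.fractional.reconstruction}, which is circular for unbounded~$w$. The truncation and Fatou argument is meant to resolve it. A secondary point to check carefully is the~$k=n$ term and its comparison with the finer-scale simplices.
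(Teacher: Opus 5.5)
Your proposal is correct and follows the paper's proof in structure: Lemma~\ref{l.fractional.reconstruction} combined with Lemma~\ref{l.lower.averages} for well-behaved functions, comparison of the $k=n$ term with the $k=n+1$ term at the cost of a factor $3^t$, a Cauchy argument in $W^{1-t,r}(Q)$ using \eqref{e.fractional.mean.norm}, and finally \eqref{e.partition.restricted} with \eqref{e.lower.half.series}. The differences are only technical, and both versions work. The paper avoids your truncation-and-Fatou step by approximating directly with bounded smooth functions (Proposition~\ref{p.weighted.calculus}\ref{i.weighted.approximation}). It handles the $k=n$ term by Jensen's inequality, since $P_n\nabla w$ is the average of $P_{n+1}\nabla w$, rather than by subadditivity of $\a_*^{-1}$ on $Q$ together with \eqref{e.lower.mean.gradient}.
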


\begin{proof}
Suppose first~$w$ is a bounded smooth function of finite energy~$\Ener_Q(w)$, and write~$P_k=P_k(\cdot\,;Q)$. Since~$P_n\nabla w$ is the average of~$P_{n+1}\nabla w$ over~$Q$, Jensen's inequality gives~$\norm{P_n\nabla w}_{L^r(Q)}\leq\norm{P_{n+1}\nabla w}_{L^r(Q)}$, so the term~$k=n$ of~\eqref{e.fractional.reconstruction} is at most~$3^t$ times the term~$k=n+1$. Lemma~\ref{l.fractional.reconstruction} with~$\alpha=1-t$ and~$\xi=r$, so that~$1-\alpha=t$, and~\eqref{e.lower.spatial.holder} of Lemma~\ref{l.lower.averages} give the first inequality below, and~\eqref{e.partition.restricted} for~$U=Q$ and~\eqref{e.lower.half.series} give the second:
\begin{align}
	[w]_{W^{1-t,r}(Q)}&\leq C\Ener_Q(w)^{\nf12}\sum_{k>n}3^{-kt}\biggl(\sum_{\triangle\in\mathscr T_k(Q)}|\triangle|\,|\a_*^{-1}(\triangle)|^q\biggr)^{\nf1{2q}}\notag\\
	&\leq\frac{C}{1-3^{-t}}\lambda_{t,1,q}(\cu_0)^{-\nf12}\Ener_Q(w)^{\nf12}\,.
	\label{e.lower.fractional.smooth}
\end{align}
The~$W^{1-t,r}$ seminorm also controls the oscillation in~$Q$: Jensen's inequality gives, for any measurable~$v$ on~$Q$, whose side length is~$3^{-n}$,
\begin{equation}
	\norm{v}_{L^r(Q)}\leq C3^{-n(1-t)}[v]_{W^{1-t,r}(Q)}+|Q|^{\nf1r}|(v)_Q|\,.
	\label{e.fractional.mean.norm}
\end{equation}
Take bounded smooth~$w_i\to w$ in~$H^1_\a(Q)$ (Proposition~\ref{p.weighted.calculus}\ref{i.weighted.approximation}). By~\eqref{e.lower.fractional.smooth} and~\eqref{e.fractional.mean.norm} applied to differences,~$(w_i)$ is Cauchy in~$W^{1-t,r}(Q)$; its limit agrees with the~$L^1$ limit~$w$, so that~$w_i\to w$ in~$W^{1-t,r}(Q)$. Passing to the limit in~\eqref{e.lower.fractional.smooth}, whose right sides depend on~$w$ only through~$\Ener_Q(w)$, proves~\eqref{e.lower.fractional.local} and~\eqref{e.lower.fractional}, and combining this with~\eqref{e.fractional.mean.norm} gives continuity of the embedding. Taking~$Q=\cu_0$ gives the last assertion.
\end{proof}

\section{Cubes with good radii}
\label{s.good.radii}

To build the piecewise harmonic extension, we choose a radius~$\tau$ for which the trace of~$v$ has controlled~$W^{1-t,r}$ and~$L^r$ norms, with an~$L^2$ bound when needed. We also control the energy in thin layers around~$\partial(\tau\cu_0)$ and the matrices~$\a(\triangle)$ on nearby simplices. Slicing and the cubical coarea formula give the trace and energy bounds on average over~$\tau$. Each simplex of~$\mathscr T_k$ lies near~$\partial(\tau\cu_0)$ only for radii in an interval of length~$C3^{-k}$, giving the boundary-dimensional estimate for the matrix maxima (Lemma~\ref{l.radius.averages}). Proposition~\ref{p.good.radius} selects one radius where these bounds and convergence of the approximation traces hold together. The selection applies to nonnegative functions in~$H^1_\a(\cu_0)$.

\smallskip

Throughout this section~\eqref{e.weighted.hypotheses} and the coarse ellipticity condition~\eqref{e.spatial.moment.range} hold on~$\cu_0$. For radii~$\nf12\leq\rho_1<\rho_2\leq1$ we set
\begin{equation}
	\delta\coloneqq\rho_2-\rho_1\,,\qquad J\coloneqq\bigl(\rho_1+\tfrac\delta4,\rho_1+\tfrac\delta2\bigr)\,.
	\label{e.selection.parameters}
\end{equation}
Surface integrals use Hausdorff measure~$\mathcal H^{d-1}$. For~$0<\tau\leq1$, an order~$0<\alpha<1$ and an integrability~$1\leq\xi<\infty$, the seminorm on~$\partial(\tau\cu_0)$ is
\begin{equation}
	[f]_{W^{\alpha,\xi}(\partial(\tau\cu_0))}\coloneqq\biggl(\int_{\partial(\tau\cu_0)}\int_{\partial(\tau\cu_0)}\frac{|f(x)-f(y)|^\xi}{|x-y|^{d-1+\alpha\xi}}\,d\mathcal H^{d-1}(y)\,d\mathcal H^{d-1}(x)\biggr)^{\nf1\xi}\,,
	\label{e.boundary.fractional.seminorm}
\end{equation}
The associated norm is
\begin{equation*}
	\norm{f}_{W^{\alpha,\xi}(\partial(\tau\cu_0))}\coloneqq\bigl(\norm{f}_{L^\xi(\partial(\tau\cu_0))}^\xi+[f]_{W^{\alpha,\xi}(\partial(\tau\cu_0))}^\xi\bigr)^{\nf1\xi}\,.
\end{equation*}
The double integral includes pairs on different faces. On open subsets of~$\R^{d-1}$ we use~\eqref{e.fractional.seminorm} with~$d$ replaced by~$d-1$.

\paragraph{Fractional traces.} On the~$(d-1)$-dimensional cube boundaries, the Sobolev exponent of~$W^{\alpha,\xi}$ is~$\xi_\partial^*$ of Section~\ref{ss.notation}; for~$\alpha=1-t$ and~$\xi=r$ it is~$r_\partial^*$ of~\eqref{e.sobolev.exponents}, which exceeds two by~\eqref{e.parameter.facts}.

\begin{lemma}[Sobolev inequality on cube boundaries]
\label{l.critical.trace.embedding}
Let~$0<\alpha<1$ and~$1\leq\xi<\infty$ with~$\alpha\xi<d-1$. There exists~$C(d,\alpha,\xi)<\infty$ such that, for~$\nf12\leq\tau\leq1$ and~$g\in W^{\alpha,\xi}(\partial(\tau\cu_0))$,
\begin{equation}
	\norm{g}_{L^{\xi_\partial^*}(\partial(\tau\cu_0))}\leq C\norm{g}_{W^{\alpha,\xi}(\partial(\tau\cu_0))}\,.
	\label{e.critical.trace.embedding}
\end{equation}
In particular, if~$\xi_\partial^*\geq2$, convergence in~$W^{\alpha,\xi}(\partial(\tau\cu_0))$ implies convergence in~$L^2(\partial(\tau\cu_0))$.
\end{lemma}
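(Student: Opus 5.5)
We will reduce \eqref{e.critical.trace.embedding} to the fractional Sobolev inequality on a flat $(d-1)$-dimensional domain. First, by scaling $x\mapsto x/\tau$ with $\tau\in[\nf12,1]$, the norms on $\partial(\tau\cu_0)$ and $\partial\cu_0$ are comparable with constants depending only on $d,\alpha,\xi$. Each norm picks up a power of $\tau$ that is bounded above and below on this range. It therefore suffices to take $\tau=1$.

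Next we cover $\partial\cu_0$ by $2d$ closed faces $F_1,\dots,F_{2d}$. Each face is an isometric copy of the $(d-1)$-dimensional cube $(-\nf12,\nf12)^{d-1}$, which is a bounded Lipschitz (indeed convex) domain of $\R^{d-1}$. Restricting the double integral in \eqref{e.boundary.fractional.seminorm} to pairs $x,y\in F_i$ gives
\begin{equation*}
	\norm{g}_{L^\xi(F_i)}^\xi+[g]_{W^{\alpha,\xi}(F_i)}^\xi\leq\norm{g}_{W^{\alpha,\xi}(\partial\cu_0)}^\xi\,,
\end{equation*}
where $[\cdot]_{W^{\alpha,\xi}(F_i)}$ is \eqref{e.fractional.seminorm} with $d$ replaced by $d-1$. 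This holds because the surface measure on $F_i$ is Lebesgue measure on $\R^{d-1}$ and $|x-y|$ is the Euclidean distance in the face.

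The main step is the fractional Sobolev inequality on the cube $F=(-\nf12,\nf12)^{d-1}$:
\begin{equation*}
	\norm{g}_{L^{\xi_\partial^*}(F)}\leq C\norm{g}_{W^{\alpha,\xi}(F)}\qquad(\alpha\xi<d-1)\,.
\end{equation*}
This is classical. One route is the extension theorem for $W^{\alpha,\xi}$ on Lipschitz (or cube) domains, for instance by even reflection across the faces followed by a cutoff; reflection preserves the Gagliardo seminorm up to a constant. The inequality then follows from the whole-space fractional Sobolev inequality $\norm{G}_{L^{\xi^*}(\R^{d-1})}\leq C[G]_{W^{\alpha,\xi}(\R^{d-1})}$, which holds for $\alpha\xi<d-1$ and every $\xi\geq1$, including $\xi=1$. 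For the reflection, the only point needing care is the estimate of the cross terms between a point and the reflection of another point. It follows from $|x-Ry|\geq|x-y|$ for $x,y$ in the same cube, so this is where we expect the most checking but no real obstacle.

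Summing the resulting bounds over the $2d$ faces gives
\begin{equation*}
	\norm{g}_{L^{\xi_\partial^*}(\partial\cu_0)}\leq\sum_i\norm{g}_{L^{\xi_\partial^*}(F_i)}\leq 2dC\norm{g}_{W^{\alpha,\xi}(\partial\cu_0)}\,.
\end{equation*}
For the final assertion, suppose $\xi_\partial^*\geq2$. Since $\partial(\tau\cu_0)$ has finite measure, H\"older's inequality gives $\norm{h}_{L^2}\leq C\norm{h}_{L^{\xi_\partial^*}}$. Applying this and \eqref{e.critical.trace.embedding} to differences $g_i-g$ shows that convergence in $W^{\alpha,\xi}(\partial(\tau\cu_0))$ implies convergence in $L^2(\partial(\tau\cu_0))$.
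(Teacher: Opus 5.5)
Your proposal is correct and is essentially the paper's proof. Both arguments rescale to a fixed size (bounded factors for $\tau\in[\nf12,1]$), restrict the boundary double integral to single faces, apply the fractional Sobolev embedding on the flat $(d-1)$-dimensional cube, sum over the $2d$ faces, and use H\"older's inequality for the $L^2$ assertion. The only difference is that you sketch the extension step by even reflection (which works, via $|x-Ry|\geq|x-y|$, followed by a cutoff and the whole-space inequality), whereas the paper cites the extension and embedding theorems of Di~Nezza, Palatucci and Valdinoci for Lipschitz domains.
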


\begin{proof}
An open unit cube of dimension~$d-1$ is a bounded Lipschitz domain, hence an extension domain for~$W^{\alpha,\xi}$~\cite[Theorem~5.4]{DNPV.fractional}, and on such a domain~$W^{\alpha,\xi}$ embeds continuously in~$L^{\xi_\partial^*}$~\cite[Theorem~6.7]{DNPV.fractional}. A face~$\Phi$ of~$\partial(\tau\cu_0)$ is a translate of the unit face dilated by~$\tau\in[\nf12,1]$. Under this dilation the~$L^{\xi_\partial^*}$ norm, the~$L^\xi$ norm and the seminorm change by the factors~$\tau^{(d-1)/\xi_\partial^*}$,~$\tau^{(d-1)/\xi}$ and~$\tau^{(d-1)/\xi-\alpha}$, which are bounded above and below. Hence~$\norm{g}_{L^{\xi_\partial^*}(\Phi)}\leq C\norm{g}_{W^{\alpha,\xi}(\Phi)}$, where the seminorm on~$\Phi$ is the double integral~\eqref{e.boundary.fractional.seminorm} restricted to~$\Phi\times\Phi$. The full double integral contains the~$2d$ double integrals over single faces, and~$\xi\leq\xi_\partial^*$, so
\begin{equation*}
	\norm{g}_{L^{\xi_\partial^*}(\partial(\tau\cu_0))}^{\xi_\partial^*}=\sum_\Phi\norm{g}_{L^{\xi_\partial^*}(\Phi)}^{\xi_\partial^*}\leq C\Bigl(\sum_\Phi\norm{g}_{W^{\alpha,\xi}(\Phi)}^\xi\Bigr)^{\nf{\xi_\partial^*}\xi}\leq C\norm{g}_{W^{\alpha,\xi}(\partial(\tau\cu_0))}^{\xi_\partial^*}\,.
\end{equation*}
The last assertion follows from H\"older's inequality, since~$\partial(\tau\cu_0)$ has area~$2d\tau^{d-1}\leq2d$.
\end{proof}

\begin{lemma}[Cutoff estimate for~$W^{\alpha,\xi}$]
\label{l.fractional.cutoff}
Let~$U\subset\Rd$ be open, let~$0<\varepsilon\leq1$, and let~$\varphi$ be Lipschitz with~$|\varphi|\leq1$, Lipschitz constant at most~$1/\varepsilon$, and support at distance at least~$\varepsilon$ from~$\Rd\setminus U$. For~$0<\alpha<1$,~$1<\xi<\infty$ and~$w\in W^{\alpha,\xi}(U)$, the zero extension of~$\varphi w$ satisfies
\begin{equation}
	\norm{\varphi w}_{W^{\alpha,\xi}(\Rd)}\leq C\bigl([w]_{W^{\alpha,\xi}(U)}+\varepsilon^{-\alpha}\norm{w}_{L^\xi(U)}\bigr)\,,\qquad C=C(d,\alpha,\xi)\,.
	\label{e.fractional.cutoff}
\end{equation}
\end{lemma}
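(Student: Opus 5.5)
The plan is to bound the $L^\xi$ part and the seminorm part of $\norm{\varphi w}_{W^{\alpha,\xi}(\Rd)}$ separately. The $L^\xi$ part is immediate: since $|\varphi|\le1$,
\begin{equation*}
\norm{\varphi w}_{L^\xi(\Rd)}\le\norm{w}_{L^\xi(U)}\le\varepsilon^{-\alpha}\norm{w}_{L^\xi(U)},
\end{equation*}
using $\varepsilon\le1$. For the seminorm, write $K=\supp\varphi$ and split the double integral over $\Rd\times\Rd$ into three regions: pairs with both points in $U$; pairs with $x\in K$ and $y\notin U$; and the symmetric region with the roles of $x$ and $y$ swapped. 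Pairs with neither point in $K$ contribute nothing, and pairs with one point in $K$ and the other outside $U$ are exactly the second and third regions.

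On $U\times U$, use the splitting
\begin{equation*}
\varphi(x)w(x)-\varphi(y)w(y)=\varphi(x)\bigl(w(x)-w(y)\bigr)+\bigl(\varphi(x)-\varphi(y)\bigr)w(y).
\end{equation*}
The first term contributes at most $[w]_{W^{\alpha,\xi}(U)}^\xi$ because $|\varphi|\le1$. For the second term, use $|\varphi(x)-\varphi(y)|\le\min\{2,|x-y|/\varepsilon\}$. Integrating in $x$ over $\Rd$ gives, for each fixed $y$,
\begin{equation*}
\int_{\Rd}\frac{\min\{2,|x-y|/\varepsilon\}^\xi}{|x-y|^{d+\alpha\xi}}\,dx\le C\Bigl(\varepsilon^{-\xi}\int_0^\varepsilon \rho^{\xi-\alpha\xi-1}\,d\rho+\int_\varepsilon^\infty \rho^{-\alpha\xi-1}\,d\rho\Bigr)\le C\varepsilon^{-\alpha\xi}.
\end{equation*}
This uses $\alpha<1$ near zero and $\alpha>0$ at infinity, exactly as in Step~2 of the proof of Lemma~\ref{l.fractional.reconstruction}. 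Multiplying by $|w(y)|^\xi$ and integrating over $y\in U$ bounds the second term by $C\varepsilon^{-\alpha\xi}\norm{w}_{L^\xi(U)}^\xi$.

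In the region $x\in K$, $y\notin U$, the integrand is $|\varphi(x)w(x)|^\xi|x-y|^{-d-\alpha\xi}\le|w(x)|^\xi|x-y|^{-d-\alpha\xi}$. The hypothesis that $K$ lies at distance at least $\varepsilon$ from $\Rd\setminus U$ gives $|x-y|\ge\varepsilon$ there. Hence
\begin{equation*}
\int_{|x-y|\ge\varepsilon}|x-y|^{-d-\alpha\xi}\,dy\le C\varepsilon^{-\alpha\xi},
\end{equation*}
and this region contributes at most $C\varepsilon^{-\alpha\xi}\norm{w}_{L^\xi(U)}^\xi$. The symmetric region is handled identically, and taking $\xi$-th roots of the sum of all contributions gives~\eqref{e.fractional.cutoff}.

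The only delicate point is the separation hypothesis. It is what keeps the contribution of pairs straddling $\partial U$ under control: without it this contribution would be a Hardy-type term in the distance to the boundary, which is not bounded by the $W^{\alpha,\xi}(U)$ norm when $\alpha\xi\ge1$. Since $|x-y|\ge\varepsilon$ is available on that region, no Hardy inequality is needed, and the rest is routine.
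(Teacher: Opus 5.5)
Your proof is correct and follows essentially the same argument as the paper. Both use the splitting $\varphi(x)\bigl(w(x)-w(y)\bigr)+\bigl(\varphi(x)-\varphi(y)\bigr)w(y)$ on $U\times U$, the kernel bound $\int\min\{2,|z|/\varepsilon\}^\xi|z|^{-d-\alpha\xi}\,dz\leq C\varepsilon^{-\alpha\xi}$, and the separation $|x-y|\geq\varepsilon$ for pairs straddling $\partial U$. You also write out the $L^\xi$ bound and the symmetric straddling region, which the paper leaves implicit.
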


\begin{proof}
For~$x,y\in U$, decompose the difference as
\begin{equation*}
	\varphi(x)w(x)-\varphi(y)w(y)=\varphi(x)\bigl(w(x)-w(y)\bigr)+\bigl(\varphi(x)-\varphi(y)\bigr)w(y)\,.
\end{equation*}
The first term contributes~$C[w]^\xi_{W^{\alpha,\xi}(U)}$. For the second,~$|\varphi(x)-\varphi(y)|\leq\min\{2,|x-y|/\varepsilon\}$, and
\begin{equation*}
	\int_{\Rd}\frac{\min\{2,|z|/\varepsilon\}^\xi}{|z|^{d+\alpha\xi}}\,dz=C\varepsilon^{-\alpha\xi}\,.
\end{equation*}
Pairs with~$x\in\supp\varphi$ and~$y\notin U$ satisfy~$|x-y|\geq\varepsilon$ and contribute at most~$C\varepsilon^{-\alpha\xi}\norm{w}_{L^\xi(U)}^\xi$.
\end{proof}

The next proposition covers a neighborhood of the cube boundaries with radii in~$J$ by~$N\leq C\delta^{-d}$ threefold enlargements of triadic cubes, with side lengths comparable to~$\delta$. The cutoff functions contribute the factor~$\delta^{-(1-t)}$. The localization near~$\rho_1\overline{\cu}_0$ is used with the Sobolev inequality on~$\rho_1\cu_0$ in Sections~\ref{s.local.boundedness} and~\ref{s.harnack}.

\begin{proposition}[Localization]
\label{p.fractional.localization}
There exists a collection~$\{Q_i\Subset\rho_2\cu_0\}_{i=1}^N$ of threefold enlargements of triadic cubes and a collection~$\{\varphi_i\in C^\infty_c(Q_i)\}_{i=1}^N$ of nonnegative functions, both depending only on~$\rho_1,\rho_2$, such that for every~$v\in H^1_\a(\cu_0)$ the function~$F\coloneqq\sum_i\varphi_iv$, extended by zero to~$\Rd$, equals~$v$ almost everywhere near every boundary~$\partial(\tau\cu_0)$ with~$\tau\in J$, and
\begin{equation}
	\norm{F}_{W^{1-t,r}(\Rd)}\leq C\delta^{-(1-t)}\bigl(\lambda_{t,1,q}(\cu_0)^{-\nf12}\Ener_{\rho_2\cu_0}(v)^{\nf12}+\norm{v}_{L^r(\rho_2\cu_0)}\bigr)\,.
	\label{e.localized.fractional.bound}
\end{equation}
If~$v_j\to v$ in~$H^1_\a(\cu_0)$, the corresponding localized functions~$F_j$ converge to~$F$ in~$W^{1-t,r}(\Rd)$. A second choice of cubes~$Q_i$ and functions~$\varphi_i$, depending only on~$\rho_1,\rho_2$, gives the same bound and convergence with~$F=v$ near~$\rho_1\overline{\cu}_0$ instead of near the cube boundaries.
\end{proposition}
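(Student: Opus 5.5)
We first construct the cubes. Fix the integer~$n$ with~$3^{-n}\leq c\delta<3^{1-n}$ for a small dimensional constant~$c$. Let~$\{D_i\}$ be the triadic cubes of side~$3^{-n}$ that meet the closed shell~$\{x:\rho_1+\delta/8\leq 2|x|_\infty\leq\rho_1+5\delta/8\}$, and set~$Q_i\coloneqq 3D_i$. This shell contains every boundary~$\partial(\tau\cu_0)$ with~$\tau\in J$. The choice of~$c$ gives~$Q_i\Subset\rho_2\cu_0$. Each~$Q_i$ has the form~$z+\cu_{1-n}$ with~$z\in3^{-n}\Zd$, so it is admissible in Proposition~\ref{p.lower.fractional}, with~$n-1$ in place of~$n$. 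If~$n-1<0$ we instead use smaller cubes of side~$3^{-1}$; this only changes constants, since~$\delta\geq$ a dimensional fraction of~$1$. The number of cubes is~$N\leq C\delta^{-d}$, and the~$Q_i$ overlap with bounded multiplicity.

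Next we build the cutoffs. Take a smooth partition of unity~$\{\psi_i\}$ subordinate to the cubes~$\tfrac32D_i$ with~$|\nabla\psi_i|\leq C3^n$, and set~$\varphi_i\coloneqq\psi_i\chi$. Here~$\chi$ is a fixed smooth cutoff of Lipschitz constant~$C\delta^{-1}$, equal to one on the shell~$\{\rho_1+\delta/8<2|x|_\infty<\rho_1+5\delta/8\}$ and supported in the union of the~$\tfrac32D_i$. On the open shell we then have~$\sum_i\varphi_i=1$, so~$F=v$ there. Each~$\varphi_i$ is supported at distance at least~$c'\delta$ from~$\partial Q_i$ and has Lipschitz constant at most~$C/\delta$. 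For the second family we repeat the construction with triadic cubes meeting~$\rho_1\overline{\cu}_0$, inside~$(\rho_1+\delta/4)\cu_0$, and with~$\chi\equiv1$ near~$\rho_1\overline{\cu}_0$.

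To estimate~$F$, we apply Lemma~\ref{l.fractional.cutoff} on~$U=Q_i$ with~$\varepsilon=c'\delta$ (after rescaling~$\varphi_i$ by a constant to meet its normalization). Combined with~\eqref{e.lower.fractional} on~$Q_i$, this gives
\begin{equation*}
\norm{\varphi_iv}_{W^{1-t,r}(\Rd)}\leq C\bigl(\lambda_{t,1,q}(\cu_0)^{-\nf12}\Ener_{Q_i}(v)^{\nf12}+\delta^{-(1-t)}\norm{v}_{L^r(Q_i)}\bigr).
\end{equation*}
We then sum over~$i$. The main point is to avoid a loss of~$N$ from the triangle inequality. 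For the seminorm of a sum of functions supported in boundedly overlapping cubes of size~$\sim\delta$, we split pairs~$(x,y)$ into near pairs, with~$|x-y|<\delta$, where only~$C$ summands are nonzero at~$x$ or~$y$, and far pairs, which contribute at most~$C\delta^{-(1-t)r}\norm{F}_{L^r}^r$. This yields~$[F]^r\leq C\sum_i\norm{\varphi_iv}_{W^{1-t,r}}^r+C\delta^{-(1-t)r}\norm{v}_{L^r(\rho_2\cu_0)}^r$.

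Since~$r<2$, we have~$\sum_i\Ener_{Q_i}(v)^{r/2}\leq N^{1-r/2}\bigl(\sum_i\Ener_{Q_i}(v)\bigr)^{r/2}\leq C\delta^{-d(1-r/2)}\Ener_{\rho_2\cu_0}(v)^{r/2}$. This is the step I expect to be the main obstacle: the loss is a power~$\delta^{-d(1/r-1/2)}$ rather than the claimed~$\delta^{-(1-t)}$. I would try first to avoid it by applying~\eqref{e.lower.fractional.local} directly on a single larger region, since Lemma~\ref{l.lower.averages} is H\"older in~$\ell^r$ and so sums over cubes without loss. Concretely, I would redo the reconstruction argument on the union of the~$Q_i$ at scales~$k>n$, using the summed bound of Lemma~\ref{l.lower.averages}. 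Failing that, the extra power of~$\delta$ is harmless for the paper's purposes and could be absorbed into the exponents~$\gamma_i$.

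Finally, convergence follows by linearity. The map~$v\mapsto F$ is linear, and the bound above, applied to~$v_j-v$, tends to zero. It uses~$\Ener_{\rho_2\cu_0}(v_j-v)\to0$ together with~$\norm{v_j-v}_{L^r}\to0$, the latter from the continuity of the embedding in Proposition~\ref{p.lower.fractional}.
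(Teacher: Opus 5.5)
Your cubes, your cutoffs, your use of Lemma~\ref{l.fractional.cutoff} and your linearity argument for convergence all follow the paper's proof. The near/far splitting is harmless but unnecessary, since bounded overlap already gives $|F(x)-F(y)|^r\leq C\sum_i|\varphi_iv(x)-\varphi_iv(y)|^r$ for every pair. The gap is the step you flag yourself.

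Once you bound each $[v]_{W^{1-t,r}(Q_i)}$ by the global estimate~\eqref{e.lower.fractional}, every cube carries the full factor $\lambda_{t,1,q}(\cu_0)^{-\nf12}$. From these per-cube inequalities alone, the loss $N^{\nf1r-\nf12}$ in $\sum_i\Ener_{Q_i}(v)^{\nf r2}$ cannot be removed when the energy is spread evenly. Your fallback therefore proves only a weaker inequality, with an extra factor $\delta^{-d(\nf1r-\nf12)}$, and not~\eqref{e.localized.fractional.bound}. Your other suggestion, redoing the reconstruction on the union of the $Q_i$, is unnecessary, and it is also not available as stated: Lemma~\ref{l.fractional.reconstruction} rests on Neumann estimates obtained by reflection on a cube.

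The missing idea is to keep the moments of $\a_*^{-1}$ localized. Apply~\eqref{e.lower.fractional.local} on each $Q_i$ (admissible with $n-1$ in place of $n$, as you note), and only then sum over $i$. Set $M_{k,i}\coloneqq\sum_{\triangle\in\mathscr T_k(Q_i)}|\triangle|\,|\a_*^{-1}(\triangle)|^q$. Minkowski's inequality in $\ell^r$ over $i$, followed for each $k$ by H\"older's inequality over $i$ with $\frac r{2q}+\frac r2=1$, gives
\begin{equation*}
	\Bigl(\sum_i[v]_{W^{1-t,r}(Q_i)}^r\Bigr)^{\nf1r}\leq C\sum_{k\geq n}3^{-kt}\Bigl(\sum_iM_{k,i}\Bigr)^{\nf1{2q}}\Bigl(\sum_i\Ener_{Q_i}(v)\Bigr)^{\nf12}\,.
\end{equation*}
Bounded overlap bounds $\sum_iM_{k,i}$ by $C\avsum_{\triangle\in\mathscr T_k}|\a_*^{-1}(\triangle)|^q$ and bounds $\sum_i\Ener_{Q_i}(v)$ by $C\Ener_{\rho_2\cu_0}(v)$. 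Then~\eqref{e.lower.half.series} bounds the sum over $k$ by $C(1-3^{-t})^{-1}\lambda_{t,1,q}(\cu_0)^{-\nf12}$. This is the $\ell^r$-H\"older mechanism of Lemma~\ref{l.lower.averages} that you identified, applied across the cubes $Q_i$. It yields~\eqref{e.localized.fractional.bound} with no dependence on $N$, which is exactly how the paper argues.
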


\begin{proof}
Choose~$m$ with~$\delta/192<3^{-m}\leq\delta/64$ and let~$Q_1,\ldots,Q_N$ be the threefold enlargements of the triadic cubes~$z+\cu_{-m}$,~$z\in3^{-m}\Zd$, whose closures meet the closed annulus~$\{\rho_1+\delta/8\leq2|x|_\infty\leq\rho_1+5\delta/8\}$. On~$\overline{Q_i}$ one has~$2|x|_\infty\leq\rho_1+5\delta/8+4\cdot3^{-m}<\rho_2$, so~$Q_i\Subset\rho_2\cu_0$; counting the centers gives~$N\leq C3^{md}\leq C\delta^{-d}$, and the overlap is at most~$4^d$. The closed central thirds cover the annulus and lie at distance~$3^{-m}$ from~$\partial Q_i$. Take bumps~$b_i$ equal to one on the closed central thirds, supported at distance~$3^{-m}/2$ from~$\partial Q_i$, with~$|\nabla b_i|\leq C3^m$. Let~$\eta:[0,\infty)\to[0,1]$ be smooth, zero on~$[0,\nf12]$ and one on~$[\nf34,\infty)$. Define
\begin{equation*}
	\varphi_i(x)\coloneqq
	\begin{cases}
		\displaystyle\frac{b_i(x)\eta(\sum_l b_l(x))}{\sum_l b_l(x)},&\sum_l b_l(x)>0\,,\\[6pt]
		0,&\sum_l b_l(x)=0\,.
	\end{cases}
\end{equation*}
Then~$\sum_i\varphi_i=1$ near the annulus, which contains a neighborhood of every cube boundary with radius in~$J$, and~$|\nabla\varphi_i|\leq C3^m$. For the construction near~$\rho_1\overline{\cu}_0$, take the closed central thirds meeting that cube.

\smallskip

By Proposition~\ref{p.lower.fractional},~$v\in W^{1-t,r}(Q_i)$. Lemma~\ref{l.fractional.cutoff} with~$\alpha=1-t$,~$\xi=r$ and~$\varepsilon=c3^{-m}$, for a dimensional~$c>0$, and bounded overlap give
\begin{equation}
	\norm{F}_{W^{1-t,r}(\Rd)}^r\leq C\sum_i\norm{\varphi_iv}_{W^{1-t,r}(\Rd)}^r\leq C\sum_i[v]_{W^{1-t,r}(Q_i)}^r+C\delta^{-(1-t)r}\sum_i\norm{v}_{L^r(Q_i)}^r\,.
	\label{e.localization.sum}
\end{equation}
Each~$Q_i$ has the form~$z+\cu_{1-m}$ with~$z\in3^{-m}\Zd$, so the local bound~\eqref{e.lower.fractional.local} with~$n=m-1$ applies to it. With Minkowski's inequality, H\"older's inequality over~$i$ with~$\nf1r=\nf1{(2q)}+\nf12$, and bounded overlap, this gives
\begin{align*}
	\biggl(\sum_i[v]_{W^{1-t,r}(Q_i)}^r\biggr)^{\nf1r}&\leq C\sum_{k\geq m}3^{-kt}\biggl(\sum_i\sum_{\triangle\in\mathscr T_k(Q_i)}|\triangle|\,|\a_*^{-1}(\triangle)|^q\biggr)^{\nf1{2q}}\biggl(\sum_i\Ener_{Q_i}(v)\biggr)^{\nf12}\\
	&\leq C\Ener_{\rho_2\cu_0}(v)^{\nf12}\sum_{k\geq m}3^{-kt}\biggl(\avsum_{\triangle\in\mathscr T_k}|\a_*^{-1}(\triangle)|^q\biggr)^{\nf1{2q}}\,,
\end{align*}
and~\eqref{e.lower.half.series} bounds the last sum by~$(1-3^{-t})^{-1}\lambda_{t,1,q}(\cu_0)^{-\nf12}$. Inserting this into~\eqref{e.localization.sum} and using bounded overlap for the~$L^r$ terms yields
\begin{equation*}
	\norm{F}_{W^{1-t,r}(\Rd)}\leq C\bigl(\lambda_{t,1,q}(\cu_0)^{-\nf12}\Ener_{\rho_2\cu_0}(v)^{\nf12}+\delta^{-(1-t)}\norm{v}_{L^r(\rho_2\cu_0)}\bigr)\,,
\end{equation*}
which implies~\eqref{e.localized.fractional.bound}, since~$\delta\leq1$. For the convergence, apply~\eqref{e.localization.sum} to~$v_j-v$, whose restrictions converge in each~$W^{1-t,r}(Q_i)$ by Proposition~\ref{p.lower.fractional}.
\end{proof}

Slicing transfers a bound in~$W^{\alpha,\xi}(\Rd)$ to the cube boundaries~$\partial(\tau\cu_0)$, on average over~$\tau$.

\begin{lemma}[Integrated slicing]
\label{l.fractional.slicing}
Let~$F\in W^{\alpha,\xi}(\Rd)$ with~$0<\alpha<1<\xi<\infty$. For almost every~$\tau\in(\nf12,1)$ the restriction of~$F$ to~$\partial(\tau\cu_0)$ is well defined, and for every measurable~$I\subseteq(\nf12,1)$,
\begin{equation}
    \label{e.boundary.slicing}
    \int_I
    \norm{F|_{\partial(\tau\cu_0)}}_{W^{\alpha,\xi}(\partial(\tau\cu_0))}^\xi
    \,d\tau
    \leq C2^\xi \norm{F}_{W^{\alpha,\xi}(\Rd)}^\xi
    \,,
\end{equation}
where~$C$ depends only on~$d$.
\end{lemma}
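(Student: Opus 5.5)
We parametrize the family of cube boundaries by a single Lipschitz map to the unit sphere of the~$\ell^\infty$ norm and reduce~\eqref{e.boundary.slicing} to a coarea-type computation. Write~$\Sigma\coloneqq\partial\cu_0$, so that~$\partial(\tau\cu_0)=\tau\Sigma$. The map~$(\tau,\omega)\mapsto\tau\omega$ from~$(\nf12,1)\times\Sigma$ onto the shell~$\{\nf14<|x|_\infty<\nf12\}$ is bi-Lipschitz on each face. Its Jacobian is~$\tau^{d-1}|\omega\cdot\mathbf n|$, which lies between~$2^{-d}$ and one because on a face the outer normal~$\mathbf n$ is a coordinate vector with~$|\omega\cdot\mathbf n|=\nf12$. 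Hence, for nonnegative measurable~$G$,
\begin{equation*}
	\int_{\nf12}^1\int_{\partial(\tau\cu_0)}G\,d\mathcal H^{d-1}\,d\tau\leq C\int_{\Rd}G\,.
\end{equation*}
This is the cubical coarea formula. Applied to~$|F|^\xi$, it gives the~$L^\xi$ part of~\eqref{e.boundary.slicing}, and it also shows that the restriction is well defined for almost every~$\tau$. Indeed, if~$F_i\to F$ in~$L^\xi$ are continuous representatives with~$\sum_i\norm{F_{i+1}-F_i}_{L^\xi}<\infty$, then for almost every~$\tau$ the restrictions~$F_i|_{\partial(\tau\cu_0)}$ converge in~$L^\xi(\partial(\tau\cu_0))$, and the limit is independent of the approximating sequence.

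For the seminorm I plan to bound the double boundary integral by an average over a thickened layer. The standard device is to compare~$F(x)-F(y)$, for~$x,y\in\partial(\tau\cu_0)$, with differences involving points of~$\Rd$ near~$x$ and~$y$. Instead, I will use the following integrated identity. Write~$x=\tau\omega$ and~$y=\tau\omega'$, and integrate over~$\tau$. The left side of~\eqref{e.boundary.slicing} then becomes
\begin{equation*}
	\int_I\int_\Sigma\int_\Sigma\frac{|F(\tau\omega)-F(\tau\omega')|^\xi}{\tau^{d-1+\alpha\xi}|\omega-\omega'|^{d-1+\alpha\xi}}\,\tau^{2(d-1)}\,d\omega\,d\omega'\,d\tau\,.
\end{equation*}
This only integrates over pairs at the same radius, which form a set of codimension one in~$\Rd\times\Rd$, so it is not directly dominated by the~$W^{\alpha,\xi}(\Rd)$ seminorm.

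To handle this, I would use the triangle inequality through an intermediate point. For~$x,y$ on the same boundary with~$\varrho=|x-y|$, average over~$z$ in a ball~$B$ of radius~$\varrho$ containing both:
\begin{equation*}
	|F(x)-F(y)|^\xi\leq2^{\xi-1}\fint_B\bigl(|F(x)-F(z)|^\xi+|F(z)-F(y)|^\xi\bigr)\,dz\,.
\end{equation*}
This is the source of the factor~$2^\xi$. It then suffices to bound
\begin{equation*}
	\int_I\int_{\partial(\tau\cu_0)}\int_{\partial(\tau\cu_0)}\varrho^{-d-(d-1+\alpha\xi)}\int_{B(x,2\varrho)}|F(x)-F(z)|^\xi\,dz\,d\mathcal H^{d-1}(y)\,d\mathcal H^{d-1}(x)\,d\tau\,.
\end{equation*}
Integrating first in~$y$ over~$\{y\in\partial(\tau\cu_0):|y-x|\geq|z-x|/2\}$ gives
\begin{equation*}
	\int\varrho^{-2d+1-\alpha\xi}\,d\mathcal H^{d-1}(y)\leq C|x-z|^{-d-\alpha\xi}\,,
\end{equation*}
since the boundary is Ahlfors~$(d-1)$-regular with constants uniform in~$\tau\in[\nf12,1]$. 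Then the coarea bound in~$(\tau,x)$ yields~$C\int\!\int|F(x)-F(z)|^\xi|x-z|^{-d-\alpha\xi}\,dx\,dz=C[F]^\xi_{W^{\alpha,\xi}(\Rd)}$.

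The main obstacle is to justify these manipulations for a non-smooth~$F$, namely that the integrand evaluated on slices is the restriction of the trace. I would prove~\eqref{e.boundary.slicing} first for~$F\in C^\infty_c$, which is dense in~$W^{\alpha,\xi}(\Rd)$. For general~$F$, I would then apply the estimate to differences~$F_i-F_j$ and choose a fast subsequence so that, by Fatou's lemma, the restrictions are Cauchy in~$W^{\alpha,\xi}(\partial(\tau\cu_0))$ for almost every~$\tau$. Their limit agrees with the~$L^\xi$ restriction defined above, and Fatou's lemma transfers~\eqref{e.boundary.slicing} to~$F$. The dependence of~$C$ only on~$d$ comes from the Jacobian bounds and the Ahlfors regularity, which are uniform in~$\tau\in[\nf12,1]$. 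The remaining integrals in~$\varrho$ converge because~$\alpha\xi>0$, so any dependence on~$\alpha$ and~$\xi$ can be absorbed into the~$2^\xi$ factor or bounded, possibly after tracking those constants.
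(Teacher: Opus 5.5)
Your proposal follows the paper's proof. You insert an intermediate point $z$ averaged over a ball, and integrate out $y$ using $\mathcal H^{d-1}(\partial(\tau\cu_0)\cap B(x,\rho))\leq C\rho^{d-1}$ summed over dyadic annuli. You then turn the integral over $\tau$ and $\partial(\tau\cu_0)$ into a volume integral by the cubical coarea formula.

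One step does not give the stated constant, however. With a ball of radius $\varrho=|x-y|$ you have to enlarge to $B(x,2\varrho)$, as you do. The annular estimate is then applied at radius $|x-z|/2$ instead of $|x-z|$, which costs a factor $2^{d+\alpha\xi}$. Your final bound is therefore $C(d)\,2^{(1+\alpha)\xi}$. The factor $2^{\alpha\xi}$ cannot be absorbed into $C2^\xi$ with $C$ depending only on $d$, contrary to your closing remark.

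The fix, which the paper uses, is to average over the smaller ball $B(\frac{x+y}2,\frac\varrho2)$. This ball lies in both $B(x,\varrho)$ and $B(y,\varrho)$, so the averaging costs only the dimensional factor $2^d$. Moreover, $z\in B(x,\varrho)$ gives $|x-y|>|x-z|$, so the annular bound is applied at radius exactly $|x-z|$.

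Your density argument is valid but unnecessary. The pointwise inequality holds for every pair $x,y$ and all integrands are nonnegative. The paper therefore fixes a Borel representative of $F$ and applies Tonelli directly, with no smooth approximation. The coarea formula then shows that the restriction does not depend on the representative for almost every $\tau$. This also replaces your construction of the restriction through continuous approximations.
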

\begin{proof}
Fix a Borel representative of~$F$.
Write~$S_\tau\coloneqq\partial(\tau\cu_0)$.
Throughout the proof,~$C$ depends only on~$d$.
For~$x\in S_\tau$ and~$\rho>0$, one has
~$\mathcal H^{d-1}(S_\tau\cap B(x,\rho))\leq C\rho^{d-1}$.
Decomposing into dyadic annuli therefore gives
\begin{equation*}
    \begin{aligned}
        \int_{S_\tau\setminus(S_\tau\cap B(x,\rho))}
        \frac{d\mathcal H^{d-1}(y)}{|x-y|^{2d-1+\alpha\xi}}
        &\leq \sum_{j=0}^{\infty}
        \frac{\mathcal H^{d-1}(S_\tau\cap B(x,2^{j+1}\rho))}
             {(2^j\rho)^{2d-1+\alpha\xi}} \\
        &\leq C\rho^{-d-\alpha\xi}
        \sum_{j=0}^{\infty}2^{-j(d+\alpha\xi)}
        \leq C\rho^{-d-\alpha\xi}
        \,.
    \end{aligned}
\end{equation*}

For distinct~$x,y\in S_\tau$, put~$\ell\coloneqq|x-y|$.
Inserting~$F(z)$ and averaging over
~$B((x+y)/2,\ell/2)$, which is contained in both
~$B(x,\ell)$ and~$B(y,\ell)$, yields
\begin{equation*}
	|F(x)-F(y)|^\xi
	\leq C2^\xi\ell^{-d}\biggl(
	\int_{B(x,\ell)}|F(x)-F(z)|^\xi\,dz
	+\int_{B(y,\ell)}|F(y)-F(z)|^\xi\,dz
	\biggr)
	\,.
\end{equation*}
Divide by~$\ell^{d-1+\alpha\xi}$ and integrate over
~$S_\tau\times S_\tau$.
The two contributions are equal by symmetry.
Since~$z\in B(x,|x-y|)$ implies~$|x-y|>|x-z|$,
Tonelli's theorem and the annular estimate give
\begin{equation*}
    \begin{aligned}
        [F|_{S_\tau}]_{W^{\alpha,\xi}(S_\tau)}^\xi
        &\leq C2^\xi
        \int_{S_\tau}\int_{\Rd}|F(x)-F(z)|^\xi \\
        &\qquad\times
        \left(
            \int_{S_\tau\setminus(S_\tau\cap B(x,|x-z|))}
            \frac{d\mathcal H^{d-1}(y)}{|x-y|^{2d-1+\alpha\xi}}
        \right)\,dz\,d\mathcal H^{d-1}(x) \\
        &\leq C2^\xi
        \int_{S_\tau}\int_{\Rd}
        \frac{|F(x)-F(z)|^\xi}{|x-z|^{d+\alpha\xi}}
        \,dz\,d\mathcal H^{d-1}(x)
        \,.
    \end{aligned}
\end{equation*}
Integrating in~$\tau$ and applying the coarea formula~\eqref{e.cubical.coarea}, we obtain
\begin{equation*}
    \int_I
    [F|_{S_\tau}]_{W^{\alpha,\xi}(S_\tau)}^\xi\,d\tau
    \leq C2^\xi
    \int_{\Rd}\int_{\Rd}
    \frac{|F(x)-F(z)|^\xi}{|x-z|^{d+\alpha\xi}}
    \,dz\,dx
    =C2^\xi[F]_{W^{\alpha,\xi}(\Rd)}^\xi
    \,.
\end{equation*}
The~$L^\xi$ bound follows from the same coarea formula.
Combining these estimates proves~\eqref{e.boundary.slicing}.
The coarea formula also shows that the restriction is independent
of the chosen representative for almost every~$\tau$.
\end{proof}

\paragraph{Maxima near a cube boundary.} For~$\nf12<\tau<1$ and~$k\geq0$, the largest norm of the matrices~$\a(\triangle)$,~$\triangle\in\mathscr T_k$, near~$\partial(\tau\cu_0)$ is
\begin{equation}
	A_k(\tau)\coloneqq\max\bigl\{|\a(\triangle)|:\triangle\in\mathscr T_k,\ \dist(\overline \triangle,\partial(\tau\cu_0))\leq100d\cdot3^{-k}\bigr\}\,,
	\label{e.boundary.maxima}
\end{equation}
with an empty maximum equal to zero. The Whitney simplices of~$\tau\overline{\cu}_0$ of size~$3^{-k}$ in~$\cu_0$ lie within Euclidean distance~$27\sqrt d\,3^{-k}<100d\cdot3^{-k}$ of~$\partial(\tau\cu_0)$, by~\eqref{e.whitney.distance}, so their matrices enter~$A_k(\tau)$. The estimate of Section~\ref{s.good.radius.energy} is controlled by the discounted series of these maxima,
\begin{equation}
	S(\tau)\coloneqq\sum_{k\geq0}3^{-k\sigma}A_k(\tau)^{\nf12}\,.
	\label{e.boundary.maxima.series}
\end{equation}
Its weight~$3^{-k\sigma}$ is the discount~$3^{-ks}$ of~$\Lambda_{s,1,p}(\cu_0)$ times~$3^{-k(d-1)/(2p)}$, which compensates the factor~$3^{k(d-1)/(2p)}$ lost by taking the maximum within distance comparable to~$3^{-k}$ of a cube boundary, of dimension~$d-1$: Lemma~\ref{l.radius.averages} shows that~$S\in L^{2p}$ in~$\tau$, with norm controlled by~$\Lambda_{s,1,p}(\cu_0)^{\nf12}$. For~$v\in H^1_\a(\cu_0)$ and~$\tau\in\R$ we also use the centered maximal function of the energy on cube boundaries,
\begin{equation}
	D_v(\tau)\coloneqq\sup_{\varepsilon>0}\frac1{2\varepsilon}\int_{(\tau-\varepsilon,\tau+\varepsilon)\cap(\rho_1,\rho_2)}\int_{\partial(l\cu_0)}\nabla v\cdot\a\nabla v\,d\mathcal H^{d-1}\,dl\,.
	\label{e.boundary.energy.maximal}
\end{equation}
It bounds the energy of~$v$ on the Whitney simplices of each size around~$\partial(\tau\cu_0)$; see~\eqref{e.energy.scale}.

\begin{lemma}[Integrability in the radius]
\label{l.radius.averages}
Let~$v\in H^1_\a(\cu_0)$. The functions~$S$ and~$D_v$ of~\eqref{e.boundary.maxima.series} and~\eqref{e.boundary.energy.maximal} are measurable. Moreover,
\begin{equation}
	\norm{S}_{L^{2p}(\frac12,1)}\leq\frac{C(d,p)}{1-3^{-s}}\Lambda_{s,1,p}(\cu_0)^{\nf12}\,,
	\label{e.boundary.maxima.bound}
\end{equation}
and for every~$M>0$,
\begin{equation}
	\bigl|\{\tau\in\R:D_v(\tau)>M\}\bigr|\leq\frac{6}{M}\Ener_{\rho_2\cu_0}(v)\,.
	\label{e.energy.weak.bound}
\end{equation}
\end{lemma}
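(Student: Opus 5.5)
The plan has three parts: measurability, the $L^{2p}$ bound for $S$, and the weak-type bound for $D_v$.

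\emph{Measurability.} For each $k$, the set of radii $\tau$ for which a fixed simplex $\triangle\in\mathscr T_k$ satisfies $\dist(\overline\triangle,\partial(\tau\cu_0))\leq100d\cdot3^{-k}$ is a closed interval, because the distance is continuous in $\tau$ and the sublevel set is an interval (see below). Hence $A_k$ is a finite maximum of step functions, so it is Borel. Then $S$ is a countable sum of nonnegative measurable functions. For $D_v$, the inner function $g(l)\coloneqq\int_{\partial(l\cu_0)}\nabla v\cdot\a\nabla v\,d\mathcal H^{d-1}$ is integrable on $(\rho_1,\rho_2)$ by the cubical coarea formula~\eqref{e.cubical.coarea}, with
\begin{equation*}
\int_{\rho_1}^{\rho_2}g=c\,\Ener_{\rho_2\cu_0\setminus\rho_1\cu_0}(v)\leq c\,\Ener_{\rho_2\cu_0}(v)
\end{equation*}
for a normalization constant $c$; since $|\nabla(2|x|_\infty)|=2$, I expect $c=1/2$ or $2$. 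The averages are continuous in $\varepsilon$, so the supremum may be taken over rational $\varepsilon$, which makes $D_v$ lower semicontinuous and hence measurable.

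\emph{Bound on $S$.} By Minkowski's inequality in $L^{2p}(\frac12,1)$,
\begin{equation*}
\norm{S}_{L^{2p}}\leq\sum_k3^{-k\sigma}\norm{A_k^{1/2}}_{L^{2p}}=\sum_k3^{-k\sigma}\Bigl(\int A_k^p\Bigr)^{1/(2p)}.
\end{equation*}
Bounding the maximum by the sum gives $A_k(\tau)^p\leq\sum_\triangle|\a(\triangle)|^p\indc\{\tau\in I_\triangle\}$, where $I_\triangle$ is the set of radii with $\dist(\overline\triangle,\partial(\tau\cu_0))\leq100d\,3^{-k}$. The key geometric claim is $|I_\triangle|\leq C(d)3^{-k}$. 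To see it, note that if $x\in\overline\triangle$ and $y\in\partial(\tau\cu_0)$ with $|x-y|\leq100d\,3^{-k}$, then $|2|x|_\infty-\tau|=2\bigl||x|_\infty-|y|_\infty\bigr|\leq200d\,3^{-k}$. Since $|x|_\infty$ varies by at most $3^{-k}$ over $\overline\triangle$, the radius $\tau$ lies in an interval of length $C3^{-k}$. Therefore
\begin{equation*}
\int A_k^p\leq C3^{-k}\sum_{\triangle\in\mathscr T_k}|\a(\triangle)|^p=C\,d!\,3^{k(d-1)}\avsum_{\triangle\in\mathscr T_k}|\a(\triangle)|^p,
\end{equation*}
and taking the $1/(2p)$ power produces the factor $3^{k(d-1)/(2p)}$, which cancels against $3^{-k\sigma}=3^{-ks}3^{-k(d-1)/(2p)}$. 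What remains is $C\sum_k3^{-ks}(\avsum|\a(\triangle)|^p)^{1/(2p)}=C(1-3^{-s})^{-1}\Lambda_{s,1,p}(\cu_0)^{1/2}$ by~\eqref{e.upper.half.series}.

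\emph{Bound on $D_v$.} Extend $g$ by zero outside $(\rho_1,\rho_2)$. Then $D_v$ is the one-dimensional centered Hardy--Littlewood maximal function of $g$. The Vitali covering argument on $\R$ gives $|\{Mg>M\}|\leq\frac{3}{M}\norm{g}_{L^1}$, and the stated constant $6$ absorbs the coarea normalization of $g$. The only delicate point in the whole lemma is checking this normalization, so I would verify~\eqref{e.cubical.coarea} carefully. I expect the geometric interval count to be routine.
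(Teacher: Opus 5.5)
Your proposal is correct and follows the paper's proof essentially step by step. You use Minkowski's inequality in $L^{2p}$, you bound the maximum by a sum using that each simplex enters $A_k$ only for radii in an interval of length $C3^{-k}$, and you apply the Vitali covering argument to the centered maximal function of the boundary energy extended by zero.

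Two small points. Your own computation $|\nabla(2|x|_\infty)|=2$ already settles the coarea normalization as $c=2$, which is \eqref{e.cubical.coarea} and gives exactly $3\cdot2=6$. Also, lower semicontinuity of $D_v$ follows because each average is continuous in $\tau$, and a supremum of continuous functions is lower semicontinuous; restricting to rational $\varepsilon$ is not what produces it.
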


\begin{proof}
For each simplex~$\triangle\in\mathscr T_k$ the function~$\tau\mapsto\dist(\overline\triangle,\partial(\tau\cu_0))$ is continuous, so~$A_k$ in~\eqref{e.boundary.maxima} is a finite maximum of measurable functions and~$S$ is measurable; moreover~$\triangle$ enters~$A_k(\tau)$ only for radii in an interval of length~$C3^{-k}$. Bounding the maximum by a sum,
\begin{equation*}
	\int_{1/2}^1 A_k(\tau)^p\,d\tau\leq C3^{-k}\sum_{\triangle\in\mathscr T_k}|\a(\triangle)|^p=C\,3^{k(d-1)}\avsum_{\triangle\in\mathscr T_k}|\a(\triangle)|^p\,.
\end{equation*}
Minkowski's inequality in~$L^{2p}$ gives
\begin{equation*}
	\norm{S}_{L^{2p}(\nf12,1)}\leq C\sum_{k\geq0}3^{-ks}\biggl(\avsum_{\triangle\in\mathscr T_k}|\a(\triangle)|^p\biggr)^{\nf1{2p}}=\frac{C}{1-3^{-s}}\Lambda_{s,1,p}(\cu_0)^{\nf12}\,,
\end{equation*}
which proves~\eqref{e.boundary.maxima.bound}.

\smallskip

Splitting~$\rho_2\cu_0\setminus\rho_1\overline{\cu}_0$ according to the largest signed coordinate gives the cubical coarea formula
\begin{equation}
	\int_{\rho_1}^{\rho_2}\int_{\partial(l\cu_0)}g\,d\mathcal H^{d-1}\,dl=2\int_{\rho_2\cu_0\setminus\rho_1\overline{\cu}_0}g\,dx\qquad(g\geq0)\,.
	\label{e.cubical.coarea}
\end{equation}
Hence the energy on cube boundaries, extended by zero outside~$(\rho_1,\rho_2)$, has integral at most~$2\Ener_{\rho_2\cu_0}(v)$, and~$D_v$ is its centered maximal function, which is lower semicontinuous. Cover a compact subset of~$\{D_v>M\}$ by finitely many centered intervals on which the average of this energy exceeds~$M$. Retain a longest interval, discard those meeting it, and repeat. The retained intervals are disjoint, and each discarded one lies in the threefold enlargement of a retained one, so the compact set has measure at most~$3/M$ times its integral, hence at most~$6\Ener_{\rho_2\cu_0}(v)/M$. Inner regularity gives~\eqref{e.energy.weak.bound}.
\end{proof}

The good radius is chosen once, for~$v$ and one sequence of smooth approximations, and it serves all truncations~$\min\{(v-k)_+,N\}$ of~$v$, with bounds in terms of~$v$ alone: Section~\ref{s.local.boundedness} takes~$k=b-a$ to compare two truncation levels of a subsolution, and Section~\ref{s.harnack} takes finite~$N$ to test with bounded powers of a supersolution. The case~$k=0$,~$N=\infty$ is the convergence of the approximations themselves.

\begin{proposition}[Existence of a good radius]
\label{p.good.radius}
Let~$v\in H^1_\a(\cu_0)$ be nonnegative with~$\norm{v}_{L^r(\rho_2\cu_0)}<\infty$, and let~$v_i\to v$ be nonnegative smooth approximations in~$H^1_\a(\cu_0)$. There exist~$\tau\in J$ and a subsequence such that~$v$ and all its truncations have traces on~$\partial(\tau\cu_0)$, which commute with truncations and are denoted by the same symbols, and, for every~$k\in\R$ and~$0<N\leq\infty$,
\begin{equation}
	\min\{(v_i-k)_+,N\}\to\min\{(v-k)_+,N\}\qquad\mbox{in }W^{1-t,r}(\partial(\tau\cu_0))\,.
	\label{e.good.radius.convergence}
\end{equation}
At this radius,
\begin{align}
	S(\tau)&\leq C\delta^{-\nf1{2p}}\Lambda_{s,1,p}(\cu_0)^{\nf12}\,,
	\label{e.good.radius.matrices}\\
	D_v(\tau)&\leq C\delta^{-1}\Ener_{\rho_2\cu_0}(v)\,,
	\label{e.good.radius.boundary.energy}\\
	\norm{v}_{W^{1-t,r}(\partial(\tau\cu_0))}&\leq C\delta^{-(1-t)-\nf1r}\bigl(\lambda_{t,1,q}(\cu_0)^{-\nf12}\Ener_{\rho_2\cu_0}(v)^{\nf12}+\norm{v}_{L^r(\rho_2\cu_0)}\bigr)\,,
	\label{e.good.radius.fractional}\\
	\norm{v}_{L^r(\partial(\tau\cu_0))}&\leq C\delta^{-\nf1r}\norm{v}_{L^r(\rho_2\cu_0)}\,,
	\label{e.good.radius.trace.lr}
\end{align}
and~$D_w\leq D_v$ everywhere for every truncation~$w=\min\{(v-k)_+,N\}$. If moreover~$\norm{v}_{L^2(\rho_2\cu_0)}<\infty$, the radius can be chosen so that also
\begin{equation}
	\norm{v}_{L^2(\partial(\tau\cu_0))}\leq C\delta^{-\nf12}\norm{v}_{L^2(\rho_2\cu_0)}\,.
	\label{e.good.radius.trace.l2}
\end{equation}
\end{proposition}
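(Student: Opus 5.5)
The plan is a Chebyshev argument on the interval~$J$, which has length~$\delta/4$, combined with a diagonal subsequence argument for the traces. First apply Proposition~\ref{p.fractional.localization} to~$v$ and to the~$v_i$. This gives functions~$F$ and~$F_i$ in~$W^{1-t,r}(\Rd)$ with the following properties:
\begin{itemize}
\item $F=v$ and~$F_i=v_i$ near every~$\partial(\tau\cu_0)$ with~$\tau\in J$;
\item $F$ satisfies the bound~\eqref{e.localized.fractional.bound};
\item $F_i\to F$ in~$W^{1-t,r}(\Rd)$.
\end{itemize}
Lemma~\ref{l.fractional.slicing} with~$I=J$ then gives two facts. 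First,~$\int_J\norm{v}_{W^{1-t,r}(\partial(\tau\cu_0))}^r\,d\tau\leq C\norm{F}_{W^{1-t,r}}^r$. Second,~$\int_J\norm{v_i-v}_{W^{1-t,r}(\partial(\tau\cu_0))}^r\,d\tau\to0$. Passing to a subsequence with summable integrals, Borel--Cantelli (or Fatou applied to the sum) gives convergence of the traces~$v_i\to v$ in~$W^{1-t,r}(\partial(\tau\cu_0))$ for almost every~$\tau\in J$.

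For truncations, note that~$x\mapsto\min\{(x-k)_+,N\}$ is~$1$-Lipschitz. So the truncated traces are the truncations of the traces, and their~$L^r$ and seminorm differences are pointwise dominated. Convergence of~$T(v_i)\to T(v)$ in~$L^r$ and a.e.\ on the boundary follows directly. For the seminorm, use dominated convergence in the double integral. The integrand~$|T(v_i)(x)-T(v)(x)-T(v_i)(y)+T(v)(y)|^r$ is bounded by~$C(|v_i(x)-v_i(y)|^r+|v(x)-v(y)|^r)$. After a further subsequence, the difference quotients of~$v_i$ converge in~$L^1(\partial\times\partial)$, so the generalized dominated convergence theorem applies. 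This handles every~$k$ and~$N$ simultaneously, since only the a.e.\ and~$L^1$ convergence of the fixed sequence~$v_i$ is used. The statement that traces commute with truncations is immediate, because the traces are restrictions of Borel representatives for almost every~$\tau$ via the coarea formula.

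The five quantitative bounds follow from averaging over~$J$:
\begin{itemize}
\item By~\eqref{e.boundary.maxima.bound}, the set of~$\tau\in J$ with~$S(\tau)^{2p}>M\Lambda_{s,1,p}(\cu_0)^p$ has measure at most~$C/M$. Choosing~$M=C'\delta^{-1}$ makes this set occupy at most a fifth of~$J$, which gives~\eqref{e.good.radius.matrices} with the factor~$\delta^{-1/(2p)}$.
\item \eqref{e.energy.weak.bound} with~$M=C\delta^{-1}\Ener_{\rho_2\cu_0}(v)$ handles~\eqref{e.good.radius.boundary.energy}.
\item Chebyshev applied to the slicing integral gives~\eqref{e.good.radius.fractional}. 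The factor~$\delta^{-1/r}$ comes from the measure of the exceptional set, and~$\delta^{-(1-t)}$ comes from~\eqref{e.localized.fractional.bound}.
\item The cubical coarea formula~\eqref{e.cubical.coarea} gives~$\int_J\norm{v}_{L^r(\partial(\tau\cu_0))}^r\leq2\norm{v}_{L^r(\rho_2\cu_0)}^r$, which yields~\eqref{e.good.radius.trace.lr}. The same argument gives~\eqref{e.good.radius.trace.l2} when~$v\in L^2$.
\end{itemize}
The bad sets together have measure less than~$|J|$, so a good~$\tau$ exists in the full-measure set where the traces converge.

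Finally,~$D_w\leq D_v$ holds because~$\nabla w\cdot\a\nabla w\leq\nabla v\cdot\a\nabla v$ almost everywhere for truncations, by the chain rule of Proposition~\ref{p.weighted.calculus}.

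The main obstacle is the uniformity over all~$k\in\R$ and~$N$ in~\eqref{e.good.radius.convergence} with a single subsequence. Nonlinear truncation is not continuous in the fractional norm in general. I would resolve this with the dominated-convergence argument in the double integral described above, run on a subsequence for which the difference quotients of~$v_i$ converge in~$L^1$ and almost everywhere on~$\partial(\tau\cu_0)\times\partial(\tau\cu_0)$.
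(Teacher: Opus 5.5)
Your proposal is correct and follows the paper's proof. Localization and integrated slicing along a subsequence with summable errors give trace convergence for almost every $\tau\in J$, Chebyshev's inequality on $J$ gives the quantitative bounds, and all truncations are handled at once by dominated convergence in the boundary double integral, with the subsequence fixed before $k$ and $N$; the only difference is that the paper uses summability at the chosen $\tau$ to build the single dominating function $\bigl(|v(x)-v(y)|^r+\sum_i|(v_i-v)(x)-(v_i-v)(y)|^r\bigr)^{1/r}$, while you use the generalized dominated convergence theorem with $L^1$-convergent dominators (which, incidentally, shows that truncation is continuous on $W^{1-t,r}$, so it was not a genuine obstacle).
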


\begin{proof}
We first select a full-measure set of radii~$\tau\in J$ on which the traces of a subsequence satisfy~$v_i\to v$ in~$W^{1-t,r}(\partial(\tau\cu_0))$. We then exclude the radii where a quantitative bound fails and treat all truncations at one remaining radius.

\smallskip

\emph{Step 1.} Traces for almost every radius. Fix a representative of~$v$, to be used throughout the proof; for almost every~$\tau\in J$, its restriction to~$\partial(\tau\cu_0)$ is the trace of~$v$. Apply Proposition~\ref{p.fractional.localization} to~$v$ and all~$v_i$ with the same cover, so~$F_i\to F$ in~$W^{1-t,r}(\Rd)$, and pass to a subsequence with~$\sum_i\norm{F_i-F}_{W^{1-t,r}(\Rd)}^r<\infty$. By~\eqref{e.boundary.slicing}, for~$\alpha=1-t$ and~$\xi=r$, applied to each~$F_i-F$ and summed over~$i$, the~$r$-th powers of the norms of~$F_i-F$ on~$\partial(\tau\cu_0)$ have a finite sum for almost every~$\tau\in J$, and~$F$ and~$F_i$ agree with~$v$ and~$v_i$ on these cube boundaries for almost every radius, by~\eqref{e.cubical.coarea}. Hence, for almost every~$\tau\in J$, the series~$\sum_i\norm{v_i-v}_{W^{1-t,r}(\partial(\tau\cu_0))}^r$ converges, so~$v_i\to v$ in~$W^{1-t,r}(\partial(\tau\cu_0))$.

\smallskip

\emph{Step 2.} Excluding the bad radii. The interval~$J$ has length~$\delta/4$. Fix~$K\geq1$. By~\eqref{e.boundary.maxima.bound} and Chebyshev's inequality,
\begin{equation*}
	\bigl|\{\tau\in J:S(\tau)>K\delta^{-\nf1{2p}}\Lambda_{s,1,p}(\cu_0)^{\nf12}\}\bigr|\leq C\delta K^{-2p}\,,
\end{equation*}
and by~\eqref{e.energy.weak.bound},
\begin{equation*}
	\bigl|\{\tau\in J:D_v(\tau)>K\delta^{-1}\Ener_{\rho_2\cu_0}(v)\}\bigr|\leq6\delta K^{-1}\,.
\end{equation*}
Lemma~\ref{l.fractional.slicing} and~\eqref{e.localized.fractional.bound} for~$v$ show that~\eqref{e.good.radius.fractional} fails, with~$K$ in place of~$C$, on a set of measure at most~$C\delta K^{-r}$. For~$b\in\{r,2\}$, the coarea formula gives
\begin{equation*}
	\int_J\norm{v}_{L^b(\partial(\tau\cu_0))}^b\,d\tau\leq2\norm{v}_{L^b(\rho_2\cu_0)}^b\,.
\end{equation*}
Thus~\eqref{e.good.radius.trace.lr} fails on a set of measure at most~$2\delta K^{-r}$ and, if~$\norm{v}_{L^2(\rho_2\cu_0)}<\infty$,~\eqref{e.good.radius.trace.l2} fails on a set of measure at most~$2\delta K^{-2}$. If a controlling quantity vanishes, the corresponding bad set is null. Choosing~$K$ large, depending only on the parameters, makes the union of these sets shorter than~$|J|/2$, and any~$\tau$ outside it in the full-measure set of Step~1 will do.

\smallskip

\emph{Step 3.} Truncations. Fix~$\tau$ as in Step~2, and then~$k\in\R$ and~$0<N\leq\infty$; no further radii are excluded. Let~$w\coloneqq\min\{(v-k)_+,N\}$ and~$w_i\coloneqq\min\{(v_i-k)_+,N\}$. Since~$w=(v-k)_+$ if~$N=\infty$ and~$w=(v-k)_+-(v-k-N)_+$ if~$N<\infty$, Proposition~\ref{p.weighted.calculus}\ref{i.weighted.chain} gives~$w\in H^1_\a(\cu_0)$ and
\begin{equation*}
	\nabla w\cdot\a\nabla w=\indc_{\{k<v<k+N\}}\nabla v\cdot\a\nabla v\leq\nabla v\cdot\a\nabla v\,,
\end{equation*}
where~$\{k<v<k+\infty\}=\{v>k\}$. Hence~$D_w\leq D_v$ everywhere. Positive parts are continuous on~$H^1_\a(\cu_0)$ by the same part, so~$w_i\to w$ in~$H^1_\a(\cu_0)$.

\smallskip

The trace of~$v$ on~$\partial(\tau\cu_0)$ is the restriction of the representative of~$v$ fixed in Step~1. Composing this representative with the one-Lipschitz map~$y\mapsto\min\{(y-k)_+,N\}$ gives a representative of~$w$, whose restriction is the same truncation of the trace of~$v$. This restriction is the trace of~$w$, so traces commute with positive parts and truncations, and we now show that it is the limit of the traces of the~$w_i$. The~$w_i$ are Lipschitz on~$\tau\overline{\cu}_0$, since the~$v_i$ are smooth, and their traces are their restrictions. On~$\partial(\tau\cu_0)$ one has~$|w_i-w|\leq|v_i-v|$ pointwise, so~$w_i\to w$ in~$L^r(\partial(\tau\cu_0))$, by Step~1. For~$\alpha=1-t$ and~$\xi=r$, the seminorm~\eqref{e.boundary.fractional.seminorm} is the~$L^r$ norm of the differences~$(x,y)\mapsto f(x)-f(y)$ on~$\partial(\tau\cu_0)\times\partial(\tau\cu_0)$, with respect to the measure
\begin{equation*}
	|x-y|^{-(d-1)-(1-t)r}\,d\mathcal H^{d-1}(y)\,d\mathcal H^{d-1}(x)\,.
\end{equation*}
Since the series of Step~1 converges at~$\tau$, the~$v_i$ converge to~$v$ almost everywhere on~$\partial(\tau\cu_0)$, and this~$L^r$ space contains the function
\begin{equation*}
	(x,y)\mapsto\Bigl(|v(x)-v(y)|^r+\sum_i\bigl|(v_i-v)(x)-(v_i-v)(y)\bigr|^r\Bigr)^{\nf1r}\,.
\end{equation*}
It bounds the differences of~$v$, and twice it bounds those of each~$v_i$. Truncation is one-Lipschitz, so the differences of~$w$ and~$w_i$ obey the same bounds, and those of~$w_i-w$ are bounded by three times this function. They tend to zero almost everywhere, since truncation is continuous, and dominated convergence gives~$[w_i-w]_{W^{1-t,r}(\partial(\tau\cu_0))}\to0$. This completes the proof of~\eqref{e.good.radius.convergence}.
\end{proof}

\paragraph{The order of the choices.} We call a radius~$\tau\in J$ supplied by Proposition~\ref{p.good.radius} a \emph{good radius}. For~$v$ and its smooth approximations~$(v_i)$, the proposition fixes this radius and a subsequence before~$k$,~$N$ or the extension width~$h$ is chosen. At this radius,~\eqref{e.good.radius.convergence} holds for each fixed~$k$ and~$N$; convergence is not asserted uniformly in these parameters. The bounds~\eqref{e.good.radius.matrices}--\eqref{e.good.radius.trace.l2} are independent of~$k$,~$N$ and~$h$. The series~$S(\tau)$ already includes the matrix maxima at every scale, so the estimates of Section~\ref{s.good.radius.energy} hold for every admissible~$h$. Sections~\ref{s.local.boundedness} and~\ref{s.harnack} choose~$h$ afterward to make the fractional-seminorm term small.

\section{The piecewise harmonic extension}
\label{s.whitney.extension}

For Lipschitz~$f$ on~$\partial(\tau\cu_0)$, with~$\tau\in J$, we first construct the piecewise affine extension~$L_hf$ outside~$\tau\cu_0$. At free vertices its values are boundary averages of~$f$ multiplied by the cutoff~$\omega_z$. Values at hanging vertices are interpolated from larger simplices, and~$L_hf$ is affine on each Whitney simplex. Replacing~$L_hf$ on each simplex by its~$\a$-harmonic representative defines~$H_hf$. The estimates below bound the energy of~$H_hf$ using the matrices~$\a(\triangle)$ and boundary norms of~$f$. At a good radius, Section~\ref{s.good.radii} bounds~$S(\tau)$ and the stated trace norms of~$v$, and supplies convergence of the approximation traces used later.

\smallskip

Fix~$\nf12\leq\rho_1<\rho_2\leq1$ and~$\tau\in J$, and choose a triadic width
\begin{equation}
	h\in\{3^{-n}:n\in\N\}\,,\qquad 0<h\leq\frac{\rho_2-\tau}{100d}\,.
	\label{e.extension.width}
\end{equation}
Constants in this section depend only on~$d,\alpha,\xi$, and~$\operatorname{Lip}$ refers to Euclidean distance.

\paragraph{The piecewise affine extension.} A point~$x\notin\tau\cu_0$ has maximum-norm distance~$|x|_\infty-\tau/2$ to~$\tau\overline{\cu}_0$. Its coordinate projection~$y_x\in\partial(\tau\cu_0)$ is given by
\begin{equation*}
	(y_x)_i\coloneqq\max\{-\tau/2,\min\{x_i,\tau/2\}\}\,.
\end{equation*}
The projection is one-Lipschitz in~$x$. Let~$\omega(\varrho)\coloneqq\min\{1,(2-2\varrho)_+\}$, which equals one on~$[0,\nf12]$ and vanishes on~$[1,\infty)$. For Lipschitz~$f$ on~$\partial(\tau\cu_0)$ and a free vertex~$z$, write~$\omega_z\coloneqq\omega((|z|_\infty-\tau/2)/h)$ and let
\begin{equation}
	L_hf(z)\coloneqq\omega_z\fint_{\Sigma_z}f\,d\mathcal H^{d-1}\,,\qquad \Sigma_z\coloneqq\partial(\tau\cu_0)\cap B\bigl(y_z,(|z|_\infty-\tau/2)/(100d)\bigr)\,.
	\label{e.extension.definition}
\end{equation}
By Lemma~\ref{l.whitney.interpolation}, these values extend to a unique continuous function~$L_hf$ on~$\Rd\setminus\tau\overline{\cu}_0$ that is affine on every Whitney simplex, the \emph{piecewise affine extension} of~$f$. See Figure~\ref{f.whitney.vertices}. Let~$\mathscr W_h$ be the set of Whitney simplices whose selected cube~$D$ satisfies~$\dist_\infty(\overline D,\tau\overline{\cu}_0)<h$, and let~$\mathscr W_h^j$ consist of those of size~$3^{-j}$, in the sense of~\eqref{e.whitney.simplex.size}.

Neither set depends on~$f$. If a selected cube~$D$ satisfies~$\dist_\infty(\overline D,\tau\overline{\cu}_0)\geq h$, every free vertex~$z\in\overline D$ has~$|z|_\infty-\tau/2\geq h$, so its value~\eqref{e.extension.definition} vanishes. All prescribed values at free vertices of~$\overline D$ are therefore zero. Lemma~\ref{l.whitney.interpolation} gives~$L_hf=0$ on~$\overline D$.

\begin{proposition}[The piecewise affine extension]
\label{p.affine.extension}
For Lipschitz~$f$, the function~$L_hf$ is linear in~$f$, nonnegative when~$f\geq0$, and a Lipschitz extension of~$f$ to~$\Rd\setminus\tau\cu_0$. Its values lie between~$\min\{0,\min f\}$ and~$\max\{0,\max f\}$. The function~$L_hf$ vanishes on the Whitney simplices not in~$\mathscr W_h$, and the simplices of~$\mathscr W_h$ and the support of~$L_hf$ have closures in~$(\tau+3h)\overline{\cu}_0\Subset\rho_2\cu_0$. For~$0<\alpha<1$ and~$1\leq\xi\leq2$ there exists~$C(d,\alpha,\xi)<\infty$ such that, for every~$j\geq0$ and~$b\in\{2,\xi\}$,
\begin{align}
	\sum_{\triangle\in\mathscr W_h^j}|\triangle|\,|\nabla(L_hf|_\triangle)|^2&\leq C3^{-j}\bigl(3^j\,3^{-j(\alpha-(d-1)(\frac1\xi-\frac12))}[f]_{W^{\alpha,\xi}(\partial(\tau\cu_0))}\bigr)^2\notag\\
	&\qquad+C3^{-j}\bigl(h^{-1}3^{j(d-1)(\frac1b-\frac12)}\norm{f}_{L^b(\partial(\tau\cu_0))}\bigr)^2\,.
	\label{e.extension.scale}
\end{align}
\end{proposition}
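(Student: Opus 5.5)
The qualitative properties come first, directly from the construction and Lemma~\ref{l.whitney.interpolation}. Linearity holds because the free-vertex values~\eqref{e.extension.definition} are linear in~$f$, and the interpolation of Lemma~\ref{l.whitney.interpolation} is unique and hence linear. Nonnegativity and the range bound follow from the convex-combination property of that lemma. Each prescribed value is~$\omega_z$ times an average of~$f$, so it lies between~$\min\{0,\min f\}$ and~$\max\{0,\max f\}$, since~$0\leq\omega_z\leq1$.

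For the extension property, note that as~$x\to y\in\partial(\tau\cu_0)$, the free vertices of Whitney simplices near~$x$ have~$\omega_z=1$. Their patches~$\Sigma_z$ shrink to~$y$, with radius comparable to~$|z|_\infty-\tau/2\lesssim|x|_\infty-\tau/2$ by~\eqref{e.whitney.distance}, so the values tend to~$f(y)$ because~$f$ is continuous. For the Lipschitz bound, apply~\eqref{e.interpolation.gradient} on a cube~$D$. Two free vertices of~$\overline D$ have patches of radius~$\sim\ell(D)$, centers within~$C\ell(D)$, and cutoffs differing by at most~$C\ell(D)/h$. Hence their value difference is at most~$C\ell(D)\operatorname{Lip}f+C(\ell(D)/h)\max|f|$, giving a bounded gradient. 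The support claim was already noted before the proposition: cubes with~$\dist_\infty(\overline D,\tau\overline\cu_0)\geq h$ carry zero. Cubes in~$\mathscr W_h$ have~$2\ell(D)<h$ and so lie within~$\infty$-distance~$h+\ell(D)<3h/2$; then~$(\tau+3h)\overline\cu_0\Subset\rho_2\cu_0$ by~\eqref{e.extension.width}.

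For~\eqref{e.extension.scale}, fix~$j$ and a simplex~$\triangle\in\mathscr W_h^j$ in a cube~$D$ with~$\ell(D)=3^{1-j}$. By~\eqref{e.interpolation.gradient},~$|\nabla L_hf|\leq C3^j\max|L_hf(z)-L_hf(z')|$ over free vertices~$z,z'\in\overline D$. I split
\[
\omega_z m_z-\omega_{z'}m_{z'}=\omega_z(m_z-m_{z'})+(\omega_z-\omega_{z'})m_{z'}\,,
\]
where~$m_z=\fint_{\Sigma_z}f$. By~\eqref{e.whitney.distance} all patches~$\Sigma_z$ for~$z\in\overline D$ have radius~$\sim3^{-j}$ and lie inside one boundary patch~$P_D\subset\partial(\tau\cu_0)$ of diameter~$C3^{-j}$, with comparable measures.

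The first term is bounded by the fractional Poincaré-type estimate~$|m_z-m_{z'}|\leq C\fint_{\Sigma_z}\fint_{\Sigma_{z'}}|f(x)-f(y)|$. By Jensen/Hölder this is at most~$C3^{-j(d-1)2/\xi}\cdot3^{j\cdot}$\dots, more precisely
\[
|m_z-m_{z'}|\leq C\,3^{j(d-1)\frac2\xi}\,3^{-j\frac{(d-1)+\alpha\xi}\xi}\Bigl(\iint_{P_D\times P_D}\frac{|f(x)-f(y)|^\xi}{|x-y|^{d-1+\alpha\xi}}\Bigr)^{\nf1\xi}
=C3^{-j(\alpha-\frac{d-1}\xi)}I_D^{\nf1\xi}\,.
\]
The second term is bounded by~$|\omega_z-\omega_{z'}|\leq C3^{-j}/h$ and~$|m_{z'}|\leq C3^{j(d-1)/b}\norm{f}_{L^b(P_D)}$.

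Squaring and multiplying by~$|\triangle|\sim3^{-jd}$, I then sum over~$\triangle\in\mathscr W_h^j$. The cubes of size~$3^{1-j}$ lie in a layer of width~$\sim3^{-j}$, and their patches~$P_D$ have bounded overlap, since each boundary point sees~$C$ of them. This counting, the layer of width~$3^{-j}$ over a~$(d-1)$-dimensional boundary, is what produces the leading factor~$3^{-j}$.

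The key step is summing~$I_D^{2/\xi}$ over roughly~$3^{j(d-1)}$ patches using only~$\sum_D I_D\leq C[f]^\xi$. Since~$\xi\leq2$, the map~$x\mapsto x^{2/\xi}$ is superadditive, so~$\sum_DI_D^{2/\xi}\leq(\sum_DI_D)^{2/\xi}$. This gives
\[
3^{-jd}3^{2j}3^{-2j(\alpha-\frac{d-1}\xi)}[f]^2=3^{-j}\bigl(3^j3^{-j(\alpha-(d-1)(\frac1\xi-\frac12))}[f]\bigr)^2\cdot3^{-j(d-1)}\cdot3^{j(d-1)}\,.
\]
The powers match~\eqref{e.extension.scale} exactly; I expect this exponent bookkeeping to be the main point to check.

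The same superadditivity applied to~$\norm{f}_{L^b(P_D)}^{2}$ with~$b\in\{2,\xi\}\leq2$ handles the cutoff term. The resulting power is~$3^{-jd}\,3^{2j}\,3^{-2j}h^{-2}\,3^{2j(d-1)/b}$, which equals~$3^{-j}h^{-2}3^{2j(d-1)(\frac1b-\frac12)}$, as claimed.
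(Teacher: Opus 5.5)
Your proposal is correct and follows the paper's proof essentially step for step. It uses the same splitting $\omega_z(m_z-m_{z'})+(\omega_z-\omega_{z'})m_{z'}$ inside a common boundary patch of diameter $C3^{-j}$, the same Jensen and H\"older bounds, bounded overlap of these patches, and the inequality $\sum_Dc_D^2\leq(\sum_Dc_D^b)^{2/b}$ for $b\leq2$, which avoids counting the roughly $3^{j(d-1)}$ cubes.

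The one point you leave implicit is the global Lipschitz bound on the non-convex set $\Rd\setminus\tau\cu_0$. The paper obtains it by splitting a segment that crosses $\tau\overline{\cu}_0$ at its two boundary points, where $L_hf=f$, and using $\operatorname{Lip}(f)$ on the middle piece. It also uses the quantitative estimate $|L_hf(x)-f(y_x)|\leq C\operatorname{Lip}(f)(|x|_\infty-\tau/2)$ in place of your qualitative continuity argument.
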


\begin{proof}
\emph{Step 1.} Linearity, pointwise bounds and support. By the uniqueness in Lemma~\ref{l.whitney.interpolation},~$L_hf$ is linear in~$f$. The same lemma places the values of~$L_hf$ on the closure of a selected cube~$D$ between the smallest and the largest value~\eqref{e.extension.definition} at free vertices in~$\overline D$, hence between~$\min\{0,\min f\}$ and~$\max\{0,\max f\}$, since~$0\leq\omega_z\leq1$ at every free vertex. In particular~$L_hf\geq0$ if~$f\geq0$. As noted after~\eqref{e.extension.definition},~$L_hf$ vanishes on the Whitney simplices not in~$\mathscr W_h$. If the simplices in~$D$ belong to~$\mathscr W_h$, then~\eqref{e.whitney.distance} gives~$2\ell(D)<h$, and the points of~$\overline D$ have maximum-norm distance less than~$h+\ell(D)<\nf{3h}2$ from~$\tau\overline{\cu}_0$. With~$\ell(D)=3^{1-j}$, this gives
\begin{equation}
	3^{-j}<\nf h6\,,\qquad \overline D\subset(\tau+3h)\cu_0\,,\qquad (\tau+3h)\overline{\cu}_0\Subset\rho_2\cu_0\,,
	\label{e.extension.support}
\end{equation}
the last because~$3h<\rho_2-\tau$ by~\eqref{e.extension.width}. In particular, the simplices of~$\mathscr W_h$ lie in~$\cu_0$, so~$\mathscr W_h^j\subset\mathscr T_j$.

\smallskip

\emph{Step 2.} Differences of the values at free vertices. We bound the difference between the values at two free vertices by estimating the boundary averages and the cutoff values~$\omega_z,\omega_{z'}$ separately. Let~$D$ be a selected cube of side length~$3^{1-j}$ whose simplices belong to~$\mathscr W_h$, and let~$x_D$ be its center. Set~$\Sigma_D\coloneqq\partial(\tau\cu_0)\cap B(y_{x_D},\sqrt d\,\ell(D))$.

By~\eqref{e.whitney.distance}, the free vertices~$z\in\overline D$ satisfy~$6\cdot3^{-j}<|z|_\infty-\tau/2\leq27\cdot3^{-j}$. Hence the radius of~$\Sigma_z$ is comparable to~$3^{-j}$ and less than~$\ell(D)/10<\tau/2$.

Since~$|y_z-y_{x_D}|\leq|z-x_D|\leq\sqrt d\,\ell(D)/2$, we have~$\Sigma_z\subset\Sigma_D$. Boundary patches~$\partial(\tau\cu_0)\cap B(y,\varrho)$ with~$\varrho<\tau/2$ have area comparable to~$\varrho^{d-1}$, so~$\mathcal H^{d-1}(\Sigma_z)$ is comparable to~$3^{-j(d-1)}$.

For free~$z,z'\in\overline D$, Jensen's inequality and~$|x-y|\leq C3^{-j}$ for~$x,y\in\Sigma_D$ give
\begin{equation*}
	\biggl|\fint_{\Sigma_z}f-\fint_{\Sigma_{z'}}f\biggr|^\xi\leq C3^{2j(d-1)}\int_{\Sigma_D}\int_{\Sigma_D}|f(x)-f(y)|^\xi\leq C3^{j(d-1)}\bigl(3^{-j\alpha}[f]_{W^{\alpha,\xi}(\Sigma_D)}\bigr)^\xi\,,
\end{equation*}
where~$[f]_{W^{\alpha,\xi}(\Sigma_D)}$ restricts the double integral in~\eqref{e.boundary.fractional.seminorm} to~$\Sigma_D\times\Sigma_D$. For~$b\in\{2,\xi\}$, H\"older's inequality bounds each average by~$C3^{j(d-1)/b}\norm{f}_{L^b(\Sigma_D)}$. Since~$\omega$ is two-Lipschitz and~$|z-z'|_\infty\leq\ell(D)$, we have~$|\omega_z-\omega_{z'}|\leq C3^{-j}/h$. Separating the term with~$\omega_z$ from the term with~$\omega_z-\omega_{z'}$ in~$L_hf(z)-L_hf(z')$, we obtain
\begin{equation}
	|L_hf(z)-L_hf(z')|\leq C3^{-j\alpha}3^{\frac{d-1}\xi j}[f]_{W^{\alpha,\xi}(\Sigma_D)}+Ch^{-1}3^{-j}3^{\frac{d-1}bj}\norm{f}_{L^b(\Sigma_D)}\,.
	\label{e.extension.vertex.difference}
\end{equation}

\smallskip

\emph{Step 3.} Summation over the simplices of one size. The interpolation estimate converts the vertex differences into gradient bounds on the simplices. Bounded overlap of the patches~$\Sigma_D$ then allows us to sum these bounds. By~\eqref{e.interpolation.gradient} and~$\ell(D)=3^{1-j}$, the gradient of~$L_hf$ on~$D$ is at most~$C3^j$ times the right side of~\eqref{e.extension.vertex.difference}. Squaring and multiplying by~$|\triangle|=3^{-j}\,3^{-j(d-1)}/d!$ gives~\eqref{e.extension.scale} for a single Whitney simplex~$\triangle\subset D$, with~$\Sigma_D$ in place of~$\partial(\tau\cu_0)$.

A point of~$\Sigma_D$ is within~$C3^{-j}$ of~$x_D$, by~\eqref{e.whitney.distance}, and distinct cubes~$D$ contain distinct simplices of~$\mathscr T_j$, so, by~\eqref{e.simplex.count}, every point lies in at most~$C$ of the patches~$\Sigma_D$ of these cubes. Hence the sums over these cubes of~$\norm{f}^b_{L^b(\Sigma_D)}$ and of~$[f]^\xi_{W^{\alpha,\xi}(\Sigma_D)}$ are at most~$C\norm{f}^b_{L^b(\partial(\tau\cu_0))}$ and~$C[f]^\xi_{W^{\alpha,\xi}(\partial(\tau\cu_0))}$. For the fractional seminorms, each pair~$(x,y)$ belongs to at most~$C$ of the sets~$\Sigma_D\times\Sigma_D$, since each such patch contains~$x$.

Each selected cube~$D$ contains~$3^dd!$ Whitney simplices. For nonnegative numbers~$c_D$, the bound~$b\leq2$ gives~$\sum_Dc_D^2\leq(\sum_Dc_D^b)^{\nf2b}$, and the same inequality holds with~$\xi$ in place of~$b$. Summing over~$\mathscr W_h^j$ therefore proves~\eqref{e.extension.scale}.

\smallskip

\emph{Step 4.} The Lipschitz extension. For Lipschitz~$f$, two averages over patches in~$\Sigma_D$ differ by at most~$C3^{-j}\operatorname{Lip}(f)$, and each is at most~$\norm{f}_{L^\infty}$, so the vertex-difference argument of Step~2 and~\eqref{e.interpolation.gradient} give
\begin{equation*}
	|\nabla(L_hf|_\triangle)|\leq C\bigl(\operatorname{Lip}(f)+h^{-1}\norm{f}_{L^\infty}\bigr)\qquad(\triangle\in\mathscr W_h)\,.
\end{equation*}
Since~$L_hf=0$ on the other Whitney simplices, and a segment in~$\Rd\setminus\tau\overline{\cu}_0$ meets finitely many of them, this bounds the difference quotients of~$L_hf$ along such segments.

If~$x$ lies in the closure of a selected cube~$D$ and~$|x|_\infty-\tau/2<\nf h3$, then~\eqref{e.whitney.distance} gives~$|z-x|_\infty\leq\ell(D)<(|x|_\infty-\tau/2)/2$ for the free vertices~$z\in\overline D$, so~$\omega_z=1$ for all these vertices and their patches lie within~$C(|x|_\infty-\tau/2)$ of~$y_x$. Each prescribed value at a free vertex of~$D$ differs from~$f(y_x)$ by at most~$C\operatorname{Lip}(f)(|x|_\infty-\tau/2)$. Since~$L_hf(x)$ lies between these values by Lemma~\ref{l.whitney.interpolation},
\begin{equation*}
	|L_hf(x)-f(y_x)|\leq C\operatorname{Lip}(f)(|x|_\infty-\tau/2)\,.
\end{equation*}
Thus~$L_hf$ extends continuously to~$\partial(\tau\cu_0)$ with values~$f$. If a segment crosses the cube~$\tau\overline{\cu}_0$, the bound applies to its two exterior pieces by continuity. At the endpoints of the part of the segment inside the cube,~$L_hf=f$, so the Lipschitz bound for~$f$ controls the difference between these points. Hence~$L_hf$ is Lipschitz.
\end{proof}

Assume~\eqref{e.weighted.hypotheses} on~$\cu_0$. The \emph{piecewise harmonic extension}~$H_hf$ equals~$H_\triangle(L_hf|_\triangle)$ on each~$\triangle\in\mathscr W_h$ and zero on the other Whitney simplices. The coefficients enter only through these simplexwise harmonic replacements, and the simplex boundaries have measure zero.

\begin{proposition}[Energy and admissibility of the piecewise harmonic extension]
\label{p.whitney.extension}
For Lipschitz~$f$, the function~$H_hf$ is linear in~$f$, nonnegative when~$f\geq0$, and satisfies~$\min\{0,\min f\}\leq H_hf\leq\max\{0,\max f\}$ almost everywhere. It has Sobolev trace~$f$ on~$\partial(\tau\cu_0)$, belongs to~$W^{1,1}(\cu_0\setminus\tau\overline{\cu}_0)$ with finite energy, and vanishes outside~$(\tau+3h)\overline{\cu}_0$. For every~$j\geq0$ and~$\triangle\in\mathscr W_h^j$,
\begin{equation}
	\Ener_\triangle(H_hf)=|\triangle|\,\nabla(L_hf|_\triangle)\cdot\a(\triangle)\nabla(L_hf|_\triangle)\leq A_j(\tau)|\triangle|\,|\nabla(L_hf|_\triangle)|^2\,,
	\label{e.harmonic.energy}
\end{equation}
so the energy of~$H_hf$ on the simplices of~$\mathscr W_h^j$ obeys the estimate~\eqref{e.extension.scale} with the extra factor~$A_j(\tau)$ on its right side. If~$w$ is Lipschitz on~$\tau\overline{\cu}_0$ with~$w=f$ on~$\partial(\tau\cu_0)$, the function equal to~$w$ on~$\tau\overline{\cu}_0$ and to~$H_hf$ outside belongs to~$\Hzero{\a}(\cu_0)$, has compact support in~$\rho_2\cu_0$, and is nonnegative if~$w\geq0$.
\end{proposition}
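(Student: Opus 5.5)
The plan is to treat each Whitney simplex separately and then glue the pieces. Linearity comes from linearity of $f\mapsto L_hf$ (Proposition~\ref{p.affine.extension}) and of $g\mapsto H_\triangle g$ (Proposition~\ref{p.harmonic.replacement}). The pointwise bounds come from the maximum principle in Proposition~\ref{p.harmonic.replacement}: $L_hf|_\triangle$ is affine, hence bounded, and lies between $\min\{0,\min f\}$ and $\max\{0,\max f\}$. Nonnegativity for $f\ge0$ follows from the same bounds. Vanishing outside $(\tau+3h)\overline{\cu}_0$ holds by definition, since $H_hf=0$ on simplices outside $\mathscr W_h$ and these cover the exterior of the closures of $\mathscr W_h$ by Proposition~\ref{p.affine.extension}.

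For the energy identity~\eqref{e.harmonic.energy}, let $e\coloneqq\nabla(L_hf|_\triangle)$, so that $L_hf|_\triangle=\ell_e+c$. Proposition~\ref{p.upper.dirichlet} gives $\Ener_\triangle(H_\triangle(\ell_e+c))=|\triangle|\,e\cdot\a(\triangle)e$. Since $\triangle\in\mathscr W_h^j\subset\mathscr T_j$ lies within distance $100d\cdot3^{-j}$ of $\partial(\tau\cu_0)$, as noted after~\eqref{e.boundary.maxima}, we have $|\a(\triangle)|\le A_j(\tau)$. Summing over $\mathscr W_h^j$ and applying~\eqref{e.extension.scale} then gives the scale estimate. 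Summing over $j$ gives finite energy, because $\mathscr W_h$ has finitely many simplices of each size and the series in~\eqref{e.extension.scale} converges for Lipschitz $f$. Alternatively, one can bound everything crudely by the energy of $L_hf$, which is finite since $L_hf$ is Lipschitz and $\tr\a\in L^1$; this is the route I would take for finiteness.

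The main obstacle is admissibility. We must show that the glued function $\Phi$, equal to $w$ inside and to $H_hf$ outside, belongs to $\Hzero{\a}(\cu_0)$, and that it has the stated Sobolev trace, since the simplexwise replacements are only elements of $L_hf|_\triangle+\Hzero{\a}(\triangle)$. I would write
\[
\Phi=\Psi+\sum_{\triangle\in\mathscr W_h}\phi_\triangle,\qquad \phi_\triangle\coloneqq H_\triangle(L_hf|_\triangle)-L_hf|_\triangle\in\Hzero{\a}(\triangle),
\]
where $\Psi$ is the Lipschitz function equal to $w$ inside and to $L_hf$ outside. The function $\Psi$ is Lipschitz because $L_hf$ is a Lipschitz extension of $f=w$ on the boundary (Proposition~\ref{p.affine.extension}). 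It has compact support in $\rho_2\cu_0$, so mollification gives smooth compactly supported approximants with energies converging, using $\tr\a\in L^1$ and dominated convergence. Hence $\Psi\in\Hzero{\a}(\cu_0)$. Each $\phi_\triangle$, extended by zero, lies in $\Hzero{\a}(\cu_0)$ by Proposition~\ref{p.weighted.calculus}\ref{i.weighted.domains}. The sum is finite because $\mathscr W_h$ is finite: its simplices have size bounded below away from the cube by Lemma~\ref{l.whitney.cubes}. I need to double-check this, since Whitney simplices accumulate at $\partial(\tau\cu_0)$, so $\mathscr W_h$ is in fact infinite.

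If $\mathscr W_h$ is infinite, I instead show that the partial sums are Cauchy in $\Hzero{\a}(\cu_0)$. The supports are disjoint and $\Ener_\triangle(\phi_\triangle)\le\Ener_\triangle(L_hf)$ by~\eqref{e.harmonic.minimization}, so the tail energies are bounded by $\Ener(L_hf)$ on a shrinking neighborhood of the boundary, which tends to zero. The means are controlled through the $W^{1,1}$ embedding of Proposition~\ref{p.weighted.calculus}. This also shows that $\Phi-\Psi$ is a $W^{1,1}$ limit of functions vanishing near $\partial(\tau\cu_0)$, so the Sobolev trace of $H_hf$ equals that of $L_hf$, namely $f$. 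Nonnegativity of $\Phi$ when $w\ge0$ follows from the pointwise bounds, and its compact support in $\rho_2\cu_0$ follows from the support statement.
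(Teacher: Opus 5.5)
Your final argument is correct and is essentially the paper's proof. You write the glued function as the Lipschitz function equal to $w$ inside and $L_hf$ outside, plus the zero extensions of $H_\triangle(L_hf|_\triangle)-L_hf|_\triangle$. You sum these over the infinite family $\mathscr W_h$ using $\Ener_\triangle(H_\triangle(L_hf|_\triangle)-L_hf|_\triangle)\leq\Ener_\triangle(L_hf)$ and the coercivity~\eqref{e.weighted.coercivity}, and you read off the trace from the vanishing of the correction near $\partial(\tau\cu_0)$.

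Just drop the two claims you flagged or replaced. First, $\mathscr W_h$ is infinite. Second, summing~\eqref{e.extension.scale} with the factor $A_j(\tau)$ does not by itself give finite energy, because $A_j(\tau)$ is uncontrolled at an arbitrary $\tau\in J$. Finiteness should instead come, as you chose, from the bound $\Ener_{\cu_0\setminus\tau\overline{\cu}_0}(L_hf)\leq\operatorname{Lip}(L_hf)^2\int_{\cu_0}\tr\a$.
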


\begin{proof}
On~$\triangle\in\mathscr W_h^j$ the function~$L_hf$ is affine, so the identity in~\eqref{e.harmonic.energy} is~\eqref{e.upper.dirichlet}. The inequality holds because the simplices of~$\mathscr W_h^j$ lie in~$\cu_0$, by Proposition~\ref{p.affine.extension}, and therefore enter the maximum~$A_j(\tau)$ of~\eqref{e.boundary.maxima}. Linearity follows from Proposition~\ref{p.affine.extension} and the linearity of harmonic replacement in Proposition~\ref{p.harmonic.replacement}. The bounds for~$L_hf$ and the maximum principle in Proposition~\ref{p.harmonic.replacement} give~$\min\{0,\min f\}\leq H_hf\leq\max\{0,\max f\}$ almost everywhere, including nonnegativity when~$f\geq0$.

\smallskip

To show that the function equal to~$w$ inside~$\tau\cu_0$ and to~$H_hf$ outside belongs to~$\Hzero{\a}(\cu_0)$, we join~$w$ to~$L_hf$ and then add, after zero extension, the functions
\begin{equation*}
	\eta_\triangle\coloneqq H_\triangle(L_hf|_\triangle)-L_hf|_\triangle\qquad(\triangle\in\mathscr W_h)\,.
\end{equation*}
We first show that these functions sum to an element of~$\Hzero{\a}(\cu_0)$. The function~$L_hf$ is Lipschitz and vanishes outside~$(\tau+3h)\cu_0$, so its exterior energy is finite, since~$\tr\a\in L^1(\cu_0)$. By~\eqref{e.harmonic.minimization}, each~$\eta_\triangle\in\Hzero{\a}(\triangle)$ has energy~$\Ener_\triangle(L_hf)-\Ener_\triangle(H_hf)$, so their energies are summable. Extending each~$\eta_\triangle$ by zero~\eqref{e.weighted.zero.extension} and summing over an enumeration of~$\mathscr W_h$, the partial sums form a Cauchy sequence in the energy seminorm, hence in~$H^1_\a(\cu_0)$ by~\eqref{e.weighted.coercivity}, and converge to some~$\eta\in\Hzero{\a}(\cu_0)$.

Their~$L^1$ limit identifies~$\eta$ with~$H_hf-L_hf$ outside~$\tau\overline{\cu}_0$ and with zero on~$\tau\cu_0$. Since~$\eta\in W^{1,1}(\cu_0)$ vanishes on~$\tau\cu_0$, its traces on~$\partial(\tau\cu_0)$ from both sides are zero. The function~$L_hf$ is continuous up to~$\partial(\tau\cu_0)$ with values~$f$, so~$H_hf=L_hf+\eta$ has trace~$f$, finite energy, and belongs to~$W^{1,1}(\cu_0\setminus\tau\overline{\cu}_0)$.

\smallskip

Finally, let~$u_0$ equal~$w$ on~$\tau\overline{\cu}_0$ and~$L_hf$ outside. A line segment meets the convex cube~$\tau\overline{\cu}_0$ in at most one subsegment. On that subsegment~$u_0=w$, while on the remaining parts~$u_0=L_hf$. The values match at the crossing points because both functions equal~$f$ on the boundary. The Lipschitz bounds for~$w$ and~$L_hf$ therefore give a Lipschitz bound for~$u_0$ along the whole segment. It is supported in~$(\tau+3h)\overline{\cu}_0\Subset\rho_2\cu_0$ and lies in~$\Hzero{\a}(\cu_0)$ by Proposition~\ref{p.weighted.calculus}\ref{i.weighted.products}. Thus~$u_0+\eta$ equals~$w$ on~$\tau\overline{\cu}_0$ and~$H_hf$ outside. It is nonnegative when~$w\geq0$.
\end{proof}

\section{Energy in cubes with good radii}
\label{s.good.radius.energy}

This section bounds the energy of a nonnegative subsolution~$w=(v-k)_+$ on~$\tau\cu_0$ at a good radius for~$v$ and its smooth approximations~$(v_i)$. We test with a function equal to~$w_i=(v_i-k)_+$ inside~$\tau\cu_0$ and to~$H_h(w_i|_{\partial(\tau\cu_0)})$ outside. This bounds the interior pairing by an exterior pairing. Lemma~\ref{l.exterior.integral} estimates that pairing over the Whitney simplices, and Proposition~\ref{p.good.radius.energy} passes the bound to the limit.

\smallskip

Throughout this section~\eqref{e.weighted.hypotheses} and the coarse ellipticity condition~\eqref{e.spatial.moment.range} hold on~$\cu_0$, the parameters are those of Section~\ref{s.triadic.simplices}, and~$\nf12\leq\rho_1<\rho_2\leq1$, with~$\delta$ and~$J$ as in~\eqref{e.selection.parameters}.

\begin{lemma}[Estimating the exterior integral]
\label{l.exterior.integral}
Let~$\tau\in J$, let~$h$ be a triadic width with~$0<h\leq\delta/(200d)$, let~$g$ be Lipschitz on~$\partial(\tau\cu_0)$, and let~$v\in H^1_\a(\cu_0)$. Then
\begin{equation}
	\biggl|\int_{\cu_0\setminus\tau\overline{\cu}_0}\nabla H_hg\cdot\a\nabla v\biggr|\leq C S(\tau)D_v(\tau)^{\nf12}\bigl(h^\theta[g]_{W^{1-t,r}(\partial(\tau\cu_0))}+h^{-\sigma}\norm{g}_{L^2(\partial(\tau\cu_0))}\bigr)\,,
	\label{e.summed.pairing}
\end{equation}
and
\begin{equation}
	\biggl|\int_{\cu_0\setminus\tau\overline{\cu}_0}\nabla H_hg\cdot\a\nabla v\biggr|\leq C S(\tau)D_v(\tau)^{\nf12}\bigl(h^\theta[g]_{W^{1-t,r}(\partial(\tau\cu_0))}+h^{-(\sigma+\sigma_*-t)}\norm{g}_{L^r(\partial(\tau\cu_0))}\bigr)\,.
	\label{e.summed.pairing.lr}
\end{equation}
\end{lemma}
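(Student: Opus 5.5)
The plan is to split the exterior integral over the Whitney simplices of $\mathscr W_h$ by size, apply Cauchy--Schwarz on each size class, and then sum a geometric series in $j$. By Proposition~\ref{p.whitney.extension}, $H_hg$ vanishes outside the simplices of $\mathscr W_h$, and the simplex boundaries are null. By~\eqref{e.extension.support}, only sizes $3^{-j}<h/6$ occur. Hence
\begin{equation*}
	\biggl|\int_{\cu_0\setminus\tau\overline{\cu}_0}\nabla H_hg\cdot\a\nabla v\biggr|\leq\sum_{j:\,3^{-j}<h/6}\biggl(\sum_{\triangle\in\mathscr W_h^j}\Ener_\triangle(H_hg)\biggr)^{\nf12}\biggl(\sum_{\triangle\in\mathscr W_h^j}\Ener_\triangle(v)\biggr)^{\nf12}\,.
\end{equation*}

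\emph{Energy of $v$ on one size class.} I first check that the union of the simplices in $\mathscr W_h^j$ lies in the layer $\{\tau/2+c3^{-j}<|x|_\infty<\tau/2+C3^{-j}\}$, by~\eqref{e.whitney.distance}. Since $h\leq\delta/(200d)$ and $\tau\in J$, this layer corresponds to radii $l\in(\tau+2c3^{-j},\tau+2C3^{-j})\subset(\rho_1,\rho_2)$. The cubical coarea formula~\eqref{e.cubical.coarea} then gives
\begin{equation*}
	\sum_{\triangle\in\mathscr W_h^j}\Ener_\triangle(v)\leq\frac12\int_{\tau-\varepsilon}^{\tau+\varepsilon}\int_{\partial(l\cu_0)}\nabla v\cdot\a\nabla v\,d\mathcal H^{d-1}\,dl\leq C3^{-j}D_v(\tau)\,,
\end{equation*}
where $\varepsilon=2C3^{-j}$ and the last step is the definition~\eqref{e.boundary.energy.maximal}. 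This is the bound referred to as~\eqref{e.energy.scale}.

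\emph{Energy of $H_hg$ on one size class.} By~\eqref{e.harmonic.energy} and~\eqref{e.extension.scale} with $\alpha=1-t$ and $\xi=r$, the energy is at most $A_j(\tau)$ times the right side of~\eqref{e.extension.scale}. Using~\eqref{e.reconstruction.parameters}, we have $\alpha-(d-1)(\tfrac1r-\tfrac12)=1-\sigma_*$. Taking square roots, the $j$-th term is bounded by
\begin{equation*}
	C A_j(\tau)^{\nf12}3^{-j}D_v(\tau)^{\nf12}\Bigl(3^{j}3^{-j(1-\sigma_*)}[g]_{W^{1-t,r}}+h^{-1}3^{j(d-1)(\frac1b-\frac12)}\norm{g}_{L^b}\Bigr)\,.
\end{equation*}
The factor $3^{-j}$ cancels the gradient scale $3^j$ in the seminorm term.

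\emph{Summation in $j$.} I write $A_j(\tau)^{\nf12}=3^{j\sigma}\bigl(3^{-j\sigma}A_j(\tau)^{\nf12}\bigr)$ and use that $3^{-j\sigma}A_j(\tau)^{\nf12}\leq S(\tau)$ for each $j$.
\begin{itemize}
	\item The seminorm term is bounded by $S(\tau)\,3^{-j\theta}$ times the seminorm. Summing over $3^{-j}<h$ and using $\theta>0$ gives $Ch^\theta$.
	\item For~\eqref{e.summed.pairing}, take $b=2$. The term is $h^{-1}3^{-j}3^{j\sigma}S(\tau)\norm{g}_{L^2}$. Since $\sigma<1$, the sum over $3^{-j}<h$ is dominated by its largest scale and is at most $Ch^{-1}h^{1-\sigma}=Ch^{-\sigma}$.
	\item For~\eqref{e.summed.pairing.lr}, take $b=r$. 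The extra factor is $3^{j(\sigma_*-t)}$, so the exponent of $3^{j}$ becomes $\sigma+\sigma_*-t-1<-\theta<0$. The sum gives $h^{-1}h^{1-\sigma-\sigma_*+t}=h^{-(\sigma+\sigma_*-t)}$.
\end{itemize}

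The step I expect to need the most care is the energy bound for $v$. There I must verify that the layers stay inside $(\rho_1,\rho_2)$, so that the truncation in the definition of $D_v$ does not interfere; this is where $h\leq\delta/(200d)$ and $\tau\in J$ are used. I must also check that the Whitney simplices of size $3^{-j}$ occupy a layer of radii of width comparable to $3^{-j}$ centered near $\tau$.
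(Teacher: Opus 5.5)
Your proposal is correct and follows the paper's proof. It uses the same Cauchy--Schwarz split over the size classes $\mathscr W_h^j$ and the same bound $\sum_{\triangle\in\mathscr W_h^j}\Ener_\triangle(v)\leq 54\cdot3^{-j}D_v(\tau)$, obtained from the cubical coarea formula and the maximal function, with $\tau\in J$ and $h\leq\delta/(200d)$ keeping the radii inside $(\rho_1,\rho_2)$. It also applies \eqref{e.harmonic.energy} and \eqref{e.extension.scale} with $\alpha=1-t$, $\xi=r$ in the same way.

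The only difference is the final summation. The paper bounds each $j$-term by the $j$-th summand $3^{-j\sigma}A_j(\tau)^{1/2}$ of $S(\tau)$ times a $j$-independent power of $h$, using $3^{-j}<h/6$, so summing over $j$ reproduces $S(\tau)$ directly. You instead bound each summand by $S(\tau)$ and sum a geometric series in $j$. This costs harmless constants such as $(1-3^{-\theta})^{-1}$, and in fact only uses $\sup_j3^{-j\sigma}A_j(\tau)^{1/2}$.
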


\begin{proof}
\emph{Step 1.} The energy of~$v$ on~$\mathscr W_h^j$. Since~$\rho_2-\tau>\delta/2$, the width~$h$ satisfies~\eqref{e.extension.width}. By~\eqref{e.whitney.distance}, the Whitney simplices of size~$3^{-j}$ lie in~$(\tau+54\cdot3^{-j})\overline{\cu}_0\setminus\tau\overline{\cu}_0$. Whenever~$\mathscr W_h^j\neq\varnothing$,~\eqref{e.extension.support} gives
\begin{equation*}
	54\cdot3^{-j}<9h\leq\frac{9\delta}{200d}<\frac\delta4\,.
\end{equation*}
Hence the interval~$(\tau-54\cdot3^{-j},\tau+54\cdot3^{-j})$ lies in~$(\rho_1,\rho_2)$. The coarea formula~\eqref{e.cubical.coarea} and the definition of~$D_v$ give
\begin{equation}
	\sum_{\triangle\in\mathscr W_h^j}\Ener_\triangle(v)\leq\frac12\int_\tau^{\tau+54\cdot3^{-j}}\int_{\partial(l\cu_0)}\nabla v\cdot\a\nabla v\,d\mathcal H^{d-1}\,dl\leq54\cdot3^{-j}D_v(\tau)\,.
	\label{e.energy.scale}
\end{equation}

\smallskip

\emph{Step 2.} Cauchy--Schwarz on each~$\mathscr W_h^j$. The replacement~$H_hg$ has finite energy and vanishes on the Whitney simplices outside~$\mathscr W_h$. Both functions have finite energy, so by Cauchy--Schwarz the exterior integral is the absolutely convergent sum of the integrals over the simplices. On each~$\mathscr W_h^j$, Cauchy--Schwarz combines~\eqref{e.energy.scale} with~\eqref{e.harmonic.energy} and~\eqref{e.extension.scale} for~$\alpha=1-t$,~$\xi=r$ and~$b\in\{2,r\}$. After the square roots, the factors~$3^{-j}$ of the two energies give~$3^{-j}$, which cancels the gradient scale~$3^j$, and~$(1-t)-(d-1)(\nf1r-\nf12)=1-\sigma_*$ by~\eqref{e.reconstruction.parameters}, so that
\begin{align}
	\sum_{\triangle\in\mathscr W_h^j}\biggl|\int_\triangle\nabla H_hg\cdot\a\nabla v\biggr|&\leq C A_j(\tau)^{\nf12}D_v(\tau)^{\nf12}3^{-j(1-\sigma_*)}[g]_{W^{1-t,r}(\partial(\tau\cu_0))}\notag \\
	& \qquad +C A_j(\tau)^{\nf12}D_v(\tau)^{\nf12}h^{-1}3^{-j}3^{j(d-1)(\frac1b-\frac12)}\norm{g}_{L^b(\partial(\tau\cu_0))}\,.
	\label{e.pairing.scale}
\end{align}
The factor~$3^{j(d-1)(\frac1b-\frac12)}$ equals one for~$b=2$, and for~$b=r$ it is the loss~$3^{j(\sigma_*-t)}$ of the~$L^r$ norm, by~\eqref{e.reconstruction.parameters}. The weight of~$S$ in~\eqref{e.boundary.maxima.series} is~$3^{-j\sigma}$, and~$3^{j\sigma}\cdot3^{-j(1-\sigma_*)}=3^{-j\theta}$. Since~$1-\sigma>0$ and~$1-(\sigma+\sigma_*-t)=\theta+t>0$, the bound~$3^{-j}<\nf h6$ of~\eqref{e.extension.support}, valid whenever~$\mathscr W_h^j\neq\varnothing$, gives
\begin{equation*}
	3^{-j(1-\sigma_*)}\leq 3^{-j\sigma}h^\theta\,,\qquad h^{-1}3^{-j}\leq 3^{-j\sigma}h^{-\sigma}\,,\qquad h^{-1}3^{-j}3^{j(\sigma_*-t)}\leq 3^{-j\sigma}h^{-(\sigma+\sigma_*-t)}\,.
\end{equation*}
Thus each term of~\eqref{e.pairing.scale}, for~$b=2$ and for~$b=r$, is at most~$3^{-j\sigma}A_j(\tau)^{\nf12}$, the~$j$-th term of~$S(\tau)$, times~$CD_v(\tau)^{\nf12}$ and a factor independent of~$j$. Summing over~$j$ proves~\eqref{e.summed.pairing} and~\eqref{e.summed.pairing.lr}.
\end{proof}

\begin{proposition}[Energy bound in a cube with a good radius]
\label{p.good.radius.energy}
Let~$v$ and~$(v_i)$ satisfy the hypotheses of Proposition~\ref{p.good.radius}. Choose a good radius~$\tau\in J$ and a subsequence of~$(v_i)$ supplied by that proposition, and denote the subsequence again by~$(v_i)$. For~$k\in\R$, set~$w\coloneqq(v-k)_+$ and assume~$w\in\Csub(\cu_0)$. For every triadic width~$h$ with~$0<h\leq\delta/(200d)$,
\begin{equation}
	\Ener_{\tau\cu_0}(w)\leq CS(\tau)D_v(\tau)^{\nf12}\bigl(h^\theta[w]_{W^{1-t,r}(\partial(\tau\cu_0))}+h^{-\sigma}\norm{w}_{L^2(\partial(\tau\cu_0))}\bigr)\,.
	\label{e.good.radius.energy.estimate}
\end{equation}
\end{proposition}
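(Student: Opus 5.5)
We test the subsolution inequality for~$w$ with the piecewise harmonic test function of Proposition~\ref{p.whitney.extension}, built from the approximations, and then pass to the limit. Set~$w_i\coloneqq(v_i-k)_+$ and~$g_i\coloneqq w_i|_{\partial(\tau\cu_0)}$. Since the~$v_i$ are smooth, the~$w_i$ are nonnegative and Lipschitz on~$\tau\overline{\cu}_0$, and the~$g_i$ are Lipschitz. Proposition~\ref{p.whitney.extension} then shows that the function~$\phi_i$, equal to~$w_i$ on~$\tau\overline{\cu}_0$ and to~$H_hg_i$ outside, is a nonnegative element of~$\Hzero{\a}(\cu_0)$ with compact support in~$\rho_2\cu_0$. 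By Lemma~\ref{l.weighted.testing} it is an admissible test function for the subsolution~$w$. Splitting the integral at the null set~$\partial(\tau\cu_0)$ gives
\begin{equation*}
	\int_{\tau\cu_0}\nabla w_i\cdot\a\nabla w\leq-\int_{\cu_0\setminus\tau\overline{\cu}_0}\nabla H_hg_i\cdot\a\nabla w\,.
\end{equation*}

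Next we bound the right side by Lemma~\ref{l.exterior.integral}, estimate~\eqref{e.summed.pairing}, applied with~$g=g_i$ and with~$w$ in place of~$v$. This gives the bound~$CS(\tau)D_w(\tau)^{\nf12}\bigl(h^\theta[g_i]_{W^{1-t,r}}+h^{-\sigma}\norm{g_i}_{L^2}\bigr)$. Proposition~\ref{p.good.radius} gives~$D_w\leq D_v$ everywhere, which produces the factor~$D_v(\tau)^{\nf12}$.

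It remains to let~$i\to\infty$ along the subsequence.
\begin{itemize}
\item For the left side, the positive part is continuous on~$H^1_\a(\cu_0)$ (Proposition~\ref{p.weighted.calculus}), so~$w_i\to w$ in~$H^1_\a(\cu_0)$. By Cauchy--Schwarz in the energy inner product, $\int_{\tau\cu_0}\nabla w_i\cdot\a\nabla w\to\Ener_{\tau\cu_0}(w)$.
\item For the right side, \eqref{e.good.radius.convergence} with~$N=\infty$ gives~$g_i\to w|_{\partial(\tau\cu_0)}$ in~$W^{1-t,r}(\partial(\tau\cu_0))$, so the seminorms converge. Lemma~\ref{l.critical.trace.embedding}, with~$r_\partial^*>2$ by~\eqref{e.parameter.facts}, upgrades this to convergence in~$L^2(\partial(\tau\cu_0))$, so the~$L^2$ norms converge as well.
\end{itemize}
Taking limits yields~\eqref{e.good.radius.energy.estimate}. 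If~$\norm{w}_{L^2(\partial(\tau\cu_0))}=\infty$ or~$D_v(\tau)=\infty$, the inequality is trivial.

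The main point to check is that the limit is taken only on the bound, never on the extension of the trace of~$w$ itself, which need not be Lipschitz. This is why we work with the~$g_i$ throughout, and the step that needs care is the admissibility of~$\phi_i$. That admissibility requires Lipschitz continuity of~$w_i$ up to~$\partial(\tau\cu_0)$, which holds because~$v_i$ is smooth on a neighborhood of~$\tau\overline{\cu}_0\Subset\cu_0$. It also requires matching traces, which holds because~$L_hg_i=g_i$ on the boundary.
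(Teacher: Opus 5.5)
Your proposal is correct and follows the paper's proof essentially step for step. It uses the same test function: $w_i$ inside $\tau\overline{\cu}_0$ and $H_h(w_i|_{\partial(\tau\cu_0)})$ outside. It bounds the exterior pairing by the same estimate, \eqref{e.summed.pairing} combined with $D_w\leq D_v$. It passes to the limit in the same way, through continuity of positive parts in $H^1_\a(\cu_0)$, \eqref{e.good.radius.convergence} with $N=\infty$, and Lemma~\ref{l.critical.trace.embedding}. As in the paper, only the bound is carried to the limit, never the harmonic extensions.
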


\begin{proof}
Let~$w_i\coloneqq(v_i-k)_+$. These functions are nonnegative and, since the~$v_i$ are smooth, Lipschitz on~$\tau\overline{\cu}_0$. By Proposition~\ref{p.whitney.extension}, the function equal to~$w_i$ on~$\tau\overline{\cu}_0$ and to~$H_h(w_i|_{\partial(\tau\cu_0)})$ outside is a nonnegative element of~$\Hzero{\a}(\cu_0)$. Testing~\eqref{e.weighted.testing} for~$w$ with this function, and using~\eqref{e.summed.pairing} with~$w$ in place of~$v$ and~$w_i|_{\partial(\tau\cu_0)}$ in place of~$g$, together with~$D_w\leq D_v$,
\begin{align*}
	\int_{\tau\cu_0}\nabla w_i\cdot\a\nabla w&\leq-\int_{\cu_0\setminus\tau\overline{\cu}_0}\nabla H_h(w_i|_{\partial(\tau\cu_0)})\cdot\a\nabla w\\
	&\leq C S(\tau)D_v(\tau)^{\nf12}\bigl(h^\theta[w_i]_{W^{1-t,r}(\partial(\tau\cu_0))}+h^{-\sigma}\norm{w_i}_{L^2(\partial(\tau\cu_0))}\bigr)\,.
\end{align*}
As~$i\to\infty$,~$w_i\to w$ in~$H^1_\a(\cu_0)$ by Proposition~\ref{p.weighted.calculus}\ref{i.weighted.chain}. Thus the interior pairing in the preceding display tends to~$\Ener_{\tau\cu_0}(w)$ by~\eqref{e.weighted.pairing}. The boundary seminorm and~$L^2$ norm on the right side tend to those of~$w$, by~\eqref{e.good.radius.convergence} with~$N=\infty$ and Lemma~\ref{l.critical.trace.embedding}. Taking limits in this bound gives~\eqref{e.good.radius.energy.estimate}; no limit of the harmonic extensions is needed.
\end{proof}

\section{Caccioppoli inequality and local boundedness}
\label{s.local.boundedness}

This section proves Theorem~\ref{t.cg.local.boundedness} and Corollary~\ref{c.lr.bound}. It combines the energy bound in a cube with a good radius, Proposition~\ref{p.good.radius.energy}, with the bounds at a good radius in Proposition~\ref{p.good.radius}. With one level this gives a Caccioppoli inequality (Proposition~\ref{p.cg.caccioppoli}). With two truncation levels, the fractional Sobolev inequalities on the cube boundary and inside the cube give a recurrence with a superlinear gain (Proposition~\ref{p.two.level.recurrence}), which drives a De Giorgi iteration (Proposition~\ref{p.energy.to.sup}).

\smallskip

Throughout this section~\eqref{e.weighted.hypotheses} and the coarse ellipticity condition~\eqref{e.spatial.moment.range} hold on~$\cu_0$, the parameters are those of Section~\ref{s.triadic.simplices}, and, unless stated otherwise, constants depend only on~$d,p,q,s,t$. For radii~$\nf12\leq\rho_1<\rho_2\leq1$, the numbers~$\delta$ and~$J$ are those of~\eqref{e.selection.parameters}. By~\eqref{e.moment.comparison},~$\Theta\geq1$.

\subsection{The Caccioppoli inequality}

With~$k=0$, Proposition~\ref{p.good.radius.energy} and the bounds at a good radius~$\tau\in J$ control the energy of~$v$ in~$\tau\cu_0$ by~$\Theta^{\nf12}h^\theta$ times its energy on~$\rho_2\cu_0$, plus a term linear in the square root of that energy, with the factor~$h^{-\sigma}$ and the~$L^2$ norm of~$v$. We choose~$h$ so that the first term can be absorbed. Since~$h^{-\sigma}$ grows as~$h$ decreases, this choice produces the power~$\Theta^{\nf\sigma\theta}$ in the next proposition. The powers of~$(\rho_2-\rho_1)^{-1}$ in the energy bound in a cube with a good radius and in the Caccioppoli inequality are
\begin{equation}
	\gamma_1\coloneqq2-t+\frac1{2p}+\frac1{2q}\qquad\mbox{and}\qquad \gamma_2\coloneqq\frac{2\gamma_1(1-\sigma_*)}{\theta}\,,
	\label{e.localization.exponents}
\end{equation}
respectively; the choice of~$h$ multiplies~$\gamma_1$ by a factor involving~$\nf1\theta$.

\begin{proposition}[Caccioppoli inequality]
\label{p.cg.caccioppoli}
For a nonnegative~$v\in\Csub(\cu_0)$ and~$\nf12\leq\rho_1<\rho_2\leq1$, with~$\gamma_2$ as in~\eqref{e.localization.exponents},
\begin{equation}
	\Ener_{\rho_1\cu_0}(v)\leq C(\rho_2-\rho_1)^{-\gamma_2}\Lambda_{s,1,p}(\cu_0)\Theta^{\nf\sigma\theta}\norm{v}_{L^2(\rho_2\cu_0)}^2\,.
	\label{e.cg.caccioppoli}
\end{equation}
\end{proposition}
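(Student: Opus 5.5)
The plan is to combine Proposition~\ref{p.good.radius.energy} with $k=0$ (so $w=v$, which is a nonnegative subsolution by assumption) with the bounds of Proposition~\ref{p.good.radius} at a good radius. Then, for an intermediate pair of radii, we choose $h$ so that the energy on the larger cube appears with a small coefficient, and finally remove that term by a standard iteration over radii.

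\emph{Setup.} Take nonnegative smooth approximations $v_i\to v$ in $H^1_\a(\cu_0)$, as supplied by Proposition~\ref{p.weighted.calculus}. Note that $\norm{v}_{L^r(\rho_2\cu_0)}<\infty$ by Proposition~\ref{p.lower.fractional}. We may assume $\norm{v}_{L^2(\rho_2\cu_0)}<\infty$; otherwise there is nothing to prove. Then Proposition~\ref{p.good.radius} gives a good radius $\tau\in J$ at which \eqref{e.good.radius.matrices}--\eqref{e.good.radius.trace.l2} hold. Since $\tau>\rho_1$, Proposition~\ref{p.good.radius.energy} gives
\begin{equation*}
\Ener_{\rho_1\cu_0}(v)\leq\Ener_{\tau\cu_0}(v)\leq CS(\tau)D_v(\tau)^{\nf12}\bigl(h^\theta[v]_{W^{1-t,r}(\partial(\tau\cu_0))}+h^{-\sigma}\norm{v}_{L^2(\partial(\tau\cu_0))}\bigr).
\end{equation*}

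\emph{Inserting the good-radius bounds.} Write $E\coloneqq\Ener_{\rho_2\cu_0}(v)$ and $\Lambda\coloneqq\Lambda_{s,1,p}(\cu_0)$. We bound $\norm{v}_{L^r}\leq C\norm{v}_{L^2}$ on the bounded cube and use $\Lambda^{\nf12}\lambda_{t,1,q}(\cu_0)^{-\nf12}=\Theta^{\nf12}$. Collecting the powers $\delta^{-\nf1{2p}}$, $\delta^{-\nf12}$, $\delta^{-(1-t)-\nf1r}$ and $\delta^{-\nf12}$ from \eqref{e.good.radius.matrices}--\eqref{e.good.radius.trace.l2} gives exactly $\delta^{-\gamma_1}$, with $\gamma_1$ as in \eqref{e.localization.exponents}. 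The estimate becomes
\begin{equation*}
\Ener_{\rho_1\cu_0}(v)\leq C\delta^{-\gamma_1}\Bigl(h^\theta\Theta^{\nf12}E+h^\theta\Lambda^{\nf12}E^{\nf12}\norm{v}_{L^2(\rho_2\cu_0)}+h^{-\sigma}\Lambda^{\nf12}E^{\nf12}\norm{v}_{L^2(\rho_2\cu_0)}\Bigr).
\end{equation*}

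\emph{Choice of $h$ and absorption.} Choose the largest triadic $h\leq\delta/(200d)$ with $C\delta^{-\gamma_1}h^\theta\Theta^{\nf12}\leq\nf14$. Since $\Theta\geq1$ and $\delta\leq1$, this gives $h^{-1}\leq C\delta^{-\gamma_1/\theta}\Theta^{\nf1{2\theta}}$; here $\gamma_1/\theta\geq1$ covers the constraint $h\leq\delta/(200d)$. Apply Young's inequality to both mixed terms, using $h^\theta\leq1$. The third term then costs $C\delta^{-2\gamma_1}h^{-2\sigma}\Lambda\norm{v}_{L^2}^2$, which is at most
\begin{equation*}
C\delta^{-2\gamma_1(1+\sigma/\theta)}\Lambda\Theta^{\nf\sigma\theta}\norm{v}_{L^2(\rho_2\cu_0)}^2.
\end{equation*}
Since $1+\sigma/\theta=(1-\sigma_*)/\theta$, the power of $\delta$ is exactly $\delta^{-\gamma_2}$. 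The second term is dominated by this bound. The result is
\begin{equation*}
\Ener_{\rho_1\cu_0}(v)\leq\tfrac12\Ener_{\rho_2\cu_0}(v)+C(\rho_2-\rho_1)^{-\gamma_2}\Lambda\Theta^{\nf\sigma\theta}\norm{v}_{L^2(\rho_2\cu_0)}^2
\end{equation*}
for all $\nf12\leq\rho_1<\rho_2\leq1$.

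\emph{Iteration.} The energy $\Ener_{\cu_0}(v)$ is finite because $v\in H^1_\a(\cu_0)$. So the standard iteration lemma for nondecreasing bounded functions of the radius removes the term $\tfrac12\Ener_{\rho_2\cu_0}(v)$. Concretely, apply the inequality on radii $\rho_1=r_0<r_1<\cdots\to\rho_2$ with geometrically shrinking gaps; the factors $2^{-n}$ beat the growth $(1-\lambda)^{-n\gamma_2}$ for $\lambda$ close to one. This yields \eqref{e.cg.caccioppoli}.

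I expect the main obstacle to be the exponent bookkeeping. One must check that the $\delta$-powers of the good-radius bounds add up to $\gamma_1$, and that the choice of $h$, being restricted to triadic values below $\delta/(200d)$, changes only constants and not the powers $\Theta^{\nf\sigma\theta}$ and $\delta^{-\gamma_2}$.
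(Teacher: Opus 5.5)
Your proposal is correct and follows the paper's proof essentially step by step. It uses the good radius of Proposition~\ref{p.good.radius}, the energy bound of Proposition~\ref{p.good.radius.energy} with $k=0$, absorption through the choice of $h$ (your one-line bound on $h^{-1}$ via $\gamma_1/\theta\geq1$ replaces the paper's two-case analysis), Young's inequality, and an iteration over radii; for that last step the paper instead takes the smaller absorption constant $2^{-\gamma_2-1}$ and halves the gaps. Two slips to fix: with gaps $(1-\lambda)\lambda^i\delta$ the growth to be beaten is $\lambda^{-i\gamma_2}$, not $(1-\lambda)^{-n\gamma_2}$, and the four $\delta$-powers you list do not all multiply into one term, since the $L^2$-trace term carries only $\delta^{-1-1/(2p)}\leq\delta^{-\gamma_1}$.
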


\begin{proof}
Let~$\delta=\rho_2-\rho_1$ and assume~$\norm{v}_{L^2(\rho_2\cu_0)}<\infty$. We first bound the energy on~$\rho_1\cu_0$ by a multiple of the energy on~$\rho_2\cu_0$ plus a term involving~$\norm{v}_{L^2(\rho_2\cu_0)}$, then choose~$h$ to make the multiple small, and finally remove the energy on~$\rho_2\cu_0$ by iterating over radii.

\smallskip

\emph{Step 1.} A cube with a good radius. Choose nonnegative smooth~$v_i\to v$ in~$H^1_\a(\cu_0)$ by Proposition~\ref{p.weighted.calculus}\ref{i.weighted.approximation}, and let~$\tau$ and the subsequence be given by Proposition~\ref{p.good.radius}, including the bound~\eqref{e.good.radius.trace.l2}. Proposition~\ref{p.good.radius.energy} with~$k=0$ and the bounds~\eqref{e.good.radius.matrices}--\eqref{e.good.radius.fractional} give, with~$E_{\rho_1}\coloneqq\Ener_{\rho_1\cu_0}(v)$ and~$E_{\rho_2}\coloneqq\Ener_{\rho_2\cu_0}(v)$,
\begin{align*}
	E_{\rho_1}\leq E_\tau&\leq C\delta^{-\nf1{2p}-\nf12-(1-t)-\nf1r}\Lambda_{s,1,p}(\cu_0)^{\nf12}h^\theta E_{\rho_2}^{\nf12}\bigl(\lambda_{t,1,q}(\cu_0)^{-\nf12}E_{\rho_2}^{\nf12}+\norm{v}_{L^r(\rho_2\cu_0)}\bigr)\notag\\
	&\qquad+C\delta^{-\nf1{2p}-1}\Lambda_{s,1,p}(\cu_0)^{\nf12}h^{-\sigma}\norm{v}_{L^2(\rho_2\cu_0)}E_{\rho_2}^{\nf12}\,.
\end{align*}
Since~$|\rho_2\cu_0|\leq1$ and~$r<2$, one has~$\norm{v}_{L^r(\rho_2\cu_0)}\leq\norm{v}_{L^2(\rho_2\cu_0)}$, and~$h^\theta\leq h^{-\sigma}$ because~$h<1$. The powers of~$\delta^{-1}$ add up to~$\nf1{(2p)}+\nf12+(1-t)+\nf1r=\gamma_1$ in the first term and to~$1+\nf1{(2p)}\leq\gamma_1$ in the second. Hence
\begin{equation}
	E_{\rho_1}\leq C\delta^{-\gamma_1}\Bigl(\Theta^{\nf12}h^\theta E_{\rho_2}+\Lambda_{s,1,p}(\cu_0)^{\nf12}h^{-\sigma}\norm{v}_{L^2(\rho_2\cu_0)}E_{\rho_2}^{\nf12}\Bigr)\,.
	\label{e.inner.energy.bound}
\end{equation}
\smallskip

\emph{Step 2.} The choice of~$h$. Let~$\eps_0\coloneqq2^{-\gamma_2-2}$, and choose~$0<c_0\leq1$, depending only on the parameters, such that~$Cc_0^\theta\leq1$ for the constant in~\eqref{e.inner.energy.bound}. Let~$h$ be the largest triadic width with
\begin{equation*}
	h\leq\min\biggl\{\frac{\delta}{200d},\,c_0\biggl(\frac{\eps_0\delta^{\gamma_1}}{\Theta^{\nf12}}\biggr)^{\nf1\theta}\biggr\}<3h\,.
\end{equation*}
Then~$C\delta^{-\gamma_1}\Theta^{\nf12}h^\theta\leq\eps_0$, and Young's inequality applied to the second term of~\eqref{e.inner.energy.bound} gives
\begin{equation*}
	E_{\rho_1}\leq2\eps_0 E_{\rho_2}+C\eps_0^{-1}\delta^{-2\gamma_1}\Lambda_{s,1,p}(\cu_0)h^{-2\sigma}\norm{v}_{L^2(\rho_2\cu_0)}^2\,.
\end{equation*}
The size of~$h^{-2\sigma}$ depends on which entry of the minimum in the choice of~$h$ is smaller. If the absorption condition is the smaller entry, then
\begin{equation*}
	h^{-2\sigma}\leq C\eps_0^{-\frac{2\sigma}{\theta}}\delta^{-\frac{2\gamma_1\sigma}{\theta}}\Theta^{\frac\sigma\theta}\qquad\mbox{and}\qquad2\gamma_1+\frac{2\gamma_1\sigma}{\theta}=\gamma_2\,,
\end{equation*}
where the identity uses~$\theta+\sigma=1-\sigma_*$. If~$\delta/(200d)$ is the smaller entry, then~$h^{-2\sigma}\leq C\delta^{-2\sigma}$ and~$2\gamma_1+2\sigma\leq\gamma_2$, since~$\gamma_1>1>\theta$. In both cases
\begin{equation}
	E_{\rho_1}\leq2\eps_0 E_{\rho_2}+C\eps_0^{-1-\nf{2\sigma}\theta}\delta^{-\gamma_2}\Lambda_{s,1,p}(\cu_0)\Theta^{\nf\sigma\theta}\norm{v}_{L^2(\rho_2\cu_0)}^2\,.
	\label{e.epsilon.caccioppoli}
\end{equation}

\smallskip

\emph{Step 3.} Iteration over radii. By the choice of~$\eps_0$, the factor~$2\eps_0=2^{-\gamma_2-1}$ multiplying~$E_{\rho_2}$ in~\eqref{e.epsilon.caccioppoli} outweighs the factor~$2^{\gamma_2}$ by which~$\delta^{-\gamma_2}$ grows when the gap between the radii is halved. Apply~\eqref{e.epsilon.caccioppoli} between the radii~$R_i\coloneqq\rho_2-2^{-i}\delta$ and~$R_{i+1}$, whose gap is~$2^{-i-1}\delta$. By induction,
\begin{equation*}
	\Ener_{\rho_1\cu_0}(v)\leq(2\eps_0)^N\Ener_{\rho_2\cu_0}(v)+C\delta^{-\gamma_2}\Lambda_{s,1,p}(\cu_0)\Theta^{\nf\sigma\theta}\norm{v}_{L^2(\rho_2\cu_0)}^2\sum_{i=0}^{N-1}(2\eps_0)^i2^{\gamma_2(i+1)}\,.
\end{equation*}
The series is at most~$2^{\gamma_2+1}$, and the first term tends to zero because the energy of~$v$ is finite.
\end{proof}

\subsection{A two-level recurrence}

We now compare two levels~$0\leq a<b$ of a subsolution~$u$ on nested cubes~$\rho_1\cu_0\subset\rho_2\cu_0$. The recurrence must bound at level~$b$ the same quantity it uses at level~$a$. The energy of~$(u-b)_+$ is estimated in a cube with a good radius through the fractional norm of the trace of~$(u-a)_+$, which by~\eqref{e.good.radius.fractional} requires both the energy and the~$L^r$ norm of~$(u-a)_+$. The~$L^r$ norm of~$(u-b)_+$ is estimated through the fractional Sobolev inequality on~$\rho_1\cu_0$, which requires the same two quantities. For a subsolution~$u\in\Csub(\cu_0)$, a level~$a\geq0$ and a radius~$\nf12\leq\rho_2\leq1$, we therefore measure the truncation~$(u-a)_+$ on~$\rho_2\cu_0$ by
\begin{equation}
	Y_a(\rho_2)\coloneqq\Bigl(\norm{(u-a)_+}_{L^r(\rho_2\cu_0)}^2+\lambda_{t,1,q}(\cu_0)^{-1}\Ener_{\rho_2\cu_0}\bigl((u-a)_+\bigr)\Bigr)^{\nf12}\,.
	\label{e.two.level.quantity}
\end{equation}
The weight~$\lambda_{t,1,q}(\cu_0)^{-1}$ normalizes the energy term as in the fractional estimate, since the energy controls the fractional seminorm with the factor~$\lambda_{t,1,q}(\cu_0)^{-1/2}$ by~\eqref{e.lower.fractional}. The quantity~$Y_a(\rho_2)$ is nonincreasing in~$a$ and nondecreasing in~$\rho_2$, and it is finite for~$\rho_2<1$ by Proposition~\ref{p.lower.fractional}. We also use the fractional Sobolev inequality~\cite[Theorem~6.5]{DNPV.fractional}: every measurable, compactly supported~$G$ on~$\Rd$ satisfies
\begin{equation}
	\norm{G}_{L^{r^*}(\Rd)}\leq C(d,q,t)[G]_{W^{1-t,r}(\Rd)}\,,
	\label{e.ordinary.fractional.sobolev}
\end{equation}
since~$(1-t)r<d$, as noted after~\eqref{e.parameter.facts}.

\smallskip

The gain comes from truncation. Where~$u>b$, the function~$(u-a)_+$ exceeds~$b-a$, so higher integrability of~$(u-a)_+$ makes~$(u-b)_+$ small when~$b-a$ is large. There are two sources of higher integrability. On the cube boundary, Lemma~\ref{l.critical.trace.embedding} places the trace of~$(u-a)_+$ in~$L^{r_\partial^*}$ with~$r_\partial^*>2$, while the energy bound in the cube involves the~$L^2$ norm of the trace of~$(u-b)_+$. On~$\rho_1\cu_0$,~\eqref{e.ordinary.fractional.sobolev} places~$(u-a)_+$ in~$L^{r^*}$ with~$r^*>r$, while~$Y_b$ involves the~$L^r$ norm of~$(u-b)_+$. In~\eqref{e.two.level.recurrence}, the boundary contribution contains~$\Theta^{\nf{(1-\sigma_*)}\theta}Y_a(\rho_2)^{r_\partial^*}(b-a)^{2-r_\partial^*}$, while the interior contribution contains~$Y_a(\rho_2)^{\nf{2r^*}r}(b-a)^{2-\nf{2r^*}r}$. Both powers of~$b-a$ are negative, so increasing the level gap reduces these contributions. The term~$\eps Y_a(\rho_2)^2$ collects the seminorm term of Proposition~\ref{p.good.radius.energy}, made small by the choice of~$h$, and the part of the boundary term split off by Young's inequality. The power of~$(\rho_2-\rho_1)^{-1}$ in the recurrence is
\begin{equation}
	\gamma_3\coloneqq\max\Bigl\{1+\frac1p+\Bigl((1-t)+\frac1r\Bigr)r_\partial^*+\frac{2\gamma_1\sigma}{\theta}\,,\ \frac{2(1-t)r^*}{r}\Bigr\}\,,
	\label{e.recurrence.exponent}
\end{equation}
the first entry from the boundary term and the second from the interior term.

\begin{proposition}[A two-level recurrence]
\label{p.two.level.recurrence}
For every~$0<\eps<1$ there is~$C_\eps<\infty$, depending only on~$\eps$ and~$d,p,q,s,t$, with the following property. Let~$u\in\Csub(\cu_0)$,~$0\leq a<b$ and~$\nf12\leq\rho_1<\rho_2\leq1$ with~$Y_a(\rho_2)<\infty$ in~\eqref{e.two.level.quantity}, and let~$\delta=\rho_2-\rho_1$. Then
\begin{equation}
	Y_b(\rho_1)^2\leq\eps Y_a(\rho_2)^2+C_\eps\delta^{-\gamma_3}\Bigl(\Theta^{\nf{(1-\sigma_*)}{\theta}}Y_a(\rho_2)^{r_\partial^*}(b-a)^{2-r_\partial^*}+Y_a(\rho_2)^{\nf{2r^*}r}(b-a)^{2-\nf{2r^*}r}\Bigr)\,.
	\label{e.two.level.recurrence}
\end{equation}
\end{proposition}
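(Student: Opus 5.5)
We apply the good-radius machinery to $v\coloneqq w_a=(u-a)_+$ and compare with $w_b=(u-b)_+=(w_a-(b-a))_+$. Both $w_a$ and $w_b$ are nonnegative subsolutions, by the chain rule of Proposition~\ref{p.weighted.calculus}. We choose nonnegative smooth approximations of $w_a$ and fix a good radius $\tau\in J$ for $v=w_a$ by Proposition~\ref{p.good.radius}. We do not use~\eqref{e.good.radius.trace.l2}, since only $L^r$ control of $w_a$ is assumed. Then Proposition~\ref{p.good.radius.energy} with $k=b-a$ gives
\begin{equation*}
\Ener_{\rho_1\cu_0}(w_b)\leq\Ener_{\tau\cu_0}(w_b)\leq CS(\tau)D_{w_a}(\tau)^{\nf12}\bigl(h^\theta[w_a]_{W^{1-t,r}(\partial(\tau\cu_0))}+h^{-\sigma}\norm{w_b}_{L^2(\partial(\tau\cu_0))}\bigr)\,.
\end{equation*}
Here we use that truncation does not increase the seminorm. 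We insert~\eqref{e.good.radius.matrices}, \eqref{e.good.radius.boundary.energy} and~\eqref{e.good.radius.fractional}. After multiplying by $\lambda_{t,1,q}(\cu_0)^{-1}$, the first term is bounded by $C\delta^{-\gamma_1}\Theta^{\nf12}h^\theta Y_a(\rho_2)^2$. For the second term we use the pointwise bound $w_b^2\leq w_a^{r_\partial^*}(b-a)^{2-r_\partial^*}$ on the boundary. Lemma~\ref{l.critical.trace.embedding}, together with~\eqref{e.good.radius.fractional} and~\eqref{e.good.radius.trace.lr}, then bounds $\norm{w_a}_{L^{r_\partial^*}(\partial(\tau\cu_0))}$ by $C\delta^{-(1-t)-\nf1r}\,Y_a(\rho_2)$. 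Hence the second term is at most
\begin{equation*}
C\delta^{-\cdots}\,\Theta^{\nf12}\,h^{-\sigma}\,Y_a(\rho_2)\cdot\bigl(Y_a(\rho_2)^{r_\partial^*}(b-a)^{2-r_\partial^*}\bigr)^{\nf12}\,,
\end{equation*}
where the factor $\Theta^{\nf12}$ collects $\Lambda^{\nf12}\lambda^{-\nf12}$.

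Next we choose $h$ exactly as in the Caccioppoli proof, as the largest triadic width with $C\delta^{-\gamma_1}\Theta^{\nf12}h^\theta\leq\eps/4$, capped by $\delta/(200d)$. With this choice $h^{-\sigma}\leq C_\eps\delta^{-\gamma_1\sigma/\theta}\Theta^{\sigma/(2\theta)}$. Young's inequality then splits the second term into $\tfrac\eps4Y_a(\rho_2)^2$ plus
\begin{equation*}
C_\eps\,\Theta\cdot\Theta^{\sigma/\theta}\cdots Y_a^{r_\partial^*}(b-a)^{2-r_\partial^*}\,.
\end{equation*}
The total power of $\Theta$ is $1+\sigma/\theta=(1-\sigma_*)/\theta$, because $\theta+\sigma=1-\sigma_*$. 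The powers of $\delta^{-1}$ should combine to the first entry of $\gamma_3$. The main obstacle is this bookkeeping: one has to check that the Young split yields exactly $\Theta^{(1-\sigma_*)/\theta}$ and the stated $\delta$ exponent. A minor point is that $D_{w_b}\leq D_{w_a}$ is needed to use $D$ at level $a$.

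For the interior $L^r$ part of $Y_b(\rho_1)$ we apply the second localization in Proposition~\ref{p.fractional.localization} to $v=w_a$. This gives $F=w_a$ near $\rho_1\overline{\cu}_0$, with $\norm{F}_{W^{1-t,r}}\leq C\delta^{-(1-t)}Y_a(\rho_2)$. By~\eqref{e.ordinary.fractional.sobolev} we get $\norm{w_a}_{L^{r^*}(\rho_1\cu_0)}\leq C\delta^{-(1-t)}Y_a(\rho_2)$. On the set where $w_b>0$ we have $w_a>b-a$, so $w_b^r\leq w_a^{r^*}(b-a)^{r-r^*}$. Therefore
\begin{equation*}
\norm{w_b}_{L^r(\rho_1\cu_0)}^2\leq\bigl(C\delta^{-(1-t)}Y_a\bigr)^{2r^*/r}(b-a)^{2-2r^*/r}\,,
\end{equation*}
and this accounts for the second entry of $\gamma_3$. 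Adding the energy and $L^r$ bounds gives~\eqref{e.two.level.recurrence}, with $\eps$ absorbing the terms $\eps/4+\eps/4$.
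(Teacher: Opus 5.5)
Your proposal is correct and follows the paper's proof essentially step for step. The steps are the same: a good radius for $w_a$, Proposition~\ref{p.good.radius.energy} with $k=b-a$, the boundary gain $w_b^2\le(b-a)^{2-r_\partial^*}w_a^{r_\partial^*}$ via Lemma~\ref{l.critical.trace.embedding}, the choice of $h$ by absorption followed by Young's inequality giving $\Theta^{1+\sigma/\theta}=\Theta^{(1-\sigma_*)/\theta}$, and the interior $L^{r^*}$ gain from the localization near $\rho_1\overline{\cu}_0$. Two small corrections, neither affecting the argument: the subsolution property of $w_b$ comes from Lemma~\ref{l.weighted.testing}\ref{i.testing.truncation} rather than the chain rule, and the cap $h\le\delta/(200d)$ is never active under the absorption choice, as the paper notes, though keeping it is harmless.
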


\begin{proof}
Write~$w_a=(u-a)_+$,~$w_b=(u-b)_+$,~$\Delta=b-a$ and~$Y=Y_a(\rho_2)$. If~$Y=0$, then~$w_a=0$ on~$\rho_2\cu_0$, hence~$w_b=0$ there, and there is nothing to prove; let~$Y>0$. We treat the energy and the~$L^r$ norm of~$w_b$ separately. Steps~1 and~2 estimate the energy in a cube with a good radius, as in the proof of Proposition~\ref{p.cg.caccioppoli}, with the gain in the boundary~$L^2$-trace term. Step~3 estimates the~$L^r$ norm with higher integrability on~$\rho_1\cu_0$.

\smallskip

\emph{Step 1.} The energy in a cube with a good radius. Choose nonnegative smooth~$v_i\to w_a$ in~$H^1_\a(\cu_0)$ by Proposition~\ref{p.weighted.calculus}\ref{i.weighted.approximation}, and let~$\tau$ and the subsequence be given by Proposition~\ref{p.good.radius} with~$v=w_a$. Since~$w_b=(w_a-\Delta)_+$ is a subsolution by Lemma~\ref{l.weighted.testing}, Proposition~\ref{p.good.radius.energy} applies with~$v=w_a$ and~$k=\Delta$; its proof approximates~$w_b$ by the Lipschitz functions~$(v_i-\Delta)_+$. On~$\partial(\tau\cu_0)$, the trace of~$w_b$ is~$(w_a-\Delta)_+$, formed from the trace of~$w_a$, by Proposition~\ref{p.good.radius}. Truncation at the level~$\Delta$ is one-Lipschitz, so~$[w_b]_{W^{1-t,r}}\leq[w_a]_{W^{1-t,r}}$ on~$\partial(\tau\cu_0)$. On the set where~$w_a>\Delta$,
\begin{equation*}
	w_b^2=(w_a-\Delta)^2\leq w_a^2\leq\Delta^{2-r_\partial^*}w_a^{r_\partial^*}\,.
\end{equation*}
The Sobolev inequality on cube boundaries~\eqref{e.critical.trace.embedding}, with~$\alpha=1-t$ and~$\xi=r$, then gives
\begin{equation*}
	\norm{w_b}_{L^2(\partial(\tau\cu_0))}^2\leq\Delta^{2-r_\partial^*}\norm{w_a}_{L^{r_\partial^*}(\partial(\tau\cu_0))}^{r_\partial^*}\leq C\Delta^{2-r_\partial^*}\norm{w_a}_{W^{1-t,r}(\partial(\tau\cu_0))}^{r_\partial^*}\,.
\end{equation*}
Each of~$\lambda_{t,1,q}(\cu_0)^{-\nf12}\Ener_{\rho_2\cu_0}(w_a)^{\nf12}$ and~$\norm{w_a}_{L^r(\rho_2\cu_0)}$ is at most~$Y$, so the bound~\eqref{e.good.radius.fractional} for~$w_a$ gives~$\norm{w_a}_{W^{1-t,r}(\partial(\tau\cu_0))}\leq C\delta^{-(1-t)-\nf1r}Y$. We insert these bounds in~\eqref{e.good.radius.energy.estimate}, together with the consequences of~\eqref{e.good.radius.matrices} and~\eqref{e.good.radius.boundary.energy}
\begin{equation*}
	S(\tau)\leq C\delta^{-\nf1{2p}}\Lambda_{s,1,p}(\cu_0)^{\nf12}\qquad\mbox{and}\qquad D_{w_a}(\tau)^{\nf12}\leq C\delta^{-\nf12}\lambda_{t,1,q}(\cu_0)^{\nf12}Y\,,
\end{equation*}
and divide by~$\lambda_{t,1,q}(\cu_0)$. Since~$\Lambda_{s,1,p}(\cu_0)^{\nf12}\lambda_{t,1,q}(\cu_0)^{-\nf12}=\Theta^{\nf12}$ and~$\Ener_{\rho_1\cu_0}(w_b)\leq\Ener_{\tau\cu_0}(w_b)$, this gives
\begin{equation}
	\frac{\Ener_{\rho_1\cu_0}(w_b)}{\lambda_{t,1,q}(\cu_0)}\leq C\delta^{-\gamma_1}\Theta^{\nf12}h^\theta Y^2+C\delta^{-\nf12-\nf1{2p}-((1-t)+\frac1r)\frac{r_\partial^*}2}\Theta^{\nf12}h^{-\sigma}Y^{1+\nf{r_\partial^*}2}\Delta^{1-\nf{r_\partial^*}2}\,,
	\label{e.recurrence.energy}
\end{equation}
for every triadic width~$0<h\leq\delta/(200d)$, where~$C$ depends only on the parameters. The first term has the form of the first term of~\eqref{e.inner.energy.bound}; in the second, the power~$1+\nf{r_\partial^*}2$ of~$Y$ exceeds two.

\smallskip

\emph{Step 2.} The choice of~$h$. As in the proof of Proposition~\ref{p.cg.caccioppoli}, we choose~$h$ so that the first term of~\eqref{e.recurrence.energy} is small. Choose~$0<c_0\leq1/(200d)$, depending only on the parameters, with~$Cc_0^\theta\leq\nf12$ for the constant in~\eqref{e.recurrence.energy}, and let~$h$ be the largest triadic width with
\begin{equation*}
	h\leq c_0\,\eps^{\nf1\theta}\Theta^{-\nf1{2\theta}}\delta^{\nf{\gamma_1}\theta}<3h\,.
\end{equation*}
Here the absorption condition alone determines~$h$: since~$\eps<1$,~$\Theta\geq1$,~$\gamma_1>1>\theta$ and~$\delta\leq\nf12$, this width satisfies~$h\leq\delta/(200d)$. The first term of~\eqref{e.recurrence.energy} is then at most~$\frac\eps2\,Y^2$. For the second term, Young's inequality with weight~$\eps$ and the lower bound~$h>c_0\eps^{1/\theta}\Theta^{-1/(2\theta)}\delta^{\gamma_1/\theta}/3$ give
\begin{equation*}
	\Theta h^{-2\sigma}\leq C\eps^{-\nf{2\sigma}\theta}\delta^{-\nf{2\gamma_1\sigma}\theta}\Theta^{1+\nf\sigma\theta}=C\eps^{-\nf{2\sigma}\theta}\delta^{-\nf{2\gamma_1\sigma}\theta}\Theta^{\nf{(1-\sigma_*)}\theta}\,,
\end{equation*}
since~$1+\nf\sigma\theta=\nf{(1-\sigma_*)}\theta$. Hence
\begin{equation*}
	\frac{\Ener_{\rho_1\cu_0}(w_b)}{\lambda_{t,1,q}(\cu_0)}\leq\eps Y^2+C\eps^{-1-\nf{2\sigma}\theta}\delta^{-1-\nf1p-((1-t)+\frac1r)r_\partial^*-\nf{2\gamma_1\sigma}\theta}\Theta^{\nf{(1-\sigma_*)}\theta}Y^{r_\partial^*}\Delta^{2-r_\partial^*}\,,
\end{equation*}
and the power of~$\delta^{-1}$ is at most~$\gamma_3$ by~\eqref{e.recurrence.exponent}. The identity above gives the power~$\Theta^{\nf{(1-\sigma_*)}\theta}$ in the boundary term of~\eqref{e.two.level.recurrence}.

\smallskip

\emph{Step 3.} The~$L^r$ norm. The localization near~$\rho_1\overline{\cu}_0$ from Proposition~\ref{p.fractional.localization}, applied to~$w_a$ between~$\rho_1$ and~$\rho_2$, gives a compactly supported~$F$ equal to~$w_a$ there with~$[F]_{W^{1-t,r}(\Rd)}\leq C\delta^{-(1-t)}Y$, so~\eqref{e.ordinary.fractional.sobolev} gives~$\norm{w_a}_{L^{r^*}(\rho_1\cu_0)}\leq C\delta^{-(1-t)}Y$. On the set where~$u>b$ one has~$w_a>\Delta$ and~$w_b\leq w_a$, hence~$w_b^r\leq\Delta^{r-r^*}w_a^{r^*}$ because~$r^*>r$. Integrating and raising to the power~$\nf2r$,
\begin{equation*}
	\norm{w_b}_{L^r(\rho_1\cu_0)}^2\leq\Delta^{2-\nf{2r^*}r}\norm{w_a}_{L^{r^*}(\rho_1\cu_0)}^{\nf{2r^*}r}\leq C\delta^{-\nf{2(1-t)r^*}r}Y^{\nf{2r^*}r}\Delta^{2-\nf{2r^*}r}\,.
\end{equation*}
Adding the bounds of Steps~2 and~3 and using~$\delta<1$ proves~\eqref{e.two.level.recurrence}.
\end{proof}

\subsection{Iteration over levels and proof of Theorem~\ref{t.cg.local.boundedness}}

To iterate Proposition~\ref{p.two.level.recurrence}, choose level gaps proportional to~$Y_a(\rho_2)$. The boundary term of~\eqref{e.two.level.recurrence} has the factor~$\Theta^{\nf{(1-\sigma_*)}\theta}$. A gap containing the factor~$\Theta^{\nf{(d-3+2\sigma_*)}{4\theta}}$ cancels it, since
\begin{equation*}
	\Theta^{\frac{1-\sigma_*}{\theta}}\bigl(\Theta^{\frac{d-3+2\sigma_*}{4\theta}}\bigr)^{2-r_\partial^*}=1\,.
\end{equation*}
The interior term has no positive power of~$\Theta$. To control the factor~$\delta^{-\gamma_3}$ as the radii approach~$\rho_1$, set
\begin{equation*}
	\gamma_4\coloneqq2\max\biggl\{\frac{\gamma_3}{r_\partial^*-2},\frac{\gamma_3}{2r^*/r-2}\biggr\}\,.
\end{equation*}
The resulting recurrence contracts~$Y$, and summing the level gaps bounds the supremum of~$u$.

\begin{proposition}[From energy to supremum]
\label{p.energy.to.sup}
There is~$C<\infty$, depending only on~$d,p,q,s,t$, such that every~$u\in\Csub(\cu_0)$ and~$\nf12\leq\rho_1<\rho_2\leq1$ with~$Y_0(\rho_2)<\infty$ satisfy
\begin{equation}
	\norm{u_+}_{L^\infty(\rho_1\cu_0)}\leq C(\rho_2-\rho_1)^{-\gamma_4}\Theta^{\nf{(d-3+2\sigma_*)}{4\theta}}Y_0(\rho_2)\,.
	\label{e.energy.to.sup}
\end{equation}
In particular, every~$u\in\Csub(\cu_0)$ is locally bounded above in~$\cu_0$.
\end{proposition}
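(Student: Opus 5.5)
We run a De Giorgi iteration over nested cubes and increasing levels, driven by the two-level recurrence of Proposition~\ref{p.two.level.recurrence}. Write $\delta=\rho_2-\rho_1$, $Y_0=Y_0(\rho_2)$, $\mu\coloneqq\nf{(d-3+2\sigma_*)}{(4\theta)}$, and set $\beta_1\coloneqq r_\partial^*-2>0$ and $\beta_2\coloneqq\nf{2r^*}r-2>0$. We use radii $R_n\coloneqq\rho_1+2^{-n}\delta$, so that $R_n-R_{n+1}=2^{-n-1}\delta$, and levels $a_0=0$, $a_{n+1}=a_n+\Delta_n$. The level gaps are
\begin{equation*}
	\Delta_n\coloneqq K\,2^{n\kappa}\delta^{-\gamma_4/2}\,\Theta^{\mu}\,Z_n\,,\qquad Z_n\coloneqq2^{-n}Y_0\,.
\end{equation*}
Here $K$ is a large constant and $\kappa$ a fixed exponent, both to be chosen. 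The inductive claim is $Y_{a_n}(R_n)\leq Z_n$. It holds at $n=0$ by definition.

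For the inductive step we apply \eqref{e.two.level.recurrence} with $a=a_n$, $b=a_{n+1}$ and the radii $R_n>R_{n+1}$, so the gap is $2^{-n-1}\delta$. We take $\eps=\nf18$, which makes the first term at most $\nf18 Z_n^2=\nf12Z_{n+1}^2$. Since $b-a=\Delta_n$ and $Y_{a_n}(R_n)\leq Z_n$, the boundary term is
\begin{equation*}
	C2^{n\gamma_3}\delta^{-\gamma_3}\,\Theta^{\nf{(1-\sigma_*)}\theta}\,Z_n^2\bigl(K2^{n\kappa}\delta^{-\gamma_4/2}\Theta^\mu\bigr)^{-\beta_1}\,.
\end{equation*}
The identity $\Theta^{\nf{(1-\sigma_*)}\theta}\Theta^{-\mu\beta_1}=1$ removes $\Theta$. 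The definition of $\gamma_4$ gives $\gamma_4\beta_1/2\geq\gamma_3$, so the $\delta$ powers are harmless because $\delta\leq1$. If we choose $\kappa$ with $\kappa\min\{\beta_1,\beta_2\}\geq\gamma_3+2$, the $2^n$ factors are absorbed as well. The interior term works the same way with $\beta_2$ in place of $\beta_1$; there $\Theta^{-\mu\beta_2}\leq1$ because $\Theta\geq1$. Taking $K$ large makes both terms at most $\nf14Z_{n+1}^2$ each. This gives $Y_{a_{n+1}}(R_{n+1})\leq Z_{n+1}$ and closes the induction. Monotonicity of $Y$ in both arguments justifies evaluating along these sequences.

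For the conclusion, we sum the gaps:
\begin{equation*}
	a_\infty=\sum_n\Delta_n\leq CK\delta^{-\gamma_4/2}\Theta^\mu Y_0\sum_n2^{n(\kappa-1)}\,.
\end{equation*}
This series diverges if $\kappa\geq1$, and that is the main obstacle. To fix it we replace the geometric rate $2^{-n}$ in $Z_n$ by a faster decay $Z_n=2^{-\lambda n}Y_0$. The recurrence is superlinear ($Y^{r_\partial^*}$ against $Y^2$, exactly the homogeneity that the choice of $\Delta_n$ uses), so any fixed $\lambda>\kappa$ can be imposed at the cost of a larger $K$. For example, take $\lambda=\kappa+1$ and enlarge $\kappa$ so that $\kappa\min\{\beta_1,\beta_2\}\geq\gamma_3+2\lambda+2$. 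Then the $2^{-2\lambda}$ loss in the first term is handled by taking $\eps=2^{-2\lambda-1}$. The gaps now sum to at most $C\delta^{-\gamma_4/2}\Theta^\mu Y_0$, and $\delta^{-\gamma_4/2}\leq\delta^{-\gamma_4}$. Since $Y_{a_\infty}(\rho_1)\leq Y_{a_n}(R_n)\to0$, we get $(u-a_\infty)_+=0$ almost everywhere on $\rho_1\cu_0$, and this is \eqref{e.energy.to.sup}. Local boundedness follows because $Y_0(\rho_2)<\infty$ for $\rho_2<1$ by Proposition~\ref{p.lower.fractional}, and any compact subset of $\cu_0$ lies in some $\rho_1\cu_0$ after rescaling the choice $\rho_1\geq\nf12$.
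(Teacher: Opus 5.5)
Your argument is essentially the paper's. The paper uses the same degree-two homogeneity of \eqref{e.two.level.recurrence} in $(Y,b-a)$. There the gap $b_{n+1}-b_n=K\Theta^\mu\delta_n^{-\gamma_4}Y_{b_n}(R_n)$ is taken from the current value, and the contraction $Y_{b_{n+1}}(R_{n+1})\le2^{-\gamma_4-1}Y_{b_n}(R_n)$ is forced. In effect this is your scheme with $\kappa=\gamma_4$ and $\lambda=\gamma_4+1$. Your predetermined targets $Z_n$ also spare the case distinction when some $Y_{b_n}(R_n)$ vanishes.

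There is one slip in your final choice of constants. With $\lambda=\kappa+1$, the requirement $\kappa\min\{\beta_1,\beta_2\}\ge\gamma_3+2\lambda+2$ becomes $\kappa(\min\{\beta_1,\beta_2\}-2)\ge\gamma_3+4$. No $\kappa$ satisfies this when $\min\{\beta_1,\beta_2\}\le2$. That happens for instance whenever $d\ge5$, since $\beta_1=4(1-\sigma_*)/(d-3+2\sigma_*)<2$ there.

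The condition is not needed. The loss $Z_{n+1}^2/Z_n^2=2^{-2\lambda}$ does not depend on $n$, so $K$ can absorb it, as you yourself say. Fix the constants in this order:
\begin{itemize}
\item $\kappa=\gamma_3/\min\{\beta_1,\beta_2\}=\gamma_4/2$, so that $\kappa\beta_i\ge\gamma_3$ for both $i$;
\item $\lambda=\kappa+1$;
\item $\varepsilon=2^{-2\lambda-1}$, which fixes $C_\varepsilon$;
\item only then $K\ge1$ with $C_\varepsilon2^{\gamma_3}K^{-\min\{\beta_1,\beta_2\}}\le2^{-2\lambda-2}$.
\end{itemize}
With these choices the induction $Y_{a_n}(R_n)\le Z_n$ and the summation of the gaps go through exactly as you wrote, with the factor $\delta^{-\gamma_4/2}\le\delta^{-\gamma_4}$.
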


\begin{proof}
Let~$\mu\coloneqq\nf{(d-3+2\sigma_*)}{(4\theta)}$, which is positive by~\eqref{e.parameter.facts}, since~$d\geq3$. Since~$r_\partial^*-2=\nf{4(1-\sigma_*)}{(d-3+2\sigma_*)}$ by~\eqref{e.sobolev.exponents}, one has
\begin{equation}
	\mu(r_\partial^*-2)=\frac{1-\sigma_*}{\theta}\,.
	\label{e.level.compensation}
\end{equation}
\smallskip

\emph{Step 1.} The choice of constants. By its definition,~$\gamma_4>\max\{\nf{\gamma_3}{(r_\partial^*-2)},\nf{\gamma_3}{(\nf{2r^*}r-2)}\}$. Choose~$\eps_1$ and~$\eps_0$ as below, let~$C_{\eps_0}$ be the constant of Proposition~\ref{p.two.level.recurrence} for~$\eps=\eps_0$, and then choose~$K\geq1$ so that
\begin{equation*}
	\eps_1\coloneqq2^{-\gamma_4-1}\,,\qquad \eps_0\coloneqq\frac{\eps_1^2}4\,,\qquad C_{\eps_0}K^{2-r_\partial^*}\leq\frac{\eps_1^2}4\,,\qquad C_{\eps_0}K^{2-2r^*/r}\leq\frac{\eps_1^2}4\,.
\end{equation*}
All these choices depend only on~$d,p,q,s,t$. The exponent~$\gamma_4$ makes the powers of the gap between radii positive in the two gain terms below. The contraction factor~$\eps_1$ is small enough that the steps, which contain the growing factor~$2^{n\gamma_4}$, still decrease geometrically. The choices of~$\eps_0$ and~$K$ make each of the three terms of~\eqref{e.two.level.recurrence} at most~$\frac{\eps_1^2}4$ times the square of the quantity at the previous level.

\smallskip

\emph{Step 2.} Choice of levels. Let~$R_n\coloneqq\rho_1+2^{-n}(\rho_2-\rho_1)$ and~$\delta_n\coloneqq R_n-R_{n+1}=2^{-n-1}(\rho_2-\rho_1)$, and set~$b_0\coloneqq0$. We choose~$b_{n+1}-b_n$ from the current value~$Y_{b_n}(R_n)$, with the factors~$K\Theta^\mu$ and~$\delta_n^{-\gamma_4}$:
\begin{equation*}
	b_{n+1}\coloneqq b_n+K\Theta^\mu\delta_n^{-\gamma_4}Y_{b_n}(R_n)\qquad(n\geq0)\,.
\end{equation*}
Write~$X_n\coloneqq Y_{b_n}(R_n)$, which is finite because~$Y$ is nonincreasing in the level and nondecreasing in the radius. If~$X_n=0$ for some~$n$, then~$u_+\leq b_n$ on~$R_n\cu_0\supseteq\rho_1\cu_0$, and the bound of Step~3 holds with~$b_n$ in place of~$b_\infty$. Otherwise~$b_{n+1}>b_n$, and~\eqref{e.two.level.recurrence} applies with~$(a,b,\rho_1,\rho_2)=(b_n,b_{n+1},R_{n+1},R_n)$ and~$b_{n+1}-b_n=K\Theta^\mu\delta_n^{-\gamma_4}X_n$. By~\eqref{e.level.compensation}, its boundary term equals
\begin{equation*}
	C_{\eps_0}K^{2-r_\partial^*}\delta_n^{\gamma_4(r_\partial^*-2)-\gamma_3}X_n^2\leq\frac{\eps_1^2}4X_n^2\,,
\end{equation*}
because the power of~$\delta_n<1$ is positive. The interior term satisfies
\begin{equation*}
	C_{\eps_0}K^{2-2r^*/r}\delta_n^{\gamma_4(2r^*/r-2)-\gamma_3}\Theta^{-\mu(2r^*/r-2)}X_n^2\leq\frac{\eps_1^2}4X_n^2\,,
\end{equation*}
since~$\Theta\geq1$ and the power of~$\delta_n$ is positive. Therefore~$X_{n+1}^2\leq\frac{3\eps_1^2}4\,X_n^2$, and~$X_n\leq\eps_1^nX_0$.

\smallskip

\emph{Step 3.} The limit level. Since~$\delta_n^{-\gamma_4}=2^{(n+1)\gamma_4}(\rho_2-\rho_1)^{-\gamma_4}$ and~$2^{\gamma_4}\eps_1=\nf12$, the steps decrease geometrically, and
\begin{equation*}
	\sum_{n\geq0}(b_{n+1}-b_n)\leq K\Theta^\mu\Bigl(\frac{\rho_2-\rho_1}2\Bigr)^{-\gamma_4}X_0\sum_{n\geq0}(2^{\gamma_4}\eps_1)^n=2K\Bigl(\frac{\rho_2-\rho_1}2\Bigr)^{-\gamma_4}\Theta^\mu Y_0(\rho_2)\,.
\end{equation*}
Hence~$b_n$ increases to a finite limit~$b_\infty$, and~$\norm{(u-b_\infty)_+}_{L^r(\rho_1\cu_0)}\leq X_n\to0$. Thus~$u_+\leq b_\infty$ almost everywhere on~$\rho_1\cu_0$, which is~\eqref{e.energy.to.sup}. For~$\rho_2<1$ the quantity~$Y_0(\rho_2)$ is finite, and every compact subset of~$\cu_0$ lies in some~$\rho_1\cu_0$ with~$\rho_1<\rho_2<1$; this proves local boundedness.
\end{proof}

\begin{proof}[Proof of Theorem~\ref{t.cg.local.boundedness}]
Local boundedness is part of Proposition~\ref{p.energy.to.sup}. For the estimate, we bound~$Y_0$ on an intermediate cube by the Caccioppoli inequality and then apply Proposition~\ref{p.energy.to.sup}. Let~$u\in\Csub(\cu_0)$ and~$\nf12\leq\rho_1<\rho_2\leq1$, assume~$\norm{u_+}_{L^2(\rho_2\cu_0)}<\infty$, and let~$\rho\coloneqq(\rho_1+\rho_2)/2$. The positive part~$u_+$ is a nonnegative subsolution by Lemma~\ref{l.weighted.testing}, so the Caccioppoli inequality~\eqref{e.cg.caccioppoli} between~$\rho$ and~$\rho_2$ gives
\begin{equation*}
	\lambda_{t,1,q}(\cu_0)^{-1}\Ener_{\rho\cu_0}(u_+)\leq C(\rho_2-\rho_1)^{-\gamma_2}\Theta^{1+\nf\sigma\theta}\norm{u_+}_{L^2(\rho_2\cu_0)}^2=C(\rho_2-\rho_1)^{-\gamma_2}\Theta^{\nf{(1-\sigma_*)}\theta}\norm{u_+}_{L^2(\rho_2\cu_0)}^2\,.
\end{equation*}
Since~$|\rho\cu_0|\leq1$, also~$\norm{u_+}_{L^r(\rho\cu_0)}\leq\norm{u_+}_{L^2(\rho_2\cu_0)}$, and therefore
\begin{equation*}
	Y_0(\rho)\leq C(\rho_2-\rho_1)^{-\frac{\gamma_2}2}\Theta^{\frac{1-\sigma_*}{2\theta}}\norm{u_+}_{L^2(\rho_2\cu_0)}\,.
\end{equation*}
Proposition~\ref{p.energy.to.sup} between~$\rho_1$ and~$\rho$ then gives~\eqref{e.cg.local.boundedness} with~$\gamma=\gamma_4+\nf{\gamma_2}2$: the exponent~$\nf{(d-3+2\sigma_*)}{4\theta}$ from Proposition~\ref{p.energy.to.sup} and the exponent~$\nf{(1-\sigma_*)}{2\theta}$ from the Caccioppoli bound on~$Y_0(\rho)$ add up to
\begin{equation*}
	\frac{d-3+2\sigma_*}{4\theta}+\frac{1-\sigma_*}{2\theta}=\frac{d-1}{4\theta}\,.
\end{equation*}
For~$\rho_2<1$, the right side of~\eqref{e.cg.local.boundedness} is finite because~$u_+$ is bounded on~$\rho_2\cu_0$.
\end{proof}

\begin{proof}[Proof of Corollary~\ref{c.lr.bound}]
We interpolate the~$L^2$ norm between the~$L^\infty$ and~$L^\eta$ norms and absorb the resulting power of the supremum by an iteration over radii. Fix~$0<\eta<2$ and~$\nf12\leq\rho_1<\rho_2<1$. For~$\rho\leq\rho_2$, set~$M(\rho)\coloneqq\norm{u_+}_{L^\infty(\rho\cu_0)}$, which is finite by Theorem~\ref{t.cg.local.boundedness}, and let~$\rho_1\leq\rho<\rho'\leq\rho_2$. Since~$u_+\leq M(\rho')$ on~$\rho'\cu_0$,
\begin{equation*}
	\norm{u_+}_{L^2(\rho'\cu_0)}^2\leq M(\rho')^{2-\eta}\norm{u_+}_{L^\eta(\rho_2\cu_0)}^\eta\,.
\end{equation*}
Theorem~\ref{t.cg.local.boundedness} between~$\rho$ and~$\rho'$ and Young's inequality with the exponents~$\nf2{(2-\eta)}$ and~$\nf2\eta$ now give
\begin{align*}
	M(\rho)&\leq C(\rho'-\rho)^{-\gamma}\Theta^{\nf{(d-1)}{4\theta}}M(\rho')^{1-\nf\eta2}\norm{u_+}_{L^\eta(\rho_2\cu_0)}^{\nf\eta2}\\
	&\leq\frac12M(\rho')+C(\rho'-\rho)^{-\nf{2\gamma}\eta}\Theta^{\nf{(d-1)}{2\eta\theta}}\norm{u_+}_{L^\eta(\rho_2\cu_0)}\,.
\end{align*}
Let
\begin{equation*}
	R_i\coloneqq\rho_1+\bigl(1-2^{-i\eta/(4\gamma)}\bigr)(\rho_2-\rho_1)\qquad(i\geq0)\,,
\end{equation*}
so that
\begin{equation*}
	R_{i+1}-R_i=\bigl(1-2^{-\eta/(4\gamma)}\bigr)2^{-i\eta/(4\gamma)}(\rho_2-\rho_1)\,.
\end{equation*}
Apply the bound for~$M(\rho)$ with~$(\rho,\rho')=(R_i,R_{i+1})$ for each~$i<n$. Iteration gives
\begin{equation*}
	M(\rho_1)\leq2^{-n}M(R_n)+C\bigl(1-2^{-\nf\eta{4\gamma}}\bigr)^{-\nf{2\gamma}\eta}(\rho_2-\rho_1)^{-\nf{2\gamma}\eta}\Theta^{\nf{(d-1)}{2\eta\theta}}\norm{u_+}_{L^\eta(\rho_2\cu_0)}\sum_{i<n}2^{-\nf i2}\,,
\end{equation*}
and~$2^{-n}M(R_n)\leq2^{-n}M(\rho_2)\to0$. This proves~\eqref{e.lr.bound} for~$\rho_2<1$, and the case~$\rho_2=1$ follows by letting~$\rho_2$ increase to one.
\end{proof}

\section{Weak Harnack and Harnack inequalities}
\label{s.harnack}

This section proves Theorem~\ref{t.harnack}. We first follow Moser's method~\cite{Moser2} to prove~\eqref{e.weak.harnack} at the fixed exponent~$\eta=\nf r4$. Iterating negative powers controls~$w^{-1}$ by its~$L^{p_*}$ norm, while iterating positive powers raises the integrability of~$w$ from~$p_*$ to~$\nf r4$. Bombieri's lemma~\cite{Bombieri.Giusti}, in the form proved in Appendix~\ref{a.bombieri}, uses the bound on~$\log w$ to control the product of the averages of~$w^{p_*}$ and~$w^{-p_*}$. The coarse ellipticity ratio~$\Theta$ determines the exponent~$p_*$ and gives the constant~$\exp(C\sqrt\Theta)$. Combining the resulting weak Harnack estimate with Corollary~\ref{c.lr.bound} proves~\eqref{e.harnack} for solutions. Section~\ref{ss.harnack.endpoint} then proves~\eqref{e.weak.harnack} at~$\eta=\nf{r^*}2$, using the summability across scales. All smaller exponents follow by H\"older's inequality.

\smallskip

We test with powers of~$w=u+\eps$, for~$\eps>0$. Every iteration starts from an exponent of size at most~$p_*$, a small multiple of~$\Theta^{-\nf12}$ fixed in~\eqref{e.small.exponent}. At such exponents the factor involving~$\Theta$ in the reverse inequalities is bounded, and at the~$j$-th step, where the exponent is~$(\nf{r^*}r)^j$ times larger, it grows only like a power of~$(\nf{r^*}r)^j$. The product of the constants is therefore~$e^{C/p_*}$, that is~$\exp(C\sqrt\Theta)$ for a larger~$C$. The positive iteration stops as soon as the exponent reaches~$\nf r4$, so every power~$w^m$ used has~$m<\nf14$. For solutions, Corollary~\ref{c.lr.bound} with~$\eta=\nf r4$ bounds the supremum by the~$L^{\nf r4}$ norm with a power of~$\Theta$, which is absorbed into the exponential. All constants are uniform in~$\eps$, which is sent to zero at the end.

\smallskip

Throughout this section~\eqref{e.weighted.hypotheses} and the coarse ellipticity condition~\eqref{e.spatial.moment.range} hold on~$\cu_0$, the parameters are those of Section~\ref{s.triadic.simplices}, and constants~$C$ depend only on~$d,p,q,s,t$. By~\eqref{e.moment.comparison},~$\Theta\geq1$. We write
\begin{equation*}
	\chi\coloneqq\frac{r^*}r>1\,,\qquad \gamma_5\coloneqq(1-t)+\frac{\gamma_1(1-t)}\theta\,,\qquad \gamma_6\coloneqq\frac{\gamma_5r\chi}{\chi-1}\,.
\end{equation*}
The normalized expressions~$\norm{f}_{\underline L^b(U)}$ of Section~\ref{ss.notation} are nondecreasing in~$b$ by H\"older's inequality. The factor~$\chi$ increases the integrability exponent at each step, and~$\gamma_5$ and~$\gamma_6$ govern the powers of the inverse gap between radii in one reverse inequality and after iteration.

\subsection{Energy of powers}

Moser's method starts from the inequalities satisfied by positive and negative powers of a supersolution.

\smallskip

Let~$u\geq0$ be a supersolution in~$\cu_0$, let~$\eps>0$ and~$w\coloneqq u+\eps$, and let~$m<\nf12$ with~$m\neq0$. By Lemma~\ref{l.weighted.testing}\ref{i.testing.powers} with~$U=\cu_0$, the power~$v\coloneqq w^m$ belongs to~$H^1_\a(\cu_0)\cap L^r(\cu_0)$ and satisfies~\eqref{e.power.testing} for every bounded nonnegative~$\phi\in H^1_\a(\cu_0)$ vanishing outside a compact subset of~$\cu_0$.

\smallskip

Dividing~\eqref{e.power.testing} by~$m$ shows that~$v$ is a supersolution if~$0<m<\nf12$ and a subsolution if~$m<0$. The additional term on the right is nonnegative when~$m>0$ and nonpositive when~$m<0$. If~$\phi$ equals~$v$ inside a cube, the interior part of the left side is~$m$ times the energy of~$v$ there and the right side is at least~$1-m$ times it. The energy therefore appears with the positive factor~$1-2m$, and the exterior integral enters with
\begin{equation*}
	c_m\coloneqq\frac{|m|}{1-2m}\,.
\end{equation*}
The factor~$c_m$ vanishes like~$|m|$ as~$m\to0$ and blows up as~$m\uparrow\nf12$.

We now test~\eqref{e.power.testing} with the function equal to an approximation of~$w^m$, truncated at~$N$ if~$m>0$, in a cube with a good radius and to the piecewise harmonic extension of its trace outside, and argue as in the proof of Proposition~\ref{p.cg.caccioppoli}. There are two differences. The right side of the resulting inequality contains the~$L^r$ norm of~$w^m$ rather than its~$L^2$ norm, because Lemma~\ref{l.positive.negative.moments} raises the integrability of~$w^m$ from~$L^r$ to~$L^{r^*}$, and~$r^*$ may be smaller than two. Accordingly we use the~$L^r$ form~\eqref{e.summed.pairing.lr} of the exterior integral, whose term involving the~$L^r$ norm of the boundary data carries~$h^{-(\sigma+\sigma_*-t)}$ instead of~$h^{-\sigma}$. And the exterior integral enters with the factor~$c_m$, so the absorption condition on~$h$ involves~$c_m^2\Theta$ instead of~$\Theta$.

\begin{proposition}[Caccioppoli inequality for powers]
\label{p.power.caccioppoli}
Let~$u\geq0$ be a supersolution in~$\cu_0$, let~$\eps>0$ and~$w\coloneqq u+\eps$. For~$m<\nf12$ with~$m\neq0$ and~$\nf12\leq\rho_1<\rho_2\leq1$,
\begin{equation}
	\Ener_{\rho_1\cu_0}(w^m)\leq C(\rho_2-\rho_1)^{-\nf{2\gamma_1(1-t)}\theta}\Lambda_{s,1,p}(\cu_0)\,c_m^2\bigl(1+c_m^2\Theta\bigr)^{\nf{(\sigma+\sigma_*-t)}\theta}\norm{w^m}_{L^r(\rho_2\cu_0)}^2\,.
	\label{e.power.caccioppoli}
\end{equation}
\end{proposition}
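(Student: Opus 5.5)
The plan is to follow the proof of Proposition~\ref{p.cg.caccioppoli} for the function $v\coloneqq w^m$, in three steps: an energy bound at a good radius, a choice of $h$ that absorbs the energy term, and an iteration over radii. Two changes are needed. The exterior pairing enters with the factor $c_m$, and the boundary data are measured in $L^r$ through~\eqref{e.summed.pairing.lr}.

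\emph{Step 1: testing at a good radius.} Set $v=w^m$, truncated as $\min\{v,N\}$ when $m>0$. By Lemma~\ref{l.weighted.testing}\ref{i.testing.powers}, $v\in H^1_\a(\cu_0)\cap L^r(\cu_0)$. Take nonnegative smooth approximations $v_i\to v$ and let Proposition~\ref{p.good.radius} supply a good radius $\tau\in J$ and a subsequence; this uses only the $L^r$ bounds, not~\eqref{e.good.radius.trace.l2}. Test~\eqref{e.power.testing} with $\phi$ equal to $v_i$ (or its truncation) on $\tau\overline{\cu}_0$ and to $H_h(v_i|_{\partial(\tau\cu_0)})$ outside. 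This $\phi$ is admissible by Proposition~\ref{p.whitney.extension}: it is bounded, nonnegative and compactly supported in $\rho_2\cu_0$. As explained before the statement, the interior terms give $(1-2m)$ times the energy. So after letting $i\to\infty$ with~\eqref{e.good.radius.convergence},
\[
	\Ener_{\tau\cu_0}(v)\leq c_m\biggl|\int_{\cu_0\setminus\tau\overline{\cu}_0}\nabla H_hv\cdot\a\nabla v\biggr|\,.
\]
The exterior pairing is taken against $v$ rather than against $w$. This is the point that needs care: the term in~\eqref{e.power.testing} pairs $\nabla\phi$ with $\a\nabla w$ weighted by $w^{m-1}$, which is $m^{-1}\nabla\phi\cdot\a\nabla v$, and this gives exactly the factor $c_m$. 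For $m>0$, remove $N$ at the end by monotone convergence. Then apply~\eqref{e.summed.pairing.lr} with $D_v$, and use~\eqref{e.good.radius.matrices},~\eqref{e.good.radius.boundary.energy},~\eqref{e.good.radius.fractional} and~\eqref{e.good.radius.trace.lr}. Writing $E_\rho\coloneqq\Ener_{\rho\cu_0}(v)$ and $X\coloneqq\norm{v}_{L^r(\rho_2\cu_0)}$, this gives
\[
	E_{\rho_1}\leq C\delta^{-\gamma_1}c_m\Bigl(\Theta^{\nf12}h^\theta E_{\rho_2}+\Lambda_{s,1,p}(\cu_0)^{\nf12}h^{-(\sigma+\sigma_*-t)}XE_{\rho_2}^{\nf12}\Bigr)\,,
\]
where $h^\theta\leq h^{-(\sigma+\sigma_*-t)}$ lets the $L^r$ part of~\eqref{e.good.radius.fractional} join the second term.

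\emph{Step 2: choice of $h$.} Take $h$ triadic with $h\approx\min\{\delta/(200d),\,c_0(\eps_0\delta^{\gamma_1}/(c_m\Theta^{\nf12}))^{\nf1\theta}\}$. The first term is then at most $\eps_0E_{\rho_2}$. Young's inequality turns the second into $\eps_0E_{\rho_2}+C\eps_0^{-1}\delta^{-2\gamma_1}c_m^2\Lambda_{s,1,p}(\cu_0)h^{-2(\sigma+\sigma_*-t)}X^2$. Using $\theta+\sigma+\sigma_*-t=1-t$, both cases of the minimum give
\[
	h^{-2(\sigma+\sigma_*-t)}\leq C\delta^{-\nf{2\gamma_1(\sigma+\sigma_*-t)}\theta}\bigl(1+c_m^2\Theta\bigr)^{\nf{(\sigma+\sigma_*-t)}\theta}\,.
\]
The total power of $\delta^{-1}$ is then $2\gamma_1(1-t)/\theta$. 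In the branch where $\delta/(200d)$ is the smaller entry, one checks that the resulting power is dominated by this one.

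\emph{Step 3: iteration.} Iterate over the dyadic radii $R_i$ exactly as in Step~3 of Proposition~\ref{p.cg.caccioppoli}, with $\eps_0=2^{-\gamma-2}$ for the relevant power $\gamma$. This absorbs $E_{\rho_2}$, which is finite, and yields~\eqref{e.power.caccioppoli}.

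The main obstacle is Step~1: verifying that the power-testing identity produces precisely the coefficient $1-2m$ on the interior energy and $|m|$ on the exterior pairing against $\nabla v$, uniformly in the truncation $N$ and the approximation index $i$. Convergence of the interior pairing uses $v_i\to v$ in $H^1_\a$. The exterior bound depends on the traces only through~\eqref{e.good.radius.convergence}, so it passes to the limit, as in Proposition~\ref{p.good.radius.energy}.
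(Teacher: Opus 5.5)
Your proposal is correct and follows the paper's proof essentially step by step. You test \eqref{e.power.testing} at a good radius for the untruncated $v=w^m$, with the (truncated) approximation inside and $H_h$ of its trace outside, and pass the bound \eqref{e.summed.pairing.lr} to the limit, which produces the factor $c_m$. You then choose $h$ to absorb the energy term and iterate over radii. The only difference is that you put $c_m\Theta^{\nf12}$ in the absorption condition where the paper puts $(1+c_m^2\Theta)^{\nf12}$; this gives the same final bound, since $c_m^2\Theta\leq1+c_m^2\Theta$ and the cap $\delta/(200d)$ covers the other branch.
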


\begin{proof}
Set
\begin{equation*}
	v\coloneqq w^m\,,\qquad\delta\coloneqq\rho_2-\rho_1\,,\qquad E_\rho\coloneqq\Ener_{\rho\cu_0}(v)\quad(\rho\in\{\rho_1,\rho_2\})\,.
\end{equation*}
These energies are finite by Lemma~\ref{l.weighted.testing}\ref{i.testing.powers}. We test~\eqref{e.power.testing} in a cube with a good radius, choose~$h$, and iterate over radii.

\smallskip

\emph{Step 1.} Testing in a cube with a good radius. By Proposition~\ref{p.weighted.calculus}\ref{i.weighted.approximation} and a subsequence, choose nonnegative smooth~$v_i\to v$ in~$H^1_\a(\cu_0)$ and almost everywhere, with~$v_i\leq\eps^m$ if~$m<0$, since then~$v\leq\eps^m$. Let~$\tau\in J$ be a good radius and the subsequence given by Proposition~\ref{p.good.radius}. The approximations, the radius and the subsequence do not depend on~$N$. Set~$N=\infty$ if~$m<0$, and choose~$0<N<\infty$ if~$0<m<\nf12$. Put
\begin{equation*}
	\tilde v\coloneqq v\wedge N\,,\qquad\tilde v_i\coloneqq v_i\wedge N\,,
\end{equation*}
where~$v\wedge\infty=v$. Then~$0\leq\tilde v_i\leq N$ for finite~$N$, and~$0\leq\tilde v_i\leq\eps^m$ if~$m<0$. The same bounds hold for~$\tilde v$. The~$\tilde v_i$ need not be smooth, but they are Lipschitz on~$\tau\overline{\cu}_0$, as the~$v_i$ are smooth, and~$\tilde v_i\to\tilde v$ in~$H^1_\a(\cu_0)$ by Proposition~\ref{p.weighted.calculus}\ref{i.weighted.chain}.

Fix a triadic width~$0<h\leq\delta/(200d)$, which satisfies~\eqref{e.extension.width} since~$\rho_2-\tau>\delta/2$. By Proposition~\ref{p.whitney.extension}, define
\begin{equation*}
	\phi_i\coloneqq\begin{cases}
		\tilde v_i&\text{on }\tau\overline{\cu}_0\,,\\
		H_h(\tilde v_i|_{\partial(\tau\cu_0)})&\text{on }\cu_0\setminus\tau\overline{\cu}_0\,.
	\end{cases}
\end{equation*}
Proposition~\ref{p.whitney.extension} shows that~$\phi_i$ is a nonnegative element of~$\Hzero{\a}(\cu_0)$, bounded by~$N$, or by~$\eps^m$ if~$m<0$, and vanishing outside a compact subset of~$\cu_0$. We apply~\eqref{e.power.testing} with~$\phi=\phi_i$. The exterior part of its right side is nonnegative,
\begin{equation*}
	(1-m)\int_{\cu_0\setminus\tau\overline{\cu}_0}\frac{\phi_i}{v}\nabla v\cdot\a\nabla v\geq0\,.
\end{equation*}
Discarding this term gives
\begin{equation*}
	(1-m)\int_{\tau\cu_0}\frac{\tilde v_i}v\,\nabla v\cdot\a\nabla v-m\int_{\tau\cu_0}\nabla\tilde v_i\cdot\a\nabla v\leq|m|\,\biggl|\int_{\cu_0\setminus\tau\overline{\cu}_0}\nabla H_h(\tilde v_i|_{\partial(\tau\cu_0)})\cdot\a\nabla v\biggr|\,.
\end{equation*}
As~$i\to\infty$,~\eqref{e.weighted.pairing} and~\eqref{e.weighted.truncation.gradients} give
\begin{equation*}
	\int_{\tau\cu_0}\nabla\tilde v_i\cdot\a\nabla v\longrightarrow\int_{\tau\cu_0}\nabla\tilde v\cdot\a\nabla v=\Ener_{\tau\cu_0}(\tilde v)\,.
\end{equation*}
For the other integral, dominated convergence and~\eqref{e.power.source} give
\begin{equation*}
	\int_{\tau\cu_0}\frac{\tilde v_i}{v}\nabla v\cdot\a\nabla v\longrightarrow\int_{\tau\cu_0}\frac{\tilde v}{v}\nabla v\cdot\a\nabla v\geq\Ener_{\tau\cu_0}(\tilde v)\,.
\end{equation*}
The integrand is bounded by~$Nv^{-1}\nabla v\cdot\a\nabla v$, or by~$\eps^mv^{-1}\nabla v\cdot\a\nabla v$ if~$m<0$, and~$\tilde v/v=1$ where~$v<N$. As~$1-m>0$, it follows that~$(1-2m)\Ener_{\tau\cu_0}(\tilde v)$ is at most the limit superior of the right side. We use~\eqref{e.summed.pairing.lr} for~$v$ and the restrictions of the~$\tilde v_i$, the convergence~$\tilde v_i\to\tilde v$ on~$\partial(\tau\cu_0)$ in~\eqref{e.good.radius.convergence} with~$k=0$, and the truncation bounds on~$\partial(\tau\cu_0)$,
\begin{equation*}
	[\tilde v]_{W^{1-t,r}}\leq[v]_{W^{1-t,r}}\,,\qquad\norm{\tilde v}_{L^r}\leq\norm{v}_{L^r}\,.
\end{equation*}
These give
\begin{equation*}
	\Ener_{\rho_1\cu_0}(\tilde v)\leq\Ener_{\tau\cu_0}(\tilde v)\leq Cc_mS(\tau)D_v(\tau)^{\nf12}\bigl(h^\theta[v]_{W^{1-t,r}(\partial(\tau\cu_0))}+h^{-(\sigma+\sigma_*-t)}\norm{v}_{L^r(\partial(\tau\cu_0))}\bigr)\,.
\end{equation*}
We insert~\eqref{e.good.radius.matrices}--\eqref{e.good.radius.trace.lr} and use
\begin{align*}
	\Lambda_{s,1,p}(\cu_0)^{\nf12}\lambda_{t,1,q}(\cu_0)^{-\nf12}&=\Theta^{\nf12}\,,\qquad h^\theta\leq h^{-(\sigma+\sigma_*-t)}\,,\\
	\frac1{2p}+\frac12+(1-t)+\frac1r&=\gamma_1\,.
\end{align*}
These relations bound all powers of~$\delta^{-1}$ by~$\delta^{-\gamma_1}$. This gives
\begin{equation}
	\Ener_{\rho_1\cu_0}(\tilde v)\leq Cc_m\delta^{-\gamma_1}\Bigl(\Theta^{\nf12}h^\theta E_{\rho_2}+\Lambda_{s,1,p}(\cu_0)^{\nf12}h^{-(\sigma+\sigma_*-t)}\norm{v}_{L^r(\rho_2\cu_0)}E_{\rho_2}^{\nf12}\Bigr)\,.
	\label{e.power.good.radius}
\end{equation}
The right side does not depend on~$N$. For~$0<m<\nf12$, the energy densities~$\indc_{\{v<N\}}\nabla v\cdot\a\nabla v$ of~$v\wedge N$ increase to that of~$v$ as~$N\to\infty$, so~\eqref{e.power.good.radius} holds with~$E_{\rho_1}$ on the left, for every triadic width~$0<h\leq\delta/(200d)$ and every~$m$.

\smallskip

\emph{Step 2.} The choice of~$h$. As in the proof of Proposition~\ref{p.cg.caccioppoli}, the width must satisfy~$h\leq\delta/(200d)$ and make the first term of~\eqref{e.power.good.radius} a small multiple of~$E_{\rho_2}$; that term now carries the factor~$c_m\Theta^{\nf12}$. Let~$\eps_0\coloneqq2^{-\nf{2\gamma_1(1-t)}\theta-2}$, choose~$0<c_0\leq1$, depending only on the parameters, with~$Cc_0^\theta\leq1$ for the constant in~\eqref{e.power.good.radius}, and let~$h$ be the largest triadic width with
\begin{equation*}
	h\leq\min\biggl\{\frac{\delta}{200d},\,c_0\biggl(\frac{\eps_0\delta^{\gamma_1}}{(1+c_m^2\Theta)^{\nf12}}\biggr)^{\nf1\theta}\biggr\}<3h\,.
\end{equation*}
The bound
\begin{equation*}
	c_m\Theta^{\nf12}\leq(1+c_m^2\Theta)^{\nf12}
\end{equation*}
makes the first term of~\eqref{e.power.good.radius} at most~$\eps_0 E_{\rho_2}$. Young's inequality bounds the second term by
\begin{equation*}
	\eps_0 E_{\rho_2}+C\eps_0^{-1}c_m^2\delta^{-2\gamma_1}\Lambda_{s,1,p}(\cu_0)h^{-2(\sigma+\sigma_*-t)}\norm{v}_{L^r(\rho_2\cu_0)}^2\,.
\end{equation*} If the second entry of the minimum is smaller, the absorption condition determines~$h$ and gives
\begin{equation*}
	h^{-2(\sigma+\sigma_*-t)}\leq C\eps_0^{-\nf{2(\sigma+\sigma_*-t)}\theta}\delta^{-\nf{2\gamma_1(\sigma+\sigma_*-t)}\theta}(1+c_m^2\Theta)^{\nf{(\sigma+\sigma_*-t)}\theta}\,.
\end{equation*}
The exponent of~$\delta^{-1}$ combines with the preceding~$\delta^{-2\gamma_1}$ because
\begin{equation*}
	2\gamma_1+\frac{2\gamma_1(\sigma+\sigma_*-t)}\theta=\frac{2\gamma_1(1-t)}\theta\,,
\end{equation*}
using~$1-t=(\sigma+\sigma_*-t)+\theta$. Otherwise~$h$ is determined by~$\delta$ alone and
\begin{equation*}
	h^{-2(\sigma+\sigma_*-t)}\leq C\delta^{-2(\sigma+\sigma_*-t)}\,.
\end{equation*}
Since~$\gamma_1>1>\theta$,
\begin{equation*}
	2\gamma_1+2(\sigma+\sigma_*-t)\leq\frac{2\gamma_1(1-t)}\theta\,.
\end{equation*}
In both cases
\begin{equation}
	E_{\rho_1}\leq2\eps_0 E_{\rho_2}+C\delta^{-\nf{2\gamma_1(1-t)}\theta}\Lambda_{s,1,p}(\cu_0)\,c_m^2\bigl(1+c_m^2\Theta\bigr)^{\nf{(\sigma+\sigma_*-t)}\theta}\norm{v}_{L^r(\rho_2\cu_0)}^2\,.
	\label{e.power.eps}
\end{equation}

\smallskip

\emph{Step 3.} Iteration over radii. The choice of~$\eps_0$ makes~$2\eps_0$ outweigh the growth of~$\delta^{-\nf{2\gamma_1(1-t)}\theta}$ when the gap is halved. Set
\begin{equation*}
	R_i\coloneqq\rho_2-2^{-i}\delta\,,\qquad R_{i+1}-R_i=2^{-i-1}\delta\,.
\end{equation*}
Apply~\eqref{e.power.eps} between~$R_i$ and~$R_{i+1}$ and bound the~$L^r$ norms on~$R_{i+1}\cu_0$ by the one on~$\rho_2\cu_0$. By induction,
\begin{equation*}
	E_{\rho_1}\leq(2\eps_0)^nE_{\rho_2}+C\delta^{-\nf{2\gamma_1(1-t)}\theta}\Lambda_{s,1,p}(\cu_0)\,c_m^2\bigl(1+c_m^2\Theta\bigr)^{\nf{(\sigma+\sigma_*-t)}\theta}\norm{v}_{L^r(\rho_2\cu_0)}^2\sum_{i=0}^{n-1}(2\eps_0)^i2^{\frac{2\gamma_1(1-t)}\theta(i+1)}\,.
\end{equation*}
The ratio of the series is
\begin{equation*}
	2\eps_0\,2^{\nf{2\gamma_1(1-t)}\theta}=\tfrac12\,.
\end{equation*}
The first term tends to zero because~$E_{\rho_2}<\infty$.
\end{proof}

\subsection{Logarithmic estimate}

As~$m\to0$,
\begin{align*}
	m^{-1}\nabla w^m&=w^{m-1}\nabla u\longrightarrow w^{-1}\nabla u=\nabla\log w\,,\\
	\frac{c_m^2}{m^2}&=(1-2m)^{-2}\longrightarrow1\,,\qquad(1+c_m^2\Theta)^{\nf{(\sigma+\sigma_*-t)}\theta}\longrightarrow1\,.
\end{align*}
Dividing~\eqref{e.power.caccioppoli} by~$m^2$ gives, in the limit, an energy bound for~$\log w$ with no power of~$\Theta$. The matrices~$\a_*^{-1}(\triangle)$ convert it into a bound on the deviation of~$\log w$ from its mean, through the fractional regularity of Section~\ref{s.fractional.reconstruction}, and the upper and lower coefficient bounds combine into~$\sqrt\Theta$.

\begin{lemma}[Logarithmic estimate]
\label{l.log.estimate}
Let~$u\geq0$ be a supersolution in~$\cu_0$, let~$\eps>0$ and~$w\coloneqq u+\eps$. Then
\begin{equation}
	\Ener_{\frac{15}{16}\cu_0}(\log w)\leq C\Lambda_{s,1,p}(\cu_0)\,,
	\label{e.log.energy}
\end{equation}
and there is~$C_{\eqref{e.log.mean.zero}}$, depending only on~$d,p,q,s,t$, such that
\begin{equation}
	\norm[\big]{\log w-(\log w)_{\frac78\cu_0}}_{L^r(\frac78\cu_0)}\leq C_{\eqref{e.log.mean.zero}}\sqrt\Theta\,.
	\label{e.log.mean.zero}
\end{equation}
\end{lemma}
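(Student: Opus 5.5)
The plan is to derive \eqref{e.log.energy} by dividing the Caccioppoli inequality for powers, Proposition~\ref{p.power.caccioppoli}, by~$m^2$ and letting~$m\to0$. Then \eqref{e.log.mean.zero} should follow from the fractional embedding of Proposition~\ref{p.lower.fractional}.

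\emph{First step.} Apply \eqref{e.power.caccioppoli} with~$\rho_1=\nf{15}{16}$ and~$\rho_2=1$. This raises a problem, since~$w^m$ need not be in~$L^r(\cu_0)$ up to the boundary. To avoid it, take~$\rho_2$ slightly less than one, or use~$\rho_2=1$ only for~$m<0$, where~$w^m\leq\eps^m$ is bounded; the case~$m\uparrow0$ suffices. For~$m<0$ small, the right side of \eqref{e.power.caccioppoli} divided by~$m^2$ is
\begin{equation*}
	C\Lambda_{s,1,p}(\cu_0)(1-2m)^{-2}\bigl(1+c_m^2\Theta\bigr)^{\nf{(\sigma+\sigma_*-t)}\theta}\norm{w^m}_{L^r(\cu_0)}^2\,.
\end{equation*}
As~$m\uparrow0$ we have~$w^m\to1$ pointwise and~$0<w^m\leq\eps^m\to1$, so by dominated convergence~$\norm{w^m}_{L^r(\cu_0)}\to|\cu_0|^{\nf1r}=1$. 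The~$\Theta$-factor tends to one. On the left, $m^{-1}\nabla w^m=w^{m-1}\nabla u\to\nabla\log w$ pointwise. Fatou's lemma applied to the energy densities $m^{-2}\nabla w^m\cdot\a\nabla w^m$ gives the bound on $\int_{\frac{15}{16}\cu_0}\nabla\log w\cdot\a\nabla\log w$.

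It remains to know that~$\log w\in H^1_\a(\frac{15}{16}\cu_0)$, so that this integral is its energy. This should follow from the chain rule of Proposition~\ref{p.weighted.calculus}, since~$\log$ is Lipschitz on~$[\eps,\infty)$ and~$w\geq\eps$. To handle the growth of~$\log$ at infinity, either truncate first (apply the chain rule to~$\log(w\wedge M)$ and let~$M\to\infty$ by monotone convergence) or argue directly with the restricted space on the smaller cube. This gives \eqref{e.log.energy}.

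\emph{Second step.} Apply \eqref{e.lower.fractional} on~$\frac78\cu_0$. That cube is not of the form~$z+\cu_{-n}$, so cover it by finitely many admissible cubes~$Q=z+\cu_{-n}\subset\frac{15}{16}\cu_0$ of a fixed small triadic size, with~$z\in3^{-n-1}\Zd$ and overlapping neighbours. An alternative is the localization of Proposition~\ref{p.fractional.localization} near~$\frac78\overline{\cu}_0$ combined with the~$L^r$-oscillation bound \eqref{e.fractional.mean.norm}. On each~$Q$, \eqref{e.lower.fractional} and \eqref{e.fractional.mean.norm} give
\begin{equation*}
	\norm{\log w-(\log w)_Q}_{L^r(Q)}\leq C\lambda_{t,1,q}(\cu_0)^{-\nf12}\Ener_{\frac{15}{16}\cu_0}(\log w)^{\nf12}\leq C\Theta^{\nf12}\,,
\end{equation*}
using \eqref{e.log.energy} and $\Lambda_{s,1,p}(\cu_0)/\lambda_{t,1,q}(\cu_0)=\Theta$. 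Overlapping cubes have comparable means: their difference is bounded by the oscillations on the overlap, divided by the overlap volume to the power~$\nf1r$, which is a fixed constant. Chaining through the finite connected family therefore bounds every~$|(\log w)_Q-(\log w)_{Q_0}|$ by~$C\sqrt\Theta$. Summing over the cover bounds~$\norm{\log w-(\log w)_{Q_0}}_{L^r(\frac78\cu_0)}$, and replacing~$(\log w)_{Q_0}$ by the mean over~$\frac78\cu_0$ costs at most a factor two by Jensen's inequality.

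\emph{Main obstacle.} I expect the main difficulty to be the justification of the limit~$m\to0$: whether~$\log w$ belongs to the weighted space and the convergence on both sides of \eqref{e.power.caccioppoli}. The covering and chaining in the second step is routine.
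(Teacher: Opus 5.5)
Your proposal is correct and is essentially the paper's proof. The paper likewise divides \eqref{e.power.caccioppoli} with $\rho_1=\nf{15}{16}$, $\rho_2=1$ by $m^2$ and lets $m\uparrow0$, using dominated convergence with the bound $w^{2m-2}\leq\max\{1,\eps^{-4}\}$ where you use Fatou. It takes $\log w\in H^1_\a(\cu_0)$ directly from Lemma~\ref{l.weighted.testing}\ref{i.testing.powers}, so no truncation is needed, since the chain rule only requires a bounded derivative. It then chains the means of $\log w$ over the cubes $y+3\cu_{-4}$, $y\in3^{-4}\Zd$, contained in $\frac{15}{16}\cu_0$, exactly as you describe.

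The boundary concern in your first step does not arise: Lemma~\ref{l.weighted.testing}\ref{i.testing.powers} gives $w^m\in L^r(\cu_0)$ for every $m<\nf12$ with $m\neq0$.
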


\begin{proof}
\emph{Step 1.} Energy. By Lemma~\ref{l.weighted.testing}\ref{i.testing.powers},~$\log w\in H^1_\a(\cu_0)$ and~$\nabla\log w=w^{-1}\nabla u$. For~$-1\leq m<0$,
\begin{equation*}
	m^{-2}\Ener_{\frac{15}{16}\cu_0}(w^m)=\int_{\frac{15}{16}\cu_0}w^{2m-2}\nabla u\cdot\a\nabla u\,,
\end{equation*}
and the integrand is at most~$\max\{1,\eps^{-4}\}\nabla u\cdot\a\nabla u$ and converges to~$w^{-2}\nabla u\cdot\a\nabla u$ as~$m\uparrow0$. Also~$w^{mr}\leq\max\{1,\eps^{-r}\}$ and~$w^{mr}\to1$, so~$\norm{w^m}_{L^r(\cu_0)}^2\to1$. Divide~\eqref{e.power.caccioppoli}, with~$\rho_1=\nf{15}{16}$ and~$\rho_2=1$, by~$m^2$ and let~$m\uparrow0$. The limits above and dominated convergence give~\eqref{e.log.energy}. The dominators depend on~$\eps$, but only the limits enter the estimate.

\smallskip

\emph{Step 2.} A finite cover. We bound the oscillation of~$\log w$ on overlapping cubes and connect their means. Let~$G\coloneqq\log w$, and consider the cubes~$Q_y\coloneqq y+3\cu_{-4}$,~$y\in3^{-4}\Zd$, whose central cubes~$y+\overline{\cu_{-4}}$ meet~$\frac78\overline{\cu}_0$. They cover~$\frac78\overline{\cu}_0$, and their points satisfy
\begin{equation*}
	|x|_\infty\leq\tfrac7{16}+2\cdot3^{-4}<\tfrac{15}{32}\,.
\end{equation*}
Thus~$\overline{Q_y}\subset\frac{15}{16}\cu_0$. By~\eqref{e.fractional.mean.norm} applied to~$G-(G)_{Q_y}$,~\eqref{e.lower.fractional} and~\eqref{e.log.energy},
\begin{equation*}
	\norm{G-(G)_{Q_y}}_{L^r(Q_y)}\leq C[G]_{W^{1-t,r}(Q_y)}\leq C\lambda_{t,1,q}(\cu_0)^{-\nf12}\Ener_{\frac{15}{16}\cu_0}(G)^{\nf12}\leq C\sqrt\Theta\,.
\end{equation*}
The centers form a box in~$3^{-4}\Zd$, and cubes whose centers differ by one grid step overlap in a set of volume~$2\cdot3^{-4}\cdot3^{-3(d-1)}$. For such neighbors, averaging the difference of the two means over the overlap and applying H\"older's inequality give
\begin{equation*}
	|(G)_{Q_y}-(G)_{Q_{y'}}|\leq C\sqrt\Theta\,.
\end{equation*}
The number of cubes and the lengths of grid paths between them depend only on~$d$, so, for one fixed~$y_0$, summing over the cover gives
\begin{equation*}
	\norm{G-(G)_{Q_{y_0}}}_{L^r(\frac78\cu_0)}\leq C\sqrt\Theta\,.
\end{equation*} Replacing~$(G)_{Q_{y_0}}$ by~$(G)_{\frac78\cu_0}$ at most doubles the left side, by H\"older's inequality, which proves~\eqref{e.log.mean.zero}.
\end{proof}

\subsection{Power iteration}

Proposition~\ref{p.power.caccioppoli} and the fractional Sobolev inequality~\eqref{e.ordinary.fractional.sobolev} bound the~$L^{r^*}(\rho_1\cu_0)$ norm of~$w^m$ by its~$L^r(\rho_2\cu_0)$ norm. This raises the integrability exponent of~$w$ when~$m>0$, or of~$w^{-1}$ when~$m<0$, from~$|m|r$ to~$\chi|m|r$.

\begin{lemma}[Reverse H\"older inequalities for powers]
\label{l.positive.negative.moments}
Let~$u\geq0$ be a supersolution in~$\cu_0$, let~$\eps>0$ and~$w\coloneqq u+\eps$. There is~$C_{\eqref{e.positive.negative.moments}}\geq1$, depending only on~$d,p,q,s,t$, such that for~$m<\nf12$ with~$m\neq0$ and~$\nf12\leq\rho_1<\rho_2\leq1$,
\begin{equation}
	\norm{w^{\operatorname{sgn}m}}_{\underline L^{\chi|m|r}(\rho_1\cu_0)}^{|m|r}\leq C_{\eqref{e.positive.negative.moments}}(\rho_2-\rho_1)^{-\gamma_5r}\bigl(1+c_m^2\Theta\bigr)^{\nf{(1-t)r}{2\theta}}\norm{w^{\operatorname{sgn}m}}_{\underline L^{|m|r}(\rho_2\cu_0)}^{|m|r}\,.
	\label{e.positive.negative.moments}
\end{equation}
\end{lemma}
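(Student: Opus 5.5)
The plan is to combine the Caccioppoli inequality for powers, Proposition~\ref{p.power.caccioppoli}, with the localized fractional estimate and the fractional Sobolev inequality~\eqref{e.ordinary.fractional.sobolev}, applied to $v\coloneqq w^m$. Fix $\nf12\leq\rho_1<\rho_2\leq1$, set $\delta=\rho_2-\rho_1$ and $\rho\coloneqq(\rho_1+\rho_2)/2$. By Lemma~\ref{l.weighted.testing}\ref{i.testing.powers}, $v\in H^1_\a(\cu_0)\cap L^r(\cu_0)$. The localization near $\rho_1\overline{\cu}_0$ from Proposition~\ref{p.fractional.localization}, applied between $\rho_1$ and $\rho$, gives a compactly supported $F$ equal to $v$ on $\rho_1\cu_0$ with
\begin{equation*}
	\norm{v}_{L^{r^*}(\rho_1\cu_0)}\leq C[F]_{W^{1-t,r}(\Rd)}\leq C\delta^{-(1-t)}\bigl(\lambda_{t,1,q}(\cu_0)^{-\nf12}\Ener_{\rho\cu_0}(v)^{\nf12}+\norm{v}_{L^r(\rho\cu_0)}\bigr)\,.
\end{equation*}

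Next, insert \eqref{e.power.caccioppoli} between $\rho$ and $\rho_2$. Its gap is $\delta/2$, so it contributes $\delta^{-\gamma_1(1-t)/\theta}$ after the square root. Multiplying by $\lambda_{t,1,q}(\cu_0)^{-\nf12}$ turns $\Lambda_{s,1,p}(\cu_0)^{\nf12}$ into $\Theta^{\nf12}$, giving
\begin{equation*}
	\lambda_{t,1,q}(\cu_0)^{-\nf12}\Ener_{\rho\cu_0}(v)^{\nf12}\leq C\delta^{-\gamma_1(1-t)/\theta}\,c_m\Theta^{\nf12}(1+c_m^2\Theta)^{\nf{(\sigma+\sigma_*-t)}{2\theta}}\norm{v}_{L^r(\rho_2\cu_0)}\,.
\end{equation*}
Use $c_m\Theta^{\nf12}\leq(1+c_m^2\Theta)^{\nf12}$ and $\tfrac12+\tfrac{\sigma+\sigma_*-t}{2\theta}=\tfrac{1-t}{2\theta}$, which holds since $1-t=\theta+\sigma+\sigma_*-t$. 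The combined factor is then $(1+c_m^2\Theta)^{(1-t)/(2\theta)}$. The $L^r$ term is bounded by the same right side, since that factor is at least one. Altogether the power of $\delta^{-1}$ is $(1-t)+\gamma_1(1-t)/\theta=\gamma_5$, so
\begin{equation*}
	\norm{w^m}_{L^{r^*}(\rho_1\cu_0)}\leq C\delta^{-\gamma_5}(1+c_m^2\Theta)^{\nf{(1-t)}{2\theta}}\norm{w^m}_{L^r(\rho_2\cu_0)}\,.
\end{equation*}

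Finally, translate into the normalized moments of $w^{\operatorname{sgn}m}$ and raise to the power $r$. We have $\norm{w^m}_{L^{r^*}}=\norm{w^{\operatorname{sgn}m}}_{L^{\chi|m|r}}^{|m|}$, since $|w^m|^{r^*}=(w^{\operatorname{sgn}m})^{\chi|m|r}$, and similarly $\norm{w^m}_{L^r}=\norm{w^{\operatorname{sgn}m}}_{L^{|m|r}}^{|m|}$. Raising the last display to the power $r$ gives $\delta^{-\gamma_5r}(1+c_m^2\Theta)^{(1-t)r/(2\theta)}$. The passage to the averages $\underline L$ costs only the volumes $|\rho_i\cu_0|\in[2^{-d},1]$, raised to the powers $1/\chi$ and $1$, so it contributes a dimensional constant. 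The step needing the most care is this bookkeeping of exponents: checking that the combination of $\Theta$-powers collapses exactly to $(1-t)/(2\theta)$, and that the split gap exponents sum to $\gamma_5$. The analytic input is entirely contained in Proposition~\ref{p.power.caccioppoli} and Proposition~\ref{p.fractional.localization}.
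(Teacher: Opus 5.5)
Your proposal is correct and follows the paper's proof essentially step for step. The steps are the localization near $\rho_1\overline{\cu}_0$ between $\rho_1$ and the midpoint $\rho$, the fractional Sobolev inequality~\eqref{e.ordinary.fractional.sobolev}, Proposition~\ref{p.power.caccioppoli} between $\rho$ and $\rho_2$, and the identity $1-t=(\sigma+\sigma_*-t)+\theta$ to collapse the powers of $1+c_m^2\Theta$ into $(1+c_m^2\Theta)^{(1-t)/(2\theta)}$. The exponent of the gap adds up to $\gamma_5=(1-t)+\gamma_1(1-t)/\theta$, and normalizing the integrals costs only $|\rho_1\cu_0|^{-1/\chi}|\rho_2\cu_0|\leq 2^d$.
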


\begin{proof}
Let~$\delta\coloneqq\rho_2-\rho_1$ and~$\rho\coloneqq(\rho_1+\rho_2)/2$. The localization near~$\rho_1\overline{\cu}_0$ from Proposition~\ref{p.fractional.localization}, applied to~$w^m$ between~$\rho_1$ and~$\rho$, gives a compactly supported~$F$ equal to~$w^m$ there. By~\eqref{e.ordinary.fractional.sobolev} and Proposition~\ref{p.power.caccioppoli} between~$\rho$ and~$\rho_2$,
\begin{align*}
	\norm{w^m}_{L^{r^*}(\rho_1\cu_0)}&\leq C\delta^{-(1-t)}\bigl(\lambda_{t,1,q}(\cu_0)^{-\nf12}\Ener_{\rho\cu_0}(w^m)^{\nf12}+\norm{w^m}_{L^r(\rho\cu_0)}\bigr)\\
	&\leq C\delta^{-\gamma_5}\Bigl(1+c_m\Theta^{\nf12}\bigl(1+c_m^2\Theta\bigr)^{\nf{(\sigma+\sigma_*-t)}{2\theta}}\Bigr)\norm{w^m}_{L^r(\rho_2\cu_0)}\\
	&\leq C\delta^{-\gamma_5}\bigl(1+c_m^2\Theta\bigr)^{\nf{(1-t)}{2\theta}}\norm{w^m}_{L^r(\rho_2\cu_0)}\,,
\end{align*}
where the last step uses~$1-t=(\sigma+\sigma_*-t)+\theta$. Raising to the power~$r$ and using
\begin{equation*}
	w^{mr^*}=(w^{\operatorname{sgn}m})^{\chi|m|r}\,,\qquad w^{mr}=(w^{\operatorname{sgn}m})^{|m|r}\,,
\end{equation*}
gives~\eqref{e.positive.negative.moments} for the unnormalized integrals. Passing to the normalized ones introduces the factor
\begin{equation*}
	|\rho_1\cu_0|^{-1/\chi}|\rho_2\cu_0|\leq2^d\,.
\end{equation*}
\end{proof}

The factor~$1+c_m^2\Theta$ in~\eqref{e.positive.negative.moments} is bounded as long as~$|m|$ is at most of order~$\Theta^{-\nf12}$ and~$m$ stays away from~$\nf12$. We therefore start all iterations from a small exponent. With~$C_{\eqref{e.log.mean.zero}}$ as in~\eqref{e.log.mean.zero}, let
\begin{equation}
	\begin{aligned}
		c&\coloneqq\min\biggl\{\frac r4,\frac1{1+C_{\eqref{e.log.mean.zero}}}\biggr\}\,,\\
		p_*&\coloneqq\frac c{\sqrt\Theta}\,.
	\end{aligned}
	\label{e.small.exponent}
\end{equation}
Since~$\Theta\geq1$ and~$r<2$,
\begin{equation*}
	p_*\leq c\leq\tfrac r4<\tfrac12\,.
\end{equation*}
The bound~$c\leq\nf1{(1+C_{\eqref{e.log.mean.zero}})}$ makes~$p_*$ times the deviation~$C_{\eqref{e.log.mean.zero}}\sqrt\Theta$ of~$\log w$ in~\eqref{e.log.mean.zero} at most one. This is where the order~$\Theta^{-\nf12}$ comes from.

\smallskip

We iterate~\eqref{e.positive.negative.moments} in three ways, halving the gap between the radii at each step. Between small exponents~$0<a<b\leq1$ of~$w^{\pm p_*}$, the iteration gives the hypothesis of Lemma~\ref{l.bombieri}. Along the negative exponents~$-p_*\chi^j$, it reaches the infimum of~$w$. Along the positive exponents~$p_*\chi^j$, it stops when the exponent reaches~$\nf r4$. In each case the factor involving~$\Theta$ at the~$j$-th step is at most~$(2\chi^{2j})^{\nf{(1-t)r}{(2\theta)}}$. Its logarithm is summable against the weights~$\chi^{-j}$ with which the steps enter.

\begin{lemma}[Moser iteration]
\label{l.moment.iterations}
Let~$u\geq0$ be a supersolution in~$\cu_0$, let~$\eps>0$,~$w\coloneqq u+\eps$ and~$\nf12\leq\rho_1<\rho_2\leq1$. There is~$C_{\eqref{e.small.moments}}\geq1$, depending only on~$d,p,q,s,t$, such that for~$Z\in\{w^{p_*},w^{-p_*}\}$ and~$0<a<b\leq1$,
\begin{equation}
	\norm{Z}_{\underline L^b(\rho_1\cu_0)}\leq\bigl(C_{\eqref{e.small.moments}}(\rho_2-\rho_1)^{-\gamma_6}\bigr)^{\nf1a}\norm{Z}_{\underline L^a(\rho_2\cu_0)}\,.
	\label{e.small.moments}
\end{equation}
Moreover,
\begin{equation}
	\biggl(\fint_{\rho_2\cu_0}w^{-p_*}\biggr)^{-\nf1{p_*}}\leq e^{C/p_*}(\rho_2-\rho_1)^{-\nf{\gamma_6}{p_*}}\essinf_{\rho_1\cu_0}w\,,
	\label{e.negative.moments}
\end{equation}
and
\begin{equation}
	\norm{w}_{\underline L^{\nf r4}(\rho_1\cu_0)}\leq e^{C/p_*}(\rho_2-\rho_1)^{-\nf{\gamma_6}{p_*}}\norm{w}_{\underline L^{p_*}(\rho_2\cu_0)}\,.
	\label{e.positive.moments}
\end{equation}
\end{lemma}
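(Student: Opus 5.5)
All three estimates will come from iterating Lemma~\ref{l.positive.negative.moments} along a geometric sequence of exponents and shrinking radii. Set $R_j\coloneqq\rho_1+2^{-j}(\rho_2-\rho_1)$, so that consecutive gaps are $2^{-j-1}(\rho_2-\rho_1)$. Given a starting exponent $\kappa_0>0$ for $Z=w^{\pm1}$, put $\kappa_j\coloneqq\chi^j\kappa_0$ and take $m_j\coloneqq\pm\kappa_j/r$ in \eqref{e.positive.negative.moments} between $R_{j+1}$ and $R_j$. Taking the $1/\kappa_j$-th root, each step reads
\begin{equation*}
	\norm{Z}_{\underline L^{\kappa_{j+1}}(R_{j+1}\cu_0)}\leq\Bigl(C2^{(j+1)\gamma_5r}(\rho_2-\rho_1)^{-\gamma_5r}(1+c_{m_j}^2\Theta)^{\nf{(1-t)r}{2\theta}}\Bigr)^{\nf1{\kappa_j}}\norm{Z}_{\underline L^{\kappa_j}(R_j\cu_0)}\,.
\end{equation*}
The key bookkeeping is the factor involving $\Theta$. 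Whenever $\kappa_0\leq rp_*$ and the positive exponents satisfy $m_j\leq\nf14$, we have $1-2m_j\geq\nf12$. Hence $c_{m_j}\leq2|m_j|\leq2\chi^jp_*$, and since $p_*^2\Theta=c^2\leq1$ this gives $1+c_{m_j}^2\Theta\leq1+4\chi^{2j}\leq5\chi^{2j}$. The product of the step constants is then bounded by
\begin{equation*}
	\exp\Bigl(\sum_j\kappa_j^{-1}\bigl(C+Cj+\gamma_5r\log(\rho_2-\rho_1)^{-1}\bigr)\Bigr)\,.
\end{equation*}
Because $\sum_j\chi^{-j}=\chi/(\chi-1)$ and $\sum_jj\chi^{-j}<\infty$, this is at most $e^{C/\kappa_0}(\rho_2-\rho_1)^{-\gamma_5r\chi/((\chi-1)\kappa_0)}=e^{C/\kappa_0}(\rho_2-\rho_1)^{-\gamma_6/\kappa_0}$.

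\emph{Proof of \eqref{e.small.moments}.} Write $Z=w^{\pm p_*}$, so that $\norm{Z}_{\underline L^a}=\norm{w^{\pm1}}_{\underline L^{ap_*}}^{p_*}$. Start at exponent $\kappa_0=ap_*$ for $w^{\pm1}$, which is at most $p_*\leq rp_*$ since $r>1$. Stop at the first $j$ with $\chi^ja\geq b$, and finish with monotonicity of the normalized norms in the exponent. Raising the resulting bound to the power $p_*$ turns $e^{C/(ap_*)}(\rho_2-\rho_1)^{-\gamma_6/(ap_*)}$ into $(C(\rho_2-\rho_1)^{-\gamma_6})^{1/a}$, which is \eqref{e.small.moments}. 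Throughout, the exponents $|m_j|\leq bp_*/r\leq\nf14$, as required above.

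\emph{Proof of \eqref{e.negative.moments}.} Run the negative iteration for $w^{-1}$ from $\kappa_0=p_*$ with no stopping. Negative $m$ are admissible for every $j$, and the bound $c_{m}\leq|m|$ for $m<0$ keeps the estimate on the $\Theta$-factor valid. The $\underline L^{\kappa_j}$ norms of $w^{-1}$ on $R_j\cu_0\supseteq\rho_1\cu_0$ therefore stay bounded by $e^{C/p_*}(\rho_2-\rho_1)^{-\gamma_6/p_*}\norm{w^{-1}}_{\underline L^{p_*}(\rho_2\cu_0)}$. Letting $j\to\infty$, these norms converge to $\esssup_{\rho_1\cu_0}w^{-1}=(\essinf_{\rho_1\cu_0}w)^{-1}$, using that $w^{-1}\leq\eps^{-1}$ is bounded and the normalizing volumes are comparable to one. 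Inverting gives \eqref{e.negative.moments}.

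\emph{Proof of \eqref{e.positive.moments}.} Run the positive iteration for $w$ from $\kappa_0=p_*$ and stop at the last $j$ with $\kappa_j\leq\nf r4$. Then $m_j=\kappa_j/r\leq\nf14$, and $\kappa_{j+1}>\nf r4$, so monotonicity in the exponent yields the $\underline L^{\nf r4}$ bound \eqref{e.positive.moments}.

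The main obstacle is keeping the constant of order $e^{C/p_*}$ rather than something larger. This depends on two points. First, the factor $(1+c_m^2\Theta)$ grows only like $\chi^{2j}$ at step $j$; this is exactly why $p_*$ is of order $\Theta^{-\nf12}$ and why $m$ must stay at most $\nf14$. Second, one must check carefully that the shrinking radii and the finite stopping rule still give the stated power $\gamma_6/p_*$ of the gap.
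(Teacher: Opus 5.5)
Your proposal is correct and follows the paper's proof essentially step for step. It uses the same radii $R_j$, the same geometric exponents fed into Lemma~\ref{l.positive.negative.moments}, the same bound $1+c_{m_j}^2\Theta\lesssim\chi^{2j}$ from $p_*^2\Theta=c^2$ and $m_j\leq\nf14$, and the same summation against the weights $\chi^{-j}$ that produces $\gamma_6$. Two small points need tidying: passing from $R_N\cu_0$ to $\rho_1\cu_0$ in normalized norms costs a harmless factor $2^{d/\kappa_N}$, and in the negative iteration the norms on the larger cubes $R_j\cu_0$ need not converge to $\esssup_{\rho_1\cu_0}w^{-1}$; you only need $\liminf_j\norm{w^{-1}}_{\underline L^{\kappa_j}(R_j\cu_0)}\geq\esssup_{\rho_1\cu_0}w^{-1}$, which follows from a level set of positive measure in $\rho_1\cu_0$, as in the paper.
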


\begin{proof}
Write~$\delta\coloneqq\rho_2-\rho_1$ and~$R_j\coloneqq\rho_1+2^{-j}\delta$ for~$j\geq0$, so that~$R_0=\rho_2$ and~$R_j-R_{j+1}=2^{-j-1}\delta$. All three bounds follow by applying~\eqref{e.positive.negative.moments} between~$R_{j+1}$ and~$R_j$ along powers~$m_j$ with~$|m_{j+1}|=\chi|m_j|$, and multiplying. We use
\begin{equation}
	\sum_{j\geq0}\chi^{-j}=\frac\chi{\chi-1}\,,\qquad \sum_{j\geq0}(j+1)\chi^{-j}=\frac{\chi^2}{(\chi-1)^2}\,,
	\label{e.iteration.sums}
\end{equation}
so that~$\gamma_5r\sum_j\chi^{-j}=\gamma_6$, and the final comparison~$|R_n\cu_0|\leq2^d|\rho_1\cu_0|$ for the normalized expressions.

\smallskip

\emph{Step 1.} Small powers. Let~$Z=w^{\pm p_*}$, and let~$N$ be the least integer~$n\geq1$ with~$a\chi^n\geq b$. Set
\begin{equation*}
	a_j\coloneqq a\chi^j\quad(0\leq j\leq N)\,,\qquad m_j\coloneqq\pm\nf{p_*a_j}r\quad(0\leq j<N)\,.
\end{equation*}
Then~$a_j<b\leq1$ for~$j<N$. On the positive branch~$1-2m_j\geq1-\nf{2p_*}r\geq\nf12$, and on the negative branch~$1-2m_j\geq1$. Thus, in both cases,
\begin{equation*}
	c_{m_j}^2\Theta\leq(2c/r)^2\leq1\,.
\end{equation*}
For~$b'>0$,
\begin{equation*}
	\norm{Z}_{\underline L^{b'}(U)}^{a_j}=\norm{w^{\pm1}}_{\underline L^{p_*b'}(U)}^{p_*a_j}\,.
\end{equation*}
The estimate~\eqref{e.positive.negative.moments} with~$m=m_j$ therefore reads
\begin{equation*}
	\norm{Z}_{\underline L^{a_{j+1}}(R_{j+1}\cu_0)}\leq\Bigl(C_{\eqref{e.positive.negative.moments}}\delta^{-\gamma_5r}2^{\gamma_5r(j+1)}2^{\nf{(1-t)r}{2\theta}}\Bigr)^{\nf1{a_j}}\norm{Z}_{\underline L^{a_j}(R_j\cu_0)}\,.
\end{equation*}
We multiply these bounds for~$j<N$ and pass from~$\underline L^{a_N}(R_N\cu_0)$ to~$\underline L^b(\rho_1\cu_0)$ by H\"older's inequality, with the factor~$2^{d/a_N}\leq2^{d/a}$. By~\eqref{e.iteration.sums}, the logarithm of the total factor is at most
\begin{multline*}
	\frac1a\Bigl(d\log2+\Bigl(\log C_{\eqref{e.positive.negative.moments}}+\frac{(1-t)r}{2\theta}\log2+\gamma_5r\log\frac1\delta\Bigr)\sum_{j\geq0}\chi^{-j}+\gamma_5r\log2\sum_{j\geq0}(j+1)\chi^{-j}\Bigr)\\
	\leq\frac{C+\gamma_6\log(1/\delta)}a\,,
\end{multline*}
which proves~\eqref{e.small.moments}.

\smallskip

\emph{Step 2.} Negative powers. Let~$m_j\coloneqq-\nf{p_*\chi^j}r$ for all~$j\geq0$. Then
\begin{equation*}
	c_{m_j}^2\Theta\leq(c/r)^2\chi^{2j}\leq\chi^{2j}\,.
\end{equation*}
The power of~$1+c_{m_j}^2\Theta$ in~\eqref{e.positive.negative.moments} is therefore at most~$(2\chi^{2j})^{\nf{(1-t)r}{(2\theta)}}$. Applying~\eqref{e.positive.negative.moments} with~$m=m_j$, taking the power~$1/(p_*\chi^j)$ and multiplying for~$j<n$,
\begin{equation*}
	\norm{w^{-1}}_{\underline L^{p_*\chi^n}(R_n\cu_0)}\leq\exp\biggl(\frac{C+\gamma_6\log(1/\delta)}{p_*}\biggr)\norm{w^{-1}}_{\underline L^{p_*}(\rho_2\cu_0)}\,,
\end{equation*}
uniformly in~$n$. The factor involving~$1+c_{m_j}^2\Theta$ contributes a logarithmic sum bounded by~\eqref{e.iteration.sums}. If~$L<\esssup_{\rho_1\cu_0}w^{-1}$, the set~$E\coloneqq\{x\in\rho_1\cu_0:w^{-1}(x)>L\}$ has positive measure and lies in every~$R_n\cu_0$. Consequently,
\begin{equation*}
	\norm{w^{-1}}_{\underline L^{p_*\chi^n}(R_n\cu_0)}\geq L\biggl(\frac{|E|}{|R_n\cu_0|}\biggr)^{1/(p_*\chi^n)}\longrightarrow L\,.
\end{equation*} Letting~$n\to\infty$ in this lower bound and the preceding upper bound, then letting~$L\uparrow\esssup_{\rho_1\cu_0}w^{-1}$, proves~\eqref{e.negative.moments}.

\smallskip

\emph{Step 3.} Positive powers. Let~$N$ be the least integer~$n\geq0$ with~$p_*\chi^n\geq\nf r4$, and set~$m_j\coloneqq\nf{p_*\chi^j}r$ for~$j<N$. Then~$m_j<\nf14$, so~$c_{m_j}\leq2m_j$ and
\begin{equation*}
	c_{m_j}^2\Theta\leq(2c/r)^2\chi^{2j}\leq\chi^{2j}\,.
\end{equation*}
Applying~\eqref{e.positive.negative.moments} for~$j<N$ and summing the logarithms with~\eqref{e.iteration.sums}, as in Step~2, gives
\begin{equation*}
	\norm{w}_{\underline L^{p_*\chi^N}(R_N\cu_0)}\leq\exp\biggl(\frac{C+\gamma_6\log(1/\delta)}{p_*}\biggr)\norm{w}_{\underline L^{p_*}(\rho_2\cu_0)}\,.
\end{equation*}
For~$N=0$, this bound holds because~$R_0=\rho_2$. Since~$p_*\chi^N\geq\nf r4$ and~$\rho_1\cu_0\subseteq R_N\cu_0$, H\"older's inequality gives
\begin{equation*}
	\norm{w}_{\underline L^{\nf r4}(\rho_1\cu_0)}\leq2^{d/(p_*\chi^N)}\norm{w}_{\underline L^{p_*\chi^N}(R_N\cu_0)}\,.
\end{equation*} Absorbing~$2^{d/(p_*\chi^N)}\leq2^{d/p_*}$ into the exponential proves~\eqref{e.positive.moments}.
\end{proof}

\subsection{From powers to Harnack}

It remains to connect the integrals of~$w^{p_*}$ and~$w^{-p_*}$. After normalization by the exponential of~$\pm p_*$ times the mean of~$\log w$, these powers have logarithms bounded by one in~$L^1$, by~\eqref{e.log.mean.zero} and the choice of~$p_*$, and they satisfy the reverse inequalities~\eqref{e.small.moments} between small exponents. Lemma~\ref{l.bombieri} turns these two facts into bounds on their integrals, and the normalizations cancel in the product.

\begin{lemma}[Crossover estimate]
\label{l.crossover}
Let~$u\geq0$ be a supersolution in~$\cu_0$, let~$\eps>0$ and~$w\coloneqq u+\eps$. Then
\begin{equation}
	\fint_{\frac34\cu_0}w^{p_*}\fint_{\frac34\cu_0}w^{-p_*}\leq C\,.
	\label{e.crossover}
\end{equation}
\end{lemma}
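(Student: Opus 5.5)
We will apply Bombieri's lemma (Lemma~\ref{l.bombieri}) separately to the two functions
\begin{equation*}
	Z_+\coloneqq w^{p_*}e^{-p_*(\log w)_{\frac78\cu_0}}\qquad\mbox{and}\qquad Z_-\coloneqq w^{-p_*}e^{p_*(\log w)_{\frac78\cu_0}}\,.
\end{equation*}
The lemma will be applied on the family of cubes~$\rho\cu_0$ with~$\frac34\leq\rho\leq\frac78$. Its two hypotheses are the reverse H\"older inequalities between small exponents and a logarithmic bound. The product of the integrals of~$Z_+$ and~$Z_-$ equals the left side of~\eqref{e.crossover}, because the normalizing exponentials cancel. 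It therefore suffices to bound~$\fint_{\frac34\cu_0}Z_\pm\leq C$ for each sign separately.

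\emph{Reverse inequalities.} The first hypothesis comes from~\eqref{e.small.moments}. Multiplying by the positive constant $e^{\mp p_*(\log w)_{\frac78\cu_0}}$ preserves this inequality, so~\eqref{e.small.moments} holds for~$Z_\pm$ for all~$0<a<b\leq1$ and~$\frac34\leq\rho_1<\rho_2\leq\frac78$. The constant has the form~$(C(\rho_2-\rho_1)^{-\gamma_6})^{1/a}$, which is the structure Bombieri's lemma requires, with constants depending only on~$d,p,q,s,t$. Before using it, I will check the precise form of the lemma in Appendix~\ref{a.bombieri}. In particular, I will check whether it asks for the reverse inequality up to exponent~$1$ or up to some fixed~$b_0\leq1$, and whether it uses normalized averages. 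I will adjust the cubes to fit, for example by rescaling radii.

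\emph{Logarithmic hypothesis.} By~\eqref{e.log.mean.zero} and the choice~\eqref{e.small.exponent} of~$p_*$, on every~$\rho\cu_0\subseteq\frac78\cu_0$ we have
\begin{equation*}
	\fint_{\rho\cu_0}|\log Z_\pm|\leq p_*\,|\rho\cu_0|^{-1}\norm{\log w-(\log w)_{\frac78\cu_0}}_{L^1(\frac78\cu_0)}\leq C p_*C_{\eqref{e.log.mean.zero}}\sqrt\Theta\leq C\,.
\end{equation*}
Here H\"older's inequality passes from~$L^r$ to~$L^1$, and~$|\rho\cu_0|\geq(\nf34)^d$. Consequently the level-set bound $|\{\log Z_\pm>\lambda\}\cap\rho\cu_0|\leq C|\rho\cu_0|/\lambda$ holds, or the integral bound itself, whichever form Lemma~\ref{l.bombieri} uses.

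\emph{Conclusion and main obstacle.} Bombieri's lemma then gives
\begin{equation*}
	\sup_{\frac34\cu_0}\mbox{ or }\norm{Z_\pm}_{\underline L^1(\frac34\cu_0)}\leq C\,,
\end{equation*}
according to the version of the lemma, for a constant depending only on the parameters. Since~$\underline L^b$ is monotone in~$b$, either version bounds the average of~$Z_\pm$, and multiplying the two bounds gives~\eqref{e.crossover}. The main point to verify is that the constants in~\eqref{e.small.moments} are genuinely independent of~$\Theta$ and of~$\eps$. They are independent because~$c_m^2\Theta\leq1$ at these exponents. The second thing to verify is that the blow-up as~$a\downarrow0$ has exactly the power-$1/a$ form that Bombieri's lemma tolerates.
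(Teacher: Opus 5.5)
Your proposal is correct and matches the paper's proof. The paper also normalizes $w^{\pm p_*}$ by $\exp(\mp p_*(\log w)_{\frac78\cu_0})$, giving your $Z_\pm$. It bounds $\norm{\log Z_\pm}_{L^1(\frac78\cu_0)}\leq p_*C_{\eqref{e.log.mean.zero}}\sqrt\Theta\leq1$. It converts \eqref{e.small.moments} with $b=1$ into the unnormalized form \eqref{e.bombieri.hypothesis} using $|\rho_1\cu_0|\leq1$ and $|\rho_2\cu_0|\geq2^{-d}$. It then applies Lemma~\ref{l.bombieri} with $A_1=1$, $A_2=2^dC_{\eqref{e.small.moments}}$ and $\kappa=\gamma_6$, so the Bombieri iteration itself involves no $\Theta$.

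The only hypothesis of Lemma~\ref{l.bombieri} you did not check is the qualitative integrability $Z_\pm\in L^1(\frac78\cu_0)$, which the iteration in its proof needs. It follows from $w^{p_*}\leq1+w$ with $w\in L^1$, and from $w^{-p_*}\leq\eps^{-p_*}$.
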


\begin{proof}
Set
\begin{align*}
	Y&\coloneqq\exp\bigl(-p_*(\log w)_{\frac78\cu_0}\bigr)w^{p_*}\,,\\
	\log Y&=p_*\bigl(\log w-(\log w)_{\frac78\cu_0}\bigr)\,,\qquad\log Y^{-1}=-\log Y\,.
\end{align*}
By H\"older's inequality,~\eqref{e.log.mean.zero} and~$cC_{\eqref{e.log.mean.zero}}\leq1$,
\begin{equation*}
	\norm{\log Y}_{L^1(\frac78\cu_0)}\leq p_*C_{\eqref{e.log.mean.zero}}\sqrt\Theta\leq1\,.
\end{equation*}
Because~$w^{p_*}\leq1+w$ and~$w^{-p_*}\leq\eps^{-p_*}$, both~$Y$ and~$Y^{-1}$ are positive, finite almost everywhere, and integrable on~$\frac78\cu_0$. For~$\nf34\leq\rho_1<\rho_2\leq\nf78$ and~$0<a<1$, the bound~\eqref{e.small.moments} with~$b=1$,~$|\rho_1\cu_0|\leq1$ and~$|\rho_2\cu_0|\geq2^{-d}$ gives
\begin{equation*}
	\norm{Z}_{L^1(\rho_1\cu_0)}\leq\bigl(2^dC_{\eqref{e.small.moments}}(\rho_2-\rho_1)^{-\gamma_6}\bigr)^{\nf1a}\norm{Z}_{L^a(\rho_2\cu_0)}\qquad(Z\in\{w^{p_*},w^{-p_*}\})\,.
\end{equation*}
The reverse inequality for~$Z$ is invariant under multiplication by a positive constant, so it holds for~$Y$ and~$Y^{-1}$. Lemma~\ref{l.bombieri} with~$A_1=1$,~$A_2=2^dC_{\eqref{e.small.moments}}$ and~$\kappa=\gamma_6$ gives
\begin{equation*}
	\int_{\frac34\cu_0}Y\leq C\,,\qquad\int_{\frac34\cu_0}Y^{-1}\leq C\,.
\end{equation*}
The normalizing factors cancel in the product, and dividing by~$|\frac34\cu_0|^2$ gives~\eqref{e.crossover}.
\end{proof}

\begin{proof}[Proof of Theorem~\ref{t.harnack} for~$\eta=\nf r4$ and of~\eqref{e.harnack}]
We combine the crossover with the negative and positive iterations of Lemma~\ref{l.moment.iterations} on nested cubes, and then let~$\eps\downarrow0$. Let~$u\geq0$ be a supersolution, let~$\eps>0$ and~$w=u+\eps$. Raising~\eqref{e.crossover} to the power~$\nf1{p_*}$ gives
\begin{equation*}
	\norm{w}_{\underline L^{p_*}(\frac34\cu_0)}\norm{w^{-1}}_{\underline L^{p_*}(\frac34\cu_0)}\leq e^{C/p_*}\,.
\end{equation*} The bounds~\eqref{e.negative.moments} between~$\nf12$ and~$\nf34$ and~\eqref{e.positive.moments} between~$\nf58$ and~$\nf34$ give
\begin{equation*}
	\norm{w^{-1}}_{\underline L^{p_*}(\frac34\cu_0)}^{-1}\leq e^{C/p_*}\essinf_{\frac12\cu_0}w\qquad\mbox{and}\qquad\norm{w}_{\underline L^{\nf r4}(\frac58\cu_0)}\leq e^{C/p_*}\norm{w}_{\underline L^{p_*}(\frac34\cu_0)}\,.
\end{equation*}
Combining the three bounds and using~$\nf1{p_*}=\nf{\sqrt\Theta}c$,
\begin{equation}
	\norm{u+\eps}_{\underline L^{\nf r4}(\frac58\cu_0)}\leq e^{C\sqrt\Theta}\essinf_{\frac12\cu_0}(u+\eps)\,.
	\label{e.weak.harnack.eps}
\end{equation}
Since~$u\leq u+\eps$ and~$\essinf(u+\eps)=\essinf u+\eps$, letting~$\eps\downarrow0$ in~\eqref{e.weak.harnack.eps} proves~\eqref{e.weak.harnack} at~$\eta=\nf r4$.

\smallskip

Let~$u\geq0$ be a solution. Then~$w=u+\eps$ is a nonnegative solution, hence a subsolution. Corollary~\ref{c.lr.bound} with~$\eta=\nf r4$ between~$\nf12$ and~$\nf58$ gives
\begin{equation*}
	\esssup_{\frac12\cu_0}w\leq C\Theta^{\nf{2(d-1)}{r\theta}}\norm{w}_{L^{\nf r4}(\frac58\cu_0)}\leq C\Theta^{\nf{2(d-1)}{r\theta}}\norm{w}_{\underline L^{\nf r4}(\frac58\cu_0)}\,,
\end{equation*}
since~$|\frac58\cu_0|\leq1$. Since~$\log\Theta\leq\sqrt\Theta$ and~$\sqrt\Theta\geq1$, the power of~$\Theta$ and the prefactor are absorbed into the exponential in~\eqref{e.weak.harnack.eps}, giving
\begin{equation*}
	\esssup_{\frac12\cu_0}(u+\eps)\leq e^{C\sqrt\Theta}\essinf_{\frac12\cu_0}(u+\eps)\,.
\end{equation*}
Letting~$\eps\downarrow0$ proves~\eqref{e.harnack}. Section~\ref{ss.harnack.endpoint} proves~\eqref{e.weak.harnack} for the full range~$0<\eta\leq\nf{r^*}2$.
\end{proof}

\subsection{Endpoint weak Harnack inequality}
\label{ss.harnack.endpoint}

% Added 2026-10-01. Mathematical source: weak-harnack-nonradial-endpoint.tex (SHA-256
% f159f077960ad92124ee8175f784f45c4237e30b15a9ee90e46393c5749776f4): Lemma 2.1 (lem:reconstruction),
% Lemma 2.2 (lem:level-set, moved into the proof of p.endpoint.potential), Proposition 3.1 (prop:potential),
% Section 4 (eq:interior-weak-harnack, lem:capacitary, lem:source-mass) and Section 5 (thm:endpoint).
% Notation of the note -> this manuscript: Q -> \cu_0, h_k -> 3^{-k}, \E -> \Ener_{\cu_0},
% \A(v,z) -> \int\nabla v\cdot\a\nabla z, \eta_c -> r^*/2, D/2 -> d/r^*, \eta_0 -> r/4, H -> \exp(C\sqrt\Theta),
% U -> \frac34\cu_0, W -> V, h -> \psi, B_0,B_1 -> Q_0,Q_1, \zeta -> \vartheta, M -> \nu(\cu_0) or \mu(\frac34\cu_0).

This subsection proves~\eqref{e.weak.harnack} at~$\eta=\nf{r^*}2$ and obtains all smaller exponents by H\"older's inequality. Reaching the endpoint through~\eqref{e.positive.negative.moments} would require~$m=\nf12$, where~$c_m=\nf{|m|}{(1-2m)}$ diverges. Instead, we use the summability of the scale terms in~\eqref{e.lower.half.series}. Lemma~\ref{l.dirichlet.reconstruction} keeps these terms separate for functions with zero boundary values, and Proposition~\ref{p.endpoint.potential} bounds the~$L^{\nf{r^*}2}$ norm of a potential by the mass of its source. For a supersolution~$u$, Lemma~\ref{l.source.mass} bounds the mass of~$-\nabla\cdot\a\nabla u$ inside~$\frac34\cu_0$ by the infimum of~$u$. The difference between~$u$ and the potential of this restricted measure is a solution in~$\frac34\cu_0$. The other inputs are Corollary~\ref{c.lr.bound}, the weak Harnack inequality~\eqref{e.weak.harnack} at~$\eta=\nf r4$ and the Harnack inequality~\eqref{e.harnack}, both proved above, together with the results of Sections~\ref{s.harmonic.matrices}--\ref{s.fractional.reconstruction} and Appendix~\ref{a.weighted.proofs}.

\paragraph{Interior estimates.} We apply Corollary~\ref{c.lr.bound}, the weak Harnack inequality~\eqref{e.weak.harnack} at~$\eta=\nf r4$ and the Harnack inequality~\eqref{e.harnack} on cubes~$Q'\subseteq\cu_0$ defined below. Let~$n\geq1$ and~$y\in3^{-n}\Zd$ be such that~$y+\cu_{1-n}\subseteq\cu_0$. Set
\begin{equation*}
	Q'\coloneqq y+\cu_{1-n}\,,\qquad\Phi(x)\coloneqq y+3^{1-n}x\,.
\end{equation*}
The cube~$Q'$ is the threefold enlargement of~$y+\cu_{-n}$. Composition with~$\Phi$ preserves smoothness, means and compact supports and multiplies energies by~$3^{(1-n)(2-d)}$, so it maps~$H^1_\a(Q')$,~$\Hzero{\a}(Q')$, the solutions and the supersolutions in~$Q'$ onto the corresponding spaces and classes for the field~$\a\circ\Phi$ in~$\cu_0$, which satisfies~\eqref{e.weighted.hypotheses}. The map~$w\mapsto3^{n-1}w\circ\Phi$ preserves affine boundary values up to constants and gives
\begin{equation*}
	\nabla(3^{n-1}w\circ\Phi)=(\nabla w)\circ\Phi\,.
\end{equation*}
Thus energies and volumes change by the same factor. By~\eqref{e.upper.dirichlet} and~\eqref{e.lower.all.functions}, the coarse-grained matrices of~$\a\circ\Phi$ on a simplex~$\triangle\subseteq\cu_0$ are~$\a(\Phi(\triangle))$ and~$\a_*(\Phi(\triangle))$.

For~$k\geq1$ we have~$y\in3^{1-n-k}\Zd$, so~$\Phi$ maps~$\mathscr T_k$ one-to-one into~$\mathscr T_{k+n-1}(Q')$, and~\eqref{e.partition.restricted} bounds the mean of~$|\a(\Phi(\triangle))|^p$ over~$\triangle\in\mathscr T_k$ by~$3^{(n-1)d}$ times the mean of~$|\a(\triangle)|^p$ over~$\mathscr T_{k+n-1}$, and likewise for~$|\a_*^{-1}|^q$. Because~$y$ need not lie on the coarser grid~$3^{1-n}\Zd$, the simplices~$\Phi(\triangle)$ for~$\triangle\in\mathscr T_0$ need not belong to any~$\mathscr T_j$. Each simplex of~$\mathscr T_0$ is, up to faces, a union of simplices of~$\mathscr T_1$. The subadditivity bounds~\eqref{e.upper.subadditivity}--\eqref{e.lower.cube.aggregation} and Jensen's inequality therefore bound the two matrix moments of~$\a\circ\Phi$ over~$\mathscr T_0$ by those over~$\mathscr T_1$.

Reindexing the scales in~\eqref{e.upper.half.series} and~\eqref{e.lower.half.series}, which introduces the factors~$3^{(n-1)s}$ and~$3^{(n-1)t}$, shows that~$\Lambda_{s,1,p}(\cu_0)$ and~$\lambda_{t,1,q}(\cu_0)^{-1}$, computed for~$\a\circ\Phi$, are at most~$C(n)$ times their values for~$\a$, where~$C(n)$ depends only on~$n$ and~$d,p,q,s,t$. For the specific map~$\Phi$ above, these bounds give an ellipticity ratio at most~$C(n)\Theta$ for~$\a\circ\Phi$. Hence Corollary~\ref{c.lr.bound}, the inequality~\eqref{e.weak.harnack} for~$\eta=\nf r4$ and~\eqref{e.harnack} hold for~$\a\circ\Phi$ and, by composition with~$\Phi^{-1}$, on~$Q'$ and its dilations about its center, with additional factors depending only on~$n$; since~$\Theta\geq1$, these factors and the powers of~$\Theta$ are absorbed into~$\exp(C\sqrt\Theta)$.

\smallskip

We use~$n=4$ and the cubes~$Q_y\coloneqq y+\cu_{-3}$,~$y\in3^{-4}\Zd$. If the centers of~$Q_y$ and~$Q_{y'}$ differ by~$3^{-4}$ in one coordinate, we call them neighbors. Their central halves~$\frac12Q_y=y+\frac12\cu_{-3}$ overlap in a set occupying a fixed positive fraction of each half. Let~$u$ be a nonnegative supersolution in~$\cu_0$, and take the~$y$ for which~$y+\overline{\cu}_{-4}$ meets~$\frac{15}{16}\overline{\cu}_0$. The points of these~$Q_y$ satisfy
\begin{equation*}
	|x|_\infty\leq\tfrac{15}{32}+2\cdot3^{-4}<\tfrac12\,.
\end{equation*}
Their centers form a box in~$3^{-4}\Zd$ whose number of points depends only on~$d$. For neighboring centers, the intersection~$\frac12Q_y\cap\frac12Q_{y'}$ lies in~$\frac58Q_y$ and occupies a fixed positive fraction of~$\frac58Q_y$. Comparing the~$L^{\nf r4}$ averages and applying the rescaled weak Harnack estimate~\eqref{e.weak.harnack} on~$Q_y$ give
\begin{equation*}
	\essinf_{\frac12Q_{y'}}u\leq\exp(C\sqrt\Theta)\essinf_{\frac12Q_y}u\,.
\end{equation*} The cubes~$y+\cu_{-4}\subseteq\frac12Q_y$ cover~$\frac{15}{16}\cu_0$ up to a null set, so some~$y_1$ satisfies~$\essinf_{\frac12Q_{y_1}}u\leq\essinf_{\frac{15}{16}\cu_0}u$. Chaining this comparison from~$y_1$ and summing the rescaled weak Harnack estimates~\eqref{e.weak.harnack} over the finitely many cubes~$Q_y$ give, for every nonnegative supersolution~$u$ in~$\cu_0$,
\begin{equation}
	\norm{u}_{L^{\nf r4}(\frac{15}{16}\cu_0)}\leq\exp\bigl(C\sqrt\Theta\bigr)\essinf_{\frac{15}{16}\cu_0}u\,.
	\label{e.interior.weak.harnack}
\end{equation}
For the Harnack estimate, take the cubes~$Q_y$ for which~$y+\overline{\cu}_{-4}$ meets~$\frac58\overline{\cu}_0$. Their points satisfy
\begin{equation*}
	|x|_\infty\leq\tfrac5{16}+2\cdot3^{-4}<\tfrac38\,,
\end{equation*}
so they lie in~$\frac34\cu_0$. For neighboring centers, the overlap gives~$\essinf_{\frac12Q_{y'}}u\leq\esssup_{\frac12Q_y}u$. Chaining the rescaled Harnack estimate~\eqref{e.harnack} over these cubes gives, for every nonnegative~$u\in\Csol(\frac34\cu_0)$,
\begin{equation}
	\esssup_{\frac58\cu_0}u\leq\exp\bigl(C\sqrt\Theta\bigr)\essinf_{\frac12\cu_0}u\,.
	\label{e.interior.harnack}
\end{equation}

\paragraph{Reconstruction with zero boundary values.} The fractional estimate~\eqref{e.lower.fractional} uses the matrices~$\a_*^{-1}(\triangle)$,~$\triangle\in\mathscr T_k$, only through the weighted sum over~$k$ in~\eqref{e.lower.half.series}. At the endpoint we keep the contribution of each scale apart and use that these contributions are summable. For~$k\geq1$ let
\begin{equation}
	b_k\coloneqq3^{-kt}\biggl(\avsum_{\triangle\in\mathscr T_k}|\a_*^{-1}(\triangle)|^q\biggr)^{\nf1{2q}}\,,\qquad\mbox{so that}\qquad\sum_{k\geq1}b_k\leq\frac{\lambda_{t,1,q}(\cu_0)^{-\nf12}}{1-3^{-t}}
	\label{e.endpoint.scales}
\end{equation}
by~\eqref{e.lower.half.series}. For~$v\in\Hzero{\a}(\cu_0)$, the mean gradient~$P_0\nabla v$ vanishes by~\eqref{e.weighted.coercivity}. We reconstruct~$v$ from the successive differences~$(P_k-P_{k-1})\nabla v$ using Newtonian potentials on~$\Rd$. The series starts at~$k=1$, and the estimates for each term retain the factor~$b_k$ in~$L^r$ and~$L^\infty$.

% Statement adapted from Lemma 2.1 (lem:reconstruction) of weak-harnack-nonradial-endpoint.tex.
% Proof revised 2026-10-01 to use whole-space Newtonian potentials of the zero-extended gradient increments.
% The terms need not have zero boundary values. The statement and the two scale estimates are unchanged.
\begin{lemma}[Scale decomposition for functions with zero boundary values]
\label{l.dirichlet.reconstruction}
There is~$C<\infty$, depending only on~$d$ and~$q$, such that every~$v\in\Hzero{\a}(\cu_0)$ is the sum~$v=\sum_{k\geq1}v_k$ of a series converging in~$L^r(\cu_0)$, whose terms~$v_k\in L^\infty(\cu_0)$ satisfy
\begin{equation}
	\norm{v_k}_{L^r(\cu_0)}\leq C3^{-k(1-t)}b_k\Ener_{\cu_0}(v)^{\nf12}\qquad\mbox{and}\qquad\norm{v_k}_{L^\infty(\cu_0)}\leq C3^{\frac d{r^*}k}b_k\Ener_{\cu_0}(v)^{\nf12}\,.
	\label{e.dirichlet.blocks}
\end{equation}
\end{lemma}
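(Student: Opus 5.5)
We plan to construct the terms from the zero-extended gradient increments. Extend $v\in\Hzero{\a}(\cu_0)$ by zero to $\Rd$; by Proposition~\ref{p.weighted.calculus} the extension lies in $W^{1,1}(\Rd)$, is supported in $\overline{\cu}_0$, and its gradient is the zero extension of $\nabla v$. For $k\geq1$ write $P_k\coloneqq P_k(\cdot\,;\cu_0)$ and set $G_k\coloneqq\indc_{\cu_0}(P_k-P_{k-1})\nabla v$, with $P_0\nabla v=(\nabla v)_{\cu_0}=0$ by~\eqref{e.weighted.coercivity}. Then $\sum_{k\leq N}G_k=\indc_{\cu_0}P_N\nabla v\to\nabla v$ in $L^1(\Rd)$. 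Let $\Gamma$ be the Newtonian kernel and define $v_k\coloneqq\nabla\Gamma*G_k$, so that formally $\Delta v_k=\nabla\cdot G_k$. The function $G_k$ is a bounded step function with zero average on each parent cube $D$ of side $3^{1-k}$, since $P_{k-1}$ is the average of $P_k$ on $D$. It is supported in $\overline{\cu}_0$, so $v_k$ is bounded. Summing and using $\Delta v=\nabla\cdot\nabla v$ with $v$ compactly supported, we expect $\sum v_k=v$ in the sense of distributions. Once the $L^r$ bounds below are summable, the series converges in $L^r(\cu_0)$ to $v$.

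For the $L^r$ estimate we use the cancellation. On each parent cube $D$ with center $x_D$,
\begin{equation*}
v_k(x)=\sum_D\int_D\bigl(\nabla\Gamma(x-y)-\nabla\Gamma(x-x_D)\bigr)G_k(y)\,dy\,.
\end{equation*}
For $x$ far from $D$ the kernel difference is at most $C3^{-k}|x-x_D|^{-d}$, which gives a bound $C3^{-k}\norm{G_k}_{L^1(D)}|x-x_D|^{-d}$. Near $D$ we instead use the local bound $|\nabla\Gamma|\lesssim|x-y|^{1-d}$, which gives $C3^{-k}$ times a local maximal average of $|G_k|$. Summing over $D$ yields a pointwise bound $|v_k|\leq C3^{-k}\mathcal M|G_k|$, where $\mathcal M$ is the Hardy--Littlewood maximal function. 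Since $r>1$, this gives $\norm{v_k}_{L^r}\leq C3^{-k}\norm{G_k}_{L^r}$. If that step is delicate, we fall back on a duality argument as in Lemma~\ref{l.fractional.reconstruction}: pair with $\phi\in L^{r'}$ and use $\nabla^2(\Gamma*\phi)\in L^{r'}$ from Calder\'on--Zygmund theory together with Poincar\'e on each $D$. This is in fact cleaner, and it is the route we would try first. We then bound $\norm{G_k}_{L^r}\leq C\norm{P_k\nabla v}_{L^r(\cu_0)}$, using Jensen for $P_{k-1}$. Lemma~\ref{l.lower.averages} with $n=0$ gives $\norm{G_k}_{L^r}\leq C3^{kt}b_k\Ener_{\cu_0}(v)^{1/2}$. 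Note that $\mathscr T_k(\cu_0)=\mathscr T_k$ and the weights are volumes, as in~\eqref{e.partition.average}. Hence $\norm{v_k}_{L^r}\leq C3^{-k(1-t)}b_k\Ener^{1/2}$.

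For the $L^\infty$ estimate we bound the step function in sup norm and sum over cubes. Each value of $P_k\nabla v$ on a cube $Q$ of side $3^{-k}$ is controlled by~\eqref{e.lower.mean.gradient}:
\begin{equation*}
|(\nabla v)_Q|\leq|\a_*^{-1}(Q)|^{1/2}\bigl(3^{kd}\Ener_Q(v)\bigr)^{1/2}\,.
\end{equation*}
We then bound $|v_k(x)|$ by the sum over parent cubes $D$ of $|G_k|$-mass times $\min\{|x-x_D|^{1-d},\,3^{-k}|x-x_D|^{-d}\}$. Applying H\"older over the cubes with exponents $2q$, $2$ and $2q/(q-1)$ (the last for the kernel weights) yields $|v_k(x)|\leq C3^{k\beta}b_k\Ener^{1/2}$. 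This works if the kernel sum in $\ell^{2q/(q-1)}$ is finite: its terms are $3^{-kd}\cdot 3^{k(d-1)}(1+j)^{-d}$ indexed by the lattice distance $j$, and the sum converges since $d\cdot 2q/(q-1)>d$. The main obstacle is the bookkeeping of exponents so that $\beta=d/r^*$ exactly. The mass contributes $3^{-kd}\cdot3^{kd/2}$, the factor $3^{kt}$ comes with $b_k$, the moment normalization gives $3^{kd/(2q)}$, and the kernel gives $3^{k}$. Checking the identity $d/r^*=d/r-(1-t)=d/2+d/(2q)-1+t$, the total exponent $-d+d/2+d/(2q)+t-1+\ldots$ must be matched carefully, with the sign of the kernel power (near cubes carry $3^{k(d-1)}$) adjusted. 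We expect the power count to close at $d/r^*$ because scaling dictates it: an $L^r$ bound at scale $3^{-k}$ with loss $3^{-k(1-t)}$ corresponds to an $L^\infty$ bound with gain $3^{kd/r}\cdot3^{-k(1-t)}$.
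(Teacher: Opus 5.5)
Your proposal is correct and is essentially the paper's proof. It uses the same Newtonian potentials $v_k$ of the zero-extended increments $G_k=(P_k-P_{k-1})\nabla v$, and the same duality argument (Calder\'on--Zygmund on $\Rd$, Poincar\'e on the parent cubes, then Lemma~\ref{l.lower.averages}) for the $L^r$ bound; your three-exponent H\"older sum over cubes for the $L^\infty$ bound is a discretized form of the paper's near/far split at $|x-y|=3^{-k}$, since your third exponent $2q/(q-1)$ is exactly $r'$, and treating $v\in\Hzero{\a}(\cu_0)$ directly through its zero extension in $W^{1,1}(\Rd)$, rather than approximating by $C^\infty_c$ functions as in the paper's Step~4, is also fine.

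The exponent count you left open does close once the kernel is credited with $3^{k(d-1)}$ rather than $3^k$, namely $3^{-kd/2}\cdot3^{k(\frac d{2q}+t)}\cdot3^{k(d-1)}=3^{\frac d{r^*}k}$, and the cancellation is not needed there because $|z|^{1-d}\indc_{\{|z|\geq3^{-k}\}}\in L^{r'}(\Rd)$ as $r<d$. You should drop the maximal-function aside: summing $3^{-k}|x-x_D|^{-d}\norm{G_k}_{L^1(D)}$ over the parent cubes gives one maximal function per dyadic annulus, hence a loss of a factor of order $k$.
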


\begin{proof}
We construct the terms~$v_k$ using Newtonian potentials, prove the two bounds in~\eqref{e.dirichlet.blocks}, and sum the series. We write~$P_k=P_k(\cdot\,;\cu_0)$, defined in Section~\ref{s.fractional.reconstruction} with~$n=0$ and~$z=0$, and~$r'=\nf r{(r-1)}$.

\smallskip

\emph{Step 1.} Newtonian potentials. Let~$v\in C^\infty_c(\cu_0)$. For~$k\geq1$,
\begin{equation*}
	P_0\nabla v=(\nabla v)_{\cu_0}=0\,,\qquad G_k\coloneqq(P_k-P_{k-1})\nabla v\,.
\end{equation*}
Lemma~\ref{l.lower.averages} with~$Q=\cu_0$ and~$k\geq1$, together with~\eqref{e.partition.average}, gives
\begin{equation*}
	\norm{P_k\nabla v}_{L^r(\cu_0)}\leq3^{kt}b_k\Ener_{\cu_0}(v)^{\nf12}\,.
\end{equation*}
Since~$P_{k-1}\nabla v$ is the average of~$P_k\nabla v$ on each triadic cube of side length~$3^{1-k}$, Jensen's inequality gives
\begin{equation}
	\norm{G_k}_{L^r(\cu_0)}\leq2\cdot3^{kt}b_k\Ener_{\cu_0}(v)^{\nf12}\,.
	\label{e.endpoint.gradient.increments}
\end{equation}
Let~$\Gamma$ be the fundamental solution of~$-\Delta$ on~$\Rd$. Extend~$G_k$ by zero outside~$\cu_0$ and define
\begin{equation}
	v_k(x)\coloneqq\int_{\cu_0}\nabla\Gamma(y-x)\cdot G_k(y)\,dy\qquad(x\in\Rd)\,.
	\label{e.endpoint.newtonian}
\end{equation}
The integral is absolutely convergent because~$G_k$ is bounded and~$|\nabla\Gamma(x)|\leq C|x|^{1-d}$. For~$\phi\in C^\infty_c(\cu_0)$, extended by zero to~$\Rd$, let~$z_\phi=\Gamma*\phi$. Then~$z_\phi$ is smooth and solves~$-\Delta z_\phi=\phi$. The Calder\'on--Zygmund estimate on~$\Rd$ gives
\begin{equation}
	\norm{\nabla^2z_\phi}_{L^{r'}(\cu_0)}\leq C\norm{\phi}_{L^{r'}(\cu_0)}\,.
	\label{e.dirichlet.estimates}
\end{equation}
By Fubini's theorem and~\eqref{e.endpoint.newtonian},
\begin{equation}
	\int_{\cu_0}v_k\phi=\int_{\cu_0}G_k\cdot\nabla z_\phi\,.
	\label{e.dirichlet.duality}
\end{equation}

\smallskip

\emph{Step 2.} The bound in~$L^r$. The field~$G_k$ has zero average on every triadic cube~$F$ of side length~$3^{1-k}$. Subtracting~$(\nabla z_\phi)_F$ on each~$F$ in~\eqref{e.dirichlet.duality}, the Poincar\'e and H\"older inequalities on these cubes and the Hessian bound in~\eqref{e.dirichlet.estimates} give
\begin{equation*}
	\biggl|\int_{\cu_0}v_k\phi\biggr|=\biggl|\sum_F\int_FG_k\cdot\bigl(\nabla z_\phi-(\nabla z_\phi)_F\bigr)\biggr|\leq C3^{-k}\norm{G_k}_{L^r(\cu_0)}\norm{\phi}_{L^{r'}(\cu_0)}\,.
\end{equation*}
By duality,
\begin{equation*}
	\norm{v_k}_{L^r(\cu_0)}\leq C3^{-k}\norm{G_k}_{L^r(\cu_0)}\,.
\end{equation*}
The bound~\eqref{e.endpoint.gradient.increments} on~$\norm{G_k}_{L^r(\cu_0)}$ gives the~$L^r$ estimate in~\eqref{e.dirichlet.blocks}.

\smallskip

\emph{Step 3.} The bound in~$L^\infty$. The formula~\eqref{e.endpoint.newtonian} and the bound on~$\nabla\Gamma$ give
\begin{equation}
	|v_k(y)|\leq C\int_{\cu_0}\frac{|G_k(x)|}{|x-y|^{d-1}}\,dx\qquad(y\in\cu_0)\,.
	\label{e.dirichlet.potential.pairing}
\end{equation}
We split the integral in~\eqref{e.dirichlet.potential.pairing} at~$|x-y|=3^{-k}$. The field~$G_k$ is constant on triadic cubes of volume~$3^{-kd}$, so the part near~$y$ satisfies
\begin{align*}
	\norm{G_k}_{L^\infty(\cu_0)}&\leq3^{\frac drk}\norm{G_k}_{L^r(\cu_0)}\,,\\
	\int_{\{|x-y|<3^{-k}\}}\frac{|G_k(x)|}{|x-y|^{d-1}}\,dx&\leq C3^{-k}\norm{G_k}_{L^\infty(\cu_0)}\leq C3^{(\frac dr-1)k}\norm{G_k}_{L^r(\cu_0)}\,.
\end{align*}
Since~$r<2<d$, the exponent~$(1-d)r'+d$ is negative, and
\begin{equation*}
	\int_{\{|h|\geq3^{-k}\}}|h|^{(1-d)r'}\,dh=C3^{-k((1-d)r'+d)}\,.
\end{equation*}
H\"older's inequality therefore gives
\begin{equation*}
	\int_{\{|x-y|\geq3^{-k}\}}\frac{|G_k(x)|}{|x-y|^{d-1}}\,dx\leq C3^{(\frac dr-1)k}\norm{G_k}_{L^r(\cu_0)}\,.
\end{equation*}
Together, the near and far parts of~\eqref{e.dirichlet.potential.pairing} give
\begin{equation*}
	\norm{v_k}_{L^\infty(\cu_0)}\leq C3^{(\frac dr-1)k}\norm{G_k}_{L^r(\cu_0)}\,.
\end{equation*}
By~\eqref{e.sobolev.exponents},
\begin{equation}
	\frac dr-\frac d{r^*}=1-t\,,
	\label{e.endpoint.exponents}
\end{equation}
so~$(\frac dr-1)k+tk=\frac d{r^*}k$. The bound~\eqref{e.endpoint.gradient.increments} on~$\norm{G_k}_{L^r(\cu_0)}$ gives the~$L^\infty$ estimate in~\eqref{e.dirichlet.blocks}.

\smallskip

\emph{Step 4.} Summation and the completion. By~\eqref{e.dirichlet.blocks} and~\eqref{e.endpoint.scales}, the series~$\sum_kv_k$ converges in~$L^r(\cu_0)$. For~$\phi\in C^\infty_c(\cu_0)$, summing~\eqref{e.dirichlet.duality} over~$k\leq N$ and using~$P_0\nabla v=0$,
\begin{equation*}
	\sum_{k=1}^N\int_{\cu_0}v_k\phi=\int_{\cu_0}P_N\nabla v\cdot\nabla z_\phi\longrightarrow\int_{\cu_0}\nabla v\cdot\nabla z_\phi=\int_{\cu_0}v\phi\,,
\end{equation*}
The limit follows from the uniform convergence~$P_N\nabla v\to\nabla v$. The last equality uses~$-\Delta z_\phi=\phi$ and integration by parts, since~$v$ has compact support in~$\cu_0$. Hence~$\sum_kv_k=v$. For general~$v\in\Hzero{\a}(\cu_0)$, take~$v^{(j)}\in C^\infty_c(\cu_0)$ converging to~$v$ in~$H^1_\a(\cu_0)$. The construction is linear, so~\eqref{e.dirichlet.blocks} for~$v^{(j)}-v^{(i)}$ shows that, for each~$k$, the terms~$v_k^{(j)}$ converge in~$L^r(\cu_0)$ and in~$L^\infty(\cu_0)$ to some~$v_k$ satisfying~\eqref{e.dirichlet.blocks}, and, summed over~$k$ with~\eqref{e.endpoint.scales}, that~$\sum_kv_k^{(j)}\to\sum_kv_k$ in~$L^r(\cu_0)$. Since~$v^{(j)}\to v$ in~$L^1(\cu_0)$ by Proposition~\ref{p.weighted.calculus}\ref{i.weighted.identification}, the sum is~$v$.
\end{proof}

\paragraph{Potentials of measures.} Let~$\nu$ be a finite nonnegative Radon measure on~$\cu_0$ such that~$\phi\mapsto\int_{\cu_0}\phi\,d\nu$,~$\phi\in C^\infty_c(\cu_0)$, extends to a continuous linear functional on~$\Hzero{\a}(\cu_0)$, on which~$\Ener_{\cu_0}^{\nf12}$ is a norm equivalent to~\eqref{e.weighted.norm} by~\eqref{e.weighted.coercivity}. We write~$\nu(g)$ for the value of this extension at~$g\in\Hzero{\a}(\cu_0)$, and~$\nu(\cu_0)$ for the total mass of the measure. The value~$\nu(g)$ is defined by continuity, without assigning values to~$g$ on sets of measure zero. By Proposition~\ref{p.weighted.calculus}\ref{i.weighted.approximation}, a nonnegative~$g\in\Hzero{\a}(\cu_0)$ is the limit of nonnegative functions in~$C^\infty_c(\cu_0)$, which can be taken at most~$K$ if~$g\leq K$. Passing to the limit in~$0\leq\int\phi\,d\nu\leq K\nu(\cu_0)$ gives, for~$g\in\Hzero{\a}(\cu_0)$ and~$K\geq0$,
\begin{equation}
	\nu(g)\geq0\quad\mbox{if }g\geq0\,,\qquad\mbox{and}\qquad\nu(g)\leq K\nu(\cu_0)\quad\mbox{if }0\leq g\leq K\,.
	\label{e.measure.evaluation}
\end{equation}
The potential of~$\nu$ is the~$v\in\Hzero{\a}(\cu_0)$ satisfying
\begin{equation*}
	\int_{\cu_0}\nabla g\cdot\a\nabla v=\nu(g)\qquad(g\in\Hzero{\a}(\cu_0))\,.
\end{equation*}
It exists and is unique by the Lax--Milgram lemma, as in the proof of Proposition~\ref{p.harmonic.replacement}.

% Adapted from Proposition 3.1 (prop:potential) of weak-harnack-nonradial-endpoint.tex, with its Lemma 2.2
% (lem:level-set) as Step 2 of the proof; stated only for measures whose action extends continuously to the
% zero-boundary weighted space.
\begin{proposition}[Endpoint potential estimate]
\label{p.endpoint.potential}
There is~$C<\infty$, depending only on~$d,q,t$, such that for every finite nonnegative Radon measure~$\nu$ on~$\cu_0$ whose action on~$C^\infty_c(\cu_0)$ extends continuously to~$\Hzero{\a}(\cu_0)$, the potential~$v$ of~$\nu$ satisfies
\begin{equation}
	\norm{v}_{L^{\nf{r^*}2}(\cu_0)}\leq C\lambda_{t,1,q}(\cu_0)^{-1}\nu(\cu_0)\,.
	\label{e.endpoint.potential}
\end{equation}
\end{proposition}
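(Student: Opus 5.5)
The plan is the truncation/level-set method of Boccardo--Gallou\"et type, adapted to the reconstruction Lemma~\ref{l.dirichlet.reconstruction}. Write $M\coloneqq\nu(\cu_0)$ and $B\coloneqq\sum_{k\geq1}b_k$, so that $B^2\leq C\lambda_{t,1,q}(\cu_0)^{-1}$ by~\eqref{e.endpoint.scales}. The target is $\norm{v}_{L^{r^*/2}(\cu_0)}\leq CB^2M$. A first orientation: $v\geq0$ by testing with $v_-$ and~\eqref{e.measure.evaluation}, so only the upper level sets of $v$ matter.

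\emph{Step 1: energy of dyadic slices.} For $K=2^i$, $i\in\Z$, set $S_K\coloneqq(v\wedge2K-K)_+\in\Hzero{\a}(\cu_0)$, using Proposition~\ref{p.weighted.calculus}. Testing the equation of $v$ with $S_K$, together with $\nabla S_K\cdot\a\nabla v=\nabla S_K\cdot\a\nabla S_K$ and~\eqref{e.measure.evaluation}, gives $\Ener_{\cu_0}(S_K)=\nu(S_K)\leq KM$. I expect to need the finer summed version as well. Since the slices $S_K/K$ are bounded by one and supported in disjoint level bands, summing the tests against $\sum_i S_{2^i}/2^i$ should give
\begin{equation*}
	\sum_i\frac{\Ener_{\cu_0}(S_{2^i})}{2^i}\leq CM\,,
\end{equation*}
because the function $g\coloneqq\sum_i2^{-i}S_{2^i}$ satisfies $0\leq g\leq C$. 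I would justify this by testing with $g$ truncated to finitely many $i$ and letting the number of slices grow. This summed inequality is what should replace the missing endpoint summability in the crude weak-type argument.

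\emph{Step 2: distribution of one slice.} Since $S_K=K$ on $\{v\geq2K\}$, it suffices to bound $|\{S_K>K/2\}|$. Apply Lemma~\ref{l.dirichlet.reconstruction} to $S_K$ with $E_K\coloneqq\Ener_{\cu_0}(S_K)$, writing $S_K=\sum_kv_k$. Choose a cutoff scale $j=j(K)$ with $3^{dj/r^*}B(E_K)^{1/2}\approx cK$. The low part then satisfies $\sum_{k\leq j}\norm{v_k}_{L^\infty}\leq K/4$ by the $L^\infty$ bound in~\eqref{e.dirichlet.blocks}. For the high part, Chebyshev's inequality in $L^r$ with the $L^r$ bound gives
\begin{equation*}
	|\{S_K>K/2\}|\leq CK^{-r}3^{-j(1-t)r}\Bigl(\sum_{k>j}b_k\Bigr)^rE_K^{r/2}\,.
\end{equation*}
The identity $d/r-d/r^*=1-t$ of~\eqref{e.endpoint.exponents} converts this into
\begin{equation*}
	K^{r^*/2}|\{v\geq2K\}|\leq C\bigl(B^2E_K/K\bigr)^{r^*/2}\,,
\end{equation*}
with the exponents matching exactly.

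\emph{Step 3: summation, and the main obstacle.} The layer cake formula gives
\begin{equation*}
	\int v^{r^*/2}\leq C\sum_iK_i^{r^*/2}|\{v\geq2K_i\}|\leq CB^{r^*}\sum_i\bigl(E_{K_i}/K_i\bigr)^{r^*/2}\,.
\end{equation*}
Since $r^*/2\geq1$ is needed here, I would use $\sum_ia_i^{r^*/2}\leq(\sum_ia_i)^{r^*/2}$ together with the summed bound of Step~1 to obtain $\int v^{r^*/2}\leq C(B^2M)^{r^*/2}$, which is~\eqref{e.endpoint.potential}.

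The hard point is the case $r^*<2$, which the paper explicitly allows. There $\ell^1$ does not embed in $\ell^{r^*/2}$, and the estimate of Step~2 must itself be improved. I would then keep the tail sums $T_j\coloneqq\sum_{k>j}b_k$ in Step~2 rather than bounding them by $B$, and use that each scale $j$ is selected by only boundedly many dyadic $K$. The aim is to make the summability of $(b_k)$, combined with the summed energies of Step~1, close the estimate through a H\"older inequality between the two sequences $(T_{j(K)})$ and $(E_K/K)$.

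Making Step~1 rigorous, namely that $\sum_i2^{-i}S_{2^i}$ is an admissible bounded test function and that $\nu$ evaluates it consistently, is routine by~\eqref{e.measure.evaluation}. The summation in Step~3 is where I expect the real difficulty.
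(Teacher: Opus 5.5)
There is a genuine gap. It sits in Step~1, and your Step~3 rests on it. The function $g=\sum_i2^{-i}S_{2^i}$ is not bounded. On $\{2^m\le v<2^{m+1}\}$ every slice with $i<m$ contributes exactly $2^{-i}S_{2^i}=1$, so $g$ counts the indices below $m$ and grows like $\log_2 v$. Testing with finitely many slices therefore only gives $\sum_{|i|\le n}2^{-i}\Ener_{\cu_0}(S_{2^i})\le(2n+1)\nu(\cu_0)$.

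The summed inequality is itself false, since it amounts to $\int_{\cu_0}v^{-1}\nabla v\cdot\a\nabla v\lesssim\nu(\cu_0)$. Take $\a=I$ and let $\nu$ be the normalized surface measure on $\partial B(0,\eps)$. Your own identity $\Ener_{\cu_0}(S_K)=\nu(S_K)$ applies, and $S_K=K$ on $\partial B(0,\eps)$ whenever $2K\le\min_{\partial B(0,\eps)}v\approx c\eps^{2-d}$. Hence $2^{-i}\Ener_{\cu_0}(S_{2^i})=\nu(\cu_0)$ for about $(d-2)\log_2(1/\eps)$ indices $i\ge0$, while $\norm{v}_{L^{r^*/2}(\cu_0)}$ stays bounded.

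Without Step~1, your Step~2 together with $E_K\le K\nu(\cu_0)$ gives only the weak-type bound $K^{r^*/2}|\{v\ge2K\}|\le C(B^2\nu(\cu_0))^{r^*/2}$, uniformly in $K$. Summing this over dyadic levels diverges. The loss is that Step~2 charges the full sum $B$ at every level, so the per-level estimate has no decay. This obstruction is the same for $r^*\ge2$ and $r^*<2$, and the H\"older argument you sketch for the latter is not specified.

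The missing idea is that the summability must come from the scale terms $b_k$, not from the energies, and for this you should index superlevel sets by their measure rather than by the level. Suppose $|f|\ge K$ on a set $A$ with $3^{-(N+1)d}<|A|\le3^{-Nd}$. Split the decomposition of Lemma~\ref{l.dirichlet.reconstruction} at $k=N$: use the $L^\infty$ bound for $k\le N$, and H\"older's inequality on $A$ with the $L^r$ bound for $k>N$. With $d/r-d/r^*=1-t$ this gives $K\le C3^{dN/r^*}c_N\Ener_{\cu_0}(f)^{\nf12}$, where
\begin{equation*}
	c_N\coloneqq\sum_{1\le k\le N}3^{-\frac d{r^*}(N-k)}b_k+\sum_{k>N}3^{-(1-t)(k-N)}b_k\,,\qquad\sum_{N\ge0}c_N\le C\sum_{k\ge1}b_k\,.
\end{equation*}
Apply this to $f=v\wedge K$ (your slices work equally well), using only the per-level bound $\Ener_{\cu_0}(v\wedge K)\le K\nu(\cu_0)$. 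For the decreasing rearrangement this gives $v^*(\vartheta)\le C3^{2dN/r^*}c_N^2\nu(\cu_0)$ for $\vartheta\in(3^{-(N+1)d},3^{-Nd}]$. When you integrate $v^*(\vartheta)^{r^*/2}$, the powers of three cancel exactly, leaving $\nu(\cu_0)^{r^*/2}\sum_Nc_N^{r^*}\le\nu(\cu_0)^{r^*/2}(\sum_Nc_N)^{r^*}$. Because $c_N$ enters squared, the relevant exponent is $r^*>1$ rather than $r^*/2$. So you need neither a summed energy estimate nor a case distinction on $r^*\gtrless2$.
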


\begin{proof}
We first show~$v\geq0$ and prove the truncation bound~\eqref{e.endpoint.truncation}. The estimates for~$v_k$ in~\eqref{e.dirichlet.blocks} then give~\eqref{e.endpoint.distribution}, which bounds the decreasing rearrangement of~$v$ on each interval~$(3^{-(N+1)d},3^{-Nd}]$. Integrating these bounds proves~\eqref{e.endpoint.potential}, even when~$\nf{r^*}2<1$.

\smallskip

\emph{Step 1.} Positivity and truncations. By Proposition~\ref{p.weighted.calculus}\ref{i.weighted.chain} and~\ref{i.weighted.products},~$(-v)_+$ is a nonnegative element of~$\Hzero{\a}(\cu_0)$ with gradient~$-\indc_{\{v<0\}}\nabla v$. Testing the equation of~$v$ with~$(-v)_+$ and using~\eqref{e.measure.evaluation} gives
\begin{equation*}
	-\Ener_{\cu_0}((-v)_+)=\nu((-v)_+)\geq0\,.
\end{equation*}
Thus~$(-v)_+$ has zero gradient and vanishes by Proposition~\ref{p.weighted.calculus}\ref{i.weighted.zero}. Hence~$v\geq0$. For~$K>0$, the truncation~$v\wedge K=T_K(v)$ of Appendix~\ref{a.weighted.proofs} belongs to~$\Hzero{\a}(\cu_0)$, has gradient~$\indc_{\{v<K\}}\nabla v$, and satisfies~$0\leq v\wedge K\leq K$. Testing the equation of~$v$ with~$v\wedge K$ and using~\eqref{e.measure.evaluation},
\begin{equation}
	\Ener_{\cu_0}(v\wedge K)=\int_{\cu_0}\nabla(v\wedge K)\cdot\a\nabla v=\nu(v\wedge K)\leq K\nu(\cu_0)\,.
	\label{e.endpoint.truncation}
\end{equation}

\smallskip

\emph{Step 2.} A distribution estimate. For integers~$N\geq0$ let
\begin{equation*}
	c_N\coloneqq\sum_{1\leq k\leq N}3^{-\frac d{r^*}(N-k)}b_k+\sum_{k>N}3^{-(1-t)(k-N)}b_k\,.
\end{equation*}
The terms with~$k\leq N$ have scale at least~$3^{-N}$ and will use the~$L^\infty$ bound in~\eqref{e.dirichlet.blocks}. The terms with~$k>N$ will use its~$L^r$ bound. Exchanging the order of summation, both geometric weights sum to constants depending only on~$d,q,t$, so
\begin{equation*}
	\sum_{N\geq0}c_N\leq C\sum_{k\geq1}b_k\,.
\end{equation*}
Since~$r^*>r>1$ by~\eqref{e.parameter.facts}, the~$\ell^{r^*}$ norm of a nonnegative sequence is at most its~$\ell^1$ norm, and therefore
\begin{equation}
	\sum_{N\geq0}c_N^{r^*}\leq\biggl(\sum_{N\geq0}c_N\biggr)^{r^*}\leq C\biggl(\sum_{k\geq1}b_k\biggr)^{r^*}\,.
	\label{e.endpoint.sums}
\end{equation}
Now let~$f\in\Hzero{\a}(\cu_0)$,~$K>0$ and~$N\geq0$, and suppose that~$|f|\geq K$ almost everywhere on a measurable set~$A\subseteq\cu_0$ with~$3^{-(N+1)d}<|A|\leq3^{-Nd}$. Decompose~$f=\sum_kf_k$ by Lemma~\ref{l.dirichlet.reconstruction}. Averaging~$|f|$ over~$A$, and using H\"older's inequality on~$A$ for the terms with~$k>N$,
\begin{equation*}
	K\leq\sum_{1\leq k\leq N}\norm{f_k}_{L^\infty(\cu_0)}+|A|^{-\nf1r}\sum_{k>N}\norm{f_k}_{L^r(\cu_0)}\,.
\end{equation*}
By~\eqref{e.sobolev.exponents},
\begin{equation*}
	|A|^{-\nf1r}3^{-(1-t)k}<3^{\frac dr(N+1)}3^{-(1-t)k}=3^{\nf dr}3^{\frac d{r^*}N}3^{-(1-t)(k-N)}\,.
\end{equation*}
Together with~\eqref{e.dirichlet.blocks}, this gives
\begin{equation}
	K\leq C3^{\frac d{r^*}N}c_N\Ener_{\cu_0}(f)^{\nf12}\,.
	\label{e.endpoint.distribution}
\end{equation}

\smallskip

\emph{Step 3.} The decreasing rearrangement. For~$0<\vartheta\leq1$ let
\begin{equation*}
	v^*(\vartheta)\coloneqq\inf\{K\geq0:|\{v>K\}|\leq\vartheta\}\,.
\end{equation*}
Since~$K\mapsto|\{v>K\}|$ is right-continuous,~$v^*(\vartheta)>K$ exactly when~$|\{v>K\}|>\vartheta$. Hence~$v^*$ on~$(0,1)$ and~$v$ on~$\cu_0$ have the same distribution function, and for~$\eta>0$,
\begin{equation*}
	\int_{\cu_0}v^\eta=\int_0^1v^*(\vartheta)^\eta\,d\vartheta\,.
\end{equation*}
Let~$N\geq0$,~$3^{-(N+1)d}<\vartheta\leq3^{-Nd}$ and~$0<K<v^*(\vartheta)$. Then~$|\{v>K\}|>\vartheta$, and since Lebesgue measure has no atoms, there is a measurable~$A\subseteq\{v>K\}$ with~$|A|=\vartheta$. The truncation~$v\wedge K$ equals~$K$ on~$A$, so~\eqref{e.endpoint.distribution} for~$f=v\wedge K$ and~\eqref{e.endpoint.truncation} give
\begin{equation*}
	K\leq C3^{\frac d{r^*}N}c_N\bigl(K\nu(\cu_0)\bigr)^{\nf12}\,.
\end{equation*}
Dividing by~$K^{\nf12}$, squaring and letting~$K\uparrow v^*(\vartheta)$,
\begin{equation}
	v^*(\vartheta)\leq C3^{\frac{2d}{r^*}N}c_N^2\,\nu(\cu_0)\qquad\bigl(3^{-(N+1)d}<\vartheta\leq3^{-Nd}\bigr)\,.
	\label{e.endpoint.rearrangement}
\end{equation}

\smallskip

\emph{Step 4.} Summation over the scales. The intervals~$(3^{-(N+1)d},3^{-Nd}]$,~$N\geq0$, cover~$(0,1)$, and each has length at most~$3^{-Nd}$. We raise~\eqref{e.endpoint.rearrangement} to the power~$\nf{r^*}2$; the powers of three cancel, and~\eqref{e.endpoint.sums} gives
\begin{align*}
	\int_{\cu_0}v^{\nf{r^*}2}=\int_0^1v^*(\vartheta)^{\nf{r^*}2}\,d\vartheta&\leq C\nu(\cu_0)^{\nf{r^*}2}\sum_{N\geq0}3^{-dN}\bigl(3^{\frac{2d}{r^*}N}c_N^2\bigr)^{\nf{r^*}2}\\
	&=C\nu(\cu_0)^{\nf{r^*}2}\sum_{N\geq0}c_N^{r^*}\leq C\nu(\cu_0)^{\nf{r^*}2}\biggl(\sum_{k\geq1}b_k\biggr)^{r^*}\,.
\end{align*}
Raising to the power~$\nf2{r^*}$ and using~\eqref{e.endpoint.scales} proves~\eqref{e.endpoint.potential}.
\end{proof}

The summability of~$(b_k)$ enters through~\eqref{e.endpoint.sums}. A bound on each~$b_k$ alone would give from~\eqref{e.endpoint.rearrangement} only the weak-type estimate
\begin{equation*}
	v^*(\vartheta)\leq C\vartheta^{-\nf2{r^*}}\nu(\cu_0)\sup_kb_k^2\,.
\end{equation*}

% Adapted from Section 4 of weak-harnack-nonradial-endpoint.tex (lem:capacitary and lem:source-mass, merged): the
% capacitary function and its properties are Steps 4--6 of the proof. The interior estimates on fixed cubes are
% e.interior.weak.harnack and Corollary c.lr.bound on Q_1, justified in the paragraph "Interior cubes".
\begin{lemma}[Interior source measure estimate]
\label{l.source.mass}
Let~$u\geq0$ be a supersolution in~$\cu_0$. Then~$\mu\coloneqq-\nabla\cdot\a\nabla u$ is a nonnegative Radon measure on~$\cu_0$, possibly of infinite total mass, with
\begin{equation*}
	\int_{\cu_0}\phi\,d\mu=\int_{\cu_0}\nabla\phi\cdot\a\nabla u\qquad(\phi\in C^\infty_c(\cu_0))\,.
\end{equation*}
Moreover,
\begin{equation}
	\mu\bigl(\tfrac34\cu_0\bigr)\leq C\Lambda_{s,1,p}(\cu_0)\exp\bigl(C\sqrt\Theta\bigr)\essinf_{\frac12\cu_0}u\,.
\label{e.source.mass}
\end{equation}
The restriction~$\nu$ of~$\mu$ to~$\frac34\cu_0$ has a continuous action on~$\Hzero{\a}(\cu_0)$. Its potential~$V\in\Hzero{\a}(\cu_0)$ satisfies~$0\leq V\leq u$ almost everywhere in~$\cu_0$, and~$u-V\in\Csol(\frac34\cu_0)$.
\end{lemma}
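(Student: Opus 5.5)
The plan has four steps: build the measure $\mu$ by Riesz representation; bound its mass on $\frac34\cu_0$ by testing against a capacitary cutoff built from the piecewise harmonic extension; show that the restriction $\nu$ acts continuously on $\Hzero{\a}(\cu_0)$; and conclude with comparison for the potential $V$.

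\emph{The measure.} Since $u$ is a supersolution, $\phi\mapsto\int\nabla\phi\cdot\a\nabla u$ is a nonnegative linear functional on $C^\infty_c(\cu_0)$. A nonnegative distribution is a Radon measure, so the Riesz representation theorem gives $\mu$.

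\emph{The mass bound.} This is the main step. I would replace $u$ by $w=u\wedge M$ with $M\coloneqq\essinf_{\frac{15}{16}\cu_0}u$. The function $w$ is still a supersolution, and on $\frac{15}{16}\cu_0$ it is constant, because $u\geq M$ a.e.\ there. I would also handle the trivial case $M=0$ first; it should force $\mu(\frac34\cu_0)=0$. Next I would test with a capacitary cutoff $\psi\in\Hzero{\a}(\cu_0)$ satisfying $0\leq\psi\leq1$, $\psi=1$ on $\frac34\cu_0$, support in $\frac78\cu_0$, and energy at most $C\Lambda_{s,1,p}(\cu_0)$. I would build $\psi$ as the piecewise harmonic extension $H_h\indc$ of the constant $1$ from $\partial(\tau\cu_0)$ with $\tau\in(\nf34,\nf78)$, joined to the value $1$ inside. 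By Proposition~\ref{p.whitney.extension}, \eqref{e.extension.scale} and Lemma~\ref{l.radius.averages}, only the $h^{-1}\|1\|_{L^2}$ term survives, so $\Ener(\psi)\leq C\sum_jA_j(\tau)3^{-j}h^{-2}$ with $h$ of order one. A good $\tau$ then gives $\Ener(\psi)\leq C\Lambda_{s,1,p}(\cu_0)$. Now $\mu(\frac34\cu_0)\leq\int\nabla\psi\cdot\a\nabla u$. Splitting $u=w+(u-M)_+$ does not immediately help, so instead I would test with $\psi^2(M-u\wedge M)$-type functions, or more simply with $\phi=\psi(M\wedge u)/M$ in the standard capacitary argument. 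This yields $\mu(\frac34\cu_0)\,M\lesssim\Ener(\psi)^{\nf12}\Ener_{\frac78\cu_0}(u\wedge M)^{\nf12}$ plus a term $M\cdot\Ener(\psi)$. The energy of $u\wedge M$ on $\frac78\cu_0$ is controlled by the Caccioppoli inequality for powers (Proposition~\ref{p.power.caccioppoli}), applied to the supersolution, in the form $\Ener(u\wedge M)\leq C\Lambda\,(1+\Theta)^{C}\|u\wedge M\|_{L^r}^2\leq C\Lambda\Theta^C M^2$. Combining, $\mu(\frac34\cu_0)\leq C\Lambda\Theta^C M$. Finally, \eqref{e.interior.weak.harnack} chained with the interior Harnack inequality bounds $M\leq\exp(C\sqrt\Theta)\essinf_{\frac12\cu_0}u$, and the power of $\Theta$ is absorbed into the exponential.

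\emph{Continuity and comparison.} With $\psi$ as above, for $g\in C^\infty_c$ I would write $\int g\,d\nu\leq\int\psi|g|\,d\mu$ and, after bounding $u$ locally via Theorem~\ref{t.cg.local.boundedness} (for $u\wedge M$), obtain $|\nu(g)|\leq C\Ener(g)^{\nf12}$ by testing with $\psi g$. This is the step I expect to be the main obstacle: for a general (unbounded) $u$ the gradient pairing is not obviously controlled by $\Ener(g)^{\nf12}$. I would try first to reduce to $u\wedge M$, whose measure coincides with $\mu$ on $\frac34\cu_0$ up to a nonnegative correction. With continuity in hand, the potential $V$ exists, and $V\geq0$ by Proposition~\ref{p.endpoint.potential}, Step~1. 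For $V\leq u$ I would test the inequality $-\nabla\cdot\a\nabla(u-V)\geq\mu-\nu\geq0$ with $(V-u)_+\in\Hzero{\a}$, which is legitimate since $V$ vanishes on the boundary and $u\geq0$; Lemma~\ref{l.weighted.testing} justifies the test. Finally, $u-V\in\Csol(\frac34\cu_0)$ because $\mu$ and $\nu$ agree on $\frac34\cu_0$.
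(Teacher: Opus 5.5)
Your construction of $\mu$ and your comparison argument for $V$ are fine. The mass bound \eqref{e.source.mass}, however, has a genuine gap, and it is the heart of the lemma.

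With $M=\essinf_{\frac{15}{16}\cu_0}u$ and $\psi$ supported in $\frac78\cu_0$, one has $u\wedge M=M$ almost everywhere on the support of $\psi$. Hence $\psi(u\wedge M)/M=\psi$, the function $\psi^2(M-u\wedge M)$ vanishes, and $\Ener_{\frac78\cu_0}(u\wedge M)=0$. Your test therefore returns exactly $\mu(\frac34\cu_0)\leq\int_{\cu_0}\nabla\psi\cdot\a\nabla u$ and nothing more. The right side involves the full gradient of $u$ on the layer where $\nabla\psi\neq0$. There $u\geq M$, and nothing relates $\nabla u$ to $M$.

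Your displayed inequality does not follow. With its energy term equal to zero it would even say $\mu(\frac34\cu_0)\lesssim\Ener_{\cu_0}(\psi)$ for every nonnegative supersolution, which fails for $\lambda u$ with $\lambda$ large. Truncating at an infimum over a smaller set does not help, since $\int M^{-1}(u\wedge M)\nabla\psi\cdot\a\nabla u$ still contains $\nabla u$ on $\{u>M\}$. Moreover, Proposition~\ref{p.power.caccioppoli} concerns the powers $(u+\eps)^m$ with $0\neq m<\nf12$, not $u\wedge M$. Estimating $\int\nabla\psi\cdot\a\nabla u$ through those powers would require a bound on $\int(u+\eps)^{2-2m}\nabla\psi\cdot\a\nabla\psi$, which is unavailable without an upper bound on $u$ where $\nabla\psi\neq0$.

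The missing idea is to avoid pairing $\nabla u$ with a cutoff altogether, and to reverse your order of steps. Continuity of $\nu$ is not an obstacle. Since $u\in H^1_\a(\cu_0)$ has finite energy by definition, for $\phi\in C^\infty_c(\cu_0)$ one has $|\int\phi\,d\nu|\leq\int|\phi|\,d\mu=\int\nabla|\phi|\cdot\a\nabla u\leq\Ener_{\cu_0}(u)^{\nf12}\Ener_{\cu_0}(\phi)^{\nf12}$. Your reduction to $u\wedge M$ would lose everything here, since the measure of $u\wedge M$ vanishes on $\frac{15}{16}\cu_0$. So the existence of $V$ and the bound $0\leq V\leq u$ come first.

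The paper then bounds the mass by reciprocity:
\begin{itemize}
\item Let $\psi$ be the capacitary function of a small cube $Q_0=\frac12Q_1$ with $\overline{Q_1}\subset\frac{15}{16}\cu_0\setminus\frac34\overline{\cu}_0$. Its energy is at most $C\Lambda_{s,1,p}(\cu_0)$, by harmonic replacement of a piecewise affine function on $\mathscr T_4$.
\item Since $\psi$ is a nonnegative supersolution equal to one on $Q_0$, \eqref{e.interior.weak.harnack} gives $\psi\geq\exp(-C\sqrt\Theta)$ on $\frac34\cu_0$. Hence $\nu(\psi)\geq\exp(-C\sqrt\Theta)\mu(\frac34\cu_0)$.
\item By the symmetry of $\a$, $\nu(\psi)=\int\nabla V\cdot\a\nabla\psi\leq(\esssup_{Q_0}V)\Ener_{\cu_0}(\psi)$, because the minimality of $\psi$ makes its measure live on $\overline{Q_0}$.
\item $V$ is a nonnegative solution in $Q_1$. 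The rescaled Corollary~\ref{c.lr.bound}, $V\leq u$ and \eqref{e.interior.weak.harnack} give $\esssup_{Q_0}V\leq\exp(C\sqrt\Theta)\essinf_{\frac12\cu_0}u$.
\end{itemize}
The infimum of $u$ thus enters only through $V\leq u$ and the weak Harnack inequality, never through $\nabla u$.
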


\begin{proof}
Let~$\nu$ be the restriction of~$\mu$ to~$\frac34\cu_0$ and let~$V$ be its potential. We show~$0\leq V\leq u$ and that~$V$ is a solution outside~$\frac34\cu_0$. For a cube~$Q_0$ in this exterior region, we construct a capacitary function~$\psi$ whose value~$\nu(\psi)$ is at least~$\exp(-C\sqrt\Theta)\mu(\frac34\cu_0)$. The equation of~$V$ and the energy of~$\psi$ bound~$\nu(\psi)$ above by~$\esssup_{Q_0}V\,\Ener_{\cu_0}(\psi)$.

\smallskip

\emph{Step 1.} The measures. For~$\phi\in C^\infty_c(\cu_0)$ let
\begin{equation*}
	\mu(\phi)\coloneqq\int_{\cu_0}\nabla\phi\cdot\a\nabla u\,.
\end{equation*}
This is nonnegative if~$\phi\geq0$, since~$-u\in\Csub(\cu_0)$. For a compact~$E\subset\cu_0$, fix a cutoff~$\varphi_E\in C^\infty_c(\cu_0)$ with~$0\leq\varphi_E\leq1$ and~$\varphi_E=1$ on~$E$. Positivity, applied to~$\norm{\phi}_{L^\infty(\cu_0)}\varphi_E\pm\phi$, gives
\begin{equation*}
	|\mu(\phi)|\leq\mu(\varphi_E)\norm{\phi}_{L^\infty(\cu_0)}\qquad(\supp\phi\subseteq E)\,.
\end{equation*}
Hence~$\mu$ extends, by uniform approximation, to a positive linear functional on~$C_c(\cu_0)$, which by the Riesz representation theorem is integration against a nonnegative Radon measure, also denoted~$\mu$. Let~$\nu$ be its restriction to~$\frac34\cu_0$, which is finite. For~$\phi\in C^\infty_c(\cu_0)$, set
\begin{equation*}
	\phi_\eps\coloneqq(\phi^2+\eps^2)^{\nf12}-\eps\,.
\end{equation*}
The functions~$\phi_\eps\in C^\infty_c(\cu_0)$ converge to~$|\phi|$ uniformly as~$\eps\downarrow0$, with gradients bounded by~$|\nabla\phi|$ and converging to~$\nabla|\phi|$ almost everywhere. Dominated convergence, with~$\a\nabla u\in L^1(\cu_0)$ and~$\mu(\supp\phi)<\infty$, gives~$\int|\phi|\,d\mu=\int\nabla|\phi|\cdot\a\nabla u$, and hence
\begin{equation*}
	\biggl|\int_{\cu_0}\phi\,d\nu\biggr|\leq\int_{\cu_0}|\phi|\,d\mu=\int_{\cu_0}\nabla|\phi|\cdot\a\nabla u\leq\Ener_{\cu_0}(u)^{\nf12}\Ener_{\cu_0}(\phi)^{\nf12}\,.
\end{equation*}
Thus~$\nu$ satisfies the hypothesis of Proposition~\ref{p.endpoint.potential}. Since~$\nu$ is the restriction of~$\mu$, we have~$\int_{\cu_0}\nabla g\cdot\a\nabla u=\mu(g)\geq\nu(g)$ for nonnegative~$g\in C^\infty_c(\cu_0)$. The pairing with~$u$ is continuous on~$\Hzero{\a}(\cu_0)$ by~\eqref{e.weighted.pairing}. Approximation by nonnegative smooth functions (Proposition~\ref{p.weighted.calculus}\ref{i.weighted.approximation}) gives
\begin{equation}
	\int_{\cu_0}\nabla g\cdot\a\nabla u\geq\nu(g)\qquad\mbox{for every nonnegative }g\in\Hzero{\a}(\cu_0)\,.
	\label{e.source.restriction}
\end{equation}

\smallskip

\emph{Step 2.} The potential of~$\nu$. Let~$V$ be the potential of~$\nu$; then~$V\geq0$, by Step~1 of the proof of Proposition~\ref{p.endpoint.potential}. To show~$V\leq u$, take~$\phi_j\in C^\infty_c(\cu_0)$ converging to~$V$, and nonnegative smooth functions~$u_j$ of finite energy converging to~$u$, in~$H^1_\a(\cu_0)$ (Proposition~\ref{p.weighted.calculus}\ref{i.weighted.approximation}). The function~$(\phi_j-u_j)_+$ is nonnegative and vanishes outside~$\supp\phi_j$, so it belongs to~$\Hzero{\a}(\cu_0)$ by Lemma~\ref{l.weighted.testing}\ref{i.testing.compact}, and it converges to~$g\coloneqq(V-u)_+$ by Proposition~\ref{p.weighted.calculus}\ref{i.weighted.chain}. Hence~$g$ is a nonnegative element of~$\Hzero{\a}(\cu_0)$ with gradient~$\indc_{\{V>u\}}\nabla(V-u)$, and the equation of~$V$ and~\eqref{e.source.restriction} give
\begin{equation*}
	\Ener_{\cu_0}(g)=\int_{\cu_0}\nabla g\cdot\a\nabla V-\int_{\cu_0}\nabla g\cdot\a\nabla u=\nu(g)-\int_{\cu_0}\nabla g\cdot\a\nabla u\leq0\,.
\end{equation*}
So~$g=0$, that is,~$V\leq u$. The definitions give
\begin{align*}
	\int_{\cu_0}\nabla\phi\cdot\a\nabla(u-V)&=\int_{\cu_0}\phi\,d\mu-\int_{\cu_0}\phi\,d\nu=0&&\bigl(\phi\in C^\infty_c(\tfrac34\cu_0)\bigr)\,,\\
	\int_{\cu_0}\nabla\phi\cdot\a\nabla V&=\int_{\cu_0}\phi\,d\nu=0&&\bigl(\phi\in C^\infty_c(\cu_0\setminus\tfrac34\overline{\cu}_0)\bigr)\,.
\end{align*}
With the restrictions of Proposition~\ref{p.weighted.calculus}\ref{i.weighted.domains},~$u-V$ is therefore a nonnegative solution in~$\frac34\cu_0$, and~$V$ is a nonnegative solution in every cube contained in~$\cu_0\setminus\frac34\overline{\cu}_0$.

\smallskip

\emph{Step 3.} A bound for~$V$ away from~$\frac34\cu_0$. Let~$e_1$ be the first coordinate vector and set
\begin{equation*}
	y_0\coloneqq\tfrac{34}{81}e_1,\qquad Q_1\coloneqq y_0+\cu_{-3},\qquad Q_0\coloneqq\tfrac12Q_1=y_0+\tfrac12\cu_{-3}\,.
\end{equation*}
For~$x\in\overline Q_1$,
\begin{equation*}
	\tfrac38<\tfrac{65}{162}\leq x_1\leq\tfrac{71}{162}<\tfrac{15}{32},\qquad |x_j|\leq\tfrac3{162}<\tfrac{15}{32}\quad(j\geq2)\,.
\end{equation*}
Thus~$\overline Q_1\subset\frac{15}{16}\cu_0\setminus\frac34\overline{\cu}_0$. Since~$y_0\in3^{-4}\Zd$, the rescaling argument before~\eqref{e.interior.weak.harnack} applies to~$Q_1$ with~$n=4$, and~$V$ is a nonnegative solution in~$Q_1$ by Step~2. Corollary~\ref{c.lr.bound} on~$Q_1$ with~$\eta=\nf r4$,~$\rho_1=\nf12$ and~$\rho_2=1$, then~$V\leq u$ and~\eqref{e.interior.weak.harnack} give
\begin{equation}
	\esssup_{Q_0}V\leq\exp\bigl(C\sqrt\Theta\bigr)\norm{V}_{L^{\nf r4}(Q_1)}\leq\exp\bigl(C\sqrt\Theta\bigr)\norm{u}_{L^{\nf r4}(\frac{15}{16}\cu_0)}\leq\exp\bigl(C\sqrt\Theta\bigr)\essinf_{\frac12\cu_0}u\,.
	\label{e.remote.bound}
\end{equation}

\smallskip

\emph{Step 4.} The capacitary function. We call an element of~$\Hzero{\a}(\cu_0)$ admissible if it is at least one almost everywhere on~$Q_0$. The admissible functions form a convex set, which is closed because convergence in~$H^1_\a(\cu_0)$ implies convergence in~$L^1(\cu_0)$, and nonempty by Step~5. Since~$\Ener_{\cu_0}^{\nf12}$ is a Hilbert norm on~$\Hzero{\a}(\cu_0)$, there is a unique admissible~$\psi$ of least energy, the capacitary function of~$Q_0$. The truncation~$\psi_+\wedge1$ is admissible and, by~\eqref{e.weighted.truncation.gradients}, has no larger energy, so uniqueness gives~$\psi=\psi_+\wedge1$. Thus~$0\leq\psi\leq1$, and~$\psi=1$ almost everywhere on~$Q_0$.

For~$g\in\Hzero{\a}(\cu_0)$, the function~$\psi+\eps g$ is admissible for every~$\eps>0$ if~$g\geq0$, and for every~$\eps\in\R$ if~$g=0$ almost everywhere on~$Q_0$. Comparing energies gives
\begin{equation*}
	\int_{\cu_0}\nabla g\cdot\a\nabla\psi\geq0\quad\mbox{if }g\geq0\,,\qquad\int_{\cu_0}\nabla g\cdot\a\nabla\psi=0\quad\mbox{if }g=0\mbox{ a.e. on }Q_0\,.
\end{equation*}
In particular,~$\psi$ is a nonnegative supersolution in~$\cu_0$.

Now let~$z\in\Hzero{\a}(\cu_0)$ be nonnegative, with~$z\leq L$ almost everywhere on~$Q_0$ for a constant~$L\geq0$. Then~$(z-L\psi)_+\in\Hzero{\a}(\cu_0)$ vanishes almost everywhere on~$Q_0$, and
\begin{equation*}
	L\psi-z\wedge(L\psi)=(L\psi-z)_+\geq0\,,\qquad z=z\wedge(L\psi)+(z-L\psi)_+\,.
\end{equation*}
We obtain
\begin{equation}
	0\leq\int_{\cu_0}\nabla z\cdot\a\nabla\psi=\int_{\cu_0}\nabla\bigl(z\wedge(L\psi)\bigr)\cdot\a\nabla\psi\leq L\Ener_{\cu_0}(\psi)\,.
	\label{e.capacitary.test}
\end{equation}

\smallskip

\emph{Step 5.} The energy of~$\psi$. We bound~$\Ener_{\cu_0}(\psi)$ using a piecewise affine function~$f$ and its~$\a$-harmonic replacement on each simplex. By~\eqref{e.upper.dirichlet}, the energy of the replacement is expressed through the matrices~$\a(\triangle)$. Let~$f$ be the continuous function on~$\overline{\cu}_0$ that is affine on each simplex of~$\mathscr T_4$, equal to one at the vertices of the simplices whose closures meet~$\overline Q_0$, and zero at the other vertices. Then~$f=1$ on each simplex whose closure meets~$\overline Q_0$, and~$f=0$ on each simplex without a vertex of value one. Hence~$f=1$ on~$Q_0$, and~$f$ vanishes outside the points within maximum-norm distance~$2\cdot3^{-4}$ of~$Q_0$. These points satisfy
\begin{equation*}
	|x|_\infty<\tfrac{34}{81}+\tfrac1{108}+\tfrac2{81}<\tfrac12\,.
\end{equation*}
So~$f$ is Lipschitz with compact support in~$\cu_0$, and~$f\in\Hzero{\a}(\cu_0)$ by Proposition~\ref{p.weighted.calculus}\ref{i.weighted.products}. Its gradient is bounded by a constant depending only on~$d$.

For~$\triangle\in\mathscr T_4$,
\begin{equation*}
	H_\triangle(f|_\triangle)-f|_\triangle\in\Hzero{\a}(\triangle)\,.
\end{equation*}
By Proposition~\ref{p.weighted.calculus}\ref{i.weighted.domains}, this difference extends by zero to an element of~$\Hzero{\a}(\cu_0)$. Adding these finitely many extensions to~$f$ gives~$f_\a\in\Hzero{\a}(\cu_0)$ with~$f_\a=H_\triangle(f|_\triangle)$ on each~$\triangle$, by~\eqref{e.weighted.zero.extension}. On the simplices where~$f=1$ the representative is the constant one, by Proposition~\ref{p.harmonic.replacement}, so~$f_\a$ is admissible.

Since~$f$ is affine on each simplex,~\eqref{e.upper.dirichlet},~\eqref{e.partition.average}, Jensen's inequality and~\eqref{e.upper.half.series} give
\begin{equation}
	\Ener_{\cu_0}(\psi)\leq\Ener_{\cu_0}(f_\a)=\sum_{\triangle\in\mathscr T_4}|\triangle|\,\nabla f|_\triangle\cdot\a(\triangle)\nabla f|_\triangle\leq C\avsum_{\triangle\in\mathscr T_4}|\a(\triangle)|\leq C\Lambda_{s,1,p}(\cu_0)\,.
	\label{e.capacitary.energy}
\end{equation}

\smallskip

\emph{Step 6.} Conclusion. Since~$\psi$ is a nonnegative supersolution in~$\cu_0$ and~$\psi=1$ on~$Q_0\subseteq\frac{15}{16}\cu_0$, the bound~\eqref{e.interior.weak.harnack} gives
\begin{equation*}
	|Q_0|^{\nf4r}\leq\exp(C\sqrt\Theta)\essinf_{\frac{15}{16}\cu_0}\psi\,.
\end{equation*}
Therefore~$\psi\geq\exp(-C\sqrt\Theta)$ almost everywhere on~$\frac34\cu_0$. To bound~$\nu(\psi)$ from below, choose~$\phi_j\in C^\infty_c(\frac34\cu_0)$ with~$0\leq\phi_j\leq1$ increasing to one on~$\frac34\cu_0$. Then~$\phi_j\psi\in\Hzero{\a}(\cu_0)$ by Proposition~\ref{p.weighted.calculus}\ref{i.weighted.products}, so~$(1-\phi_j)\psi$ and~$\phi_j(\psi-\exp(-C\sqrt\Theta))$ are nonnegative elements of~$\Hzero{\a}(\cu_0)$, and~\eqref{e.measure.evaluation} gives
\begin{equation*}
	\nu(\psi)\geq\nu(\phi_j\psi)\geq\exp\bigl(-C\sqrt\Theta\bigr)\nu(\phi_j)=\exp\bigl(-C\sqrt\Theta\bigr)\int_{\cu_0}\phi_j\,d\mu\,.
\end{equation*}
By monotone convergence,~$\nu(\psi)\geq\exp(-C\sqrt\Theta)\mu(\frac34\cu_0)$. On the other hand, the equation of~$V$, the symmetry of~$\a$,~\eqref{e.capacitary.test} with~$z=V$ and~$L=\esssup_{Q_0}V$,~\eqref{e.remote.bound} and~\eqref{e.capacitary.energy} give
\begin{equation*}
	\nu(\psi)=\int_{\cu_0}\nabla\psi\cdot\a\nabla V=\int_{\cu_0}\nabla V\cdot\a\nabla\psi\leq\Bigl(\esssup_{Q_0}V\Bigr)\Ener_{\cu_0}(\psi)\leq C\Lambda_{s,1,p}(\cu_0)\exp\bigl(C\sqrt\Theta\bigr)\essinf_{\frac12\cu_0}u\,.
\end{equation*}
The lower and upper estimates for~$\nu(\psi)$ give~\eqref{e.source.mass}.
\end{proof}

% Adapted from Section 5 of weak-harnack-nonradial-endpoint.tex. Uses only Proposition p.endpoint.potential,
% Lemma l.source.mass and e.interior.harnack (from e.harnack, proved in the preceding subsection).
\begin{proof}[Completion of the proof of Theorem~\ref{t.harnack}]
Let~$u\geq0$ be a supersolution in~$\cu_0$. Set~$\mu=-\nabla\cdot\a\nabla u$, let~$\nu$ be its restriction to~$\frac34\cu_0$, and let~$V$ be the potential of~$\nu$. By Lemma~\ref{l.source.mass},~$0\leq V\leq u$,~$u-V$ is a solution in~$\frac34\cu_0$, and~$\nu$ satisfies the hypothesis of Proposition~\ref{p.endpoint.potential}. Also,~$\nu(\cu_0)=\mu(\frac34\cu_0)$. By Proposition~\ref{p.endpoint.potential} and Lemma~\ref{l.source.mass}, since~$\Theta=\Lambda_{s,1,p}(\cu_0)/\lambda_{t,1,q}(\cu_0)$,
\begin{equation*}
	\norm{V}_{L^{\nf{r^*}2}(\cu_0)}\leq C\lambda_{t,1,q}(\cu_0)^{-1}\mu\bigl(\tfrac34\cu_0\bigr)\leq C\Theta\exp\bigl(C\sqrt\Theta\bigr)\essinf_{\frac12\cu_0}u\,.
\end{equation*}
The function~$u-V$ is a nonnegative solution in~$\frac34\cu_0$, so~\eqref{e.interior.harnack} and~$V\geq0$ give
\begin{equation*}
	\esssup_{\frac58\cu_0}(u-V)\leq\exp\bigl(C\sqrt\Theta\bigr)\essinf_{\frac12\cu_0}(u-V)\leq\exp\bigl(C\sqrt\Theta\bigr)\essinf_{\frac12\cu_0}u\,.
\end{equation*}
On~$\frac58\cu_0$ we write~$u=V+(u-V)$. If~$\nf{r^*}2\geq1$, we use the triangle inequality in~$L^{\nf{r^*}2}(\frac58\cu_0)$. If~$\nf{r^*}2<1$, we integrate~$(a+b)^{\nf{r^*}2}\leq a^{\nf{r^*}2}+b^{\nf{r^*}2}$, valid for~$a,b\geq0$, and then use the convexity of~$y\mapsto y^{\nf2{r^*}}$. In both cases, since~$|\frac58\cu_0|$ depends only on~$d$,
\begin{equation*}
	\biggl(\fint_{\frac58\cu_0}u^{\nf{r^*}2}\biggr)^{\nf2{r^*}}\leq C\norm{V}_{L^{\nf{r^*}2}(\cu_0)}+C\esssup_{\frac58\cu_0}(u-V)\,.
\end{equation*}
Inserting the bounds for~$\norm{V}_{L^{\nf{r^*}2}(\cu_0)}$ and~$\esssup_{\frac58\cu_0}(u-V)$ and absorbing~$C\Theta$ into the exponential, since~$\Theta\geq1$, proves~\eqref{e.weak.harnack} for~$\eta=\nf{r^*}2$, with~$C$ depending only on~$d,p,q,s,t$. For~$0<\eta<\nf{r^*}2$, H\"older's inequality gives
\begin{equation*}
	\biggl(\fint_{\frac58\cu_0}u^\eta\biggr)^{\nf1\eta}\leq\biggl(\fint_{\frac58\cu_0}u^{\nf{r^*}2}\biggr)^{\nf2{r^*}}\,,
\end{equation*}
which proves~\eqref{e.weak.harnack} for every~$0<\eta\leq\nf{r^*}2$.
\end{proof}

\section{Coefficient moments and negative regularity}
\label{s.classical.moments}

The simplex bounds~\eqref{e.upper.norm.bound} and~\eqref{e.lower.norm.bound} give parts~\ref{c.classical.moments} and~\ref{c.negative.regularity} of Theorem~\ref{t.coefficient.conditions}. We combine part~\ref{c.negative.regularity} with Lemma~\ref{l.negative.sobolev} to prove part~\ref{c.negative.sobolev}, then apply the regularity theorems to obtain Corollary~\ref{c.sobolev.coefficients}. We finish by comparing the classical moment condition with the hypotheses of Bella and Sch\"affner.

\begin{proof}[Proof of Theorem~\ref{t.coefficient.conditions}\ref{c.classical.moments}]
By~\eqref{e.upper.norm.bound} and~\eqref{e.lower.norm.bound}, and since the norm of an average is at most the average of the norm, Jensen's inequality gives for every simplex
\begin{equation*}
	|\a(\triangle)|^p\leq|(\a)_\triangle|^p\leq\fint_\triangle|\a|^p\qquad\mbox{and}\qquad |\a_*^{-1}(\triangle)|^q\leq|(\a^{-1})_\triangle|^q\leq\fint_\triangle|\a^{-1}|^q\,.
\end{equation*}
By~\eqref{e.partition.average}, the means over~$\mathscr T_k$ are therefore at most~$\int_{\cu_0}|\a|^p$ and~$\int_{\cu_0}|\a^{-1}|^q$; for~$p=\infty$ the maximum over~$\mathscr T_k$ of~$|\a(\triangle)|\leq|(\a)_\triangle|$ is at most~$\norm{\a}_{L^\infty(\cu_0)}$, and likewise for~$q=\infty$. Since~$(1-3^{-s})\sum_k3^{-ks}=1$, the definitions~\eqref{e.upper.half.series}--\eqref{e.lower.half.series} give~\eqref{e.classical.series}, and~\eqref{e.spatial.moment.ratio} gives the bound on~$\Theta$.
\end{proof}

For a constant matrix the comparison is an equality, by the computation after~\eqref{e.lower.half.series}, so the constant one in~\eqref{e.classical.series} is attained.

\smallskip

Part~\ref{c.negative.regularity} follows from the same comparison on each simplex, combined with a comparison of the averages over simplices and over cubes.

\begin{proof}[Proof of Theorem~\ref{t.coefficient.conditions}\ref{c.negative.regularity}]
When~$p$ or~$q$ is infinite, the means over~$\mathscr T_k$ and over the cubes below are replaced by maxima, and the argument is the same. By~\eqref{e.upper.half.series} and~\eqref{e.upper.norm.bound},
\begin{equation*}
	\Lambda_{s,1,p}(\cu_0)\leq(1-3^{-s})^2\biggl(\sum_{k\geq0}3^{-ks}\biggl(\avsum_{\triangle\in\mathscr T_k}|(\a)_\triangle|^p\biggr)^{\nf1{2p}}\biggr)^2\,.
\end{equation*}
Let~$\triangle\in\mathscr T_k$ and let~$Q=z+\cu_{-k}$, with~$z\in3^{-k}\Zd\cap\cu_0$, be the triadic cube that contains it, so that~$|Q|=d!\,|\triangle|$. Since~$\a$ is positive definite almost everywhere,~$(\a)_\triangle\leq|\triangle|^{-1}\int_Q\a=d!\,(\a)_Q$ as quadratic forms, and hence~$|(\a)_\triangle|\leq d!\,|(\a)_Q|$. Each such cube contains~$d!$ simplices of~$\mathscr T_k$ of equal volume, so the~$L^p$ mean over~$\mathscr T_k$ of~$|(\a)_\triangle|$ is at most~$d!$ times the~$L^p$ mean of~$|(\a)_{z+\cu_{-k}}|$ over~$z\in3^{-k}\Zd\cap\cu_0$, and the right side above is at most~$d!\,(1-3^{-s})^2\norm{\a}_{\mathring{\underline B}^{-2s}_{p,\nf12}(\cu_0)}$. The same argument with~\eqref{e.lower.half.series},~\eqref{e.lower.norm.bound},~$\a^{-1}$,~$t$ and~$q$ bounds~$\lambda_{t,1,q}(\cu_0)^{-1}$. Since~$(1-3^{-s})(1-3^{-t})\leq1$,~\eqref{e.spatial.moment.ratio} and~\eqref{e.negative.regularity.series} give the bound on~$\Theta$.
\end{proof}

\begin{proof}[Proof of Theorem~\ref{t.coefficient.conditions}\ref{c.negative.sobolev}]
Let~$C$ denote constants depending only on~$d$,~$p$,~$q$,~$s$,~$t$ and~$\eps$. Since~$\a$ is positive definite almost everywhere and~$0<\eps\leq2s$, Lemma~\ref{l.negative.sobolev} gives
\begin{equation*}
	\norm{\a}_{\mathring{\underline B}^{-2s}_{p,\nf12}(\cu_0)}\leq C\norm{\a}_{W^{-2s+\eps,p}(\cu_0)}\,,
\end{equation*}
and the lemma applied to~$\a^{-1}$, with~$t$ and~$q$ in place of~$s$ and~$p$, bounds the quasi-norm~\eqref{e.negative.regularity.norm} of~$\a^{-1}$ of order~$-2t$ and integrability~$q$ in the same way. Both quasi-norms are therefore finite, and part~\ref{c.negative.regularity}, with~$(1-3^{-s})^2\leq1$ and~$(1-3^{-t})^2\leq1$, gives~\eqref{e.negative.sobolev.series} with the constant~$d!\,C$, which depends only on~$d,p,q,s,t,\eps$. The bound on~$\Theta$ follows from~\eqref{e.spatial.moment.ratio}.
\end{proof}

\begin{proof}[Proof of Corollary~\ref{c.sobolev.coefficients}]
Let~$C$ and~$\gamma$ denote constants depending only on~$d,p,q,\alpha,\beta$. Apply Theorem~\ref{t.coefficient.conditions}\ref{c.negative.sobolev} with
\begin{equation*}
	s\coloneqq\frac\alpha2+\frac\theta4\,,\qquad t\coloneqq\frac\beta2+\frac\theta4\,,\qquad\eps\coloneqq\frac\theta2\,.
\end{equation*}
These choices satisfy~$0<\eps\leq2\min\{s,t\}$, including when~$\alpha=0$ or~$\beta=0$, and~$-2s+\eps=-\alpha$ and~$-2t+\eps=-\beta$. Hence
\begin{equation*}
	\Lambda_{s,1,p}(\cu_0)\leq C\norm{\a}_{W^{-\alpha,p}(\cu_0)}\qand\lambda_{t,1,q}(\cu_0)^{-1}\leq C\norm{\a^{-1}}_{W^{-\beta,q}(\cu_0)}\,.
\end{equation*}
For these indices, the parameter in~\eqref{e.negative.regularity.range} equals~$\nf\theta2>0$. The preceding bounds give
\begin{equation*}
	\frac{\Lambda_{s,1,p}(\cu_0)}{\lambda_{t,1,q}(\cu_0)}\leq C\norm{\a}_{W^{-\alpha,p}(\cu_0)}\norm{\a^{-1}}_{W^{-\beta,q}(\cu_0)}\,.
\end{equation*}
Theorems~\ref{t.cg.local.boundedness} and~\ref{t.harnack} therefore apply with this ratio in place of~$\Theta$. The power~$\nf{(d-1)}{(4\theta)}$ in~\eqref{e.cg.local.boundedness}, with~$\nf\theta2$ in place of~$\theta$, becomes~$\nf{(d-1)}{(2\theta)}$ and gives~\eqref{e.sobolev.local.boundedness}. The Harnack estimate~\eqref{e.harnack} gives~\eqref{e.sobolev.harnack}.
\end{proof}

\paragraph{Comparison with Bella and Sch\"affner.} Section~\ref{ss.intro.discussion} compares the two results in general terms; here we record the precise hypotheses. Bella and Sch\"affner~\cite[Theorems~1--2]{Bella.Schaffner.regularity} prove local boundedness of weak solutions, the Harnack inequality for nonnegative solutions, and local upper boundedness of weak subsolutions, for possibly nonsymmetric coefficients with~$d\geq2$ and~$p,q\in(1,\infty]$. Besides~\eqref{e.classical.moment.range}, their coefficient hypotheses require
\begin{equation*}
	\sup_{e\neq0}\frac{|\a e|^2}{e\cdot\a e}\in L^p(\cu_0)\qquad\mbox{and}\qquad\left(\inf_{e\neq0}\frac{e\cdot\a e}{|e|^2}\right)^{-1}\in L^q(\cu_0)\,.
\end{equation*}
For symmetric~$\a$, diagonalization identifies these functions with~$|\a|$ and~$|\a^{-1}|$. Thus their coefficient hypotheses agree with those in Theorem~\ref{t.coefficient.conditions}\ref{c.classical.moments}, which also allows infinite exponents, and in our symmetric setting with~$d\geq3$ this part reaches their whole moment range. Their solution spaces are completions with the squared norm~$\int_U|\a|u^2+\int_U\nabla u\cdot\a\nabla u$. Ours uses the squared mean and weighted energy (Section~\ref{s.weighted.spaces}). The comparison here concerns the coefficient hypotheses and exponent range.

\section{Sharpness}
\label{s.sharpness}

We prove Theorem~\ref{t.sharpness} and the optimality of the powers of~$\Theta$ in Theorem~\ref{t.cg.local.boundedness} and Corollary~\ref{c.lr.bound}. The first two examples use thin cylinders parallel to the~$x_1$-axis; their volumes are proportional to the~$(d-1)$-st power of their radii. We first bound their volume fractions in simplices and cubes. For Theorem~\ref{t.sharpness}, disjoint cylinders carry nonnegative subsolutions with summable norms, and harmonic replacement gives a positive solution above their sum. For the polynomial bound, we construct~$\a_\eps$ and~$u_\eps$ using one thin cylinder. The positive part of~$u_\eps$ reaches values of order one but is nonzero only in a region of volume of order~$\eps^{d-1}$. A third, radial example, with fundamental solutions made constant near the pole, shows that the range of exponents in the weak Harnack inequality~\eqref{e.weak.harnack} cannot be enlarged.

\smallskip
Throughout this section we write~$x=(x_1,y)$ with~$y\in\R^{d-1}$, and regard functions of~$y$ as functions on~$\cu_0$. Two positive quantities are comparable, written with~$\asymp$, if each is at most~$C$ times the other. Constants~$c,C$ may change from line to line and depend only on~$d$ and the exponents fixed in each statement; in the proof of Theorem~\ref{t.sharpness}, the power~$\kappa$ is determined by those exponents. The constants never depend on the scale~$k$, the cylinder index~$j$, the radii and centers, or the conductivities.

\subsection{Averages of cylinders over simplices}
\label{ss.sharpness.cylinders}

The following upper bounds for cylinder averages will be used for both constructions.

\begin{lemma}[Averages of cylinders over simplices]
\label{l.sharpness.cylinder.averages}
Let~$0<\eps<1$ and~$y_0\in\R^{d-1}$, and define
\begin{equation*}
	E_{\eps,y_0}\coloneqq\{(x_1,y)\in\cu_0:|y-y_0|<\eps\}\,,\qquad f_\triangle\coloneqq\frac{|\triangle\cap E_{\eps,y_0}|}{|\triangle|}\,.
\end{equation*}
For~$1\leq p<\infty$ and~$k\geq0$,
\begin{equation}
	\biggl(\avsum_{\triangle\in\mathscr T_k}f_\triangle^p\biggr)^{\nf1p}\leq C\eps^{d-1}\max\{\eps,3^{-k}\}^{-(d-1)(1-\nf1p)}\,,
	\label{e.sharpness.cylinder.scales}
\end{equation}
and, for~$s>0$ and~$0<\nu<d-1$ with~$\nu\leq\nf{(d-1)}p+2s$,
\begin{equation}
	\sum_{k\geq0}3^{-ks}\biggl(\avsum_{\triangle\in\mathscr T_k}f_\triangle^p\biggr)^{\nf1{2p}}\leq C\eps^{\nf\nu2}\,.
	\label{e.sharpness.cylinder.discount}
\end{equation}
The constants depend only on~$d$ and~$p$, and in~\eqref{e.sharpness.cylinder.discount} also on~$s$ and~$\nu$. The same bounds hold for the fractions of the triadic subcubes of~$\cu_0$ of side length~$3^{-k}$ occupied by~$E_{\eps,y_0}$, with the arithmetic mean over these subcubes.
\end{lemma}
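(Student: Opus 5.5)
The plan is a direct counting argument at each scale: bound the number of simplices that meet the cylinder and the fraction of each one it occupies, then sum over scales.

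\emph{Per-scale bound.} Fix $k$ and write $\ell\coloneqq3^{-k}$. We have $0\leq f_\triangle\leq1$ and, by \eqref{e.partition.average}, $\avsum_{\triangle\in\mathscr T_k}f_\triangle=|E_{\eps,y_0}|\leq C\eps^{d-1}$. Let $\mathscr N$ be the set of simplices with $f_\triangle>0$. Each of them lies in a triadic cube of side $\ell$ whose $y$-projection meets the $(d-1)$-ball $B(y_0,\eps)$. The number of such $y$-cells is at most $C\max\{\eps,\ell\}^{d-1}\ell^{-(d-1)}$, and each cell carries $3^k$ cubes along the $x_1$-axis. Hence the fraction of simplices in $\mathscr N$ is $\#\mathscr N/(d!3^{kd})\leq C\max\{\eps,\ell\}^{d-1}$.

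Each $f_\triangle$ is also bounded by the fraction of its cube occupied by the cylinder, which is at most $C\min\{1,(\eps/\ell)^{d-1}\}$ since $|\triangle|=|Q|/d!$. Using $f_\triangle^p\leq(\max_\triangle f_\triangle)^{p-1}f_\triangle$ gives
\begin{equation*}
\avsum_{\triangle\in\mathscr T_k} f_\triangle^p\leq C\min\{1,(\eps/\ell)^{d-1}\}^{p-1}\eps^{d-1}.
\end{equation*}
We then split into two cases. If $\ell\geq\eps$, the right side is $C\eps^{(d-1)p}\ell^{-(d-1)(p-1)}$. If $\ell<\eps$, it is $C\eps^{d-1}=C\eps^{(d-1)p}\eps^{-(d-1)(p-1)}$. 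Taking $p$-th roots gives \eqref{e.sharpness.cylinder.scales} in both cases.

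\emph{Summation over scales.} From \eqref{e.sharpness.cylinder.scales}, the $k$-th term of \eqref{e.sharpness.cylinder.discount} is at most $C3^{-ks}\eps^{(d-1)/2}\max\{\eps,3^{-k}\}^{-(d-1)(1-1/p)/2}$. Let $K$ be the scale with $3^{-K}\asymp\eps$. For $k\geq K$ the terms are $C\eps^{(d-1)/(2p)}3^{-ks}$, which sum to $C\eps^{(d-1)/(2p)}\eps^{s}=C\eps^{(d-1)/(2p)+s}$. For $k<K$ the terms are $C\eps^{(d-1)/2}3^{-k(s-(d-1)(1-1/p)/2)}$, a geometric sum with three possible behaviors:
\begin{itemize}
\item If the exponent $s-(d-1)(1-1/p)/2$ is positive, the sum is dominated by $k=0$ and is $C\eps^{(d-1)/2}$.
\item If it is negative, the sum is dominated by $k=K$ and is $C\eps^{(d-1)/(2p)+s}$.
\item If it is zero, the sum is $C\eps^{(d-1)/2}\log(1/\eps)$.
\end{itemize}
In every case the total is at most $C\eps^{\nu/2}$ whenever $\nu\leq(d-1)/p+2s$ and $\nu<d-1$, using $\eps<1$. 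The strict inequality $\nu<d-1$ is what absorbs the logarithm in the borderline case.

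\emph{Cubes.} The same argument applies verbatim to the triadic subcubes of side $3^{-k}$ with the arithmetic mean over them, and the resulting bounds differ only by dimensional constants.

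The only step needing care is the counting of cells and the borderline logarithmic case. Both are routine, and the hypothesis $\nu<d-1$ is exactly what handles the logarithm.
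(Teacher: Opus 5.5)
Your proof is correct and follows the paper's argument. The per-scale bound uses the same three ingredients as the paper: the mean of $f_\triangle$ equals $|E_{\eps,y_0}|$, the pointwise bound $f_\triangle\leq C\min\{1,(3^k\eps)^{d-1}\}$ holds, and $f_\triangle^p\leq(\max f)^{p-1}f_\triangle$; your count of the simplices in $\mathscr N$ is never used. For the sum over scales, the paper avoids your three-way case split (and the logarithm) by inserting $s\geq\frac12(\nu-\frac{d-1}p)$ directly. This bounds each coarse-scale term by $C\eps^{(d-1)/2}3^{k(d-1-\nu)/2}$, a geometric series with positive ratio exponent that is dominated by its last term $C\eps^{\nu/2}$.
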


\begin{proof}
Since~$E_{\eps,y_0}$ lies in the product of~$(-\nf12,\nf12)$ and the ball of radius~$\eps$ about~$y_0$, one has~$|E_{\eps,y_0}|\leq C\eps^{d-1}$. Each simplex has volume~$\nf{3^{-kd}}{d!}$ and axial extent at most~$3^{-k}$, and by~\eqref{e.partition.average} the arithmetic mean over~$\mathscr T_k$ is the volume-weighted sum. Thus
\begin{equation*}
	\avsum_{\triangle\in\mathscr T_k}f_\triangle=|E_{\eps,y_0}|\leq C\eps^{d-1}\,,\qquad f_\triangle\leq\min\{1,C(3^k\eps)^{d-1}\}\leq C\eps^{d-1}\max\{\eps,3^{-k}\}^{-(d-1)}\,.
\end{equation*}
Hence~$\avsum_{\triangle\in\mathscr T_k}f_\triangle^p\leq(\max_{\triangle\in\mathscr T_k}f_\triangle)^{p-1}\avsum_{\triangle\in\mathscr T_k}f_\triangle$, and taking the~$p$-th root proves~\eqref{e.sharpness.cylinder.scales}. This argument also applies to cubes, whose volumes are~$3^{-kd}$.

\smallskip
For~$3^{-k}\geq\eps$, the summand in~\eqref{e.sharpness.cylinder.discount} is at most~$C\eps^{\nf{(d-1)}2}3^{\nf{k(d-1-\nu)}2}$, since~$s\geq\frac12(\nu-\nf{(d-1)}p)$. The geometric sum over these scales is at most~$C\eps^{\nf\nu2}$ because~$d-1-\nu>0$. For~$3^{-k}<\eps$, the sum is at most~$C\eps^{\nf{(d-1)}{2p}}\eps^s\leq C\eps^{\nf\nu2}$, since~$\eps<1$ and~$s+\nf{(d-1)}{(2p)}\geq\nf\nu2$. Adding these bounds proves~\eqref{e.sharpness.cylinder.discount}.
\end{proof}

\subsection{The coefficient range}
\label{ss.sharpness.range}

\begin{proof}[Proof of Theorem~\ref{t.sharpness}]
We construct a nonnegative subsolution in one cylinder, estimate its weighted norm, and bound the averages of the coefficient field and of its inverse cylinder by cylinder. Its axial growth is independent of the radius, which can therefore be chosen to make a sequence of these subsolutions summable. Harmonic replacement gives a positive solution above their sum.

\smallskip
\emph{Step 1.} Powers, profiles and the field. Since~$2(1-\theta_0)=\nf{(d-1)}p+\alpha+\nf{(d-1)}q+\beta$, set
\begin{equation*}
	\kappa\coloneqq\frac{\nf{(d-1)}p+\alpha}{1-\theta_0}\,,\qquad 2-\kappa=\frac{\nf{(d-1)}q+\beta}{1-\theta_0}\,.
\end{equation*}
Both numerators are positive and~$1-\theta_0\geq1$. Thus~$0<\kappa<2$ and
\begin{equation}
	\kappa\leq\frac{d-1}p+\alpha\,,\qquad 2-\kappa\leq\frac{d-1}q+\beta\,.
	\label{e.sharpness.scalar.powers}
\end{equation}

\smallskip
For~$j\geq1$, put~$b_j\coloneqq2^{-j-2}$ and~$y_j\coloneqq(b_j,0,\ldots,0)\in\R^{d-1}$. We first allow any~$0<\varrho_j<\nf{b_j}{(16d)}$ and set~$a_j^+\coloneqq b_j\varrho_j^{-\kappa}$ and~$a_j^-\coloneqq b_j^{-1}\varrho_j^{2-\kappa}$. Define
\begin{equation}
	\psi_j(\varrho)\coloneqq\begin{cases}1+\dfrac{d-1}{2b_j^2}(\varrho_j^2-\varrho^2)&\mbox{if }0\leq \varrho\leq\varrho_j\,,\\(2-\varrho/\varrho_j)^{d-1}&\mbox{if }\varrho_j<\varrho<2\varrho_j\,,\\0&\mbox{if }\varrho\geq2\varrho_j\,,\end{cases}
	\label{e.sharpness.scalar.profile}
\end{equation}
and
\begin{equation}
	v_j(x_1,y)\coloneqq j\exp\biggl(\frac{d-1}{b_j}(x_1+\nf12)\biggr)\psi_j(|y-y_j|)\,.
	\label{e.sharpness.scalar.subsolution}
\end{equation}
The profile~\eqref{e.sharpness.scalar.profile} is continuous, satisfies~$0\leq\psi_j\leq2$, and is at least one on~$[0,\varrho_j]$. In expressions involving transverse derivatives or integrals, we also write~$\psi_j$ for the function~$y\mapsto\psi_j(|y-y_j|)$.

\smallskip
For~$i>j$, the centers have distance at least~$\nf{b_j}2$, whereas~$2\varrho_j+2\varrho_i<\nf{3b_j}{(16d)}<\nf{b_j}2$, so the closed transverse balls are disjoint. They lie in the transverse cube and accumulate only at zero. Define
\begin{equation}
	a(y)\coloneqq\begin{cases}a_j^+&\mbox{if }|y-y_j|<\varrho_j\mbox{ for some }j\,,\\a_j^-&\mbox{if }\varrho_j<|y-y_j|<2\varrho_j\mbox{ for some }j\,,\\1&\mbox{otherwise}\,,\end{cases}\qquad\a(x_1,y)\coloneqq a(y)I\,.
	\label{e.sharpness.scalar.field}
\end{equation}

\smallskip
\emph{Step 2.} The subsolution and its energy. We call~$E_{\varrho_j,y_j}$ the inner cylinder and~$\{x\in\cu_0:\varrho_j<|y-y_j|<2\varrho_j\}$ the annulus; by~\eqref{e.sharpness.scalar.field},~$a=a_j^+$ on the first and~$a=a_j^-$ on the second. In the inner cylinder,~$\Delta_y\psi_j=-\nf{(d-1)^2}{b_j^2}$, so the function~$v_j$ in~\eqref{e.sharpness.scalar.subsolution} satisfies
\begin{equation*}
	\nabla\cdot(\a\nabla v_j)=a_j^+j\exp\biggl(\frac{d-1}{b_j}(x_1+\nf12)\biggr)\frac{(d-1)^2}{b_j^2}(\psi_j-1)\geq0\,.
\end{equation*}
In the annulus, the transverse Laplacian at radius~$\varrho$ is
\begin{equation*}
	\psi_j''(\varrho)+\frac{d-2}{\varrho}\psi_j'(\varrho)=\frac{2(d-1)(d-2)}{\varrho_j^2}(2-\varrho/\varrho_j)^{d-3}\Bigl(1-\frac{\varrho_j}{\varrho}\Bigr)\geq0\,,
\end{equation*}
and the axial second derivative is nonnegative. At~$\varrho=\varrho_j$, the radial fluxes agree because
\begin{equation*}
	a_j^+\psi_j'(\varrho_j-)=-\frac{(d-1)a_j^+\varrho_j}{b_j^2}=-\frac{(d-1)a_j^-}{\varrho_j}=a_j^-\psi_j'(\varrho_j+)\,.
\end{equation*}
At~$\varrho=2\varrho_j$, both radial derivatives are zero because~$d\geq3$. The radial function is smooth at its center. Let~$0\leq\varphi\in C^\infty_c(\cu_0)$; it vanishes near~$\partial\cu_0$, which contains the axial ends of the cylinder. Integrating by parts in the inner cylinder and in the annulus, the volume terms~$-\int\varphi\nabla\cdot(\a\nabla v_j)$ are nonpositive by the two inequalities above, the terms on~$\{|y-y_j|=\varrho_j\}$ cancel because the radial fluxes agree, and those on~$\{|y-y_j|=2\varrho_j\}$ vanish. Hence~$\int_{\cu_0}\nabla\varphi\cdot\a\nabla v_j\leq0$.

\smallskip
On the inner ball~$|\psi_j'|\leq C\nf{\varrho_j}{b_j^2}$, and on the annulus~$|\psi_j'|\leq\nf C{\varrho_j}$. Both regions have volume at most~$C\varrho_j^{d-1}$. The annular contribution to the first integral below and the inner contribution to the second are smaller than the respective bounds by the factor~$\nf{\varrho_j^2}{b_j^2}<1$. Thus
\begin{equation*}
	\int a\psi_j^2\leq Cb_j\varrho_j^{d-1-\kappa}\,,\qquad\int a|\nabla_y\psi_j|^2\leq Cb_j^{-1}\varrho_j^{d-1-\kappa}\,.
\end{equation*}
Here the integrals are over~$(-\nf12,\nf12)^{d-1}$. The axial factor and its derivative are bounded by~$j\exp(\nf{(d-1)}{b_j})$ and~$j(d-1)b_j^{-1}\exp(\nf{(d-1)}{b_j})$. Consequently, since~$(\nf{(d-1)}{b_j})^2b_j=(d-1)^2b_j^{-1}$,
\begin{equation}
	|(v_j)_{\cu_0}|^2+\Ener_{\cu_0}(v_j)\leq Cj^2\exp\bigl(\nf{2(d-1)}{b_j}\bigr)b_j^{-1}\varrho_j^{d-1-\kappa}\,.
	\label{e.sharpness.scalar.summability}
\end{equation}
For the mean, use~$|(v_j)_{\cu_0}|\leq Cj\exp(\nf{(d-1)}{b_j})\varrho_j^{d-1}$,~$b_j<1$ and~$2(d-1)>d-1-\kappa$. Constants in these estimates depend only on~$d$ and are independent of~$j$ and~$\varrho_j$.

\smallskip
\emph{Step 3.} Radii and coefficient bounds. Now fix each~$\varrho_j$ small enough that
\begin{equation}
	\varrho_j<\frac{b_j}{16d}\,,\qquad a_j^+\geq1\geq a_j^-\,,\qquad j^2\exp\bigl(\nf{2(d-1)}{b_j}\bigr)b_j^{-1}\varrho_j^{d-1-\kappa}\leq2^{-j}\,.
	\label{e.sharpness.scalar.radii}
\end{equation}
This is possible because~$0<\kappa<2$ and~$d-1-\kappa>0$. The resulting field is measurable and positive, and
\begin{equation}
	a\leq1+\sum_{j\geq1}b_j\varrho_j^{-\kappa}\indc_{E_{\varrho_j,y_j}}\,,\qquad a^{-1}\leq1+\sum_{j\geq1}b_j\varrho_j^{-(2-\kappa)}\indc_{E_{2\varrho_j,y_j}}\,.
	\label{e.sharpness.scalar.majorants}
\end{equation}
Integrating the majorants~\eqref{e.sharpness.scalar.majorants} gives powers~$\varrho_j^{d-1-\kappa}$ and~$\varrho_j^{d-3+\kappa}$, whose exponents are positive because~$0<\kappa,2-\kappa<2\leq d-1$. Since~$\varrho_j<1$ and~$\sum_jb_j<\infty$, one has~$a,a^{-1}\in L^1(\cu_0)$ and hence~$\tr\a,\tr\a^{-1}\in L^1(\cu_0)$.

\smallskip
For cube averages,~$|(\a)_{z+\cu_{-k}}|=(a)_{z+\cu_{-k}}$ and~$|(\a^{-1})_{z+\cu_{-k}}|=(a^{-1})_{z+\cu_{-k}}$. Lemma~\ref{l.sharpness.cylinder.averages} applies because~$2\varrho_j<1$,~$0<\kappa,2-\kappa<d-1$, and~\eqref{e.sharpness.scalar.powers} holds. For~$\alpha>0$, apply~\eqref{e.sharpness.cylinder.discount} for cubes with exponent~$p$, scale discount~$s=\nf\alpha2$ and~$\nu=\kappa$. For a finite partial sum of the first majorant, Minkowski's inequality for the spatial~$p$-mean, square-root subadditivity and~\eqref{e.sharpness.cylinder.discount} bound the contribution of cylinder~$j$ by~$C(b_j\varrho_j^{-\kappa})^{\nf12}\varrho_j^{\nf\kappa2}=Cb_j^{\nf12}$. The partial majorants increase; monotone convergence passes their cube averages and nonnegative scale sums to the limit, and the bound passes to~$a$, which is at most the full majorant. By~\eqref{e.negative.regularity.norm},
\begin{equation}
	\norm{\a}_{\mathring{\underline B}^{-\alpha}_{p,\nf12}(\cu_0)}^{\nf12}\leq\frac1{1-3^{-\nf\alpha2}}+C\sum_{j\geq1}b_j^{\nf12}<\infty\,.
	\label{e.sharpness.scalar.upper.norm}
\end{equation}
For~$\beta>0$, apply~\eqref{e.sharpness.cylinder.discount} to the second majorant for cubes with exponent~$q$, scale discount~$s=\nf\beta2$,~$\nu=2-\kappa$ and radius~$2\varrho_j$. The contribution is at most~$C(b_j\varrho_j^{-(2-\kappa)})^{\nf12}(2\varrho_j)^{\nf{(2-\kappa)}2}\leq Cb_j^{\nf12}$. Applying spatial Minkowski, square-root subadditivity and monotone convergence as above gives
\begin{equation}
	\norm{\a^{-1}}_{\mathring{\underline B}^{-\beta}_{q,\nf12}(\cu_0)}^{\nf12}\leq\frac1{1-3^{-\nf\beta2}}+C\sum_{j\geq1}b_j^{\nf12}<\infty\,.
	\label{e.sharpness.scalar.lower.norm}
\end{equation}
The constants here may also depend on the fixed exponents of Theorem~\ref{t.sharpness}.

\smallskip
If~$\alpha=0$, Minkowski's inequality and~\eqref{e.sharpness.scalar.powers} give~$\norm{\a}_{L^p(\cu_0)}\leq1+C\sum_jb_j\varrho_j^{\nf{(d-1)}p-\kappa}\leq1+C\sum_jb_j<\infty$. If~$\beta=0$, they give~$\norm{\a^{-1}}_{L^q(\cu_0)}\leq1+C\sum_jb_j\varrho_j^{\nf{(d-1)}q-(2-\kappa)}\leq1+C\sum_jb_j<\infty$. These arguments apply independently, including when both orders vanish; equality in either inequality of~\eqref{e.sharpness.scalar.powers} is allowed. Together with~\eqref{e.sharpness.scalar.upper.norm} and~\eqref{e.sharpness.scalar.lower.norm}, this proves the assertions of Theorem~\ref{t.sharpness} on the coefficient field.

\smallskip
\emph{Step 4.} Convergence and harmonic replacement. Each~$v_j$ is Lipschitz on~$\overline{\cu}_0$. Mollifications of a Lipschitz extension converge uniformly, with uniformly bounded gradients converging almost everywhere. The energy densities of the approximation errors are bounded by four times the squared Lipschitz constant of the extension times~$a\in L^1(\cu_0)$. Dominated convergence and convergence of the means give convergence in the norm~\eqref{e.weighted.norm}. By Proposition~\ref{p.weighted.calculus}\ref{i.weighted.identification} the limit is~$v_j$, so~$v_j\in H^1_\a(\cu_0)$, and Step~2 makes it a subsolution. By~\eqref{e.sharpness.scalar.summability} and~\eqref{e.sharpness.scalar.radii},~$\sum_j\norm{v_j}_{H^1_\a(\cu_0)}\leq C\sum_j2^{-\nf j2}<\infty$. The series converges in~$H^1_\a(\cu_0)$ and, by Proposition~\ref{p.weighted.calculus}\ref{i.weighted.identification}, in~$L^1(\cu_0)$. Monotone convergence identifies the limit with~$v\coloneqq\sum_jv_j$. Continuity of the pairing~\eqref{e.weighted.pairing} passes the subsolution inequality to~$v$.

\smallskip
Let~$H_{\cu_0}v$ be given by Proposition~\ref{p.harmonic.replacement}, and put~$w\coloneqq H_{\cu_0}v-v\in\Hzero{\a}(\cu_0)$. Its negative part~$(-w)_+$ belongs to~$\Hzero{\a}(\cu_0)$ by Proposition~\ref{p.weighted.calculus}\ref{i.weighted.products}. By~\eqref{e.weighted.truncation.gradients},~$\nabla(-w)_+=-\indc_{\{w<0\}}\nabla w$, so the orthogonality~\eqref{e.harmonic.orthogonality} and the testing inequality~\eqref{e.weighted.testing} give
\begin{equation*}
	\Ener_{\cu_0}((-w)_+)=-\int_{\cu_0}\nabla(-w)_+\cdot\a\nabla w=\int_{\cu_0}\nabla(-w)_+\cdot\a\nabla v\leq0\,.
\end{equation*}
By~\eqref{e.weighted.coercivity},~$(-w)_+=0$ and~$w\geq0$. Hence~$u\coloneqq1+H_{\cu_0}v\in\Csol(\cu_0)$ satisfies~$u\geq1+v\geq1$ almost everywhere.

\smallskip
On~$E_{\varrho_j,y_j}$, one has~$v_j\geq j$. Every neighborhood of a point~$(x_1,0)$ with~$|x_1|<\nf12$ contains the product of a nonempty open subinterval of~$(-\nf12,\nf12)$ and a transverse ball about zero. Since~$|y_j|+\varrho_j\to0$, this product contains a positive-measure subset of~$E_{\varrho_j,y_j}$ for every sufficiently large~$j$. On it~$u\geq1+j$, proving the asserted unboundedness near every point of the segment and in~$\frac12\cu_0$. Finally,~$u\geq1$ and~$u\in L^1(\cu_0)$ by Proposition~\ref{p.weighted.calculus}\ref{i.weighted.identification}, so~$0<\essinf_{\frac12\cu_0}u<\infty$.
\end{proof}

\smallskip
Bella and Sch\"affner~\cite[Section~3]{Bella.Schaffner.plaplace} also alternate large and small scalar conductivities about a line, on coaxial annuli, and obtain unbounded subsolutions that are an exponential in~$x_1$ times a function of~$|y|$. For the linear equation their Theorem~2 includes the endpoint~$\nf1p+\nf1q=\nf2{(d-1)}$ only when~$d\geq4$, and they note that their construction fails at this endpoint when~$d=3$. Here the cylinders are disjoint, and harmonic replacement turns the sum of the subsolutions into a positive solution.

\subsection{The polynomial bound}
\label{ss.sharpness.polynomial}

A single anisotropic cylinder shows that the powers~$\nf{(d-1)}{(4\theta)}$ in Theorem~\ref{t.cg.local.boundedness} and~$\nf{(d-1)}{(2\eta\theta)}$ in Corollary~\ref{c.lr.bound} cannot be lowered. The solution changes sign, and its positive part has height of order one and transverse radius of order~$\eps$.

\begin{proposition}[Sharpness of the polynomial bound]
\label{p.sharpness.polynomial}
Let~$d\geq3$, and let~$1<p,q<\infty$ and~$s,t>0$ satisfy~$\theta>0$, with~$\theta$ as in~\eqref{e.intro.parameters}. There are symmetric coefficient fields~$\a_\eps$ on~$\cu_0$, for~$0<\eps<\nf18$, each uniformly elliptic with ellipticity constants depending on~$\eps$, and solutions~$u_\eps\in\Csol(\cu_0)$ for~$\a=\a_\eps$, such that
\begin{equation}
	\sup_{0<\eps<\nf18}\int_{\cu_0}\bigl(\tr\a_\eps+\tr\a_\eps^{-1}\bigr)<\infty\,.
	\label{e.sharpness.polynomial.traces}
\end{equation}
For these fields, write~$\Lambda_\eps\coloneqq\Lambda_{s,1,p}(\cu_0)$,~$\lambda_\eps\coloneqq\lambda_{t,1,q}(\cu_0)$, and~$\Theta_\eps\coloneqq\nf{\Lambda_\eps}{\lambda_\eps}$. Then
\begin{equation}
	\Lambda_\eps\asymp\eps^{-2\theta}\,,\qquad\lambda_\eps\asymp1\,,\qquad\Theta_\eps\asymp\eps^{-2\theta}\,.
	\label{e.sharpness.polynomial.constants}
\end{equation}
The positive parts~$(u_\eps)_+$ belong to~$\Csub(\cu_0)$, and for every~$\eta>0$ and~$\nf12\leq\rho_1<\rho_2\leq1$,
\begin{equation}
	\norm{(u_\eps)_+}_{L^\infty(\rho_1\cu_0)}=1+\frac{\rho_1^2}4\qquad\mbox{and}\qquad\norm{(u_\eps)_+}_{L^\eta(\rho_2\cu_0)}\asymp\eps^{\nf{(d-1)}\eta}\,.
	\label{e.sharpness.polynomial.norms}
\end{equation}
The constants in~\eqref{e.sharpness.polynomial.constants} depend only on~$d,p,q,s,t$, and those in~\eqref{e.sharpness.polynomial.norms} also on~$\eta$; none depends on~$\eps$,~$\rho_1$ or~$\rho_2$. Consequently, for fixed~$\nf12\leq\rho_1<\rho_2\leq1$ and~$0<\eta\leq2$, and every real~$\upsilon<\nf{(d-1)}{(2\eta\theta)}$,
\begin{equation}
	\frac{\norm{(u_\eps)_+}_{L^\infty(\rho_1\cu_0)}}{\Theta_\eps^\upsilon\norm{(u_\eps)_+}_{L^\eta(\rho_2\cu_0)}}\longrightarrow\infty\qquad\mbox{as }\eps\downarrow0\,.
	\label{e.sharpness.polynomial.failure}
\end{equation}
\end{proposition}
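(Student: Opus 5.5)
The plan is to build $\a_\eps$ from a single thin anisotropic cylinder about the $x_1$-axis. The conductivities are chosen so that, by the scale bookkeeping of Lemma~\ref{l.sharpness.cylinder.averages}, the upper constant sees the cylinder with exactly the growth $\eps^{-\theta}$ in $\Lambda_\eps^{\nf12}$, while the lower constant stays of order one. I would set $\kappa\coloneqq2(1-\sigma_*)=2(\theta+\sigma)\in(0,2)$ and, on $E_\eps\coloneqq E_{\eps,0}$, take
\begin{equation*}
\a_\eps\coloneqq\operatorname{diag}\bigl(\eps^{-\kappa},\,c_d\eps^{2-\kappa},\ldots,c_d\eps^{2-\kappa}\bigr)\,.
\end{equation*}
Outside $E_\eps$ I would take $\a_\eps=I$, possibly with a transition annulus as in Step~2 of the proof of Theorem~\ref{t.sharpness}. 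The ratio $\eps^2$ between transverse and axial conductivity is what allows an exact polynomial solution. Inside the cylinder I would take
\begin{equation*}
u_\eps=1+x_1^2-\frac{|y|^2}{\eps^2}\,,
\end{equation*}
which is $\a_\eps$-harmonic when $c_d=\nf1{(d-1)}$. Outside the cylinder I would continue $u_\eps$ by
\begin{equation*}
x_1^2-\frac{|y|^2}{d-1}+\beta\,G(|y|)+\gamma\,,
\end{equation*}
with $G$ the radial fundamental solution in $\R^{d-1}$, which is harmonic for $\a=I$. The constants $\beta,\gamma$ are fixed by continuity of $u_\eps$ and of the radial flux at $|y|=\eps$, exactly as the flux matching in Step~2 of the proof of Theorem~\ref{t.sharpness}. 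One then checks that the continuation is negative, so $(u_\eps)_+$ is supported in $E_\eps$. Then $(u_\eps)_+$ is a subsolution by Lemma~\ref{l.weighted.testing}, $u_\eps\in\Csol$, and \eqref{e.sharpness.polynomial.traces} holds because $\eps^{-\kappa}|E_\eps|\asymp\eps^{d-1-\kappa}$ and $\eps^{\kappa-2}|E_\eps|$ are bounded, since $\kappa<2\le d-1$.

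The norms \eqref{e.sharpness.polynomial.norms} are then explicit. The supremum of $(u_\eps)_+$ on $\rho_1\cu_0$ is attained on the axis at $|x_1|=\nf{\rho_1}2$, giving $1+\nf{\rho_1^2}4$. The positive part is at least $\nf12$ on $E_{\eps/2,0}$ and at most $2$ on $E_\eps$, so its $L^\eta$ norm is comparable to $\eps^{\nf{(d-1)}\eta}$.

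For \eqref{e.sharpness.polynomial.constants} I would estimate the coarse-grained matrices from both sides.

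\emph{Upper bounds.} I would use \eqref{e.upper.norm.bound} and \eqref{e.lower.norm.bound} together with Lemma~\ref{l.sharpness.cylinder.averages} applied to the majorants $1+\eps^{-\kappa}\indc_{E_\eps}$ and $1+C\eps^{\kappa-2}\indc_{E_\eps}$. For $\Lambda$ the lemma is applied with $\nu=\kappa$; the only constraint is $\kappa\le\nf{(d-1)}p+2s$. Because we want the sharp power rather than boundedness, I would instead compute the scale sum directly. At scales $3^{-k}\ge\eps$ the summand is $\eps^{-\kappa/2}\eps^{(d-1)/2}3^{k(d-1)(1-1/p)/2}3^{-ks}$, and at scales $3^{-k}<\eps$ it is $\eps^{-\kappa/2}\eps^{(d-1)/(2p)}3^{-ks}$. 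Both pieces are maximized at $3^{-k}\approx\eps$, where the summand is $\eps^{-\kappa/2+\sigma}=\eps^{-\theta}$, using $(d-1)(1-\nf1p)>2s$ or a separate handling of that case. This gives $\Lambda_\eps\lesssim\eps^{-2\theta}$. The identical computation for $\a_*^{-1}$ gives $\lambda_\eps^{-\nf12}\lesssim1+\eps^{(\kappa-2)/2+\sigma_*}=2$.

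\emph{Lower bounds.} The comparison \eqref{e.moment.comparison} gives $\lambda_\eps\lesssim1$. For $\Lambda_\eps\gtrsim\eps^{-2\theta}$ I would look at a simplex $\triangle\in\mathscr T_k$ with $3^{-k}\asymp\eps$ that meets $E_\eps$ in a fixed fraction. I would bound $e_1\cdot\a(\triangle)e_1$ from below through the dual variational principle: the lower matrix of $\triangle$ is at most its upper matrix by \eqref{e.matrix.ordering}. It then suffices to show $e_1\cdot\a_*(\triangle)e_1\gtrsim\eps^{-\kappa}$, i.e.\ that the axial mean gradient is controlled by energy divided by $\eps^{-\kappa}$. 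I would try to obtain this by a slicing argument along $x_1$ lines in the cylinder portion of $\triangle$. This lower bound for the upper matrix on a simplex is the step I expect to be the main obstacle, since the simplex also contains the unit-conductivity exterior, which could short-circuit the axial direction. If the direct estimate fails, the fallback is to use the Voigt--Reuss-type bounds only in the axial direction, where the cylinder crosses the simplex from face to face.

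Finally, \eqref{e.sharpness.polynomial.failure} follows by arithmetic. The ratio is comparable to $\eps^{2\theta\upsilon-\nf{(d-1)}\eta}$, and this tends to infinity as $\eps\downarrow0$ precisely when $\upsilon<\nf{(d-1)}{(2\eta\theta)}$.
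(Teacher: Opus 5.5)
Your scaling agrees with the paper's: with $\kappa=2(1-\sigma_*)$ your conductivities are $a_{\parallel,\eps}$ and $a_{\perp,\eps}$ up to the factor $d-1$. Your upper bounds, the trace bound and the final arithmetic also go through. The construction, however, fails as written. You make $\a_\eps$ anisotropic exactly on $E_\eps$, which is the radius where the interior profile $|y|^2/\eps^2$ reaches one. So $u_\eps=x_1^2\geq0$ on $|y|=\eps$, and the exterior continuation is not negative. Flux matching forces $\beta G'(\eps)=\frac{2}{d-1}(\eps-\eps^{1-\kappa})$. Hence outside the cylinder $u_\eps=x_1^2+h(|y|)$ with $h(\varrho)=-\frac{\varrho^2-\eps^2}{d-1}+O(\eps^{2-\kappa}\log\frac1\eps)$, and the fundamental-solution term is negligible. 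It follows that $u_\eps\gtrsim1$ on a set such as $\{\frac{\rho_2}4<|x_1|<\frac{\rho_2}2,\ |y|<\frac{\rho_2}4\}$, so $\norm{(u_\eps)_+}_{L^\eta(\rho_2\cu_0)}\asymp1$ and the example collapses. The repair, which is the paper's choice, is to make the anisotropic region strictly larger than the positivity set of the interior profile. Put $\a_\eps=\operatorname{diag}(a_{\parallel,\eps},a_{\perp,\eps}I_{d-1})$ on $|y|<2\eps$ and keep $u_\eps=1+x_1^2-|y|^2/\eps^2$ there. Then $u_\eps=x_1^2-3<0$ on $|y|=2\eps$. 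The exterior profile $F_\eps$ with $\Delta_yF_\eps=2$ and $F_\eps'(2\eps+)=a_{\perp,\eps}F_\eps'(2\eps-)>0$ is increasing, so $(u_\eps)_+=(1+x_1^2-|y|^2/\eps^2)_+$ is supported in $|y|<\sqrt{5/4}\,\eps$.

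The second gap is the lower bound $\Lambda_\eps\gtrsim\eps^{-2\theta}$. Your primary route through $\a_*(\triangle)\le\a(\triangle)$ cannot work, because the supremum in \eqref{e.lower.all.functions} has no boundary constraint. Take a simplex of size $\asymp\eps$ meeting both phases in fixed fractions. The test function $(x_1-c)\psi(y)$, with $\psi=0$ on the cylinder and $\psi=1$ slightly outside it, carries an axial mean gradient of order one through the unit-conductivity phase at energy $O(|\triangle|)$. Combined with $\a_*^{-1}(\triangle)\geq((\a_\eps)_\triangle)^{-1}$, whose transverse entries are at least one, this gives $\a_*(\triangle)\le CI$. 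This is exactly the short circuit you feared, and the Reuss bound points the wrong way as well. A direct lower bound is easy if you use $\a(\triangle)$ itself. For $3^{-k}\asymp\eps$ with a small enough constant, at least $c\,3^{kd}\eps^{d-1}$ simplices of $\mathscr T_k$ lie inside the anisotropic cylinder, where the field is constant, so $\a(\triangle)=\a_\eps$ there. The $k$-th term of \eqref{e.upper.half.series} is then at least $c\,3^{-ks}\eps^{\nf{(d-1)}{2p}}a_{\parallel,\eps}^{\nf12}\asymp\eps^{-\theta}$. The paper computes no lower bound at all. Having $\Lambda_\eps\le C\eps^{-2\theta}$ and $\lambda_\eps^{-1}\le C$, it applies Theorem~\ref{t.cg.local.boundedness} to the explicit solution: $1\le C\Theta_\eps^{\nf{(d-1)}{4\theta}}\norm{(u_\eps)_+}_{L^2(\frac34\cu_0)}\le C\Theta_\eps^{\nf{(d-1)}{4\theta}}\eps^{\nf{(d-1)}2}$. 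This gives $\Theta_\eps\ge c\eps^{-2\theta}$, and then $\Lambda_\eps=\Theta_\eps\lambda_\eps\ge c\eps^{-2\theta}$ and $\lambda_\eps=\Lambda_\eps/\Theta_\eps\le C$. This step also relies on the $L^2$ smallness that the first repair restores.
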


\smallskip
By~\eqref{e.sharpness.polynomial.constants}, each~$\a_\eps$ satisfies the coarse ellipticity condition~\eqref{e.spatial.moment.range}. For fixed~$\rho_1$ and~$\rho_2$, the factors~$(\rho_2-\rho_1)^{-\gamma}$ in~\eqref{e.cg.local.boundedness} and~$(\rho_2-\rho_1)^{-\nf{2\gamma}\eta}$ in~\eqref{e.lr.bound} do not depend on~$\eps$. Hence~\eqref{e.sharpness.polynomial.failure}, with~$\eta=2$ and with~$0<\eta<2$, shows that the powers~$\nf{(d-1)}{(4\theta)}$ in~\eqref{e.cg.local.boundedness} and~$\nf{(d-1)}{(2\eta\theta)}$ in~\eqref{e.lr.bound} are optimal for estimates of the positive parts of the sign-changing solutions~$u_\eps$, for each fixed~$d,p,q,s,t,\rho_1,\rho_2$.

\begin{proof}[Proof of Proposition~\ref{p.sharpness.polynomial}]
We construct the solution, estimate its positive part, and bound~$\Lambda_\eps$ and~$\lambda_\eps^{-1}$ from above by coefficient averages. Theorem~\ref{t.cg.local.boundedness} then gives the matching lower bounds.

\smallskip
\emph{Step 1.} The field and solution. The condition~$\theta>0$ gives~$0<s+\nf{(d-1)}{2p}<1$ and~$0<t+\nf{(d-1)}{2q}<1$. Define
\begin{equation*}
	a_{\perp,\eps}\coloneqq\eps^{2t+\nf{(d-1)}q}\,,\qquad a_{\parallel,\eps}\coloneqq(d-1)\eps^{2t+\nf{(d-1)}q-2}\,,
\end{equation*}
and
\begin{equation}
	\a_\eps(x_1,y)\coloneqq\begin{cases}\operatorname{diag}(a_{\parallel,\eps},a_{\perp,\eps}I_{d-1})&\mbox{if }|y|<2\eps\,,\\I&\mbox{if }|y|\geq2\eps\,.\end{cases}
	\label{e.sharpness.polynomial.field}
\end{equation}
Then~$0<a_{\perp,\eps}<1<a_{\parallel,\eps}<\infty$, so the field~\eqref{e.sharpness.polynomial.field} is uniformly elliptic for each fixed~$\eps$. The contributions of~$E_{2\eps,0}$ to~$\int_{\cu_0}\tr\a_\eps$ and~$\int_{\cu_0}\tr\a_\eps^{-1}$ are at most constants times~$\eps^{d-3+2t+\nf{(d-1)}q}$ and~$\eps^{d-1-2t-\nf{(d-1)}q}$, respectively. Both exponents are positive, which proves~\eqref{e.sharpness.polynomial.traces}.

\smallskip
Define~$F_\eps(\varrho)\coloneqq\nf{\varrho^2}{\eps^2}$ for~$0\leq \varrho\leq2\eps$, and continue~$F_\eps$ continuously from~$F_\eps(2\eps)=4$ by
\begin{equation}
	\varrho^{d-2}F_\eps'(\varrho)=\frac{2a_{\perp,\eps}(2\eps)^{d-1}}{\eps^2}+\frac2{d-1}\bigl(\varrho^{d-1}-(2\eps)^{d-1}\bigr)\qquad(\varrho>2\eps)\,.
	\label{e.sharpness.polynomial.profile}
\end{equation}
Set~$u_\eps(x_1,y)\coloneqq1+x_1^2-F_\eps(|y|)$. Inside the cylinder,
\begin{equation*}
	\nabla\cdot(\a_\eps\nabla u_\eps)=2a_{\parallel,\eps}-\frac{2(d-1)a_{\perp,\eps}}{\eps^2}=0\,.
\end{equation*}
Outside, differentiation of~\eqref{e.sharpness.polynomial.profile} gives~$\Delta_y(F_\eps(|y|))=2$, so~$\Delta u_\eps=0$. At~$\varrho=2\eps$,
\begin{equation*}
	F_\eps'(2\eps+)=a_{\perp,\eps}F_\eps'(2\eps-)\,,
\end{equation*}
so the normal fluxes agree. Integration by parts therefore proves the weak equation on~$\cu_0$. For fixed~$\eps$, the profile has bounded derivative on~$[0,\nf{\sqrt{d-1}}2]$, so~$u_\eps$ is Lipschitz. Mollifying a Lipschitz extension gives convergence in~$H^1_{\a_\eps}(\cu_0)$ since~$\a_\eps$ is bounded, with limit~$u_\eps$ by Proposition~\ref{p.weighted.calculus}\ref{i.weighted.identification}. Hence~$u_\eps\in\Csol(\cu_0)$, and Lemma~\ref{l.weighted.testing}\ref{i.testing.truncation} gives~$(u_\eps)_+\in\Csub(\cu_0)$.

\smallskip
\emph{Step 2.} The positive part and its norms. The right side of~\eqref{e.sharpness.polynomial.profile} is positive for~$\varrho>2\eps$, so~$F_\eps'(\varrho)>0$ there and~$F_\eps(\varrho)\geq F_\eps(2\eps)=4$, whereas~$1+x_1^2\leq\nf54$ on~$\cu_0$. Thus~$u_\eps$ changes sign and, on all of~$\cu_0$,
\begin{equation}
	(u_\eps)_+(x_1,y)=\Bigl(1+x_1^2-\frac{|y|^2}{\eps^2}\Bigr)_+\,.
	\label{e.sharpness.polynomial.positive}
\end{equation}
By~\eqref{e.sharpness.polynomial.positive}, the positive part is at most~$\nf54$, vanishes unless~$|y|<\sqrt{\nf54}\,\eps$, and is at least~$\nf34$ when~$|y|<\nf\eps2$. These transverse balls lie in~$(-\nf{\rho_2}2,\nf{\rho_2}2)^{d-1}$ for~$\eps<\nf18$ and~$\rho_2\geq\nf12$. Integrating these bounds over the axial interval of length~$\rho_2$ gives, for every~$\eta>0$,
\begin{equation*}
	c\eps^{d-1}\leq\int_{\rho_2\cu_0}(u_\eps)_+^\eta\leq C\eps^{d-1}\,.
\end{equation*}
The constants depend only on~$d,\eta$, uniformly in~$\rho_2\in[\nf12,1]$. Taking the~$\nf1\eta$ power gives the asserted~$L^\eta$ comparison. The essential supremum on~$\rho_1\cu_0$ is~$1+\nf{\rho_1^2}4$: there~$(u_\eps)_+\leq1+x_1^2<1+\nf{\rho_1^2}4$, and every smaller value is exceeded on a set of positive measure near~$(\pm\nf{\rho_1}2,0)$. This proves~\eqref{e.sharpness.polynomial.norms}, including~$\rho_2=1$.

\smallskip
\emph{Step 3.} The coarse ellipticity constants and ratio. Use the fractions~$f_\triangle$ of Lemma~\ref{l.sharpness.cylinder.averages} for~$E_{2\eps,0}$. The averages of~$\a_\eps$ and~$\a_\eps^{-1}$ over~$\triangle$ are diagonal, with largest entries~$1+(a_{\parallel,\eps}-1)f_\triangle$ and~$1+(a_{\perp,\eps}^{-1}-1)f_\triangle$, respectively. For the coarse-grained matrices~$\a(\triangle)$ and~$\a_*(\triangle)$ of~$\a=\a_\eps$, the bounds~\eqref{e.upper.norm.bound} and~\eqref{e.lower.norm.bound} give
\begin{equation*}
	|\a(\triangle)|\leq1+a_{\parallel,\eps}f_\triangle\,,\qquad|\a_*^{-1}(\triangle)|\leq1+a_{\perp,\eps}^{-1}f_\triangle\,.
\end{equation*}
By spatial Minkowski, square-root subadditivity and~$(1-3^{-s})\sum_{k\geq0}3^{-ks}=1$, the definition~\eqref{e.upper.half.series} gives
\begin{equation*}
	\Lambda_\eps^{\nf12}\leq1+a_{\parallel,\eps}^{\nf12}\sum_{k\geq0}3^{-ks}\biggl(\avsum_{\triangle\in\mathscr T_k}f_\triangle^p\biggr)^{\nf1{2p}}\leq1+Ca_{\parallel,\eps}^{\nf12}\eps^{s+\nf{(d-1)}{2p}}=1+C\sqrt{d-1}\,\eps^{-\theta}\,.
\end{equation*}
The second inequality is~\eqref{e.sharpness.cylinder.discount} with~$\nu=2s+\nf{(d-1)}p\in(0,2)\subseteq(0,d-1)$. The same estimate with~$q$,~$t$ and~$2t+\nf{(d-1)}q$ in place of~$p$,~$s$ and~$\nu$, together with~\eqref{e.lower.half.series}, gives~$\lambda_\eps^{-\nf12}\leq1+Ca_{\perp,\eps}^{-\nf12}\eps^{t+\nf{(d-1)}{2q}}=1+C$. Hence
\begin{equation*}
	\Lambda_\eps\leq C\eps^{-2\theta}\,,\qquad\lambda_\eps^{-1}\leq C\,,\qquad\Theta_\eps\leq C\eps^{-2\theta}\,.
\end{equation*}

\smallskip
These bounds verify the coefficient hypotheses of Theorem~\ref{t.cg.local.boundedness}, and~$u_\eps$ is a solution. By~\eqref{e.cg.local.boundedness} with~$\rho_1=\nf12$,~$\rho_2=\nf34$, and by~\eqref{e.sharpness.polynomial.norms} with~$\eta=2$,
\begin{equation*}
	1\leq\norm{(u_\eps)_+}_{L^\infty(\frac12\cu_0)}\leq C\Theta_\eps^{\nf{(d-1)}{4\theta}}\norm{(u_\eps)_+}_{L^2(\frac34\cu_0)}\leq C\Theta_\eps^{\nf{(d-1)}{4\theta}}\eps^{\nf{(d-1)}2}\,.
\end{equation*}
Thus~$\Theta_\eps\geq c\eps^{-2\theta}$. Combining this with the preceding bounds gives~$\Lambda_\eps=\Theta_\eps\lambda_\eps\geq c\eps^{-2\theta}$ and~$\lambda_\eps=\nf{\Lambda_\eps}{\Theta_\eps}\leq C$, proving all three comparisons in~\eqref{e.sharpness.polynomial.constants}. The constants depend only on~$d,p,q,s,t$, as those of Theorem~\ref{t.cg.local.boundedness} do.

\smallskip
For every fixed real~$\upsilon$, including negative~$\upsilon$, the two-sided comparison for~$\Theta_\eps$ gives~$\Theta_\eps^\upsilon\asymp\eps^{-2\theta\upsilon}$. By~\eqref{e.sharpness.polynomial.norms}, the ratio in~\eqref{e.sharpness.polynomial.failure} is therefore comparable to~$\eps^{2\theta\upsilon-\nf{(d-1)}\eta}$ for fixed~$\rho_1,\rho_2,\eta,\upsilon$, with constants depending also on~$\upsilon$. It tends to infinity exactly when~$\upsilon<\nf{(d-1)}{(2\eta\theta)}$.
\end{proof}

\subsection{The exponent of the weak Harnack inequality}
\label{ss.sharpness.weak.harnack}

Under the hypotheses of Theorem~\ref{t.cg.local.boundedness}, Theorem~\ref{t.harnack} gives the weak Harnack inequality~\eqref{e.weak.harnack} for~$0<\eta\leq\eta_c$, where
\begin{equation*}
	\eta_c\coloneqq\frac{r^*}2=\frac d{d-2+2t+\nf dq}\,;
\end{equation*}
the last expression follows from~\eqref{e.sobolev.exponents} and~$r=\nf{2q}{(q+1)}$. We show that this range cannot be enlarged, for any fixed admissible choice of~$d,p,q,s,t$. In this subsection we write~$\varrho=|x|$ for the radial variable, and set~$\nu\coloneqq2t+\nf dq$; thus~$\eta_c=\nf d{(d-2+\nu)}$.

\smallskip
The coefficient field vanishes at the origin like~$|x|^\nu$, up to a logarithmic factor. On the simplices of~$\mathscr T_k$ near the origin, of total volume of order~$3^{-kd}$, the average of its inverse is of order~$3^{k\nu}$. Apart from the logarithm, these simplices therefore contribute a term of order~$3^{-kt}3^{k(\frac\nu2-\frac d{2q})}=1$ to the series~\eqref{e.lower.half.series}, and the third power of the logarithm in the field makes the series converge. Each supersolution is the fundamental solution with pole at the origin, of size~$\varrho^{-(d-2+\nu)}$ up to a logarithm, made constant in a ball~$B(0,\eps)$. The value~$\eps^{-(d-2+\nu)}$ on a ball of volume of order~$\eps^d$ makes the moments of order~$\eta>\nf d{(d-2+\nu)}=\eta_c$ unbounded as~$\eps\downarrow0$, whereas the infimum on~$\frac12\cu_0$ does not depend on~$\eps$.

\begin{proposition}[Sharpness of the weak Harnack exponent]
\label{p.sharpness.weak.harnack}
Let~$d\geq3$, and let~$1<p,q<\infty$ and~$s,t>0$ satisfy~$\theta>0$, with~$\theta$ as in~\eqref{e.intro.parameters}. There are a coefficient field~$\a(x)=a(|x|)I$ on~$\cu_0$ with~$\tr\a,\tr\a^{-1}\in L^1(\cu_0)$,~$\Lambda_{s,1,p}(\cu_0)<\infty$ and~$\lambda_{t,1,q}(\cu_0)>0$, and nonnegative supersolutions~$u_\eps$ in~$\cu_0$, for~$0<\eps<\nf18$, such that~$\essinf_{\frac12\cu_0}u_\eps>0$ does not depend on~$\eps$ and, for every~$\eta>\eta_c$,
\begin{equation}
	\frac1{\essinf_{\frac12\cu_0}u_\eps}\biggl(\fint_{\frac58\cu_0}u_\eps^\eta\biggr)^{\nf1\eta}\longrightarrow\infty\qquad\mbox{as }\eps\downarrow0\,.
	\label{e.sharpness.weak.harnack.failure}
\end{equation}
\end{proposition}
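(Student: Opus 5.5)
Following the heuristic stated just before the proposition, I would take a radial scalar field $a(\varrho)=\varrho^{\nu}\log(e/\varrho)^{3}$ near the origin, with $\nu=2t+\nf dq$, made equal to one (or kept comparable to one) away from the origin, say $a(\varrho)=\min\{1,c\varrho^\nu\log(e/\varrho)^3\}$ extended by a positive constant on $\cu_0$. The supersolutions are radial: let $G(\varrho)$ solve $(\varrho^{d-1}a(\varrho)G'(\varrho))'=0$ with $\varrho^{d-1}aG'=-1$, normalized by $G(\nf{\sqrt d}2)=0$. Then $G(\varrho)=\int_\varrho^{\sqrt d/2}\frac{d\varsigma}{\varsigma^{d-1}a(\varsigma)}$, which is of order $\varrho^{-(d-2+\nu)}\log(e/\varrho)^{-3}$ as $\varrho\downarrow0$, since $d-2+\nu>0$. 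I set $u_\eps\coloneqq 1+\min\{G,G(\eps)\}$, which is the minimum of two $\a$-harmonic functions in $\cu_0\setminus\{0\}$ and is constant near the pole. It is Lipschitz on $\overline{\cu}_0$, so mollification and Proposition~\ref{p.weighted.calculus}\ref{i.weighted.identification} put it in $H^1_\a(\cu_0)$. The supersolution property follows by integrating by parts on $\{\varrho<\eps\}$ and $\{\varrho>\eps\}$: the flux across $\{\varrho=\eps\}$ has the correct sign because the outer radial derivative is negative and the inner one vanishes. Equivalently, $u_\eps$ is a truncation from above of a supersolution. Since $\eps<\nf18$ and $\frac12\cu_0$ reaches radius $\nf14$, the infimum over $\frac12\cu_0$ is attained at the corners, where $\varrho=\nf{\sqrt d}4>\eps$. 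It equals $1+G(\nf{\sqrt d}4)$, which is independent of $\eps$.

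For the divergence, $u_\eps\geq G(\eps)\geq c\eps^{-(d-2+\nu)}\log(e/\eps)^{-3}$ on $B(0,\eps)$, so
\begin{equation*}
\fint_{\frac58\cu_0}u_\eps^\eta\geq c\eps^{d-\eta(d-2+\nu)}\log(e/\eps)^{-3\eta}\,,
\end{equation*}
which tends to infinity exactly when $\eta>\nf d{(d-2+\nu)}=\eta_c$. This is \eqref{e.sharpness.weak.harnack.failure}.

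Integrability of the coefficients is immediate: $a$ is bounded, and $a^{-1}\leq C\varrho^{-\nu}$ with $\nu<d$, because $2t+\nf dq<2t+\nf{(d-1)}q+1<2$ when $\theta>0$. For the coarse ellipticity constants I would invoke Theorem~\ref{t.coefficient.conditions}\ref{c.classical.moments} for the upper constant, which gives $\Lambda_{s,1,p}(\cu_0)\leq\norm{a}_{L^p}<\infty$. For the lower constant, I would use $|\a_*^{-1}(\triangle)|\leq(a^{-1})_\triangle$ from \eqref{e.lower.norm.bound} and estimate $\avsum_{\triangle\in\mathscr T_k}(a^{-1})_\triangle^q$ scale by scale. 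The key step is to split the simplices according to their distance from the origin, $\dist(\triangle,0)\asymp 3^{-j}$ with $j\leq k$, and use $(a^{-1})_\triangle\leq C3^{j\nu}j^{-3}$ there. The $O(1)$ simplices touching the origin have average of order $3^{k\nu}k^{-3}$ (the exponent $\nu<d$ makes the average finite) and volume fraction $3^{-kd}$. This gives
\begin{equation*}
\avsum_{\triangle\in\mathscr T_k}(a^{-1})_\triangle^q\leq C+C\sum_{j\leq k}3^{-jd}3^{jq\nu}j^{-3q}\,.
\end{equation*}
Because $q\nu-d=2tq>0$, the sum is dominated by its last term, of order $3^{2tqk}k^{-3q}$. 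Taking the $\nf1{2q}$ power and multiplying by $3^{-kt}$ leaves a term of order $k^{-3/2}$, which is summable. This shows $\lambda_{t,1,q}(\cu_0)>0$.

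The main obstacle is this scale-by-scale bound: both the geometric sum over $j$ and the logarithmic correction have to be handled carefully so that the $k$-th term is $O(k^{-3/2})$ rather than $O(1)$. The same power of the logarithm also shrinks $G(\eps)$ only logarithmically, which is why the divergence above survives.
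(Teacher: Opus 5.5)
Your proposal is correct and is essentially the paper's proof. It uses the same radial field $\varrho^\nu\log^3$, the same fundamental solution frozen on $B(0,\eps)$ (with the flux across $\partial B(0,\eps)$ giving the supersolution property), and the same near/far splitting at each scale, which makes the $k$-th term in \eqref{e.lower.half.series} of order $(1+k)^{-\nf32}$. The paper treats the far simplices by Jensen's inequality and a polar integral of $a^{-q}$ split at radius $\sqrt d\,3^{-\nf k2}$; that splitting is exactly what justifies your claim that the last term of the $j$-sum dominates.

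A few minor slips need fixing:
\begin{itemize}
\item $\nu$ may exceed $2$. One only has $\nu<2+\nf1q<3\leq d$, which is all you need.
\item Theorem~\ref{t.coefficient.conditions}\ref{c.classical.moments} does not apply verbatim, because $a^{-q}\notin L^1(\cu_0)$. Bound $|\a(\triangle)|\leq(a)_\triangle\leq\sup a$ directly instead.
\item $j^{-3}$ should read $(1+j)^{-3}$.
\item Your formula $\min\{1,c\varrho^\nu\log^3(e/\varrho)\}$ degenerates near $\varrho=e$, which lies in $\cu_0$ for large $d$. Use $\log(e\sqrt d/\varrho)$ as the paper does.
\end{itemize}
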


\smallskip
The field and the supersolutions are the same for every~$\eta>\eta_c$. Since~$\Theta<\infty$ for this field,~\eqref{e.sharpness.weak.harnack.failure} shows that~\eqref{e.weak.harnack} fails for every~$\eta>\eta_c$ with any finite constant, even one depending on~$\a$. Hence the range~$0<\eta\leq\eta_c$ of Theorem~\ref{t.harnack} is optimal for each fixed~$d,p,q,s,t$. This concerns the exponent only; the coefficient range and the growth of the constant in~$\Theta$ are treated by Theorem~\ref{t.sharpness} and by the example after Theorem~\ref{t.harnack}.

\begin{proof}[Proof of Proposition~\ref{p.sharpness.weak.harnack}]
We bound the coarse ellipticity constants of the field through the averages of~$a$ and~$a^{-1}$, show that the functions~$u_\eps$ are supersolutions with an explicit source, and compare their moments with their infimum. The constants depend only on~$d,p,q,s,t$, and in Step~4 also on~$\eta$; they never depend on~$k$ or~$\eps$.

\smallskip
\emph{Step 1.} The field and its upper constant. Since~$\nu=2\sigma_*+\nf1q$ and~$\sigma_*<1$ by~\eqref{e.parameter.facts},
\begin{equation}
	0<\nu<2+\frac1q<3\leq d\qquad\mbox{and}\qquad\nu q-d=2tq>0\,.
	\label{e.sharpness.weak.harnack.exponents}
\end{equation}
Let~$R\coloneqq\sqrt d$, so that~$|x|<\nf R2$ for~$x\in\cu_0$, and define
\begin{equation*}
	a(\varrho)\coloneqq\varrho^\nu\log^3(eR/\varrho)\quad(0<\varrho\leq R)\,,\qquad\a(x)\coloneqq a(|x|)I\quad(x\in\cu_0\setminus\{0\})\,,
\end{equation*}
and~$\a(0)\coloneqq I$. The logarithm is at least one on~$(0,R]$, so~$a$ is positive and continuous there, and~$a(\varrho)\to0$ as~$\varrho\downarrow0$ because~$\nu>0$; thus~$a$ is bounded. Moreover~$a(\varrho)^{-1}\leq\varrho^{-\nu}$, and~$|x|^{-\nu}$ is integrable near the origin because~$\nu<d$. Hence~$\tr\a=da(|x|)$ and~$\tr\a^{-1}=da(|x|)^{-1}$ belong to~$L^1(\cu_0)$. The field is fixed from now on; it depends neither on~$\eps$ nor on~$\eta$.

\smallskip
For a simplex~$\triangle$ we write~$(a)_\triangle$ and~$(a^{-1})_\triangle$ for the averages of~$a(|x|)$ and~$a(|x|)^{-1}$ over~$\triangle$, so that~$(\a)_\triangle=(a)_\triangle I$ and~$(\a^{-1})_\triangle=(a^{-1})_\triangle I$. By~\eqref{e.upper.norm.bound} and~\eqref{e.lower.norm.bound},
\begin{equation}
	|\a(\triangle)|\leq(a)_\triangle\leq\sup_{(0,R]}a\,,\qquad|\a_*^{-1}(\triangle)|\leq(a^{-1})_\triangle\,.
	\label{e.sharpness.weak.harnack.matrices}
\end{equation}
Since the weights~$(1-3^{-s})3^{-ks}$ in~\eqref{e.upper.half.series} sum to one, the first bound gives~$\Lambda_{s,1,p}(\cu_0)\leq\sup_{(0,R]}a<\infty$.

\smallskip
\emph{Step 2.} The lower constant. We show that, for every~$k\geq0$,
\begin{equation}
	\avsum_{\triangle\in\mathscr T_k}\bigl((a^{-1})_\triangle\bigr)^q\leq C3^{k(\nu q-d)}(1+k)^{-3q}\,.
	\label{e.sharpness.weak.harnack.inverse}
\end{equation}
The simplices of~$\mathscr T_k$ lie in cubes of side length~$3^{-k}$, so they have diameter at most~$R3^{-k}$. By~\eqref{e.simplex.count} with~$x=0$ and~$N=R$, at most~$C$ of them have closures meeting~$B(0,R3^{-k})$, and these lie in~$B(0,2R3^{-k})$. There~$\log(eR/|x|)\geq1-\log2+k\log3\geq(1-\log2)(1+k)$, so polar coordinates and~$\nu<d$ give, for each of these simplices,
\begin{equation*}
	(a^{-1})_\triangle\leq\frac{d!\,3^{kd}}{(1-\log2)^3(1+k)^3}\int_{B(0,2R3^{-k})}|x|^{-\nu}\,dx\leq C3^{k\nu}(1+k)^{-3}\,.
\end{equation*}
By~\eqref{e.partition.average}, their contribution to the left side of~\eqref{e.sharpness.weak.harnack.inverse} is at most~$C3^{k(\nu q-d)}(1+k)^{-3q}$. Here the averages over simplices are essential: since~$\nu q>d$ by~\eqref{e.sharpness.weak.harnack.exponents}, the function~$a(|x|)^{-q}$ is not integrable near the origin. The other simplices of~$\mathscr T_k$ do not meet~$B(0,R3^{-k})$. For them, Jensen's inequality bounds~$((a^{-1})_\triangle)^q$ by the average of~$a(|x|)^{-q}$ over~$\triangle$, and~\eqref{e.partition.average}, polar coordinates and~$d-1-\nu q=-1-2tq$ bound their contribution by
\begin{equation*}
	\int_{\cu_0\setminus B(0,R3^{-k})}\frac{dx}{a(|x|)^q}\leq C\int_{R3^{-k}}^R\frac{d\varrho}{\varrho^{1+2tq}\log^{3q}(eR/\varrho)}\,.
\end{equation*}
The logarithm is at least~$1+\frac k2\log3\geq\frac12(1+k)$ for~$\varrho\leq R3^{-\nf k2}$, and at least one for~$\varrho\leq R$. Splitting the integral at~$R3^{-\nf k2}$, we bound it by
\begin{equation*}
	\frac{2^{3q}(R3^{-k})^{-2tq}}{2tq(1+k)^{3q}}+\frac{(R3^{-\nf k2})^{-2tq}}{2tq}\leq C3^{2ktq}(1+k)^{-3q}\,,
\end{equation*}
since~$3^{-ktq}(1+k)^{3q}$ is bounded in~$k$. As~$2tq=\nu q-d$ by~\eqref{e.sharpness.weak.harnack.exponents}, this proves~\eqref{e.sharpness.weak.harnack.inverse}.

\smallskip
By~\eqref{e.sharpness.weak.harnack.matrices} and~\eqref{e.sharpness.weak.harnack.inverse}, and since~$\nf\nu2-\nf d{2q}=t$,
\begin{equation*}
	3^{-kt}\biggl(\avsum_{\triangle\in\mathscr T_k}|\a_*^{-1}(\triangle)|^q\biggr)^{\nf1{2q}}\leq C3^{-kt}3^{k(\frac\nu2-\frac d{2q})}(1+k)^{-\nf32}=C(1+k)^{-\nf32}\,.
\end{equation*}
Summing over~$k$, the definition~\eqref{e.lower.half.series} gives~$\lambda_{t,1,q}(\cu_0)^{-\nf12}\leq C(1-3^{-t})\sum_{k\geq0}(1+k)^{-\nf32}<\infty$, so~$\lambda_{t,1,q}(\cu_0)>0$.

\smallskip
\emph{Step 3.} The supersolutions. For~$0<\varrho\leq R$ and~$0<\eps<\nf18$, let
\begin{equation*}
	G(\varrho)\coloneqq\int_\varrho^R\frac{d\tau}{\tau^{d-1+\nu}\log^3(eR/\tau)}\,,\qquad u_\eps(x)\coloneqq G(\max\{|x|,\eps\})\quad(x\in\cu_0)\,.
\end{equation*}
The function~$G$ is decreasing, vanishes at~$R$, and satisfies~$a(\varrho)G'(\varrho)=-\varrho^{1-d}$. Since~$\max\{|x|,\eps\}<R$ on~$\cu_0$, the function~$u_\eps$ is positive there. It equals~$G(\eps)$ on~$B(0,\eps)$, and~$\nabla u_\eps(x)=G'(|x|)\nf x{|x|}$ for~$|x|>\eps$. Hence
\begin{equation*}
	\a\nabla u_\eps(x)=\begin{cases}0&\mbox{if }|x|<\eps\,,\\-|x|^{1-d}\dfrac x{|x|}&\mbox{if }|x|>\eps\,.\end{cases}
\end{equation*}
The vector field~$x|x|^{-d}$ is divergence free in~$\Rd\setminus\{0\}$. For~$\varphi\in C^\infty_c(\cu_0)$, the divergence theorem in~$\{|x|>\eps\}$, whose outer unit normal on the sphere~$\partial B(0,\eps)\subseteq\cu_0$ is~$-\nf x{|x|}$, gives
\begin{equation}
	\int_{\cu_0}\nabla\varphi\cdot\a\nabla u_\eps=-\int_{\{|x|>\eps\}}\nabla\varphi\cdot\frac x{|x|^d}\,dx=\eps^{1-d}\int_{\partial B(0,\eps)}\varphi\,d\mathcal H^{d-1}\,.
	\label{e.sharpness.weak.harnack.source}
\end{equation}
Thus~$-\nabla\cdot\a\nabla u_\eps$ is the nonnegative measure~$\eps^{1-d}\mathcal H^{d-1}$ on~$\partial B(0,\eps)$, of total mass~$\mathcal H^{d-1}(\partial B(0,1))$.

\smallskip
For fixed~$\eps$, the function~$x\mapsto G(\min\{\max\{|x|,\eps\},R\})$ is a Lipschitz extension of~$u_\eps$ to~$\Rd$. Its mollifications converge to it uniformly, so their means over~$\cu_0$ converge, and their gradients are bounded by its Lipschitz constant and converge almost everywhere. The energy densities of the approximation errors are therefore bounded by four times the squared Lipschitz constant times~$a(|x|)$, which is integrable, and dominated convergence gives convergence in the norm~\eqref{e.weighted.norm}. By Proposition~\ref{p.weighted.calculus}\ref{i.weighted.identification} the limit is~$u_\eps$, so~$u_\eps\in H^1_\a(\cu_0)$, and~\eqref{e.sharpness.weak.harnack.source} shows that~$u_\eps$ is a nonnegative supersolution. For each fixed~$\eps>0$, the truncation makes~$u_\eps$ integrable; when~$\nu>2$, the untruncated profile~$G(|x|)$ is not integrable on~$\cu_0$.

\smallskip
\emph{Step 4.} Asymptotics and the ratio. We show that
\begin{equation}
	\lim_{\varrho\downarrow0}(d-2+\nu)\varrho^{d-2+\nu}\log^3(eR/\varrho)\,G(\varrho)=1\,.
	\label{e.sharpness.weak.harnack.asymptotics}
\end{equation}
We compare~$G$ with~$g(\varrho)\coloneqq(d-2+\nu)^{-1}\varrho^{2-d-\nu}\log^{-3}(eR/\varrho)$, which tends to infinity as~$\varrho\downarrow0$. Differentiation gives
\begin{equation*}
	g'(\varrho)=G'(\varrho)\biggl(1-\frac3{(d-2+\nu)\log(eR/\varrho)}\biggr)\,.
\end{equation*}
The factor in parentheses is less than one, and for each~$0<\delta<1$ it exceeds~$1-\delta$ on an interval~$(0,\varrho_\delta)$. Since~$G'<0$ and~$G(R)=0$, integrating~$g'$ over~$(\varrho,R)$ and over~$(\varrho,\varrho_\delta)$ gives
\begin{equation*}
	g(\varrho)-g(R)\leq G(\varrho)\leq\frac{g(\varrho)}{1-\delta}+G(\varrho_\delta)\qquad(0<\varrho<\varrho_\delta)\,.
\end{equation*}
Dividing by~$g(\varrho)$, letting~$\varrho\downarrow0$ and then~$\delta\downarrow0$ proves~\eqref{e.sharpness.weak.harnack.asymptotics}.

\smallskip
Now let~$\eta>\eta_c$, so that~$d-\eta(d-2+\nu)<0$. By~\eqref{e.sharpness.weak.harnack.asymptotics},~$G(\eps)\geq\frac12g(\eps)$ for every sufficiently small~$\eps$. Since~$u_\eps=G(\eps)$ on~$B(0,\eps)\subseteq\frac58\cu_0$ and~$|\frac58\cu_0|\leq1$,
\begin{equation*}
	\fint_{\frac58\cu_0}u_\eps^\eta\geq|B(0,\eps)|\,G(\eps)^\eta\geq\frac{c\,\eps^{d-\eta(d-2+\nu)}}{\log^{3\eta}(eR/\eps)}\longrightarrow\infty\qquad\mbox{as }\eps\downarrow0\,,
\end{equation*}
because the power of~$\eps$ is negative and the logarithm grows more slowly than every power of~$\eps^{-1}$. On the other hand,~$|x|<\nf R4$ on~$\frac12\cu_0$ and~$\eps<\nf18<\nf R4$, so~$u_\eps\geq G(\nf R4)$ there, and~$u_\eps(x)=G(|x|)$ tends to~$G(\nf R4)$ as~$x$ tends to a vertex of~$\frac12\cu_0$. As~$u_\eps$ is continuous,~$\essinf_{\frac12\cu_0}u_\eps=G(\nf R4)>0$, independently of~$\eps$. This proves~\eqref{e.sharpness.weak.harnack.failure}.
\end{proof}

\smallskip
At~$\eta=\eta_c$ the construction is consistent with Theorem~\ref{t.harnack}. Since~$\varrho^{d-1}\varrho^{-(d-2+\nu)\eta_c}=\varrho^{-1}$, the limit~\eqref{e.sharpness.weak.harnack.asymptotics} shows that~$\varrho^{d-1}G(\varrho)^{\eta_c}$ is comparable on~$(0,\nf R2]$ to~$\varrho^{-1}\log^{-3\eta_c}(eR/\varrho)$. In polar coordinates,
\begin{equation*}
	\int_{\cu_0}G(|x|)^{\eta_c}\,dx\leq C\int_0^{\nf R2}\frac{d\varrho}{\varrho\log^{3\eta_c}(eR/\varrho)}<\infty\,,
\end{equation*}
because~$3\eta_c>1$; indeed~$\eta_c=\nf{r^*}2>\nf12$, since~$r^*>r>1$ by~\eqref{e.parameter.facts}. As~$G$ is decreasing,~$u_\eps\leq G(|x|)$, so the left side of~\eqref{e.sharpness.weak.harnack.failure} stays bounded for~$\eta=\eta_c$. The endpoint inequality from Section~\ref{ss.harnack.endpoint}, together with Proposition~\ref{p.sharpness.weak.harnack}, gives the exact range~$0<\eta\leq\eta_c$ in Theorem~\ref{t.harnack}.

\appendix

\section{Weighted spaces and variational properties}
\label{a.weighted.proofs}

This appendix proves Proposition~\ref{p.weighted.calculus} and Lemma~\ref{l.weighted.testing}, compares~$H^1_\a(U)$ with the space of~\cite{AK.HC}, and then proves the three propositions and the subadditivity bounds of Section~\ref{s.harmonic.matrices}. Throughout,~$U\subseteq\Rd$ is a bounded connected Lipschitz domain and the coefficient field satisfies the assumptions of Section~\ref{s.weighted.spaces}, including~\eqref{e.weighted.hypotheses}. The proofs use no result of Sections~\ref{s.triadic.simplices}--\ref{s.sharpness}. We write~$\a^{\nf12}$ for the positive square root, and constants~$C(U)$ depend only on~$U$ and~$d$. For~$N>0$ we write~$T_N(u)\coloneqq\max\{-N,\min\{u,N\}\}$.

\begin{proposition}[Weighted calculus and approximation]
\label{p.weighted.calculus}
Under~\eqref{e.weighted.hypotheses}, the following statements hold.
\begin{enumerate}[label=\textup{(\roman*)}]
\item\label{i.weighted.identification} The space~$H^1_\a(U)$ is a Hilbert space continuously and injectively embedded in~$W^{1,1}(U)$. Its energy is~$\Ener_U(u)=\norm{\a^{\nf12}\nabla u}_{L^2(U)}^2$, and~$\a\nabla u\in L^1(U;\Rd)$. Convergence in~$H^1_\a(U)$ implies convergence of gradients and fluxes in~$L^1$.
\item\label{i.weighted.zero} One has~$\Hzero{\a}(U)\subseteq W^{1,1}_0(U)$; in particular, elements of~$\Hzero{\a}(U)$ have zero trace. For every~$\varphi\in\Hzero{\a}(U)$,
\begin{equation}
	|(\varphi)_U|\leq C(U)\Bigl(\int_U\tr\a^{-1}\Bigr)^{\nf12}\Ener_U(\varphi)^{\nf12}\,,\qquad \int_U\nabla\varphi=0\,.
	\label{e.weighted.coercivity}
\end{equation}
Consequently energy is coercive on~$\Hzero{\a}(U)$, and an element with zero gradient is zero.
\item\label{i.weighted.domains} If~$U'\supseteq U$ is another bounded connected Lipschitz domain on which~\eqref{e.weighted.hypotheses} holds, restriction maps~$H^1_\a(U')$ continuously into~$H^1_\a(U)$, and extension by zero maps~$\Hzero{\a}(U)$ continuously into~$\Hzero{\a}(U')$, preserving energy.
\item\label{i.weighted.chain} For smooth~$\Phi:\R\to\R$ with bounded derivative and~$u\in H^1_\a(U)$, one has~$\Phi(u)\in H^1_\a(U)$ and~$\nabla \Phi(u)=\Phi'(u)\nabla u$. For every~$c\in\R$,~$\indc_{\{u=c\}}\nabla u=0$ almost everywhere. Positive parts and the truncations~$T_N$ are continuous on~$H^1_\a(U)$, do not increase energy, and satisfy
\begin{equation}
	\nabla u_+=\indc_{\{u>0\}}\nabla u\,,\qquad \nabla T_N(u)=\indc_{\{|u|<N\}}\nabla u\,.
	\label{e.weighted.truncation.gradients}
\end{equation}
Moreover~$T_N(u)\to u$ in~$H^1_\a(U)$ as~$N\to\infty$.
\item\label{i.weighted.approximation} Every element of~$H^1_\a(U)$ is a limit of bounded smooth functions. The approximations may be chosen nonnegative if the function is, bounded by~$N$ in absolute value if~$|u|\leq N$, both at once, and compactly supported in~$U$ for elements of~$\Hzero{\a}(U)$.
\item\label{i.weighted.products} Lipschitz functions with compact support in~$U$, and Lipschitz functions on~$\overline U$ vanishing on~$\partial U$, belong to~$\Hzero{\a}(U)$. If~$w\in H^1_\a(U)\cap L^\infty(U)$ and~$\varphi$ is Lipschitz with compact support in~$U$, then~$\varphi w\in\Hzero{\a}(U)$ and
\begin{equation}
	\nabla(\varphi w)=\varphi\nabla w+w\nabla\varphi\,.
	\label{e.weighted.product}
\end{equation}
If~$f,g\in H^1_\a(U)\cap L^\infty(U)$, then~$fg\in H^1_\a(U)$ and~$\nabla(fg)=f\nabla g+g\nabla f$. A bounded element of~$H^1_\a(U)$ vanishing outside a compact subset of~$U$ belongs to~$\Hzero{\a}(U)$. Positive parts and the maps~$T_N$ preserve~$\Hzero{\a}(U)$.
\end{enumerate}
\end{proposition}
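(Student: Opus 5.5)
The basic tool is the weighted Cauchy--Schwarz inequality $|\nabla u|\le (\tr\a^{-1})^{\nf12}(\nabla u\cdot\a\nabla u)^{\nf12}$ and its flux analogue $|\a\nabla u|\le(\tr\a)^{\nf12}(\nabla u\cdot\a\nabla u)^{\nf12}$. Together with \eqref{e.weighted.hypotheses}, these bound $\norm{\nabla u}_{L^1(U)}$ and $\norm{\a\nabla u}_{L^1(U)}$ by $C\,\Ener_U(u)^{\nf12}$ for smooth functions of finite energy. For~\ref{i.weighted.identification}, the Poincar\'e inequality on the Lipschitz domain $U$ in $W^{1,1}$ gives $\norm{u}_{W^{1,1}(U)}\le C(U)(1+(\int_U\tr\a^{-1})^{\nf12})\norm{u}_{H^1_\a(U)}$. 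Hence Cauchy sequences in \eqref{e.weighted.norm} converge in $W^{1,1}$, while $\a^{\nf12}\nabla u_j$ converges in $L^2$. Injectivity follows by identifying the $L^2$ limit with $\a^{\nf12}$ times the $W^{1,1}$ gradient, which is possible after passing to an a.e.\ convergent subsequence. This also gives the formula for the energy and the $L^1$ convergence of gradients and fluxes. For~\ref{i.weighted.zero}, $C^\infty_c$ embeds in $W^{1,1}_0$ continuously, so the closure does too. Applying the Poincar\'e--Sobolev inequality for $W^{1,1}_0$ to $\varphi$ gives \eqref{e.weighted.coercivity}, and $\int\nabla\varphi=0$ passes to limits from compactly supported functions. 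For~\ref{i.weighted.domains}, restriction is immediate for smooth functions, with the mean controlled by the $W^{1,1}(U')$ norm. Zero extension maps $C^\infty_c(U)$ into $C^\infty_c(U')$ and preserves energy, and this persists in the closure.

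For~\ref{i.weighted.chain}, take smooth $u_j\to u$. Then $\Phi(u_j)$ is smooth, and its energy density $\Phi'(u_j)^2\nabla u_j\cdot\a\nabla u_j$ converges in $L^1$ to that of $\Phi'(u)\nabla u$. This uses dominated convergence after passing to an a.e.\ subsequence, continuity of $\Phi'$, and the $L^2$ convergence of $\a^{\nf12}\nabla u_j$ (via Vitali/generalized dominated convergence). So $\Phi(u_j)$ is Cauchy, and its limit is identified through~\ref{i.weighted.identification}. The identity $\indc_{\{u=c\}}\nabla u=0$ is the standard $W^{1,1}$ fact. For positive parts and $T_N$, approximate by smooth $\Phi_\eps$ with $|\Phi_\eps'|\le1$. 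Then $\Phi_\eps(u)$ is bounded in $H^1_\a$ and converges in $L^1$ with gradients $\Phi_\eps'(u)\nabla u\to\indc_{\{u>0\}}\nabla u$. Dominated convergence of the energy densities shows convergence in $H^1_\a$ and gives \eqref{e.weighted.truncation.gradients}. The same formula shows $T_N(u)\to u$, since $\indc_{\{|u|\ge N\}}\nabla u\cdot\a\nabla u\to0$ in $L^1$. \emph{Continuity} of $u\mapsto u_+$ is the main obstacle: the map is not uniformly continuous in the gradient. I would argue along subsequences. If $u_j\to u$, then a.e.\ $\indc_{\{u_j>0\}}\to\indc_{\{u>0\}}$ off the set $\{u=0\}$, where $\nabla u=0$. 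Combine this with $L^1$ convergence of $\a^{\nf12}\nabla u_j$ squared, and conclude by a generalized dominated convergence argument.

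For~\ref{i.weighted.approximation}, smooth approximants exist by definition. I compose them with smooth bounded $\Phi$ (e.g.\ smoothed truncations $T_N$, or smoothed positive parts) and use the chain-rule continuity above, plus $T_N(u)\to u$, to get bounded, nonnegative, or $N$-bounded approximants. For $\Hzero{\a}$ one starts from $C^\infty_c$, which is preserved by composition with $\Phi$ when $\Phi(0)=0$. For~\ref{i.weighted.products}, a Lipschitz $\varphi$ with compact support is mollified. Its mollifications are uniformly Lipschitz and converge a.e.\ with gradients, so energy convergence holds by dominated convergence with weight $\tr\a\in L^1$. Lipschitz functions vanishing on $\partial U$ are first cut off as $(|\varphi|-\eps)_+\operatorname{sgn}\varphi$, which has compact support. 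For the products, take bounded smooth approximants $w_j\to w$ with $|w_j|\le N$. Then $\varphi w_j$ are compactly supported Lipschitz functions, so they lie in $\Hzero{\a}$, and their energies converge by dominated convergence using $|\nabla\varphi|^2N^2\tr\a$. This gives \eqref{e.weighted.product}; $fg$ is handled the same way. A bounded element vanishing outside a compact set $K$ equals $\varphi w$ for a cutoff $\varphi=1$ on $K$. Finally, positive parts and $T_N$ preserve $\Hzero{\a}$ because they map $C^\infty_c$ approximants to compactly supported Lipschitz functions in $\Hzero{\a}$, and the maps are continuous.
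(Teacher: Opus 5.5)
Your proposal is correct and follows essentially the paper's proof. In both, the identification in (i) uses the Cauchy--Schwarz bounds together with Poincar\'e. The chain rule and the truncations go by dominated convergence along a.e. subsequences, with locality on level sets. Lipschitz functions and products are handled by mollification with the dominator $\operatorname{Lip}^2\,\tr$ of the coefficient.

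There are two minor differences. For the chain rule and the continuity of $u\mapsto u_+$, you invoke generalized dominated convergence; note that this must give $L^2$ convergence of the weighted gradients, not merely $L^1$ convergence of the energy densities. The paper instead splits $\Phi'(u_j)\nabla u_j-\Phi'(u)\nabla u=\Phi'(u_j)\nabla(u_j-u)+(\Phi'(u_j)-\Phi'(u))\nabla u$, so that plain dominated convergence suffices. For zero-boundary approximants, you compose $C^\infty_c$ approximants with a smooth $\Phi$ vanishing at zero, whereas the paper mollifies truncated compactly supported Lipschitz functions.
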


\begin{proof}
Most assertions are first checked on smooth functions and then passed to the limit: the~$L^1$ bounds of Step~1 identify the limits, and dominated convergence of the energy densities gives convergence in~$H^1_\a(U)$. Steps~1 and~2 prove~\ref{i.weighted.identification}--\ref{i.weighted.domains}, and Step~3 proves~\ref{i.weighted.chain}. Step~4 constructs the bounded smooth approximations of~\ref{i.weighted.approximation}, Step~5 uses them to prove the assertions of~\ref{i.weighted.products} on Lipschitz functions and products, and Step~6 treats~$\Hzero{\a}(U)$, which completes~\ref{i.weighted.approximation} and~\ref{i.weighted.products}.

\smallskip

\emph{Step 1.} Identification of the completion. We show that the norm dominates the~$W^{1,1}$ norm on smooth functions of finite energy and that an element of the completion is determined by its limit in~$W^{1,1}(U)$. We use the mean-zero Poincar\'e inequality~\cite[Lemma 3.24,~$p=1$]{Guermond.traces},
\begin{equation}
	\norm{v-(v)_U}_{L^1(U)}\leq C(U)\norm{\nabla v}_{L^1(U)}\qquad\mbox{for every }v\in W^{1,1}(U)\,,
	\label{e.ordinary.mean.poincare}
\end{equation}
and the two Cauchy--Schwarz bounds
\begin{equation}
	\norm{\nabla v}_{L^1(U)}\leq\Bigl(\int_U\tr\a^{-1}\Bigr)^{\nf12}\Ener_U(v)^{\nf12}\,,\qquad \norm{\a\nabla v}_{L^1(U)}\leq\Bigl(\int_U\tr\a\Bigr)^{\nf12}\Ener_U(v)^{\nf12}\,,
	\label{e.weighted.gradient.lone}
\end{equation}
obtained by writing~$\nabla v=\a^{-\nf12}(\a^{\nf12}\nabla v)$ and~$\a\nabla v=\a^{\nf12}(\a^{\nf12}\nabla v)$ and using~$|\a^{\mp\nf12}e|^2\leq(\tr\a^{\mp1})|e|^2$. On these functions, the norm~\eqref{e.weighted.norm} therefore dominates the~$W^{1,1}$ norm. It comes from the inner product
\begin{equation*}
	(u,w)\mapsto(u)_U(w)_U+\int_U\nabla u\cdot\a\nabla w\,,
\end{equation*}
which is definite because~$U$ is connected. If a sequence~$(u_j)$ of smooth functions of finite energy is Cauchy, then~$u_j\to u$ in~$W^{1,1}(U)$ and~$\a^{\nf12}\nabla u_j\to g$ in~$L^2$. By~\eqref{e.weighted.gradient.lone},~$\nabla u_j\to\a^{-\nf12}g$ in~$L^1$, so~$g=\a^{\nf12}\nabla u$, and~$\a\nabla u_j\to\a\nabla u$ in~$L^1$. Thus the limit is determined by~$u$: if~$u=0$, both its mean and~$g$ vanish. This proves~\ref{i.weighted.identification}; the bounds~\eqref{e.weighted.gradient.lone} extend to the completion, and so does
\begin{equation}
	\Bigl|\int_U\nabla v\cdot\a\nabla u\Bigr|\leq\Ener_U(v)^{\nf12}\Ener_U(u)^{\nf12}\qquad(u,v\in H^1_\a(U))\,.
	\label{e.weighted.pairing}
\end{equation}

\smallskip

\emph{Step 2.} Zero boundary values and domains. The defining approximations of~$\varphi\in\Hzero{\a}(U)$ converge in~$W^{1,1}(U)$, so~$\varphi\in W^{1,1}_0(U)$ and its trace vanishes~\cite[Theorem 3.10]{Guermond.traces}. The zero-boundary Poincar\'e inequality~$\norm{\varphi}_{L^1(U)}\leq C(U)\norm{\nabla\varphi}_{L^1(U)}$ of~\cite[Lemma 3.27]{Guermond.traces} and~\eqref{e.weighted.gradient.lone} give the first bound in~\eqref{e.weighted.coercivity}, and~$\int_U\nabla\varphi=0$ passes to the limit from smooth compactly supported functions. Hence
\begin{equation*}
	\norm{\varphi}_{H^1_\a(U)}^2\leq\Bigl(1+C(U)^2\int_U\tr\a^{-1}\Bigr)\Ener_U(\varphi)\,.
\end{equation*}
Restrictions to~$U$ of smooth functions of finite energy on~$U'$ are smooth functions of finite energy on~$U$, and~$|(u)_U|\leq|U|^{-1}\norm{u}_{L^1(U')}$ gives continuity of restriction. Zero extensions of~$\varphi_j\in C^\infty_c(U)$ lie in~$C^\infty_c(U')$, keep their energies and have means~$(|U|/|U'|)(\varphi_j)_U$; passing to the limit,
\begin{equation}
	\norm{\widetilde\varphi}_{H^1_\a(U')}^2=\frac{|U|^2}{|U'|^2}|(\varphi)_U|^2+\Ener_U(\varphi)\,,\qquad \nabla\widetilde\varphi=\indc_U\nabla\varphi\,,
	\label{e.weighted.zero.extension}
\end{equation}
where~$\widetilde\varphi$ is the zero extension.

\smallskip

\emph{Step 3.} Chain rule, locality and truncations. Let~$u_j\to u$ with smooth functions~$u_j$ of finite energy. Then~$\Phi(u_j)$ is smooth with finite energy,~$\Phi(u_j)\to \Phi(u)$ in~$L^1$, and
\begin{equation*}
	\Phi'(u_j)\nabla u_j-\Phi'(u)\nabla u=\Phi'(u_j)\nabla(u_j-u)+\bigl(\Phi'(u_j)-\Phi'(u)\bigr)\nabla u\,.
\end{equation*}
After multiplication by~$\a^{\nf12}$ the first term tends to zero in~$L^2$, and the second does so by dominated convergence along almost everywhere convergent subsequences, with dominator~$4\norm{\Phi'}_\infty^2\nabla u\cdot\a\nabla u$. Completeness gives the smooth chain rule. Next, every~$W^{1,1}$ function is absolutely continuous on almost every coordinate line~\cite[Theorem 2.36]{Kinnunen.sobolev}, and a function of one variable has zero derivative almost everywhere on each of its level sets; hence
\begin{equation}
	\indc_{\{u=c\}}\nabla u=0\quad\mbox{almost everywhere, for each }c\in\R\,.
	\label{e.weighted.locality}
\end{equation}
The smooth functions~$y\mapsto\frac12(\sqrt{y^2+\eps^2}+y)$ on~$\R$ converge uniformly to~$y_+$ as~$\eps\to0$, and their derivatives converge to~$\indc_{\{y>0\}}$ away from zero. By~\eqref{e.weighted.locality} and dominated convergence, their compositions with~$u$ converge to~$u_+$ in~$H^1_\a(U)$, which gives the first formula in~\eqref{e.weighted.truncation.gradients}. Continuity of~$u\mapsto u_+$ follows in the same way from
\begin{equation*}
	\nabla(u_j)_+-\nabla u_+=\indc_{\{u_j>0\}}\nabla(u_j-u)+\bigl(\indc_{\{u_j>0\}}-\indc_{\{u>0\}}\bigr)\nabla u\,.
\end{equation*}
The identity~$T_N(u)=-N+(u+N)_+-(u-N)_+$ transfers both facts to~$T_N$, and the gradient formulas give energy contraction. Finally~$|u-T_N(u)|\leq|u|\indc_{\{|u|>N\}}$ and~$\Ener_U(u-T_N(u))=\int_{\{|u|>N\}}\nabla u\cdot\a\nabla u$ tend to zero.

\smallskip

\emph{Step 4.} Bounded smooth approximations. We compose approximations by smooth functions of finite energy with smoothed truncations. Suppose~$|u|\leq N$ and let smooth functions~$u_j$ of finite energy converge to~$u$, so~$T_N(u_j)\to u$ by Step~3. Let~$G_\eps$ be a mollification of~$T_N$ on~$\R$ at scale~$\eps$. Then~$|G_\eps|\leq N$,~$|G_\eps'|\leq1$ and~$G_\eps'\to\indc_{\{|y|<N\}}$ away from~$\pm N$, so locality and dominated convergence give~$G_\eps(u_j)\to T_N(u_j)$ as~$\eps\to0$, and a diagonal choice gives smooth approximations bounded by~$N$. For~$0\leq u\leq N$, use~$y\mapsto\min\{y_+,N\}$ instead of~$T_N$. For general~$u$, approximate~$T_N(u)$ and let~$N\to\infty$; for nonnegative~$u$, use~$u\wedge N$.

\smallskip

\emph{Step 5.} Lipschitz functions and products. Let~$f$ be Lipschitz with compact support in~$U$. Its mollifications belong to~$C^\infty_c(U)$ for small radii and converge uniformly. Their gradients are the mollifications of~$\nabla f$~\cite[Lemma 1.18 and Theorem 1.19]{Kinnunen.sobolev}, which converge to~$\nabla f$ at its Lebesgue points and are bounded by~$\operatorname{Lip}(f)$, since~$|\nabla f|\leq\operatorname{Lip}(f)$ almost everywhere~\cite[Theorem 3.31]{Kinnunen.sobolev}. Dominated convergence against~$4\operatorname{Lip}(f)^2\tr\a$ shows that they converge in~$H^1_\a(U)$, so~$f\in\Hzero{\a}(U)$; mollification preserves nonnegativity and interval bounds containing zero. If~$f$ is instead Lipschitz on~$\overline U$ and zero on~$\partial U$, the compactly supported functions~$(f-\eps)_+-(-f-\eps)_+$ have gradients~$\indc_{\{|f|>\eps\}}\nabla f$ and converge to~$f$ in~$H^1_\a(U)$, by locality and the same dominator. For the product, let~$|w|\leq N$ and take the approximations~$w_j$ of Step~4, bounded by~$N$ and converging almost everywhere. Then~$\varphi w_j\in\Hzero{\a}(U)$ by the first case, and
\begin{equation*}
	\int_U\bigl|\a^{\nf12}\bigl(\nabla(\varphi w_j)-\varphi\nabla w-w\nabla\varphi\bigr)\bigr|^2\leq2\norm{\varphi}_\infty^2\Ener_U(w_j-w)+2\operatorname{Lip}(\varphi)^2\int_U|w_j-w|^2\tr\a\longrightarrow0
\end{equation*}
by dominated convergence, which proves~\eqref{e.weighted.product}. For bounded~$f,g\in H^1_\a(U)$, let~$\Phi$ be smooth with bounded derivative and equal to~$y^2$ on an interval containing the ranges of~$f\pm g$; the chain rule and~$fg=\frac14(\Phi(f+g)-\Phi(f-g))$ give~$fg\in H^1_\a(U)$ with the product rule. If a bounded~$g\in H^1_\a(U)$ vanishes outside a compact~$K\subset U$, choose~$\varphi\in C^\infty_c(U)$ equal to one near~$K$; then~$g=\varphi g\in\Hzero{\a}(U)$ by~\eqref{e.weighted.product}.

\smallskip

\emph{Step 6.} Approximations and truncations with zero boundary values. Let~$\psi_j\in C^\infty_c(U)$ converge to~$u\in\Hzero{\a}(U)$. By Steps~3 and~5, the compactly supported Lipschitz functions~$(\psi_j)_+$ and~$T_N(\psi_j)$ lie in~$\Hzero{\a}(U)$ and converge to~$u_+$ and~$T_N(u)$. If~$u\geq0$ (and~$u\leq N$), mollify~$(\psi_j)_+$ (respectively~$\min\{(\psi_j)_+,N\}$) at a small enough radius; for~$|u|\leq N$ use~$T_N(\psi_j)$. This gives the remaining approximations.
\end{proof}

% Identification with AK.HC, Section 2.1 (e.H1s: completion of smooth functions under (||u||_{L^1}^2+energy)^{1/2};
% zero-boundary closure; solutions tested with C^infty_c). Qualitative only; its constant does not enter any estimate.
\paragraph{The space of~\cite{AK.HC}.} The space of~\cite[Section~2.1]{AK.HC} is the completion of the same class of smooth functions of finite energy under the norm~$(\norm{u}_{L^1(U)}^2+\Ener_U(u))^{\nf12}$; its zero-boundary subspace is the closure of~$C^\infty_c(U)$, and its solutions are defined by the same smooth tests. On smooth functions of finite energy,~\eqref{e.ordinary.mean.poincare} and the first bound in~\eqref{e.weighted.gradient.lone} give
\begin{equation*}
	|(u)_U|\leq|U|^{-1}\norm{u}_{L^1(U)}\,,\qquad \norm{u}_{L^1(U)}\leq|U|\,|(u)_U|+C(U)\Bigl(\int_U\tr\a^{-1}\Bigr)^{\nf12}\Ener_U(u)^{\nf12}\,,
\end{equation*}
so the two norms are equivalent on this class. Hence the two completions coincide; by Step~1 of the proof of Proposition~\ref{p.weighted.calculus}, both are the same subspace of~$W^{1,1}(U)$. The two closures of~$C^\infty_c(U)$ and the two classes of solutions coincide as well. The constant in the second bound depends on~$\int_U\tr\a^{-1}$; the comparison serves only to identify the spaces, and all Hilbert-space arguments use the inner product of~\eqref{e.weighted.norm}.

\paragraph{Testing.} The subsolution inequality~\eqref{e.weighted.subsolution} is imposed only for smooth test functions, whereas the test functions built in Sections~\ref{s.whitney.extension}--\ref{s.harnack} from truncations, powers and harmonic replacements on simplices are merely nonnegative elements of~$\Hzero{\a}(U)$. The next lemma admits all such test functions, shows that truncations of subsolutions are subsolutions, and gives the inequality satisfied by powers of nonnegative supersolutions, which Section~\ref{s.harnack} uses.

\begin{lemma}[Admissible tests, truncations and powers]
\label{l.weighted.testing}
Assume~\eqref{e.weighted.hypotheses}.
\begin{enumerate}[label=\textup{(\roman*)}]
\item\label{i.testing.admissible} If~$v\in\Csub(U)$, then
\begin{equation}
	\int_U\nabla g\cdot\a\nabla v\leq0\qquad\mbox{for every }0\leq g\in\Hzero{\a}(U)\,,
	\label{e.weighted.testing}
\end{equation}
with equality for every~$g\in\Hzero{\a}(U)$ if~$v\in\Csol(U)$. If~$-v\in\Csub(U)$, the inequality holds reversed.
\item\label{i.testing.compact} Every nonnegative~$g\in H^1_\a(U)$ vanishing outside a compact subset of~$U$ belongs to~$\Hzero{\a}(U)$.
\item\label{i.testing.truncation} If~$u\in\Csub(U)$ and~$c\in\R$, then~$(u-c)_+\in\Csub(U)$.
\item\label{i.testing.powers} Let~$u\geq0$ with~$-u\in\Csub(U)$, let~$\eps>0$ and~$w\coloneqq u+\eps$, and let~$m<\nf12$ with~$m\neq0$. Then~$\log w\in H^1_\a(U)$ with~$\nabla\log w=w^{-1}\nabla u$, and~$v\coloneqq w^m$ belongs to~$H^1_\a(U)\cap L^r(U)$, with~$r$ as in~\eqref{e.intro.parameters},~$\nabla v=mw^{m-1}\nabla u$ and
\begin{equation}
	\frac{\nabla v\cdot\a\nabla v}{v}=m^2w^{m-2}\nabla u\cdot\a\nabla u\in L^1(U)\,.
	\label{e.power.source}
\end{equation}
For every bounded nonnegative~$\phi\in H^1_\a(U)$ vanishing outside a compact subset of~$U$,
\begin{equation}
	m\int_U\nabla\phi\cdot\a\nabla v\geq(1-m)\int_U\frac\phi v\,\nabla v\cdot\a\nabla v\,.
	\label{e.power.testing}
\end{equation}
\end{enumerate}
\end{lemma}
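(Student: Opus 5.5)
The plan is to derive all four parts from the smooth testing inequality~\eqref{e.weighted.subsolution}, using density and the calculus of Proposition~\ref{p.weighted.calculus}.

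\emph{Part~\ref{i.testing.admissible}.} Take $0\leq g\in\Hzero{\a}(U)$. Proposition~\ref{p.weighted.calculus}\ref{i.weighted.approximation} gives nonnegative $g_j\in C^\infty_c(U)$ with $g_j\to g$ in $H^1_\a(U)$. The pairing $g\mapsto\int\nabla g\cdot\a\nabla v$ is continuous by~\eqref{e.weighted.pairing}, so the inequality for the $g_j$ passes to $g$. For $v\in\Csol(U)$ we use arbitrary $C^\infty_c$ approximations and get equality. The supersolution case follows by applying the result to $-v$.

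\emph{Part~\ref{i.testing.compact}.} Apply the final assertions of Proposition~\ref{p.weighted.calculus}\ref{i.weighted.products} to the bounded truncations $g\wedge N$. Each $g\wedge N$ is bounded, nonnegative, vanishes outside the compact set, and hence lies in $\Hzero{\a}(U)$. Since $g\wedge N\to g$ in $H^1_\a(U)$ by Step~3 of that proof, and $\Hzero{\a}(U)$ is closed, $g\in\Hzero{\a}(U)$.

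\emph{Part~\ref{i.testing.truncation}.} Let $0\leq\varphi\in C^\infty_c(U)$, and let $H_\delta(y)$ be a smooth, nondecreasing, bounded approximation of $\indc_{\{y>0\}}$ (for example $\min\{y_+/\delta,1\}$ smoothed). The test function $\varphi H_\delta(u-c)$ is nonnegative, bounded and compactly supported. It lies in $\Hzero{\a}(U)$ by Proposition~\ref{p.weighted.calculus}\ref{i.weighted.chain} and~\ref{i.weighted.products}, with gradient $H_\delta(u-c)\nabla\varphi+\varphi H_\delta'(u-c)\nabla u$. Testing via part~\ref{i.testing.admissible} and discarding the term $\int\varphi H_\delta'\nabla u\cdot\a\nabla u\geq0$ gives $\int H_\delta(u-c)\nabla\varphi\cdot\a\nabla u\leq0$. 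Letting $\delta\to0$ by dominated convergence (with $\a\nabla u\in L^1$) and using~\eqref{e.weighted.truncation.gradients} yields $\int\nabla\varphi\cdot\a\nabla(u-c)_+\leq0$.

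\emph{Part~\ref{i.testing.powers}.} The chain rule with smooth $\Phi$ with bounded derivative that agree with $\log y$ and $y^m$ on $[\eps,\infty)$ does not apply directly, since $u$ is unbounded above. I would first work with $T_N(u)$, where $\Phi'$ can be bounded, and then let $N\to\infty$. For $m<0$ and for $\log$, the derivative is bounded on $[\eps,\infty)$, so the chain rule applies at once. For $0<m<\tfrac12$, $y^{m}$ also has derivative bounded by $m\eps^{m-1}$ on $[\eps,\infty)$, so again $\Phi$ with bounded derivative exists. This gives $v\in H^1_\a(U)$ with the stated gradients, and~\eqref{e.power.source} follows from $w^{m-2}\leq\eps^{m-2}$. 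For $v\in L^r(U)$: if $m<0$ then $v\leq\eps^m$ is bounded; if $0<m<\tfrac12$ then $v\leq 1+w$, and $w\in L^r$ by Proposition~\ref{p.lower.fractional} (or simply $w\in L^1$ with $mr<1$).

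For~\eqref{e.power.testing}, test the supersolution inequality (part~\ref{i.testing.admissible} reversed) with $g=\phi w^{m-1}$. This is bounded since $w\geq\eps$ and $m-1<0$, nonnegative, and compactly supported. It lies in $\Hzero{\a}(U)$ by the product rule in Proposition~\ref{p.weighted.calculus}\ref{i.weighted.products} and part~\ref{i.testing.compact}. Its gradient is $w^{m-1}\nabla\phi+(m-1)\phi w^{m-2}\nabla u$, so
\[
\int w^{m-1}\nabla\phi\cdot\a\nabla u\geq(1-m)\int\phi w^{m-2}\nabla u\cdot\a\nabla u.
\]
Multiplying by $m^2>0$ and substituting $m w^{m-1}\nabla u=\nabla v$ turns the left side into $m\int\nabla\phi\cdot\a\nabla v$ and the right side into $(1-m)\int\phi v^{-1}\nabla v\cdot\a\nabla v$, which is~\eqref{e.power.testing}.

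The main obstacle is justifying this product test function: $\phi\in H^1_\a$ is bounded while $w^{m-1}$ is bounded with gradient in the weighted space, and one must check that the product rule is legitimate in $H^1_\a$ and that all integrands are integrable. Boundedness of $w^{m-1}$ and $w^{m-2}$ (by powers of $\eps$) handles this.
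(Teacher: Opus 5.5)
Your proposal is correct and follows the paper's proof essentially step for step. Part (i) uses density with nonnegative $C^\infty_c$ approximations and continuity of the pairing. Part (ii) uses the bounded truncations $g\wedge N$ together with Proposition~\ref{p.weighted.calculus}\ref{i.weighted.products}. Part (iii) tests with $\varphi$ times a smoothed indicator of $\{u>c\}$. Part (iv) uses the chain rule for functions with bounded derivative on $[\eps,\infty)$, then tests the reversed inequality with $\phi w^{m-1}$ and multiplies by $m^2$.

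The one thing to tidy is the proof that $v\in L^r(U)$ for $0<m<\tfrac12$. Rely on your parenthetical argument $v^r=w^{mr}\le 1+w\in L^1(U)$, not on Proposition~\ref{p.lower.fractional}. That proposition needs the coarse ellipticity condition and a cube, whereas the lemma is stated for a general Lipschitz domain $U$ under \eqref{e.weighted.hypotheses} alone. As you noticed yourself, the $T_N(u)$ detour is also unnecessary.
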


\begin{proof}
Assertion~\ref{i.testing.admissible} follows from the approximations in Proposition~\ref{p.weighted.calculus}\ref{i.weighted.approximation} and the continuity~\eqref{e.weighted.pairing}; for a supersolution apply it to~$-v$. For~\ref{i.testing.compact}, if~$g\geq0$ vanishes outside a compact~$K\subset U$, choose~$\varphi\in C^\infty_c(U)$ equal to one near~$K$; then~$g\wedge N=\varphi(g\wedge N)\in\Hzero{\a}(U)$ by~Proposition~\ref{p.weighted.calculus}\ref{i.weighted.products}, and~$g\wedge N\to g$ by~Proposition~\ref{p.weighted.calculus}\ref{i.weighted.chain}.

\smallskip

For~\ref{i.testing.truncation}, fix~$0\leq\varphi\in C^\infty_c(U)$ and a smooth nondecreasing~$\chi_\eps:\R\to[0,1]$, zero on~$(-\infty,c]$ and one on~$[c+\eps,\infty)$. By Proposition~\ref{p.weighted.calculus}\ref{i.weighted.chain} and~Proposition~\ref{p.weighted.calculus}\ref{i.weighted.products}, the function~$\varphi\chi_\eps(u)$ is a nonnegative element of~$\Hzero{\a}(U)$, and testing gives
\begin{equation*}
	0\geq\int_U\chi_\eps(u)\nabla\varphi\cdot\a\nabla u+\int_U\varphi\chi_\eps'(u)\nabla u\cdot\a\nabla u\,.
\end{equation*}
By Proposition~\ref{p.weighted.calculus}\ref{i.weighted.chain},~$(u-c)_+\in H^1_\a(U)$. Discard the nonnegative second term and let~$\eps\downarrow0$, with dominator~$\norm{\nabla\varphi}_\infty|\a\nabla u|\in L^1$. This gives
\begin{equation*}
	\int_U\nabla\varphi\cdot\a\nabla(u-c)_+=\int_U\indc_{\{u>c\}}\nabla\varphi\cdot\a\nabla u\leq0\,.
\end{equation*}

\smallskip

For~\ref{i.testing.powers}, since~$w\geq\eps$, the functions~$y\mapsto\log(y+\eps)$,~$y\mapsto(y+\eps)^m$ and~$y\mapsto(y+\eps)^{m-1}$ on~$[0,\infty)$ have smooth extensions to~$\R$ with bounded derivatives, so Proposition~\ref{p.weighted.calculus}\ref{i.weighted.chain} gives~$\log w,v,w^{m-1}\in H^1_\a(U)$ with the chain rule, and~$0<w^{m-1}\leq\eps^{m-1}$. The identity~\eqref{e.power.source} follows from~$\nabla v=mw^{m-1}\nabla u$, and its right side is at most~$m^2\eps^{m-2}\nabla u\cdot\a\nabla u$. If~$m<0$, then~$v\leq\eps^m$; if~$0<m<\nf12$, then~$0<mr<1$ and~$v^r\leq1+w$, and~$w\in L^1(U)$ by Proposition~\ref{p.weighted.calculus}\ref{i.weighted.identification}. In both cases~$v\in L^r(U)$. For~$\phi$ as in the statement, the function~$w^{m-1}\phi$ is bounded, nonnegative and vanishes outside a compact subset of~$U$, so it belongs to~$\Hzero{\a}(U)$ by Proposition~\ref{p.weighted.calculus}\ref{i.weighted.products}. Its gradient is
\begin{equation*}
	\nabla(w^{m-1}\phi)=w^{m-1}\nabla\phi+(m-1)w^{m-2}\phi\nabla u\,.
\end{equation*}
Since~$-u\in\Csub(U)$,~\ref{i.testing.admissible} gives
\begin{equation*}
	0\leq\int_Uw^{m-1}\nabla\phi\cdot\a\nabla u+(m-1)\int_Uw^{m-2}\phi\,\nabla u\cdot\a\nabla u\,.
\end{equation*}
Multiplying by~$m^2$ and inserting~$mw^{m-1}\nabla u=\nabla v$ and~\eqref{e.power.source} gives~\eqref{e.power.testing}.
\end{proof}

\paragraph{Proofs for Section~\ref{s.harmonic.matrices}.} The remaining proofs are those of Propositions~\ref{p.harmonic.replacement},~\ref{p.upper.dirichlet} and~\ref{p.lower.matrix} and of the bounds~\eqref{e.upper.subadditivity} and~\eqref{e.lower.cube.aggregation}, in this order.

\begin{proof}[Proof of Proposition~\ref{p.harmonic.replacement}]
The form~$(\varphi,\eta)\mapsto\int_U\nabla\eta\cdot\a\nabla\varphi$ is bounded on~$\Hzero{\a}(U)$ by~\eqref{e.weighted.pairing} and coercive by~\eqref{e.weighted.coercivity}. The right side~$\eta\mapsto-\int_U\nabla\eta\cdot\a\nabla g$ is bounded on~$\Hzero{\a}(U)$ by~\eqref{e.weighted.pairing}, since~$g\in H^1_\a(U)$. The Lax--Milgram lemma~\cite[Lemma 25.2]{Ern.Guermond.hilbert} gives a unique~$\varphi\in\Hzero{\a}(U)$ with~$\int_U\nabla\eta\cdot\a\nabla\varphi=-\int_U\nabla\eta\cdot\a\nabla g$ for all~$\eta\in\Hzero{\a}(U)$, and~$H_Ug\coloneqq g+\varphi$ satisfies~\eqref{e.harmonic.orthogonality}. Expanding~$\Ener_U(w)$ around~$H_Ug$ gives~\eqref{e.harmonic.minimization}. Any harmonic representative satisfies~\eqref{e.harmonic.orthogonality} by Lemma~\ref{l.weighted.testing}, hence~\eqref{e.harmonic.minimization}, so it equals~$H_Ug$ by coercivity. Uniqueness implies linearity, and constants are their own representatives.

\smallskip

For the maximum principle let~$N\coloneqq\esssup_Ug$ and~$\varphi_j\in C^\infty_c(U)$ converge to~$H_Ug-g$. The functions~$(g+\varphi_j-N)_+$ converge to~$(H_Ug-N)_+$ by Proposition~\ref{p.weighted.calculus}\ref{i.weighted.chain} and vanish outside~$\supp\varphi_j$, so they belong to~$\Hzero{\a}(U)$ by Lemma~\ref{l.weighted.testing}. Hence~$(H_Ug-N)_+\in\Hzero{\a}(U)$, and~\eqref{e.harmonic.orthogonality} with~\eqref{e.weighted.truncation.gradients} gives~$\Ener_U((H_Ug-N)_+)=0$, so~$(H_Ug-N)_+=0$. Applying this to~$-g$ gives the lower bound.
\end{proof}

\begin{proof}[Proof of Proposition~\ref{p.upper.dirichlet}]
Pointwise,~$-\nabla w\cdot\a\nabla w+2e\cdot\a\nabla w=e\cdot\a e-(\nabla w-e)\cdot\a(\nabla w-e)\leq e\cdot\a e$, which gives the upper bound in~\eqref{e.upper.coefficient.bound}; the zero function is a competitor in both classes, and~$\Csol(U)\subseteq\Csub(U)$ gives the middle inequality. If~$w\in\Csol(U)$, then~$\ell_e-H_U\ell_e\in\Hzero{\a}(U)$ and Lemma~\ref{l.weighted.testing} give~$\fint_Ue\cdot\a\nabla w=\fint_U\nabla H_U\ell_e\cdot\a\nabla w$, so completing the square,
\begin{equation*}
	\fint_U\bigl(-\nabla w\cdot\a\nabla w+2e\cdot\a\nabla w\bigr)=\frac{\Ener_U(H_U\ell_e)}{|U|}-\frac{\Ener_U(w-H_U\ell_e)}{|U|}\,.
\end{equation*}
The supremum is attained at~$H_U\ell_e$, and~\eqref{e.harmonic.minimization} gives the minimum. Since~$e\mapsto H_U\ell_e$ is linear by Proposition~\ref{p.harmonic.replacement}, this is a quadratic form, and~$H_U(\ell_e+c)=H_U\ell_e+c$. It is positive definite: by~\eqref{e.weighted.coercivity},~$(\nabla H_U\ell_e)_U=e$, so zero energy forces~$e=0$.
\end{proof}

\begin{proof}[Proof of Proposition~\ref{p.lower.matrix}]
The mean-zero subspace of~$H^1_\a(U)$ is a Hilbert space for the inner product~$\fint_U\nabla u\cdot\a\nabla w$: on it the induced norm is~$|U|^{-\nf12}$ times the norm~\eqref{e.weighted.norm}, and zero energy forces zero by~\eqref{e.ordinary.mean.poincare}. By Cauchy--Schwarz,~$|e\cdot(\nabla w)_U|\leq(\fint_Ue\cdot\a^{-1}e)^{\nf12}(\Ener_U(w)/|U|)^{\nf12}$, so~$w\mapsto e\cdot(\nabla w)_U$ is bounded, and the Riesz theorem~\cite[Theorem C.24]{Ern.Guermond.hilbert} gives a mean-zero~$u_e$, linear in~$e$, with
\begin{equation}
	\fint_U\nabla w\cdot\a\nabla u_e=e\cdot(\nabla w)_U\qquad\mbox{for every }w\in H^1_\a(U)\,;
	\label{e.lower.riesz}
\end{equation}
the identity extends from mean-zero~$w$ because both sides depend only on~$\nabla w$. Testing with~$w\in C^\infty_c(U)$ shows~$u_e\in\Csol(U)$. Completing the square with~\eqref{e.lower.riesz},
\begin{equation*}
	\fint_U\bigl(-\nabla w\cdot\a\nabla w+2e\cdot\nabla w\bigr)=\frac{\Ener_U(u_e)}{|U|}-\frac{\Ener_U(w-u_e)}{|U|}\,,
\end{equation*}
so both suprema equal~$\Ener_U(u_e)/|U|$, a quadratic form in~$e$. It is positive definite, since~\eqref{e.lower.riesz} with~$w=\ell_e$ gives~$|e|^2=\fint_U\nabla\ell_e\cdot\a\nabla u_e$. Inserting~$cw$ in~\eqref{e.lower.all.functions} and optimizing over~$c\in\R$ gives~\eqref{e.lower.mean.gradient}. Finally,
\begin{equation*}
	-\nabla w\cdot\a\nabla w+2e\cdot\nabla w=e\cdot\a^{-1}e-(\nabla w-\a^{-1}e)\cdot\a(\nabla w-\a^{-1}e)\,.
\end{equation*}
This gives~\eqref{e.lower.coefficient.bound}.
\end{proof}

\begin{proof}[Proof of~\eqref{e.upper.subadditivity} and~\eqref{e.lower.cube.aggregation}]
Fix~$e\in\Rd$. A solution~$w\in\Csol(U)$ restricts to a solution on each~$U_i$ by Proposition~\ref{p.weighted.calculus}\ref{i.weighted.domains}. The integral over~$U$ of the integrand in~\eqref{e.upper.directional.definition} is the sum of its integrals over the~$U_i$, each bounded by~$|U_i|\,e\cdot\a(U_i)e$ by~\eqref{e.upper.dirichlet}. Dividing by~$|U|$ and taking the supremum over~$w$ proves~\eqref{e.upper.subadditivity}. The same argument with~$w\in H^1_\a(U)$ and~\eqref{e.lower.all.functions} proves~\eqref{e.lower.cube.aggregation}. The sums of competitor integrals converge absolutely:~\eqref{e.weighted.gradient.lone} puts both~$\nabla w$ and~$\a\nabla w$ in~$L^1(U)$, while~$\Ener_U(w)<\infty$. The matrix series converge absolutely by~\eqref{e.upper.norm.bound} and~\eqref{e.lower.norm.bound}, since~$|\a(U_i)|\leq(\tr\a)_{U_i}$ and~$|\a_*^{-1}(U_i)|\leq(\tr\a^{-1})_{U_i}$, whose volume-weighted sums are~$\int_U\tr\a$ and~$\int_U\tr\a^{-1}$.
\end{proof}

\section{Averages over simplices or cubes and Gaussian averages}
\label{a.besov}

% Change-list items V9-015 and V9-016. The proof is Step 1, (4)--(6), of notes/positive-coefficients-besov-equivalence-2026-09-28.md, for simplices, with the same argument for cubes; the comparison of Gaussian averages with Littlewood--Paley pieces (Steps 2--3 of that note) is cited, not proved.
Proposition~\ref{p.besov.averages} compares the averages of a positive semidefinite field over the simplices of~$\mathscr T_k$ or over the triadic cubes of side length~$3^{-k}$ with its Gaussian averages at the time~$3^{-2k}$. Lemma~\ref{l.negative.sobolev} bounds~\eqref{e.negative.regularity.norm} by negative Sobolev norms and gives Theorem~\ref{t.coefficient.conditions}\ref{c.negative.sobolev}. Apart from Theorem~\ref{t.coefficient.conditions}\ref{c.negative.sobolev} and Corollary~\ref{c.sobolev.coefficients}, the proofs of the other results of this paper do not use this appendix.

\smallskip

For~$t>0$ let
\begin{equation*}
	G_t(x)\coloneqq(4\pi t)^{-\nf d2}\exp\Bigl(-\frac{|x|^2}{4t}\Bigr)\qquad(x\in\Rd)\,,
\end{equation*}
and for a matrix field~$\mathbf b$ on~$\cu_0$ let~$\widetilde{\mathbf b}$ be its extension by zero to~$\Rd$. For a scalar or matrix field~$F\in L^1(\Rd)$, since~$G_t$ is positive with integral one,~$|G_t*F|\leq G_t*|F|$ pointwise and~$\norm{G_t*F}_{L^p(\Rd)}\leq\norm{F}_{L^p(\Rd)}$ for~$1\leq p\leq\infty$, and~$G_t*F$ is positive semidefinite everywhere if~$F$ is positive semidefinite almost everywhere.

\begin{proposition}[Averages and Gaussian averages]
\label{p.besov.averages}
Let~$1\leq p\leq\infty$, and let~$\mathbf b$ be a matrix field on~$\cu_0$, symmetric and positive semidefinite almost everywhere, with~$|\mathbf b|\in L^1(\cu_0)$. For~$k\geq0$ let~$\mathscr P_k$ be either~$\mathscr T_k$ or the family of the triadic cubes~$z+\cu_{-k}$ with~$z\in3^{-k}\Zd\cap\cu_0$. For~$p=\infty$ the means below are maxima. There is~$C<\infty$, depending only on~$d$, such that for every~$k\geq0$
\begin{equation}
	C^{-1}\biggl(\avsum_{Q\in\mathscr P_k}|(\mathbf b)_Q|^p\biggr)^{\nf1p}\leq\norm{G_{3^{-2k}}*\widetilde{\mathbf b}}_{L^p(\Rd)}\leq C\biggl(\avsum_{Q\in\mathscr P_k}|(\mathbf b)_Q|^p\biggr)^{\nf1p}\,.
	\label{e.besov.heat}
\end{equation}
Consequently, for every~$s>0$, each of the sums
\begin{equation}
	\sum_{k\geq0}3^{-ks}\biggl(\avsum_{Q\in\mathscr P_k}|(\mathbf b)_Q|^p\biggr)^{\nf1{2p}}\qand\sum_{k\geq0}3^{-ks}\norm{G_{3^{-2k}}*\widetilde{\mathbf b}}_{L^p(\Rd)}^{\nf12}
	\label{e.besov.averages}
\end{equation}
is at most~$C^{\nf12}$ times the other.
\end{proposition}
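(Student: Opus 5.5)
The plan is to compare both sides pointwise with the step function $P\coloneqq\sum_{Q\in\mathscr P_k}(\mathbf b)_Q\indc_Q$ (whose $L^p$ norm is exactly the spatial mean, since all cells have the same volume and $|\cu_0|=1$). Positivity is what makes this work. Write $\varepsilon=3^{-k}$ and $t=\varepsilon^2$. Since $\widetilde{\mathbf b}\geq0$, we can compare operator norms of positive semidefinite matrices through integrals of $\widetilde{\mathbf b}$ against nonnegative kernels. In particular, if two nonnegative kernels satisfy $K_1\leq cK_2$ pointwise, then $|K_1*\widetilde{\mathbf b}|\leq c|K_2*\widetilde{\mathbf b}|$.

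\emph{Lower bound on the Gaussian average.} Take $x\in Q$, where $Q\in\mathscr P_k$ has diameter at most $\sqrt d\,\varepsilon$. For $y\in Q$ we have $G_t(x-y)\geq c\varepsilon^{-d}$ with $c=c(d)$. This gives the matrix inequality $G_t*\widetilde{\mathbf b}(x)\geq c\varepsilon^{-d}\int_Q\mathbf b\geq c'(\mathbf b)_Q$, and hence $|G_t*\widetilde{\mathbf b}(x)|\geq c'|(\mathbf b)_Q|$ for all $x\in Q$. Integrating the $p$-th power over $Q$ and summing over $\mathscr P_k$, which tiles $\cu_0$, yields the left inequality of~\eqref{e.besov.heat}. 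For $p=\infty$ we take maxima instead.

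\emph{Upper bound on the Gaussian average.} This is the main step. Decompose $\widetilde{\mathbf b}=\sum_Q\mathbf b\indc_Q$. For $x\in\Rd$ and $y\in Q$, we have $G_t(x-y)\leq C\varepsilon^{-d}\exp(-c\,\dist(x,Q)^2/\varepsilon^2)$. Therefore
\[
|G_t*\widetilde{\mathbf b}(x)|\leq C\sum_Q e^{-c\,\dist(x,Q)^2/\varepsilon^2}|(\mathbf b)_Q|,
\]
where we use that the operator norm is subadditive on positive semidefinite matrices and that $\varepsilon^{-d}\int_Q\mathbf b\leq C(\mathbf b)_Q$. The right side is a convolution-type operator on the lattice of cells applied to the sequence $(|(\mathbf b)_Q|)_Q$. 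Its kernel $e^{-c|m|^2}$, indexed by the cell offset $m\in\Z^d$, is summable. Integrating over a unit cell in $x$ at scale $\varepsilon$ and applying Young's inequality for discrete convolutions (or Schur's test) gives $\norm{G_t*\widetilde{\mathbf b}}_{L^p(\Rd)}\leq C\norm{P}_{L^p}$, and the case $p=\infty$ is immediate. For simplices, the cells are not translates of each other. However, each simplex lies in a cube of side $\varepsilon$ and has comparable volume, and there are boundedly many simplices per cube, so the same Schur bound applies with kernel $e^{-c\,\dist^2/\varepsilon^2}$ using the count~\eqref{e.simplex.count}. Points $x$ outside $\cu_0$ are handled by the same estimate, since the Gaussian tail summed over the cells is still controlled.

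\emph{Consequence.} Taking square roots of~\eqref{e.besov.heat} at each scale and summing with weights $3^{-ks}$ gives the comparison of the two sums in~\eqref{e.besov.averages}, with constant $C^{1/2}$. The only delicate point in the whole argument is the positivity step. Cancellation in $\mathbf b$ could otherwise make the cell averages small while the Gaussian averages stay large, and positive semidefiniteness rules this out by turning every comparison into a monotone inequality of kernels.
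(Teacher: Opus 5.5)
Your proof is correct, and your lower bound is the same as the paper's: positivity lets you drop the integral outside $Q$, and then you use the pointwise lower bound on $G_{3^{-2k}}$ over a cell.

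For the upper bound you take a different route. You bound $G_t(x-y)$ by $C\varepsilon^{-d}e^{-c\,\dist(x,Q)^2/\varepsilon^2}$. That leaves a cell-indexed kernel, for which you need a Schur test: a discrete Young inequality alone does not cover the continuous variable $x$ or the non-translated simplices. The Schur test in turn requires three checks:
\begin{itemize}
\item the counting bound~\eqref{e.simplex.count};
\item the lower volume bound $|Q|\geq\varepsilon^d/d!$;
\item that both Schur sums stay bounded for $x\notin\cu_0$.
\end{itemize}
All of this goes through, with constants depending only on $d$ and not on $p$.

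The paper avoids this bookkeeping with a Gaussian-specific domination. For $y,z\in Q$ one has $|x-z|^2\leq2|x-y|^2+2d\,3^{-2k}$, so $G_{3^{-2k}}(x-y)\leq2^{\nf d2}e^{\nf d4}G_{2\cdot3^{-2k}}(x-z)$. Averaging in $z\in Q$, multiplying by $\mathbf b(y)$ and summing over cells gives the matrix inequality $G_{3^{-2k}}*\widetilde{\mathbf b}\leq2^{\nf d2}e^{\nf d4}\,G_{2\cdot3^{-2k}}*P$, with your step function $P$. The $L^p$ bound is then just the contraction of convolution with a probability density.

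The paper's route gives the explicit constant $d!\,(4\pi)^{\nf d2}e^{\nf d4}$ and removes all cell counting and geometry. Yours uses only the decay of the kernel, not its semigroup structure, so it would work for any rapidly decaying nonnegative averaging kernel.
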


The second sum in~\eqref{e.besov.averages}, squared, is comparable, with constants depending only on~$d$,~$p$ and~$s$, to the quasi-norm of~$\widetilde{\mathbf b}$ in the Besov space~$B^{-2s}_{p,\nf12}(\Rd)$ defined through the Fourier transform. Take~$L=-\Delta$, so that~$e^{-tL}=e^{t\Delta}$ is convolution with~$G_t$. For the negative order~$-2s$, use the derivative-free case~$m=0$ of the heat characterization~\cite[Definition~6.6]{Kerkyacharian.Petrushev}; the condition~$m>-2s$ holds and the derivative factor is the identity. Thus~\cite[Definition~6.1(i) and Theorem~6.7(a)]{Kerkyacharian.Petrushev}, applied to the entries of~$\widetilde{\mathbf b}$, show that this quasi-norm is comparable to
\begin{equation*}
	\norm{G_1*\widetilde{\mathbf b}}_{L^p(\Rd)}+\biggl(\int_0^1t^{\nf s2}\norm{G_t*\widetilde{\mathbf b}}_{L^p(\Rd)}^{\nf12}\,\frac{dt}{t}\biggr)^2\,.
\end{equation*}
By the semigroup property and the contraction above,~$\norm{G_t*\widetilde{\mathbf b}}_{L^p(\Rd)}$ is nonincreasing in~$t$, so the integral over~$[3^{-2k-2},3^{-2k}]$ lies between multiples, depending only on~$s$, of the terms of the second sum in~\eqref{e.besov.averages} with indices~$k$ and~$k+1$. The first term of the display is the square of the term~$k=0$ of that sum, hence at most the square of the sum. Passing between the entries and the operator norm changes the constants only by factors depending on~$d$, since the operator norm of a matrix lies between the largest absolute value of its entries and the sum of the absolute values of its entries.

\begin{proof}[Proof of Proposition~\ref{p.besov.averages}]
Fix~$k\geq0$. The members of~$\mathscr P_k$ have equal volumes, at least~$3^{-kd}/d!$, and diameter~$\sqrt d\,3^{-k}$, and they cover~$\cu_0$ up to a null set, so the arithmetic mean over~$\mathscr P_k$ is the volume-weighted sum, as in~\eqref{e.partition.average}. Let~$Q\in\mathscr P_k$. If~$x,y\in Q$, then~$|x-y|\leq\sqrt d\,3^{-k}$, so~$G_{3^{-2k}}(x-y)\geq(4\pi)^{-\nf d2}e^{-\nf d4}3^{kd}$. Since~$\widetilde{\mathbf b}$ is positive semidefinite, we may drop the integral over~$\Rd\setminus Q$ in~$G_{3^{-2k}}*\widetilde{\mathbf b}(x)$, and~$|Q|\geq3^{-kd}/d!$ gives, as quadratic forms,
\begin{equation*}
	(\mathbf b)_Q\leq d!\,(4\pi)^{\nf d2}e^{\nf d4}\,G_{3^{-2k}}*\widetilde{\mathbf b}(x)\qquad(x\in Q)\,.
\end{equation*}
If also~$z\in Q$, then~$|x-z|^2\leq2|x-y|^2+2d3^{-2k}$ for every~$x\in\Rd$, so that~$G_{3^{-2k}}(x-y)\leq2^{\nf d2}e^{\nf d4}G_{2\cdot3^{-2k}}(x-z)$. We average in~$z\in Q$, multiply by the positive semidefinite matrix~$\mathbf b(y)$, integrate over~$y\in Q$, and sum over~$Q\in\mathscr P_k$. For every~$x\in\Rd$ this gives
\begin{equation*}
	G_{3^{-2k}}*\widetilde{\mathbf b}(x)\leq2^{\nf d2}e^{\nf d4}\sum_{Q\in\mathscr P_k}\bigl(G_{2\cdot3^{-2k}}*\indc_Q\bigr)(x)\,(\mathbf b)_Q\,.
\end{equation*}
The operator norm is monotone on positive semidefinite matrices. The first bound, raised to the power~$p$ and integrated over each~$Q\in\mathscr P_k$, or for~$p=\infty$ its essential supremum over~$Q$, bounds the~$L^p$ mean of~$|(\mathbf b)_Q|$ over~$\mathscr P_k$ by~$d!\,(4\pi)^{\nf d2}e^{\nf d4}\norm{G_{3^{-2k}}*\widetilde{\mathbf b}}_{L^p(\Rd)}$. The second bound, the triangle inequality and the contraction of~$G_{2\cdot3^{-2k}}*$ on~$L^p(\Rd)$, applied to~$\sum_{Q\in\mathscr P_k}|(\mathbf b)_Q|\indc_Q$, give the reverse bound with the factor~$2^{\nf d2}e^{\nf d4}$. This proves~\eqref{e.besov.heat} with~$C=d!\,(4\pi)^{\nf d2}e^{\nf d4}$, which does not depend on~$k$ or~$p$. Taking square roots in~\eqref{e.besov.heat}, multiplying by~$3^{-ks}$ and summing over~$k$ compares the two sums in~\eqref{e.besov.averages}.
\end{proof}

\smallskip

% Adapted from AADKM (arXiv:2601.05690), Lemma 2.1 with q=p; local source paper/coarsegrainedDGMN.tex (SHA-256 af0dd4f4), l.weak.Wsp.bounds.Besov.circ. AADKM tests with step functions on cubes (e.dual.id--e.p-bound), which needs t<1-1/p; here, for positive semidefinite fields, the test functions are Gaussian averages, as in notes/positive-coefficients-besov-equivalence-2026-09-28.md, Section 3, (12), and the cube averages are recovered by Proposition p.besov.averages. Translation: AADKM scale 3^k, k<=0, is 3^{-k} here; its order t is \beta and its s is the present s; its sum of square roots over scales is the square root of e.negative.regularity.norm. On the unit cube the volume-normalized norms of AADKM coincide with e.fractional.seminorm. Change-list items V9-012 and V9-016.
\begin{lemma}[Negative Sobolev norms]
\label{l.negative.sobolev}
Let~$1<p\leq\infty$,~$s>0$ and~$0<\eps\leq2s$, and let~$\mathbf b$ be a matrix field on~$\cu_0$, symmetric and positive semidefinite almost everywhere, with~$|\mathbf b|\in L^1(\cu_0)$. There is~$C<\infty$, depending only on~$d$,~$p$,~$s$ and~$\eps$, such that
\begin{equation*}
	\norm{\mathbf b}_{\mathring{\underline B}^{-2s}_{p,\nf12}(\cu_0)}\leq C\norm{\mathbf b}_{W^{-2s+\eps,p}(\cu_0)}\,.
\end{equation*}
\end{lemma}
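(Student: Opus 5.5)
We plan to bound each scale term of~\eqref{e.negative.regularity.norm} by testing~$\mathbf b$ against Gaussian averages, and then to sum over scales. By Proposition~\ref{p.besov.averages} with~$\mathscr P_k$ the triadic cubes, it suffices to show that for every~$k\geq0$
\begin{equation*}
	\norm{G_{3^{-2k}}*\widetilde{\mathbf b}}_{L^p(\Rd)}\leq C3^{k(2s-\eps)}\norm{\mathbf b}_{W^{-2s+\eps,p}(\cu_0)}\,.
\end{equation*}
Indeed, inserting this into the second sum in~\eqref{e.besov.averages} gives terms~$3^{-ks}3^{k(s-\eps/2)}=3^{-k\eps/2}$, which are summable. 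This is where~$\eps>0$ is used.

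To prove the scale bound, we use duality in~$L^p(\Rd)$. Write~$\beta\coloneqq2s-\eps\geq0$. Take~$g\in L^{p'}(\Rd)$, nonnegative, bounded and compactly supported, with~$\norm{g}_{L^{p'}}\leq1$, and a unit vector~$e$. Since~$G_t*\widetilde{\mathbf b}$ is positive semidefinite, its operator norm is controlled by~$\sum_i e_i\cdot(G_t*\widetilde{\mathbf b})e_i$ up to a factor~$d$, so it suffices to test the scalar field~$\tr(G_t*\widetilde{\mathbf b})$. Fubini then gives
\begin{equation*}
	\int_{\Rd}g\,\tr(G_t*\widetilde{\mathbf b})=\int_{\cu_0}(G_t*g)\tr\mathbf b\,.
\end{equation*}
The right side pairs~$\mathbf b$ with the bounded smooth test function~$\phi\coloneqq G_t*g$ restricted to~$\cu_0$, multiplying the identity. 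This is admissible in~\eqref{e.negative.sobolev.norm} because the dual norm is tested on~$W^{\beta,p'}(\cu_0)$, not on~$W_0$, so no boundary cutoff is needed. Hence the pairing is at most~$d\,\norm{\phi}_{W^{\beta,p'}(\cu_0)}\norm{\mathbf b}_{W^{-\beta,p}(\cu_0)}$.

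The key estimate, which we expect to be the main technical point, is
\begin{equation*}
	\norm{G_t*g}_{W^{\beta,p'}(\cu_0)}\leq Ct^{-\beta/2}\norm{g}_{L^{p'}}\qquad(0<t\leq1)\,.
\end{equation*}
Write~$\beta=m+\alpha$. For integer derivatives, Young's inequality with~$\norm{\nabla^jG_t}_{L^1}\leq Ct^{-j/2}$ gives~$\norm{\nabla^jG_t*g}_{L^{p'}}\leq Ct^{-j/2}\norm{g}_{L^{p'}}$, and~$t\leq1$ lets the lower-order terms be absorbed. For the fractional seminorm of~$\nabla^m(G_t*g)$, we split the difference quotient at~$|h|=\sqrt t$: for small~$h$ we use the next derivative, for large~$h$ twice the sup, exactly as in Step~2 of the proof of Lemma~\ref{l.fractional.reconstruction}, which gives~$Ct^{-(m+\alpha)/2}$. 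The case~$p=\infty$ ($p'=1$) works the same way, since only Young's inequality in~$L^1$ is used. With~$t=3^{-2k}$ this yields the scale bound with factor~$3^{k\beta}$, and the constants depend on~$d,p,s,\eps$ through~$\beta$ and the summation. We expect no other obstacle; at most, some care with the entries-versus-operator-norm conversion.
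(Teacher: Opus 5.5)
Your proposal is correct and follows the paper's proof essentially step for step. You reduce via Proposition~\ref{p.besov.averages} for cubes to a per-scale bound on $G_{3^{-2k}}*\widetilde{\mathbf b}$, and obtain that bound by duality and Fubini against the trace, testing with $G_t*g$ and the estimate $\norm{G_t*g}_{W^{2s-\eps,p'}(\cu_0)}\leq Ct^{-(2s-\eps)/2}\norm{g}_{L^{p'}(\Rd)}$ (split at $|z|=\sqrt t$); the factors $3^{-k\eps/2}$ are then summed. One small correction: for positive semidefinite $M$ one has $|M|\leq\tr M$ exactly, so no factor $d$ is lost there.
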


In~\cite{AADKM}, for orders~$0<\beta<1$, the negative Sobolev norm~$\norm{\cdot}_{W^{-\beta,p}(\cu_0)}$ of a scalar function is its norm in the dual of~$W^{\beta,p'}(\cu_0)$, where the pairing with a bounded test function is the integral, and matrix fields are taken entrywise. Since the operator norm of a matrix is at most the sum of the absolute values of its entries,~\eqref{e.negative.sobolev.norm} is at most the sum of these norms of the entries of~$\mathbf b$.

\begin{proof}
We test with Gaussian averages, in the notation above. The constants~$C$ below depend only on~$d$,~$p$,~$s$ and~$\eps$. Write~$2s-\eps=m+\alpha$ as in Section~\ref{ss.notation}, let~$0<t\leq1$ and~$g\in L^{p'}(\Rd)$. The function~$G_t*g$ is bounded and smooth, with~$\nabla^j(G_t*g)=(\nabla^jG_t)*g$, and by scaling~$\norm{\nabla^jG_t}_{L^1(\Rd)}\leq Ct^{-\nf j2}$ for~$0\leq j\leq m+1$, so that~$\norm{\nabla^j(G_t*g)}_{L^{p'}(\Rd)}\leq Ct^{-\nf j2}\norm{g}_{L^{p'}(\Rd)}$ for these~$j$.

\smallskip

If~$\alpha>0$, the bounds~$\norm{\nabla^mG_t(\cdot+z)-\nabla^mG_t}_{L^1(\Rd)}\leq\min\{2\norm{\nabla^mG_t}_{L^1(\Rd)},|z|\norm{\nabla^{m+1}G_t}_{L^1(\Rd)}\}$ give, for~$z\in\Rd$,
\begin{equation*}
	\norm{\nabla^m(G_t*g)(\cdot+z)-\nabla^m(G_t*g)}_{L^{p'}(\Rd)}\leq Ct^{-\nf m2}\min\bigl\{1,t^{-\nf12}|z|\bigr\}\norm{g}_{L^{p'}(\Rd)}\,.
\end{equation*}
We write the seminorm~\eqref{e.fractional.seminorm} of~$\nabla^m(G_t*g)$ on~$\Rd$ as an integral over~$z=y-x$, insert this bound and split the radial integral at~$|z|=t^{\nf12}$; the two parts are at most~$Ct^{-\nf{(2s-\eps)p'}2}\norm{g}_{L^{p'}(\Rd)}^{p'}$, because~$0<\alpha<1$. Since~$t\leq1$, so that~$t^{-\nf j2}\leq t^{-\nf{(2s-\eps)}2}$ for~$j\leq m$, and restriction to~$\cu_0$ does not increase the norm, this gives
\begin{equation*}
	\norm{G_t*g}_{W^{2s-\eps,p'}(\cu_0)}\leq Ct^{-\nf{(2s-\eps)}2}\norm{g}_{L^{p'}(\Rd)}\,.
\end{equation*}

\smallskip

Since~$\tr\mathbf b\geq0$ and~$|\tr M|\leq d|M|$ for symmetric matrices~$M$, the definition~\eqref{e.negative.sobolev.norm} gives, for every bounded~$h$,
\begin{equation*}
	\Bigl|\int_{\cu_0}h\,\tr\mathbf b\Bigr|\leq d\norm{\mathbf b}_{W^{-2s+\eps,p}(\cu_0)}\norm{h}_{W^{2s-\eps,p'}(\cu_0)}\,.
\end{equation*}
The function~$G_t*\tr\widetilde{\mathbf b}$ belongs to~$L^p(\Rd)$. Since~$G_t$ is even, Fubini's theorem gives, for~$g\in L^{p'}(\Rd)$,
\begin{equation*}
	\int_{\Rd}(G_t*\tr\widetilde{\mathbf b})\,g=\int_{\cu_0}\tr\mathbf b\,(G_t*g)\,.
\end{equation*}
Use~$h=G_t*g$ in the preceding bound and apply the norm estimate above. Taking the supremum over~$\norm{g}_{L^{p'}(\Rd)}\leq1$ with~$t=3^{-2k}$ gives
\begin{equation}
	\norm{G_{3^{-2k}}*\tr\widetilde{\mathbf b}}_{L^p(\Rd)}\leq C3^{k(2s-\eps)}\norm{\mathbf b}_{W^{-2s+\eps,p}(\cu_0)}\qquad(k\geq0)\,.
	\label{e.negative.sobolev.step}
\end{equation}
The field~$G_{3^{-2k}}*\widetilde{\mathbf b}$ is positive semidefinite with trace~$G_{3^{-2k}}*\tr\widetilde{\mathbf b}$, and~$|M|\leq\tr M$ for positive semidefinite matrices~$M$. Hence~\eqref{e.besov.heat} for the family of cubes and~\eqref{e.negative.sobolev.step} bound the~$L^p$ mean of~$|(\mathbf b)_{z+\cu_{-k}}|$ over~$z\in3^{-k}\Zd\cap\cu_0$ by~$C3^{k(2s-\eps)}\norm{\mathbf b}_{W^{-2s+\eps,p}(\cu_0)}$. Taking square roots, multiplying by~$3^{-ks}$, summing the geometric series~$\sum_k3^{-k\eps/2}=(1-3^{-\nf\eps2})^{-1}$ and squaring proves the lemma, by~\eqref{e.negative.regularity.norm}.
\end{proof}

% Source: writing/vs10/appendix_c_symmetric2.tex, SHA-256 7f97f48e12c5e8493e8cf9a9695663b99f997dd3b89674724e36b8e1670ca77f.
% Proposition and spatial-moment proof retained; the shared finite/countable subadditivity proof is in Appendix A.
\section{Comparison of cubical and simplicial constants}
\label{a.cubical.simplicial}

This appendix proves Proposition~\ref{p.cubical.simplicial.equivalence}. Subadditivity compares each cube with its simplices. For the reverse comparison, we partition each simplex into triadic Whitney cubes. The cubes of side length~$3^{-j}$ inside a simplex of size~$3^{-k}$ occupy a fraction at most~$C3^{-(j-k)}$ of its volume. H\"older's inequality converts this volume bound into a comparison of spatial moments, which we then sum over scales.

Throughout,~\eqref{e.weighted.hypotheses} holds on~$\cu_0$, and~$\mathbf B(U)$ denotes either~$\a(U)$ or~$\a_*^{-1}(U)$.

\begin{lemma}
\label{l.cubical.simplicial.moments}
For~$1\leq p\leq\infty$ and~$k\geq0$, set
\begin{equation*}
	B_{k,p}\coloneqq \biggl(\avsum_{\triangle\in\mathscr T_k} |\mathbf B(\triangle)|^p\biggr)^{\nf1p} \qquad\mbox{and}\qquad \widetilde B_{k,p}\coloneqq \biggl(\avsum_{Q\in\mathscr Q_k} |\mathbf B(Q)|^p\biggr)^{\nf1p}\,.
\end{equation*}
For~$p=\infty$, replace the means by maxima. For every~$1\leq p\leq\infty$ and~$k\geq0$,
\begin{equation}
	\widetilde B_{k,p}\leq B_{k,p} \leq C\sum_{j\geq k} 3^{-(j-k)(1-1/p)}\widetilde B_{j,p}\,,
	\label{e.cubical.simplicial.moments}
\end{equation}
where~$C$ depends only on~$d$.
\end{lemma}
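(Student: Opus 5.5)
The first inequality in~\eqref{e.cubical.simplicial.moments} comes from subadditivity. Each $Q\in\mathscr Q_k$ is tiled by the $d!$ simplices of $\mathscr T_k(Q)$, which all have volume $|Q|/d!$. By~\eqref{e.upper.subadditivity} or~\eqref{e.lower.cube.aggregation}, $\mathbf B(Q)$ is at most the arithmetic mean of the $\mathbf B(\triangle)$ with $\triangle\subset Q$. Monotonicity and the triangle inequality for the operator norm give $|\mathbf B(Q)|\le\avsum_{\triangle\in\mathscr T_k(Q)}|\mathbf B(\triangle)|$. Jensen's inequality then gives $|\mathbf B(Q)|^p\le\avsum_{\triangle\subset Q}|\mathbf B(\triangle)|^p$ (a maximum for $p=\infty$). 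Averaging over $Q$ yields $\widetilde B_{k,p}\le B_{k,p}$, since the mean over $\mathscr T_k$ is the volume-weighted sum~\eqref{e.partition.average}.

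For the reverse bound, fix $\triangle\in\mathscr T_k$ and partition it, up to a null set, into its maximal triadic cubes. These are the triadic cubes $D\subset\triangle$ whose parent is not contained in $\triangle$. Such a cube of side $3^{-j}$, $j\ge k+1$, lies within distance $C3^{-j}$ of $\partial\triangle$. The face area of $\triangle$ is of order $3^{-k(d-1)}$, so the number $N_{j}(\triangle)$ of these cubes satisfies $N_j(\triangle)3^{-jd}\le C3^{-j}3^{-k(d-1)}$. The cubes therefore fill a fraction $\omega_j\le C3^{-(j-k)}$ of $|\triangle|$. Each such cube $D$ belongs to $\mathscr Q_j$. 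Because the family is countable and $\sum_j\omega_j=1$, subadditivity gives $|\mathbf B(\triangle)|\le\sum_{j>k}\sum_{D}\frac{|D|}{|\triangle|}|\mathbf B(D)|$, with the inner sum over the maximal cubes $D\subset\triangle$ of side $3^{-j}$.

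Next comes the Hölder step, which is the main point. For fixed $j$, set $S_j(\triangle)=\sum_D\frac{|D|}{|\triangle|}|\mathbf B(D)|$. Hölder's inequality with weights $|D|/|\triangle|$, whose total mass is $\omega_j$, gives $S_j(\triangle)\le\omega_j^{1-1/p}\bigl(\sum_D\frac{|D|}{|\triangle|}|\mathbf B(D)|^p\bigr)^{1/p}$. Minkowski's inequality in $\ell^p$ over $\triangle\in\mathscr T_k$, with weights $|\triangle|$, then gives $B_{k,p}\le\sum_{j>k}\bigl(\sum_{\triangle}|\triangle|S_j(\triangle)^p\bigr)^{1/p}$. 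Distinct simplices of $\mathscr T_k$ contain disjoint cubes, so $\sum_\triangle\sum_D|D|\,|\mathbf B(D)|^p\le\sum_{Q\in\mathscr Q_j}|Q||\mathbf B(Q)|^p=\widetilde B_{j,p}^p$. Hence $B_{k,p}\le C\sum_{j>k}3^{-(j-k)(1-1/p)}\widetilde B_{j,p}$. For $p=\infty$, the same argument with maxima and $\sum_j\omega_j\le1$ gives $B_{k,\infty}\le\sum_j\omega_j\widetilde B_{j,\infty}\le C\sum_j3^{-(j-k)}\widetilde B_{j,\infty}$, because each cube is contained in $\cu_0$ and so appears in $\mathscr Q_j$.

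The obstacle I expect is the geometric count $\omega_j\le C3^{-(j-k)}$, with $C$ depending only on $d$. I would prove it by dilation. After rescaling by $3^{k}$, $\triangle$ becomes a fixed standard simplex up to translation by lattice points. Since $3^kz\in\Zd$, the triadic grid is preserved under this rescaling. A maximal cube of side $\ell$ has its parent, of side $3\ell$, meeting the complement of $\triangle$. Hence every point of the cube lies within $C\ell$ of $\partial\triangle$. For a simplex, the volume of the $C\ell$-neighborhood of the boundary inside it is at most $C\ell\,\mathcal H^{d-1}(\partial\triangle)$. This bounds the total volume of cubes of side $3^{-j}$ by $C3^{-j}3^{-k(d-1)}$, as required.
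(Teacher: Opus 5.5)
Your proposal is correct and follows the paper's proof essentially step for step: subadditivity plus Jensen for the first inequality; then a countable partition of each simplex into triadic cubes lying in a boundary layer of width $C3^{-j}$; H\"older with the layer fraction $C3^{-(j-k)}$; summation over the disjoint simplices; and Minkowski's inequality in $j$. The only difference is cosmetic: you use the maximal triadic cubes contained in $\triangle$ instead of the paper's Whitney cubes with $3\overline Q\subseteq\triangle$. Both families have the boundary-layer property, and yours even gives $j\ge k+1$.
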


\begin{proof}
Throughout the proof,~$C$ depends only on~$d$. By~\eqref{e.upper.subadditivity}--\eqref{e.lower.cube.aggregation},
\begin{equation*}
	\mathbf B(Q)\leq \sum_{\substack{\triangle\in\mathscr T_k\\\triangle\subseteq Q}} \frac{|\triangle|}{|Q|}\mathbf B(\triangle) \qquad(Q\in\mathscr Q_k)\,.
\end{equation*}
Taking operator norms, applying Jensen's inequality and averaging over~$\mathscr Q_k$ proves~$\widetilde B_{k,p}\leq B_{k,p}$. For~$p=\infty$, take maxima instead.

\smallskip

Fix~$\triangle\in\mathscr T_k$ and take its triadic Whitney decomposition~$\mathscr W(\triangle)$, consisting of the maximal triadic cubes~$Q$ such that~$3\overline Q\subseteq\triangle$. Write~$\mathscr W_j(\triangle)\coloneqq\mathscr W(\triangle)\cap\mathscr Q_j$. Outside the countable union of triadic grid faces, every point of~$\triangle$ lies in an arbitrarily small admissible cube and hence in a maximal one. Nesting of the triadic grid gives disjoint interiors, so these cubes partition~$\triangle$ up to a null set and have~$j\geq k$. The parent of a maximal cube is not admissible, so every~$Q\in\mathscr W_j(\triangle)$ lies in a boundary layer of width~$C3^{-j}$. Since the simplices are dilates and coordinate permutations of a fixed simplex,
\begin{equation}
	\sum_{Q\in\mathscr W_j(\triangle)}\frac{|Q|}{|\triangle|} \leq C3^{-(j-k)}\,.
	\label{e.simplex.whitney.layer}
\end{equation}

For~$j\geq k$, write~$S_j(\triangle)\coloneqq\sum_{Q\in\mathscr W_j(\triangle)}\frac{|Q|}{|\triangle|}|\mathbf B(Q)|$ for the contribution of the~$j$th Whitney layer. Applying~\eqref{e.upper.subadditivity}--\eqref{e.lower.cube.aggregation} to the countable partition and taking operator norms gives
\begin{equation*}
	|\mathbf B(\triangle)|\leq\sum_{j\geq k}S_j(\triangle)\,.
\end{equation*}
For~$1<p<\infty$, H\"older's inequality and~\eqref{e.simplex.whitney.layer} give
\begin{equation*}
	S_j(\triangle)^p\leq C^{p-1}3^{-(j-k)(p-1)}\sum_{Q\in\mathscr W_j(\triangle)}\frac{|Q|}{|\triangle|}|\mathbf B(Q)|^p\,.
\end{equation*}
Each cube belongs to the Whitney decomposition of at most one~$\triangle\in\mathscr T_k$. Summing over the simplices gives
\begin{equation*}
	\sum_{\triangle\in\mathscr T_k}|\triangle|S_j(\triangle)^p\leq C^{p-1}3^{-(j-k)(p-1)}\widetilde B_{j,p}^{\,p}\,.
\end{equation*}
Both spatial means are volume averages. Taking the~$p$th root and applying Minkowski's inequality in the sum over~$j$ proves the second inequality of~\eqref{e.cubical.simplicial.moments}. For~$p=1$, sum directly; for~$p=\infty$, use~\eqref{e.simplex.whitney.layer} and take maxima.
\end{proof}

\begin{proof}[Proof of Proposition~\ref{p.cubical.simplicial.equivalence}]
Taking square roots in~\eqref{e.cubical.simplicial.moments} and using~$(\sum_\ell b_\ell)^{\nf12}\leq\sum_\ell b_\ell^{\nf12}$ for nonnegative~$b_\ell$ gives
\begin{equation*}
	3^{-ks}B_{k,p}^{\nf12} \leq C\sum_{\ell\geq0} 3^{-\ell(\frac12(1-\frac1p)-s)} 3^{-(k+\ell)s}\widetilde B_{k+\ell,p}^{\nf12}\,.
\end{equation*}
For~$1\leq\mathsf m\leq\infty$, the triangle inequality in~$\ell^{\mathsf m}$, the bound of each tail by the full sequence norm and~$s<\nf12(1-\nf1p)$ imply
\begin{equation*}
	\bigl\|\{3^{-ks}B_{k,p}^{\nf12}\}_{k\geq0}\bigr\|_{\ell^{\mathsf m}} \leq\frac{C}{1-3^{s-\frac12(1-\frac1p)}} \bigl\|\{3^{-ks}\widetilde B_{k,p}^{\nf12}\}_{k\geq0} \bigr\|_{\ell^{\mathsf m}}\,.
\end{equation*}
The normalizing factor~$(1-3^{-s\mathsf m})^{1/\mathsf m}$ is the same on both sides for finite~$\mathsf m$. Taking~$\mathbf B(U)=\a(U)$ and squaring proves the upper bound in~\eqref{e.Lambda.cube.simplex.equivalence}. The lower bound follows from~$\widetilde B_{k,p}\leq B_{k,p}$.

Taking~$\mathbf B(U)=\a_*^{-1}(U)$ and replacing~$(s,p)$ by~$(t,q)$ gives the same comparison for~$\lambda_{t,\mathsf m,q}(\cu_0)^{-\nf12}$ and~$\widetilde\lambda_{t,\mathsf m,q}(\cu_0)^{-\nf12}$. Inverting and squaring proves~\eqref{e.lambda.cube.simplex.equivalence}, with the usual extended interpretation when a lower constant is zero.
\end{proof}

\section{Bombieri's lemma}
\label{a.bombieri}

This appendix proves Bombieri's lemma, which the proof of Lemma~\ref{l.crossover} uses in place of the John--Nirenberg inequality. It is the lemma of Bombieri and Giusti~\cite{Bombieri.Giusti} in the form of~\cite[Lemma~A.2]{AADKM}, with the exponent~$q$ there equal to one. Here the radii lie between~$\nf34$ and~$\nf78$, and the function is positive almost everywhere, as holds for~$w^{-p_*}$ in the crossover estimate. The proof bounds~$\int v^a$ using a level set and H\"older's inequality, then iterates over radii.

\begin{lemma}[Bombieri's lemma]
\label{l.bombieri}
Let~$A_1,A_2\geq1$ and~$\kappa>0$, and let~$v$ be measurable on~$\frac78\cu_0$ with~$0<v<\infty$ almost everywhere,~$v\in L^1(\frac78\cu_0)$ and~$\norm{\log v}_{L^1(\frac78\cu_0)}\leq A_1$. Suppose that, for~$0<a<1$ and~$\nf34\leq\rho_1<\rho_2\leq\nf78$,
\begin{equation}
	\norm{v}_{L^1(\rho_1\cu_0)}\leq\bigl(A_2(\rho_2-\rho_1)^{-\kappa}\bigr)^{\nf1a}\norm{v}_{L^a(\rho_2\cu_0)}\,.
	\label{e.bombieri.hypothesis}
\end{equation}
Then~$\norm{v}_{L^1(\frac34\cu_0)}\leq\exp\bigl(C(\kappa)A_1A_2^6\bigr)$, where~$C(\kappa)$ depends only on~$\kappa$.
\end{lemma}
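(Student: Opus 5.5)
The plan is the classical Bombieri--Giusti argument, run in the normalized form the paper already uses. For $\nf34\leq\rho\leq\nf78$ set $\Psi(\rho)\coloneqq\log\norm{v}_{L^1(\rho\cu_0)}$, which is finite and nondecreasing because $v\in L^1(\frac78\cu_0)$. The first step is a one-step inequality for $\Psi$ between radii $\rho_1<\rho_2$. We split the $L^a$ integral at a level $\lambda>0$:
\begin{equation*}
	\int_{\rho_2\cu_0}v^a\leq\int_{\{v>\lambda\}\cap\rho_2\cu_0}v^a+\lambda^a|\rho_2\cu_0|\,.
\end{equation*}
On $\{v>\lambda\}$ we write $v^a=v\cdot v^{a-1}\leq\lambda^{a-1}v$. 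This gives the bound $\lambda^{a-1}e^{\Psi(\rho_2)}+\lambda^a$. An alternative is H\"older with exponents $1/a$ and $1/(1-a)$, which gives $e^{a\Psi(\rho_2)}|\{v>\lambda\}|^{1-a}$, and then Chebyshev applied to $\log v$, which gives $|\{\log v>\log\lambda\}|\leq A_1/\log\lambda$. The second route is the one I will use, because it keeps the dependence on $A_1$ explicit.

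Inserting this into \eqref{e.bombieri.hypothesis} and taking logarithms gives
\begin{equation*}
	\Psi(\rho_1)\leq\frac1a\log\bigl(A_2(\rho_2-\rho_1)^{-\kappa}\bigr)+\frac1a\log\Bigl(e^{a\Psi(\rho_2)}\bigl(\tfrac{A_1}{\log\lambda}\bigr)^{1-a}+\lambda^a\Bigr)\,.
\end{equation*}
We choose $\log\lambda=\Psi(\rho_2)/2$, provided $\Psi(\rho_2)$ is large compared with $A_1$; otherwise there is nothing to prove. Using $\log(x+y)\leq\log2+\max\{\log x,\log y\}$, the first term in the second logarithm contributes $\Psi(\rho_2)+\frac{1-a}{a}\log\bigl(2A_1/\Psi(\rho_2)\bigr)$, and the second contributes $\Psi(\rho_2)/2$. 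Now choose $a$ small with $a\asymp\log\bigl(\Psi(\rho_2)/A_1\bigr)/\Psi(\rho_2)$, up to the quantity $\log(A_2\delta^{-\kappa})$, where $\delta\coloneqq\rho_2-\rho_1$. This should yield
\begin{equation*}
	\Psi(\rho_1)\leq\tfrac12\Psi(\rho_2)+\frac{C\,A_1\log\bigl(A_2\delta^{-\kappa}\bigr)}{\log\bigl(\Psi(\rho_2)/A_1\bigr)}+C\,,
\end{equation*}
or, more crudely, $\Psi(\rho_1)\leq\frac12\Psi(\rho_2)+C(\kappa)A_1A_2^{c}\delta^{-c\kappa}$. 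The power $A_2^6$ in the statement suggests that one should not optimize carefully here: any polynomial in $A_2$ is acceptable, so I will allow generous losses, such as $\log A_2\leq A_2$.

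The final step is iteration over radii. Put $R_i\coloneqq\frac34+2^{-i-3}$, so that $R_0=\frac78$ and $R_i\downarrow\frac34$, with gaps $R_i-R_{i+1}=2^{-i-4}$. The one-step inequality reads $\Psi(R_{i+1})\leq\frac12\Psi(R_i)+K2^{ci\kappa}$, with $K\coloneqq C(\kappa)A_1A_2^{c}$. The factor $\frac12$ does not beat the geometric growth $2^{ci\kappa}$, so I will use the standard lemma for nondecreasing bounded functions in reverse. Namely, if $\Psi(\rho_1)\leq\theta\Psi(\rho_2)+K(\rho_2-\rho_1)^{-\beta}$ for all $\rho_1<\rho_2$ with $\theta<1$, then $\Psi(\frac34)\leq C(\theta,\beta)K$. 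This is proved by choosing gaps $(1-\tau)\tau^i$ with $\theta\tau^{-\beta}<1$, and it needs $\Psi\leq\log\norm{v}_{L^1(\frac78\cu_0)}<\infty$ only qualitatively. The conclusion is then $\norm{v}_{L^1(\frac34\cu_0)}\leq\exp(CA_1A_2^6)$.

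The main obstacle is the one-step inequality. The Chebyshev bound on $\log v$ only gives decay like $1/\log\lambda$, so the choice of $a$ and $\lambda$ must convert the $\frac1a$ loss in \eqref{e.bombieri.hypothesis} into a contraction factor below one on $\Psi$. I expect to use the splitting with $a=\min\{\frac12,\,\cdots\}$ and to handle separately the case $\Psi(\rho_2)\leq C A_1\log(A_2\delta^{-\kappa})$. In that case the inequality holds trivially, since $\Psi(\rho_1)\leq\Psi(\rho_2)$.
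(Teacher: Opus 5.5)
Your strategy is the paper's: Chebyshev for $\log v$ on a superlevel set, H\"older there, an exponent $a$ chosen from $\Psi(\rho_2)$, a case split, and the standard iteration lemma with geometrically shrinking gaps. The paper's Step~2 is exactly that lemma with $\tau=2^{-1/(12\kappa)}$.

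The gap is in the one-step inequality. You flag it as the main obstacle, but you do not compute it, and you state it incorrectly. Write $\Psi=\Psi(\rho_2)$, $L=\log(\Psi/(2A_1))$ and $M=\log(2A_2\delta^{-\kappa})$. With your level $\log\lambda=\Psi/2$, Chebyshev gives $|\{v>\lambda\}\cap\rho_2\cu_0|\le 2A_1/\Psi$. The choice $\frac{1}{a}=1+\frac{\Psi}{2L}$ makes the H\"older term equal to $e^{a\Psi/2}$, so $\int_{\rho_2\cu_0}v^a\le 2e^{a\Psi/2}$, and the hypothesis gives
\begin{equation*}
	\Psi(\rho_1)\le\frac{\Psi}{2}+\frac{M}{a}=\Bigl(\frac{1}{2}+\frac{M}{2L}\Bigr)\Psi(\rho_2)+M\,.
\end{equation*}
For the tail to be controlled, $a$ must be at most of order $L/\Psi$. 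So the loss $M/a$ is proportional to $\Psi(\rho_2)$ itself, not to $A_1$ as in your display. It cannot be carried as an additive error; it must be absorbed into the coefficient of $\Psi(\rho_2)$. That requires $L\ge cM$ with some $c>1$, i.e.\ $\Psi(\rho_2)\ge 2A_1(2A_2\delta^{-\kappa})^{c}$, so the threshold is polynomial in $A_2\delta^{-\kappa}$. Your case split at $\Psi(\rho_2)\approx CA_1\log(A_2\delta^{-\kappa})$ fails: just above it, $L$ is only of size $\log\log(A_2\delta^{-\kappa})$, far below $M$, and there is no contraction at all.

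The repair stays within your framework.
\begin{itemize}
\item If $\Psi(\rho_2)\ge 8A_1A_2^2\delta^{-2\kappa}$, i.e.\ $L\ge 2M$, the display gives $\Psi(\rho_1)\le\frac{3}{4}\Psi(\rho_2)+M$.
\item Otherwise $\Psi(\rho_1)\le\Psi(\rho_2)<8A_1A_2^2\delta^{-2\kappa}$. If $\Psi(\rho_2)<0$, monotonicity alone gives $\Psi(\rho_1)\le\frac{3}{4}\Psi(\rho_2)$.
\end{itemize}
Hence $\Psi(\rho_1)\le\frac{3}{4}\Psi(\rho_2)+8A_1A_2^2\delta^{-2\kappa}$ for all admissible radii. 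Your iteration lemma with $\theta=\frac{3}{4}$ and $\beta=2\kappa$ then yields $\exp(C(\kappa)A_1A_2^2)$, which implies the claim.

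Two further remarks. With $\log\lambda=\Psi/2$ the coefficient is always strictly larger than $\frac{1}{2}$, so your factor $\frac{1}{2}$ must be replaced by some $\theta<1$. The paper instead takes $\log\lambda=f(\rho_2)/3$, which gives the coefficient $\frac{1}{3}+M/\log(f/(3A_1))$. Requiring this to be at most $\frac{1}{2}$ forces the threshold $f\ge 3A_1(2A_2)^6\delta^{-6\kappa}$, and that is the source of $A_2^6$. Finally, it is not true that any polynomial in $A_2$ is acceptable. Since $A_2\ge1$, a power at most $6$ suffices, but a larger power would not prove the statement as written, so the exponent must be tracked as above.
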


\begin{proof}
For~$\nf34\leq\rho\leq\nf78$, let
\begin{equation*}
	f(\rho)\coloneqq\max\Bigl\{0,\log\int_{\rho\cu_0}v\Bigr\}\,.
\end{equation*}
Since~$v>0$ almost everywhere,~$f$ is well defined. It is nondecreasing in~$\rho$ and bounded by~$f(\nf78)<\infty$.

\smallskip

\emph{Step 1.} A one-step inequality. Fix~$\nf34\leq\rho_1<\rho_2\leq\nf78$, write~$\delta\coloneqq\rho_2-\rho_1$, and suppose first that~$f(\rho_2)>3A_1$, so that~$\int_{\rho_2\cu_0}v=e^{f(\rho_2)}$. Define
\begin{equation*}
	E\coloneqq\Bigl\{x\in\rho_2\cu_0:\log v(x)>\frac{f(\rho_2)}3\Bigr\}\,.
\end{equation*}
Chebyshev's inequality gives~$|E|\leq3A_1/f(\rho_2)$. For~$0<a<1$, splitting the integral of~$v^a$ over~$E$ and its complement and applying H\"older's inequality on~$E$,
\begin{equation*}
	\int_{\rho_2\cu_0}v^a\leq e^{\nf{af(\rho_2)}3}|\rho_2\cu_0|+\biggl(\int_{\rho_2\cu_0}v\biggr)^a|E|^{1-a}\leq e^{\nf{af(\rho_2)}3}+e^{af(\rho_2)}\Bigl(\frac{3A_1}{f(\rho_2)}\Bigr)^{1-a}\,.
\end{equation*}
Choose~$a\in(0,1)$ by
\begin{equation*}
	\frac1a=1+\frac{f(\rho_2)}{\log(f(\rho_2)/(3A_1))}\,.
\end{equation*}
Then~$(1-a)\log\nf{f(\rho_2)}{3A_1}=af(\rho_2)$ and the last term equals one. Hence~$\int_{\rho_2\cu_0}v^a\leq2e^{af(\rho_2)/3}$, so~$\log\norm{v}_{L^a(\rho_2\cu_0)}\leq\nf{f(\rho_2)}3+\nf{\log2}a$, and~\eqref{e.bombieri.hypothesis} gives
\begin{equation*}
	\log\norm{v}_{L^1(\rho_1\cu_0)}\leq\frac{f(\rho_2)}3+\frac1a\log\frac{2A_2}{\delta^\kappa}=f(\rho_2)\biggl(\frac13+\frac{\log(2A_2\delta^{-\kappa})}{\log(f(\rho_2)/(3A_1))}\biggr)+\log\frac{2A_2}{\delta^\kappa}\,.
\end{equation*}
If~$f(\rho_2)\geq3A_1(2A_2)^6\delta^{-6\kappa}$, then
\begin{equation*}
	\log\frac{f(\rho_2)}{3A_1}\geq6\log(2A_2\delta^{-\kappa})\,.
\end{equation*}
The bracket is then at most~$\nf12$, and, since~$\log x\leq x^6$ for~$x=2A_2\delta^{-\kappa}\geq1$, we obtain~$f(\rho_1)\leq\frac12f(\rho_2)+3A_1(2A_2)^6\delta^{-6\kappa}$. Otherwise~$f(\rho_1)\leq f(\rho_2)<3A_1(2A_2)^6\delta^{-6\kappa}$. In both cases
\begin{equation}
	f(\rho_1)\leq\frac12f(\rho_2)+3A_1(2A_2)^6(\rho_2-\rho_1)^{-6\kappa}\,.
	\label{e.bombieri.step}
\end{equation}

\smallskip

\emph{Step 2.} Iteration over radii. For~$j\geq0$, let
\begin{equation*}
	R_j\coloneqq\frac78-\frac18\,2^{-j/(12\kappa)}\,,\qquad R_{j+1}-R_j=\frac18\bigl(1-2^{-1/(12\kappa)}\bigr)2^{-j/(12\kappa)}\,.
\end{equation*}
Then~$R_0=\nf34$. Iterating~\eqref{e.bombieri.step} between~$R_j$ and~$R_{j+1}$ for~$j<n$,
\begin{equation*}
	f\bigl(\tfrac34\bigr)\leq2^{-n}f(R_n)+3A_1(2A_2)^6\biggl(\frac8{1-2^{-1/(12\kappa)}}\biggr)^{6\kappa}\sum_{j<n}2^{-j}2^{\nf j2}\,.
\end{equation*}
The first term tends to zero because~$f\leq f(\nf78)$, and the series is at most~$(1-2^{-1/2})^{-1}$. Since~$\log\norm{v}_{L^1(\frac34\cu_0)}\leq f(\nf34)$, this proves the lemma.
\end{proof}

\subsection*{Acknowledgments}
S.~A. and T.~K. acknowledge support from the European Research Council (ERC) under the European Union's Horizon Europe research and innovation programme, grant agreement number 101200828. T.~K. and A.~T. were supported by the Academy of Finland.

\subsection*{Declaration of generative AI use}
The proof development and the preparation of this paper were assisted by ChatGPT and Claude. The authors take full responsibility for the contents of the paper.

\end{document}